\documentclass[11pt]{amsart}   	
\usepackage{geometry}             
\usepackage[dvipsnames]{xcolor}   		             		
\usepackage{graphicx}				
\usepackage{mathrsfs}
\usepackage{amssymb,amsthm,amsmath,mathtools}
\usepackage{tikz}
\usepackage{tikz-cd}
\usepackage{adjustbox}
\usepackage{accents,upgreek,enumerate}
\usepackage[headings]{fullpage}
\usepackage{bm}
\usepackage[all]{xy}
\usepackage{caption}
\usepackage{floatrow}       
\usepackage{csquotes}
\usepackage{enumitem}
\usepackage{comment}
\usepackage{verbatim}
\usepackage{hyperref}

\newtheorem{theorem}{Theorem}[section]

\newtheorem{lemma}[theorem]{Lemma}
\newtheorem{proposition}[theorem]{Proposition}

\newtheorem{quasi-theorem}[theorem]{Quasi-Theorem}

\theoremstyle{definition}
\newtheorem{definition}[theorem]{Definition}

\newtheorem{remark}[theorem]{Remark}

\newcommand{\direction}{\vec{\mathbf{v}}} 

\newcommand{\quotient}[2]{\raisebox{0.5ex}{$#1$}/\raisebox{-0.5ex}{$#2$}}

\newcommand{\bigmid}{\mathrel{\big|}}

\newcommand{\A}{{\mathbb{A}}}                     
\newcommand{\EE}{\mathbf{E}}       
\newcommand{\FF} {{\mathbb F}}	
\newcommand{\NN} {{\mathbb N}}		
\newcommand{\PP}{\mathbb{P}}         
\newcommand{\QQ} {{\mathbb Q}}	

\newcommand{\RR} {{\mathbb R}}	
\newcommand{\CC} {{\mathbb C}}
\newcommand{\ZZ} {{\mathbb Z}}	
\renewcommand{\AA}{\mathbb{A}}	

\newcommand{\HM}[1]{{\color{blue} \sf H:~[#1]}}
\newcommand{\AZ}[1]{{\color{red} \sf A:~[#1]}}

\newcommand{\Puiseux}[2][t]{#2\{\!\{#1\}\!\}}

\DeclareMathOperator{\Hom}{Hom}
\DeclareMathOperator{\Ker}{Ker}
\DeclareMathOperator{\image}{Im}
\DeclareMathOperator{\End}{End}
\DeclareMathOperator{\Coker}{Coker}
\DeclareMathOperator{\Ext}{Ext}
\DeclareMathOperator{\Aut}{Aut}

\DeclareMathOperator{\ini}{in}
\DeclareMathOperator{\chara}{char} 
\DeclareMathOperator{\codim}{codim} 
\DeclareMathOperator{\conv}{conv}
\DeclareMathOperator{\pr}{pr} 
\DeclareMathOperator{\stab}{stab}
\DeclareMathOperator{\ext}{\cE \it{xt}} 
\DeclareMathOperator{\diag}{diag} 
\DeclareMathOperator{\Area}{Area}
\DeclareMathOperator{\Span}{span}
\DeclareMathOperator{\id}{id} 
\DeclareMathOperator{\val}{val}
\DeclareMathOperator{\ev}{ev}
\DeclareMathOperator{\floor}{floor}
\DeclareMathOperator{\sign}{sign} 
\DeclareMathOperator{\res}{res}
\DeclareMathOperator{\Br}{Br}
\DeclareMathOperator{\Stab}{Stab}
\DeclareMathOperator{\rank}{rank}
\DeclareMathOperator{\spec}{Spec}
\DeclareMathOperator{\Symm}{Symm}
\DeclareMathOperator{\inv}{{inv}}

\newcommand{\M}{\overline{{M}}}
\newcommand{\smargin}[1]{\marginpar{\tiny{#1}}}

\def\VZ{\mathcal{V\!Z}}
\def\VQ{\mathcal{V\!Q}}

\newcommand{\ba}{{\mathbf{a}}}
\newcommand{\bb}{{\mathbf{b}}}
\newcommand{\bc}{{\mathbf{c}}}
\newcommand{\bi}{{\mathbf{i}}}
\newcommand{\bj}{{\mathbf{j}}}
\newcommand{\kk}{\mathbf{k}}
\newcommand{\bL}{\mathbf{L}}
\newcommand{\bT}{\mathbf{T}}
\newcommand{\bk}{\mathbf{k}}
\newcommand{\bn}{\mathbf{n}}
\newcommand{\bp}{\mathbf{p}}
\newcommand{\bq}{\mathbf{q}}
\newcommand{\bs}{\mathbf{s}}
\newcommand{\bt}{\mathbf{t}}
\newcommand{\bw}{\mathbf{w}}
\newcommand{\bx}{\mathbf{x}}

\newcommand{\cal}{\mathcal}

\def\cA{{\cal A}}
\def\cB{{\cal B}}
\def\cC{{\cal C}}
\def\cD{{\cal D}}
\def\cE{{\cal E}}
\def\cF{{\cal F}}
\def\cH{{\cal H}}
\def\cK{{\cal K}}
\def\cL{{\cal L}}
\def\cM{{\cal M}}
\def\cN{{\cal N}}
\def\cO{{\cal O}}
\def\cP{{\cal P}}
\def\cQ{{\cal Q}}
\def\cR{{\cal R}}
\def\cT{{\cal T}}
\def\cU{{\cal U}}
\def\cV{{\cal V}}
\def\cW{{\cal W}}
\def\cX{{\cal X}}
\def\cY{{\cal Y}}
\def\cZ{{\cal Z}}

\def\fB{\mathfrak{B}}
\def\fC{\mathfrak{C}}
\def\fD{\mathfrak{D}}
\def\fE{\mathfrak{E}}
\def\fF{\mathfrak{F}}
\def\fM{\mathfrak{M}}
\def\fS{\mathfrak{S}}
\def\fT{\mathfrak{T}}
\def\fV{\mathfrak{V}}
\def\fX{\mathfrak{X}}
\def\fY{\mathfrak{Y}}

\def\ff{\mathfrak{f}}
\def\fm{\mathfrak{m}}
\def\fp{\mathfrak{p}}
\def\ft{\mathfrak{t}}
\def\fu{\mathfrac{u}}
\def\fv{\mathfrak{v}}
\def\frev{\mathfrak{rev}}

\newcommand{\tPhi }{\tilde{\Phi} }
\newcommand{\tGa}{\tilde{\Gamma}}
\newcommand{\tbeta}{\tilde{\beta}}
\newcommand{\trho }{\tilde{\rho} }
\newcommand{\tpi  }{\tilde{\pi}  }

\newcommand{\tC}{\tilde{C}}
\newcommand{\tD}{\tilde{D}}
\newcommand{\tE}{\tilde{E}}
\newcommand{\tF}{\tilde{F}}
\newcommand{\tK}{\tilde{K}}
\newcommand{\tL}{\tilde{L}}
\newcommand{\tT}{\tilde{T}}
\newcommand{\tY}{\tilde{Y}}

\newcommand{\plC}{\scalebox{0.8}[1.3]{$\sqsubset$}}

\newcommand{\tf}{\tilde{f}}
\newcommand{\tih}{\tilde{h}}
\newcommand{\tz}{\tilde{z}}
\newcommand{\tu}{\tilde{u}}

\def\tilcW{{\tilde\cW}}
\def\tilcM{\tilde\cM}
\def\tilcC{\tilde\cC}
\def\tilcZ{\tilde\cZ}

\newcommand{\Mbar}{\overline{\cM}\vphantom{\cM}}
\newcommand{\Hbar}{\overline{\cH}}
\newcommand{\MX}{\Mbar^\bu_\chi(\Gamma,\vec{d},\vmu)}
\newcommand{\GX}{G^\bu_\chi(\Gamma,\vec{d},\vmu)}
\newcommand{\Mi}{\Mbar^\bu_{\chi^i}(\Po,\nu^i,\mu^i)}
\newcommand{\Mv}{\Mbar^\bu_{\chi^v}(\Po,\nu^v,\mu^v)}
\newcommand{\GYfm}{G^\bu_{\chi,\vmu}(\Gamma)}
\newcommand{\Gdmu}{G^\bu_\chi(\Gamma,\vd,\vmu)}
\newcommand{\Mdmu}{\cM^\bu_\chi(\hatYrel,\vd,\vmu)}
\newcommand{\tMd}{\tilde{\cM}^\bu_\chi(\hatYrel,\vd,\vmu)}
\newcommand{\Lsm}{\mathcal{LSM}}
\newcommand{\Tsm}{{TSM}}

\def\trop{\mathrm{tr}}
\def\mult{\mathrm{mult}}
\def\an{\mathrm{an}}
\def\di{\mathrm{div}}
\def\sigbar{\overline \Delta}
\def\Sbar{\overline{\mathfrak{S}}}
\newcommand{\Spec}{\operatorname{Spec}}
\newcommand{\tr}{\operatorname{tr}}

\def\vertexsize {1.2pt}   
\newcommand{\vertex}[2][1]{\fill (#2) circle [radius = #1 * \vertexsize];}
\newcommand{\vertexblue}[2][1]{\fill [blue] (#2) circle [radius = 1* \vertexsize];}

\title{Trigonal and embedded tropical curves of low genus}
\author {Hannah Markwig}
\address {Universit\"at T\"ubingen, Fachbereich Mathematik, Auf der Morgenstelle 10, 72076 T\"ubingen, Germany }
\email {hannah@math.uni-tuebingen.de}
\author {Angelina Zheng}
\address {Universit\"at T\"ubingen, Fachbereich Mathematik, Auf der Morgenstelle 10, 72076 T\"ubingen, Germany }
\email {zheng@math.uni-tuebingen.de}

\subjclass[2020]{14T15, 14T20}
\keywords{Tropical curves, harmonic morphisms of graphs, trigonality, plane tropical curves and dual Newton subdivisions}

\begin{document}

\begin{abstract}
   In algebraic geometry, trigonal curves can always be embedded into Hirzebruch surfaces. In tropical geometry, the notion of trigonality does not have a unique translation. We focus on the characterization in terms of the existence of a degree $3$ morphism to a line, and discuss relations to possible embeddings into $\mathbb{R}^2$ reflecting an embedding into a Hirzebruch surface. Our results can be divided into three parts: for tropical curves of low genus $3$ and $4$, we discuss the relation between a trigonal morphism and an embedding dual to the polygon of a Hirzebruch surface, building on works on embeddings of hyperelliptic tropical curves and curves of low genus \cite{Mor21, BJMS15}. We compare obstructions for embeddings with obstructions for the existence of a degree $3$ morphism to a line. Finally, we showcase examples where a non-smooth embedding can be unfolded to reflect certain features of a degree $3$ morphism to a line.
\end{abstract}

\maketitle

\section{Introduction}
In algebraic geometry, \emph{trigonal curves} form a well-understood class.
They are defined as smooth projective non-hyperelliptic curves admitting a $g^1_3$, or equivalently, a degree 3 morphism to $\mathbb{P}^1$. 
Classical results due to Noether, Enriques, Babbage, and Petri \cite{ACGH}, as well as subsequent work by Miranda \cite{Mir} and Casnati--Ekedahl \cite{CE}, provide a detailed description of the canonical model of a trigonal curve. In particular, the canonical image of such a curve lies on a \emph{rational normal scroll}, that is, the image of a rational ruled surface over $\mathbb{P}^1$ embedded in projective space. This surface is called a \emph{Hirzebruch surface} $\mathbb{F}_n := \mathbb{P}(\mathcal{O}_{\mathbb{P}^1} \oplus \mathcal{O}_{\mathbb{P}^1}(n))$, where the integer $n$, known as the \emph{Maroni invariant}, satisfies
\begin{equation}\label{eq:Maroni}
0 \leq n \leq \left\lfloor \frac{g+2}{3} \right\rfloor \quad \text{and} \quad g \equiv n \mod 2.
\end{equation}

More precisely, if $C \subset \mathbb{F}_n$ is a trigonal curve of genus $g$ and Maroni invariant $n$, then its divisor class is
\[
[C] \sim 3E_n + \frac{g + 3n + 2}{2} F_n,
\]
where $E_n$ denotes the class of the unique curve with self-intersection $-n$ ( when $n > 0$), and $F_n$ is the class of a line of the ruling. The projection $\mathbb{F}_n\rightarrow \mathbb{P}^1$ restricted to $C$ is a degree $3$ morphism.
When viewing $\mathbb{F}_n$ as a toric surface defined by a polygon, and the class of $C$ as the class of a hyperplane section, then the polygon to consider is as depicted in Figure \ref{fig-polygon}.

\begin{figure}[h]
    \centering
\tikzset{every picture/.style={line width=0.75pt}} 
\tikzset{every picture/.style={line width=0.75pt}} 

\begin{tikzpicture}[x=0.75pt,y=0.75pt,yscale=-1,xscale=1]

\draw    (200,20) -- (200,130) ;
\draw    (230,130) -- (200,130) ;
\draw    (200,20) -- (230,80) ;
\draw    (230,80) -- (230,130) ;

\draw (103,8) node [anchor=north west][inner sep=0.75pt]  [font=\scriptsize] [align=left] {$\displaystyle \left( 0,\frac{g+3n+2}{2}\right)$};
\draw (151,122) node [anchor=north west][inner sep=0.75pt]  [font=\scriptsize] [align=left] {$\displaystyle ( 0,0)$};
\draw (243,122) node [anchor=north west][inner sep=0.75pt]  [font=\scriptsize] [align=left] {$\displaystyle ( 3,0)$};

\draw (243,70) node [anchor=north west][inner sep=0.75pt]  [font=\scriptsize] [align=left] {$\displaystyle \left( 3,\frac{g-3n+2}{2}\right)$};

\end{tikzpicture}
    \caption{The polygon of a curve of class $3E_n + \frac{g + 3n + 2}{2} F_n$ in $\mathbb{F}_n$.}
    \label{fig-polygon}
\end{figure}
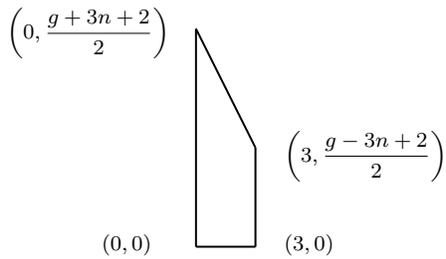

In particular, from \eqref{eq:Maroni}, a trigonal curve of genus $3$ has always Maroni invariant $1.$ Indeed, a genus $3$ trigonal curves $C$ is a plane quartic and each $g_3^1$ is obtained by fixing a point $p\in C$ and considering all lines passing through $p$. The blow-up of the plane at a point $p$ is isomorphic to the first Hirzebruch surface: $\mathbb F_1\cong Bl_{\left[0,0,1\right]}\mathbb P(1,1,1)$; $\mathbb P^2=\mathbb P(1,1,1)$.

For genus $4$ curves, again from \eqref{eq:Maroni}, the possible Maroni invariants are $0,2$. Indeed, a trigonal curve of genus $4$ is canonically embedded in $\mathbb P^3$ as the complete intersection of a non-singular cubic and a quadric, which may or may not be singular.
If the quadric is non-singular, one obtains naturally the embedding in the degree $0$ Hirzebruch surface since $\mathbb F_0\cong \mathbb P^1\times \mathbb P^1.$ If the quadric is singular, instead, the blow up of the vertex of the cone, which can be identified with the weighted projective plane $\mathbb P(1,1,2)$, is isomorphic to the second Hirzebruch surface via isomorphism $\mathbb F_n\cong Bl_{\left[0,0,1\right]}\mathbb P(1,1,n)$; $n\geq 1$.

Vice versa, in general a curve of class $3E_n + k F_n$ in $\mathbb{F}_n$ is trigonal, because the projection morphism induces a degree $3$ cover of $\mathbb{P}^1$. 

In contrast to the algebraic setting, the notion of gonality for abstract tropical curves is not uniquely defined, and several inequivalent definitions have been introduced. Most notably, there are (at least) two distinct notions of gonality for graphs: the so-called \emph{gonality}, defined via harmonic morphisms, and the \emph{divisorial gonality}, defined via linear systems on graphs.

A metric graph is said to be \emph{$d$-gonal} if it admits a non-degenerate harmonic morphism of degree $d$ to a tree (possibly after a tropical modification). On the other hand, it is \emph{divisorially $d$-gonal} if it carries a divisor of degree $d$ and rank at least one. These two definitions are known to coincide when 
$d=2$ \cite{Chan}, corresponding to the tropical analogue of hyperelliptic curves. Their relationship for 
$d=3$, i.e. in the trigonal case, has been investigated in detail in \cite{MZ1, MZ2}.

The existence of a non-degenerate harmonic morphism of degree $d$ from a tropical curve (or from a tropical modification of it) to a tree always implies the existence of a divisor of degree $d$ and rank at least one. In particular, every $d$-gonal tropical curve is necessarily divisorially $d$-gonal. However, the converse does not hold in general.

For the case $d=3$, the two notions coincide under certain additional assumptions, specifically when the underlying graph is 3-edge-connected. More generally, if one drops the edge-connectivity assumption the two notions are again equivalent if the graph has no separating vertices.

Furthermore, in the 3-edge-connected case, if a tropical curve is trigonal and equipped with a non-degenerate harmonic morphism of degree $3$ to a tree, then this morphism defines a \emph{tropical admissible cover} of degree $3$ in the sense of \cite{CMR}. The same relation continues to hold for graphs with no assumption on the edge-connectivity, if one restricts to graphs with no separating vertices or multiple edges.
Naturally, tropical admissible covers are harmonic morphisms of the same degree, while in general the converse is not true.

Of course, we can use \emph{tropicalization} to transport concepts from algebraic geometry to tropical geometry, but this is intricate and can lead to different results in tropical geometry when considering notions that are equivalent in the world of algebraic geometry. This is, roughly, why we observe different notions of gonality for abstract tropical curves.

Given a curve defined over a field with a non-Archimedean valuation, the theory of semistable reduction produces a tropicalization which can be viewed as a skeleton of the Berkovich analytification of the curve, or, equivalently, as a metrization of the dual graph of the special fiber of the semistable reduction, where the length of an edge reflects ``how fast'' the dual node forms in the family, i.e.\ what the valuation of $h$ in a local equation $xy=h$ of the forming node is. Given a cover of curves, one can tropicalize both source and target and obtain a harmonic morphism of tropical curves, which in addition satisfies the Riemann-Hurwitz condition at every vertex (i.e.\ a tropical cover), and such that the local Hurwitz numbers which can be associated to each vertex are nonzero. In fact, this is also the realizability condition for a harmonic morphism of tropical curves: if it satisfies the Riemann-Hurwitz condition (i.e.\ if it is a tropical cover) and the local Hurwitz numbers are all nonzero, then it is the tropicalization of a cover of curves \cite{ABBR15, ABBR15b}. We therefore call such a tropical cover a \emph{realizable cover}.
As observed in \cite[Section 2.2]{Cap13} degree $3$ tropical covers are all realizable. Then, all tropical covers that we will consider are also realizable.

Assume we have a curve $C$ which is embedded in a toric surface, e.g.\ $\mathbb{F}_n$. Restricting to the torus points of the curve and applying (minus) the valuation map coordinatewise, we obtain an \emph{embedded tropicalization}. There is always a map from the abstract tropicalization of $C$ to the tropicalization of the torus, which equals $\mathbb{R}^2$. But in general this map can be bad in the sense that it may contract important features of the abstract tropical curve such as cycles. Only if the embedded tropicalization is \emph{smooth}, we know that its underlying metric graph structure actually reflects the abstract tropicalization \cite{BPR11a}.

\begin{figure}[h]
    \centering
\tikzset{every picture/.style={line width=0.75pt}} 

\begin{tikzpicture}[x=0.75pt,y=0.75pt,yscale=-1,xscale=1]

\draw    (184.67,80) -- (311.67,68.81) ;
\draw [shift={(313.67,68.63)}, rotate = 174.96] [color={rgb, 255:red, 0; green, 0; blue, 0 }  ][line width=0.75]    (10.93,-3.29) .. controls (6.95,-1.4) and (3.31,-0.3) .. (0,0) .. controls (3.31,0.3) and (6.95,1.4) .. (10.93,3.29)   ;
\draw    (170,100) -- (170,168) ;
\draw [shift={(170,170)}, rotate = 270] [color={rgb, 255:red, 0; green, 0; blue, 0 }  ][line width=0.75]    (10.93,-3.29) .. controls (6.95,-1.4) and (3.31,-0.3) .. (0,0) .. controls (3.31,0.3) and (6.95,1.4) .. (10.93,3.29)   ;
\draw    (322.13,116.73) -- (321.36,172.67) ;
\draw [shift={(321.33,174.67)}, rotate = 270.79] [color={rgb, 255:red, 0; green, 0; blue, 0 }  ][line width=0.75]    (10.93,-3.29) .. controls (6.95,-1.4) and (3.31,-0.3) .. (0,0) .. controls (3.31,0.3) and (6.95,1.4) .. (10.93,3.29)   ;
\draw    (177.6,190) -- (270.83,205.73) ;
\draw [shift={(272.8,206.07)}, rotate = 189.58] [color={rgb, 255:red, 0; green, 0; blue, 0 }  ][line width=0.75]    (10.93,-3.29) .. controls (6.95,-1.4) and (3.31,-0.3) .. (0,0) .. controls (3.31,0.3) and (6.95,1.4) .. (10.93,3.29)   ;
\draw    (180,70) -- (241.66,18.55) ;
\draw [shift={(243.2,17.27)}, rotate = 140.16] [color={rgb, 255:red, 0; green, 0; blue, 0 }  ][line width=0.75]    (10.93,-3.29) .. controls (6.95,-1.4) and (3.31,-0.3) .. (0,0) .. controls (3.31,0.3) and (6.95,1.4) .. (10.93,3.29)   ;
\draw    (273.2,26.87) -- (314.86,61.19) ;
\draw [shift={(316.4,62.47)}, rotate = 219.49] [color={rgb, 255:red, 0; green, 0; blue, 0 }  ][line width=0.75]    (10.93,-3.29) .. controls (6.95,-1.4) and (3.31,-0.3) .. (0,0) .. controls (3.31,0.3) and (6.95,1.4) .. (10.93,3.29)   ;
\draw    (256.4,69.2) -- (256.4,95.67) -- (256.4,114.87) ;
\draw [shift={(256.4,116.87)}, rotate = 270] [color={rgb, 255:red, 0; green, 0; blue, 0 }  ][line width=0.75]    (10.93,-3.29) .. controls (6.95,-1.4) and (3.31,-0.3) .. (0,0) .. controls (3.31,0.3) and (6.95,1.4) .. (10.93,3.29)   ;
\draw    (180,180) -- (242.8,132.87) ;
\draw [shift={(244.4,131.67)}, rotate = 143.11] [color={rgb, 255:red, 0; green, 0; blue, 0 }  ][line width=0.75]    (10.93,-3.29) .. controls (6.95,-1.4) and (3.31,-0.3) .. (0,0) .. controls (3.31,0.3) and (6.95,1.4) .. (10.93,3.29)   ;
\draw    (272,137.67) -- (308.66,178.51) ;
\draw [shift={(310,180)}, rotate = 228.09] [color={rgb, 255:red, 0; green, 0; blue, 0 }  ][line width=0.75]    (10.93,-3.29) .. controls (6.95,-1.4) and (3.31,-0.3) .. (0,0) .. controls (3.31,0.3) and (6.95,1.4) .. (10.93,3.29)   ;
\draw    (296,207.6) -- (307.6,198.87) ;
\draw [shift={(309.2,197.67)}, rotate = 143.04] [color={rgb, 255:red, 0; green, 0; blue, 0 }  ][line width=0.75]    (10.93,-3.29) .. controls (6.95,-1.4) and (3.31,-0.3) .. (0,0) .. controls (3.31,0.3) and (6.95,1.4) .. (10.93,3.29)   ;
\draw    (268,58.07) -- (309.66,92.39) ;
\draw [shift={(311.2,93.67)}, rotate = 219.49] [color={rgb, 255:red, 0; green, 0; blue, 0 }  ][line width=0.75]    (10.93,-3.29) .. controls (6.95,-1.4) and (3.31,-0.3) .. (0,0) .. controls (3.31,0.3) and (6.95,1.4) .. (10.93,3.29)   ;
\draw    (340,68.63) .. controls (369.52,70.13) and (375.61,214.18) .. (303.76,213) ;
\draw [shift={(302.67,212.97)}, rotate = 2.09] [color={rgb, 255:red, 0; green, 0; blue, 0 }  ][line width=0.75]    (10.93,-3.29) .. controls (6.95,-1.4) and (3.31,-0.3) .. (0,0) .. controls (3.31,0.3) and (6.95,1.4) .. (10.93,3.29)   ;

\draw (161,72) node [anchor=north west][inner sep=0.75pt]   [align=left] {$\displaystyle C$};
\draw (317.87,56.53) node [anchor=north west][inner sep=0.75pt]   [align=left] {$\displaystyle \mathbb{P}^{1}$};
\draw (172,112) node [anchor=north west][inner sep=0.75pt]   [align=left] {trop};
\draw (329.6,129) node [anchor=north west][inner sep=0.75pt]   [align=left] {trop};
\draw (161,181) node [anchor=north west][inner sep=0.75pt]   [align=left] {$\displaystyle \Gamma $};
\draw (311,182) node [anchor=north west][inner sep=0.75pt]   [align=left] {$\displaystyle \mathbb{R}$};
\draw (248.4,5) node [anchor=north west][inner sep=0.75pt]   [align=left] {$\displaystyle \mathbb{F}_{n}$};
\draw (260.8,83.6) node [anchor=north west][inner sep=0.75pt]   [align=left] {trop};
\draw (248.4,121) node [anchor=north west][inner sep=0.75pt]   [align=left] {$\displaystyle \mathbb{R}^{2}$};
\draw (251.2,45.2) node [anchor=north west][inner sep=0.75pt]   [align=left] {$\displaystyle T^{2}$};
\draw (233.8,132.8) node [anchor=north west][inner sep=0.75pt]   [align=left] {\textsuperscript{}};
\draw (246.25,28.77) node [anchor=north west][inner sep=0.75pt]  [rotate=-358.96] [align=left] {$\displaystyle \cup $};
\draw (288.6,22.47) node [anchor=north west][inner sep=0.75pt]   [align=left] {pr};
\draw (287,137.07) node [anchor=north west][inner sep=0.75pt]   [align=left] {pr};
\draw (278.2,197.8) node [anchor=north west][inner sep=0.75pt]   [align=left] {$\displaystyle \Gamma '$};
\draw (221.93,57) node [anchor=north west][inner sep=0.75pt]   [align=left] {$\displaystyle f$};
\draw (212.2,175.8) node [anchor=north west][inner sep=0.75pt]   [align=left] {$\displaystyle f^{\rm trop}$};
\draw (318.2,90.87) node [anchor=north west][inner sep=0.75pt]   [align=left] {$\displaystyle T$};
\draw (317.51,74.71) node [anchor=north west][inner sep=0.75pt]  [rotate=-358.96] [align=left] {$\displaystyle \cup $};
\draw (281,49.4) node [anchor=north west][inner sep=0.75pt]   [align=left] {pr};
\draw (372.6,103.33) node [anchor=north west][inner sep=0.75pt]   [align=left] {trop};

\end{tikzpicture}

    \caption{Tropicalization of a curve with a morphism to $\mathbb{P}^1$ which factors through the projection of a Hirzebruch surface to $\mathbb{P}^1$.}
    \label{fig:sketchtropicalization}
\end{figure}
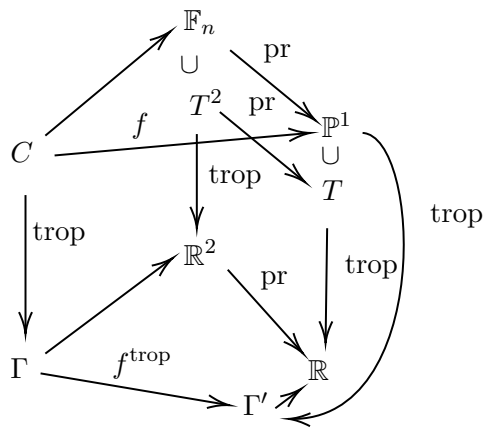

Figure \ref{fig:sketchtropicalization} shows the relation of tropicalization for a curve with a morphism to $\mathbb{P}^1$ which factors through the projection of the Hirzebruch surface to $\mathbb{P}^1$. Since we restrict our tropicalization for $\mathbb{F}_n$ to the torus points, the projection yields a tropical cover from the tropicalization of $C$ to $\mathbb{R}$. If we directly tropicalize the cover $C\rightarrow \mathbb{P}^1$ abstractly, we would not necessarily expect $\mathbb{R}$ for the target but rather any metric tree $\Gamma'$, which can be contracted to $\mathbb{R}$.
Since in this paper, we focus in situations in which the tropical version of this diagram is most meaningful, we accordingly focus on realizable covers to $\mathbb{R}$. 

As mentioned above, we expect the nicest relation in the tropical version of the diagram when the embedded tropicalization of $C$ is smooth, because then the map sending $\Gamma$ to this plane tropical curve does not contract essential features. We build on \cite{BJMS15}, which studied which abstract tropical curves of low genus can be smoothly embedded as plane tropical curves.

Our study can be viewed as an extension of \cite{Mor21}, which studies embeddings of tropical hyperelliptic curves into the plane.

Our study starts with the following observation, which we prove in Section \ref{sec-proj}:

\begin{proposition}[See \ref{lm:realizability_cover} ]\label{prop-proj}
Assume $\Gamma$ can be embedded smoothly into $\mathbb{R}^2$, i.e.\ such that (a modification of) $\Gamma$ is dual to a unimodular triangulation of the polygon in Figure \ref{fig-polygon}. Then the projection restricted to $\Gamma$ is a realizable
well-contracted tropical cover of degree $3$.
    \end{proposition}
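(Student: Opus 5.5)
The plan is to work locally at each vertex of the smooth plane tropical curve $\Gamma\subset\mathbb{R}^2$, using the duality with the unimodular triangulation of the polygon $P$ of Figure \ref{fig-polygon}. Let $\pi:\mathbb{R}^2\to\mathbb{R}$ be the projection $(x,y)\mapsto x$.

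\textbf{Local degrees.} Give each edge $e$ of $\Gamma$ primitive direction $u_e$. Since the triangulation is unimodular, every edge has weight $1$. I define the local degree of $\pi|_\Gamma$ on $e$ as $|\langle (1,0),u_e\rangle|$. Edges parallel to the $y$-axis (dual to horizontal segments of the triangulation) get degree $0$ and are contracted. The target tropical curve is $\mathbb{R}$, with a vertex structure obtained by subdividing at the images of the vertices of $\Gamma$.

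\textbf{Harmonicity.} At a trivalent vertex $v$, dual to a triangle $T$, the balancing condition $\sum_{e\ni v} u_e=0$ projects to the statement that the local degrees of edges leaving $v$ to the right equal those leaving to the left. So $\pi|_\Gamma$ is harmonic at $v$, with a well-defined local degree $d_v$.

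\textbf{Degree $3$.} For the global degree, look at a generic fibre $\pi^{-1}(x)\cap\Gamma$. Counted with multiplicity, it is the intersection number of $\Gamma$ with a vertical line. By tropical B\'ezout (or a direct count), this equals the width of $P$ in the $x$-direction, namely $3$. Equivalently, the vertical ends of $\Gamma$ carry degree $0$, while the horizontal ends are dual to the vertical edges of $P$, which have lattice length $3$ at $x=0$ and $x=3$ after accounting for the slanted edge.

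\textbf{Riemann--Hurwitz and realizability.} Next I check the Riemann--Hurwitz condition at each vertex, which makes the map a tropical cover. A vertex of $\Gamma$ has genus $0$. The condition reads
\[
2-2\cdot 0 = d_v(2-2\cdot 0)-\sum_{e\ni v}(d_e-1),
\]
after completing fibres over the target vertex by adding contracted or degree-one legs where needed. Since each triangle in $P$ spans width at most $3$ horizontally and is unimodular, $d_v\le 2$. One then checks the finitely many local configurations of unimodular triangles case by case. Realizability then follows from the remark above that degree $3$ tropical covers are all realizable \cite[Section 2.2]{Cap13}.

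\textbf{Well-contractedness.} The remaining point is that the map is well-contracted. Contracted edges are the vertical edges of $\Gamma$, dual to horizontal unimodular segments. I expect this to be the main obstacle, since it depends on the precise definition given in Section \ref{sec-proj}. I would argue that each contracted edge, or maximal chain of them, can be collapsed, or absorbed after a tropical modification, without creating a degenerate fibre. The reason is that the triangles on either side of a horizontal segment project with positive horizontal width, so the contracted segment connects two vertices of positive local degree. This should give exactly the local-degree-preserving contraction required, and the passage to a modification is handled by attaching degree-$1$ ends.
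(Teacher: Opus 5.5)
There is a genuine error in your choice of projection, and it breaks the degree count. You take $\pi(x,y)=x$ and claim that the vertical edges of the polygon have lattice length $3$. In fact they have lattice lengths $\frac{g+3n+2}{2}$ (at $x=0$) and $\frac{g-3n+2}{2}$ (at $x=3$), and they are dual to the two sections of $\mathbb{F}_n$. The two edges of lattice length $3$ are the bottom edge from $(0,0)$ to $(3,0)$ and the slanted top edge, which are dual to the two torus-invariant fibres.

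Your B\'ezout step also swaps directions. A vertical line meets $\Gamma$ in as many points as the width of the polygon in the $y$-direction: a vertical line far to the left meets exactly the $\frac{g+3n+2}{2}$ leftward horizontal ends. So $(x,y)\mapsto x$ is harmonic of degree $\frac{g+3n+2}{2}$. On the right you get $\frac{g-3n+2}{2}$ horizontal ends plus three ends of direction $(n,1)$, each of local degree $n$. This degree is $4$ for genus $3$ in $\mathbb{F}_1$ and $6$ for genus $4$ in $\mathbb{F}_2$; it equals $3$ only for the square $g=4$, $n=0$.

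The projection realizing the ruling, and the one the paper uses, is $(x,y)\mapsto y$. It contracts the horizontal edges and horizontal ends. A sufficiently negative point has exactly three preimages, on the three downward ends dual to the bottom edge. A sufficiently positive point has its three preimages on the ends of direction $(n,1)$ dual to the slanted edge.

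After this switch your outline matches the paper's, but two steps need correcting.

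\emph{The bound $d_v\le 2$ is false.} The unimodular triangle with vertices $(0,0),(1,0),(3,1)$ lies in the polygon for $g=3$, $n=1$, and its dual vertex has local degrees $1,2,3$. No case-by-case check is needed anyway. By balancing, either no edge at $v$ is contracted and the local degrees are $a,b$ on one side and $a+b$ on the other, or exactly one edge is contracted and the other two have the same degree $a$ in opposite directions. In both cases $2d_v-2-\sum_{e\ni v}(\mu_e-1)=1$, while $2$-valent subdivision points give $0$.

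\emph{Well-contractedness is not an obstacle.} By Definition \ref{def-wellcontraced} it only asks for non-degeneracy with a path as target. Two edges of a lattice triangle are never parallel, so every vertex is incident to at most one contracted edge. Hence no vertex has all its edges contracted and no cycle is contracted. Your remark that the triangles adjacent to a horizontal segment have positive horizontal width mixes the two projections: under your $\pi$, the local degree at a vertex is the vertical extent of the dual triangle.
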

Here, the term well-contracted (see Definition \ref{def-wellcontraced}) roughly means that no important features, in particular no cycles, can be contracted by the cover.

Vice versa, given a realizable tropical cover $\Gamma\rightarrow \mathbb{R}$ of degree $3$, then $\Gamma$ can be mapped to $\mathbb{R}^2$ such that its image is dual to the polygon in Figure \ref{fig-polygon}. This holds true by relying on the algebraic geometry side: $\Gamma\rightarrow \mathbb{R}$ can be lifted to a trigonal morphism $C\rightarrow \mathbb{P}^1$. Since $C$ is trigonal, it can be embedded into $\mathbb{F}_n$. Then we can use embedded tropicalization.
However, the map from $\Gamma\rightarrow \mathbb{R}^2$ which we obtain like this can be very bad in the sense that it can contract important features, since the embedded tropicalization cannot be expected to be smooth.

Our results are concerned with the converse of Proposition \ref{prop-proj} and are centered around the question to what extent contractions as mentioned above can be controlled.
Our results can be divided into three parts.

\begin{enumerate}
    \item We focus on a study of tropical curves of low genus $3,4$ and show that for tropical curves of maximal combinatorial type, the possible contractions are not bad, in the sense that we remain within the same combinatorial type. In this study of embeddings of curves of genus $3$ and $4$ into $\mathbb{R}^2$, we build on \cite{BJMS15}, which initiated the study of planar embeddings of abstract tropical curves. We also study a tropical version of the Maroni invariant for genus $3$ and $4$. The results in this context are presented in Section \ref{sec-maroni}.
\item We study obstructions for smooth embeddings of $\Gamma$ into $\mathbb{R}^2$ dual to the polygon in Figure \ref{fig-polygon} and find that such obstructions arise for combinatorial types which do not allow a realizable harmonic degree $3$ morphism to $\mathbb{R}$, see Section \ref{sec-obstructions}.
\item Let $\Gamma\rightarrow\mathbb{R}$ be a realizable harmonic degree $3$ morphism.
If $\Gamma$ is not smoothly embedded into $\mathbb{R}^2$, one can usually use tropical modification of the surrounding plane to unfold contracted features, also in a way that respects the morphism. It is hard to describe this process in general, but we present two case studies exemplarily in Section \ref{sec-unfolding}.
    
\end{enumerate}

\subsection{Acknowledgements}
We would like to thank the Humboldt foundation for support.
We thank Erwan Brugall\'e, Andr\'es Jaramillo Puentes, Michael Joswig, Johannes Rau and Ilya Tyomkin for useful discussions.
The first author acknowledges support by the Deutsche Forschungsgemeinschaft (DFG, German Research Foundation), Project-ID 286237555, TRR 195. 
The second author is a member of the INDAM group GNSAGA.

\section{Preliminaries}
\subsection{Plane tropical curves and dual Newton polygons}\label{ssc:DualNewtonPolygons}

We give a short overview about plane tropical curves and their dual Newton subdivisions, for more details, see e.g.\ \cite{BS14, BIMS14, MS15, RST03, CMR23}.

A plane algebraic curve (or, more generally, a curve embedded in a toric surface) over a field $K$ is given as the zero-set of a polynomial in $K[x, y]$. In tropical geometry we can use the analogous definition, by just adding the word tropical everywhere:
tropical plane curves are tropical vanishing loci of tropical polynomials over the tropical semifield.
Alternatively, one can also start with an (algebraically closed) field $K$ which is equipped with a non-Archimedean valuation $\val : K^\times \to \RR$ and an algebraic curve $C$ defined over $K$. 
A natural example is the field of Puiseux series over $\mathbb{C}$.

The \emph{tropicalization} of $C$ is the closure of the image of $C \cap (K^\times)^2$ under the \emph{tropicalization map}
\[ \trop : (K^\times)^2 \longrightarrow \RR^2, \qquad (x,y) \longmapsto \big( -\val(x), -\val(y) \big) \]
and this is a tropical plane curve. It is a piecewise linear graph with edges of rational slope in $\mathbb{R}^2$. In fact, the two definitions mentioned here are equivalent by Kapranov's theorem.

Every tropical plane curve is dual to a Newton subdivision of the Newton polygon of its defining equation. The subdivision is given by projecting upper faces of the extended Newton polygon, which takes the valuations of the coefficients into account. Every vertex of the tropical curve is dual to a polygon of the subdivision, every edge dual to an orthogonal edge and every part of $\mathbb{R}^2$ minus the tropical curve is dual to a vertex of the subdivision. An example is shown in Figure \ref{fig:ex-planecurve}.

\begin{figure}[h]
    \centering

\tikzset{every picture/.style={line width=0.75pt}} 

\begin{tikzpicture}[x=0.75pt,y=0.75pt,yscale=-1,xscale=1]

\draw    (200,20) -- (200,140) ;
\draw    (260,140) -- (200,140) ;
\draw    (200,20) -- (260,140) ;
\draw    (220,60) -- (220,140) ;
\draw    (240,100) -- (240,140) ;
\draw    (240,120) -- (260,140) ;
\draw    (220,100) -- (240,120) ;
\draw    (200,120) -- (220,140) ;
\draw    (200,120) -- (220,120) ;
\draw    (200,120) -- (220,100) ;
\draw    (220,120) -- (240,120) ;
\draw    (220,140) -- (240,120) ;
\draw    (220,100) -- (240,100) ;
\draw    (220,80) -- (240,100) ;
\draw    (200,100) -- (220,100) ;
\draw    (200,100) -- (220,80) ;
\draw    (200,100) -- (220,60) ;
\draw    (200,80) -- (220,60) ;
\draw    (200,60) -- (220,60) ;
\draw    (200,40) -- (220,60) ;
\draw    (310,140) -- (290,140) ;
\draw    (310,160) -- (310,140) ;
\draw    (310,140) -- (320,130) ;
\draw    (320,130) -- (320,120) ;
\draw    (330,130) -- (320,130) ;
\draw    (320,120) -- (310,110) ;
\draw    (330,120) -- (320,120) ;
\draw    (330,130) -- (330,120) ;
\draw    (310,110) -- (290,110) ;
\draw    (310,110) -- (310,100) ;
\draw    (310,100) -- (300,90) ;
\draw    (340,100) -- (310,100) ;
\draw    (340,110) -- (340,100) ;
\draw    (330,120) -- (340,110) ;
\draw    (340,140) -- (330,130) ;
\draw    (340,160) -- (340,140) ;
\draw    (350,140) -- (340,140) ;
\draw    (350,160) -- (350,140) ;
\draw    (350,140) -- (380,110) ;
\draw    (380,110) -- (340,110) ;
\draw    (400,100) -- (380,110) ;
\draw    (340,100) -- (350,90) ;
\draw    (350,90) -- (300,90) ;
\draw    (370,80) -- (350,90) ;
\draw    (280,80) -- (300,90) ;
\draw    (280,80) -- (260,80) ;
\draw    (280,80) -- (270,70) ;
\draw    (270,70) -- (250,70) ;
\draw    (270,70) -- (270,60) ;
\draw    (270,60) -- (250,60) ;
\draw    (270,60) -- (280,50) ;
\draw    (280,50) -- (260,50) ;
\draw    (300,40) -- (280,50) ;
\draw    (480,140) -- (460,140) ;
\draw    (480,160) -- (480,140) ;
\draw    (480,140) -- (490,130) ;
\draw [color={rgb, 255:red, 139; green, 87; blue, 42 }  ,draw opacity=1 ]   (490,130) -- (490,120) ;
\draw [color={rgb, 255:red, 139; green, 87; blue, 42 }  ,draw opacity=1 ]   (500,130) -- (490,130) ;
\draw [color={rgb, 255:red, 184; green, 233; blue, 134 }  ,draw opacity=1 ]   (490,120) -- (480,110) ;
\draw [color={rgb, 255:red, 139; green, 87; blue, 42 }  ,draw opacity=1 ]   (500,120) -- (490,120) ;
\draw [color={rgb, 255:red, 80; green, 227; blue, 194 }  ,draw opacity=1 ]   (500,130) -- (500,120) ;
\draw    (480,110) -- (460,110) ;
\draw [color={rgb, 255:red, 184; green, 233; blue, 134 }  ,draw opacity=1 ]   (480,110) -- (480,100) ;
\draw [color={rgb, 255:red, 74; green, 144; blue, 226 }  ,draw opacity=1 ]   (480,100) -- (470,90) ;
\draw [color={rgb, 255:red, 208; green, 2; blue, 27 }  ,draw opacity=1 ]   (510,100) -- (480,100) ;
\draw [color={rgb, 255:red, 184; green, 233; blue, 134 }  ,draw opacity=1 ]   (510,110) -- (510,100) ;
\draw [color={rgb, 255:red, 144; green, 19; blue, 254 }  ,draw opacity=1 ]   (500,120) -- (510,110) ;
\draw [color={rgb, 255:red, 144; green, 19; blue, 254 }  ,draw opacity=1 ]   (510,140) -- (500,130) ;
\draw    (510,160) -- (510,140) ;
\draw [color={rgb, 255:red, 144; green, 19; blue, 254 }  ,draw opacity=1 ]   (520,140) -- (510,140) ;
\draw    (520,160) -- (520,140) ;
\draw [color={rgb, 255:red, 144; green, 19; blue, 254 }  ,draw opacity=1 ]   (520,140) -- (550,110) ;
\draw [color={rgb, 255:red, 144; green, 19; blue, 254 }  ,draw opacity=1 ]   (550,110) -- (510,110) ;
\draw    (570,100) -- (550,110) ;
\draw [color={rgb, 255:red, 74; green, 144; blue, 226 }  ,draw opacity=1 ]   (510,100) -- (520,90) ;
\draw [color={rgb, 255:red, 74; green, 144; blue, 226 }  ,draw opacity=1 ]   (520,90) -- (470,90) ;
\draw    (540,80) -- (520,90) ;
\draw    (450,80) -- (470,90) ;
\draw    (450,80) -- (430,80) ;
\draw    (450,80) -- (440,70) ;
\draw    (440,70) -- (420,70) ;
\draw    (440,70) -- (440,60) ;
\draw    (440,60) -- (420,60) ;
\draw    (440,60) -- (450,50) ;
\draw    (450,50) -- (430,50) ;
\draw    (470,40) -- (450,50) ;
\draw [color={rgb, 255:red, 144; green, 19; blue, 254 }  ,draw opacity=1 ]   (630,120) .. controls (670,90) and (697.25,146.72) .. (630,130) ;
\draw [color={rgb, 255:red, 80; green, 227; blue, 194 }  ,draw opacity=1 ]   (630,130) -- (630,120) ;
\draw [color={rgb, 255:red, 139; green, 87; blue, 42 }  ,draw opacity=1 ]   (630,130) .. controls (611.25,131.72) and (614.25,116.22) .. (630,120) ;
\draw [color={rgb, 255:red, 184; green, 233; blue, 134 }  ,draw opacity=1 ]   (610,100) .. controls (610.75,108.72) and (611.75,115.72) .. (620,120) ;
\draw [color={rgb, 255:red, 184; green, 233; blue, 134 }  ,draw opacity=1 ]   (640.5,114) .. controls (642.75,110.09) and (639.25,108.09) .. (636.5,104.59) ;
\draw [color={rgb, 255:red, 208; green, 2; blue, 27 }  ,draw opacity=1 ]   (610,100) .. controls (620,100.09) and (633.5,102.09) .. (636.5,104.59) ;
\draw [color={rgb, 255:red, 74; green, 144; blue, 226 }  ,draw opacity=1 ]   (610,100) .. controls (596.75,74.59) and (658.5,82.84) .. (636.5,104.59) ;

\end{tikzpicture}

     \caption{A Newton subdivision and a dual tropical plane curve. On the right, an abstract tropical curve that can be embedded into the plane, as the skeleton of the plane tropical curve, as depicted in color on the second to right picture.}
    \label{fig:ex-planecurve}
\end{figure}
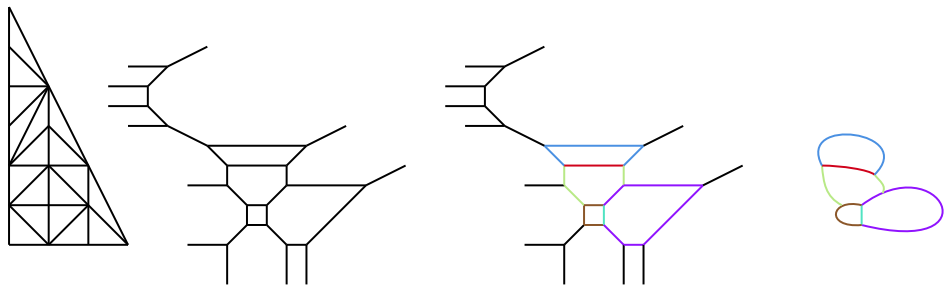

Every edge of a tropical plane curve has a \emph{direction vector}, which is defined to be the primitive direction vector of the line of which it is a segment times a weight which equals the lattice length of the dual edge in the dual Newton subdivision. Using the direction vector, we associate a \emph{length} to an edge of a tropical plane curve, which equals the Euclidean length divided by the norm of its direction vector. 

We say that a tropical plane curve is \emph{smooth} if its dual Newton subdivision contains only triangles of minimal area.

The \emph{degree} of a tropical plane curve is its dual Newton polygon. We say that a tropical plane curve is of degree $d$, if its dual Newton polygon is the triangle with vertices $(0,0)$, $(d,0)$ and $(0,d)$.

We say that a tropical plane curve is in $\mathbb{F}_n$, of degree $(3,\frac{g+3n+2}{2})$, if it is dual to the Newton polygon depicted in Figure \ref{fig-polygon}.

\subsection{Abstract tropical curves}
An \emph{abstract tropical curve} is a connected metric graph $\Gamma$ with unbounded rays or ``ends'' and a genus function $g:\Gamma\rightarrow \NN$ which is nonzero only at finitely many points. Locally around a point $p$, $\Gamma$ is homeomorphic to a star with $r$ half-rays. 
The number $r$ is called the \emph{valence} of the point $p$ and denoted by $\val(p)$.
We require that there are only finitely many points with $\val(p)\neq 2$ and that the set of all points of nonzero genus or valence larger than $2$ is nonempty.
A finite set of points containing  (but not necessarily equal to the set of) all points of nonzero genus or valence larger than $2$ may be chosen; its elements are called  \emph{vertices}. 

Edges which are not ends are called \emph{bounded edges}. The graph is equipped with a length function $l$ that assigns to each bounded edge a positive real number. 
An isomorphism of a tropical curve is a homeomorphism respecting the metric and the genus function. The \emph{genus} of a tropical curve is the first Betti number $b_1(\Gamma)$ plus the genera of all vertices. 

The \emph{combinatorial type} $G$ of a tropical curve $\Gamma$ is obtained by dropping the information on the metric. We will also denote by $V(G)$, $E(G)$ the sets of its vertices and bounded edges, respectively.
We say that $(G,l)$ is a \emph{model} for $\Gamma$ and write $\Gamma=(G,l)$. Notice that the model of a tropical curve (or equivalently of its underlying metric graph) is not unique; one can for instance always refine $G$ by adding an arbitrary number of vertices and define the length function in the new edges accordingly. 
Often, we use the model with the smallest vertex set and denote it by $\Gamma$, so that the graph structure is given with the tropical curve in the following. When another graph structure is needed, we will point to it.

Following this convention, let $\Gamma$ be an abstract tropical curve with the graph structure given by the smallest vertex set. We can shrink all ends, and, iteratively, all edges which become leaf edges. The thus resulting abstract tropical curve $\Gamma'$ is called the \emph{canonical model} of $\Gamma$. 

Two metric graphs are \emph{tropically equivalent} if they have the same canonical model. 
A \emph{tropical modification} of a metric graph $\Gamma$ is a metric graph $\tilde\Gamma$ tropically equivalent to $\Gamma,$ obtained by gluing trees at some points. 

Fix a genus $g$ and a combinatorial type of a canonical model of a tropical curve of genus $g$ without ends. All abstract tropical curves of this combinatorial type are parametrized by an orthant $\mathbb{R}_{>0}^b$, where $b$ is the number of bounded edges, respectively, by the quotient cone of this orthant by the automorphism group of the underlying graph. Moving to the boundary of a cone means shrinking an edge of the tropical curve to $0$, thus obtaining a new combinatorial type with less edges. We can identify the boundary with the cone corresponding to this new combinatorial type. In this way, cones for different combinatorial types can be glued to form an \emph{abstract cone complex} called the \emph{moduli space of abstract tropical curves of genus $g$} which parametrizes all abstract tropical curves of genus $g$ and is denoted by $\mathcal{M}_{g}^{\rm trop}$ \cite{BMV11, CMV12, Cha10, Cap13, Mi07}.

Abstract tropical curves whose vertices are all of genus $0$ and $3$-valent have the maximal number of bounded edges and therefore lead to cones of maximal dimension in the moduli space of abstract tropical curves. 
For that reason, we call them abstract tropical curves of \emph{maximal combinatorial type}. 
Since the subset of abstract tropical curves which are not of maximal combinatorial type in the moduli space is of lower dimension, a restriction to abstract tropical curves of maximal combinatorial types can be viewed as a way to consider general abstract tropical curves.
We will take this point of view in the following.

Consider a smooth plane tropical curve. By the smoothness condition, every vertex is $3$-valent.  Forgetting the embedding in the plane, and taking the canonical model, with the lengths we defined for the edges of a plane tropical curve, is an abstract tropical curve which we call the image under the \emph{forgetful map}. It is of genus $g$, if the dual Newton polygon had $g$ interior points. By fixing a dual Newton polygon $\Delta$ and considering all smooth plane tropical curves dual to a subdivision of $\Delta$, the forgetful map goes from plane tropical curves to the moduli space of abstract tropical curves of genus $g$ \cite{BJMS15}. By smoothness, the image of this map is always of maximal combinatorial type, that is, generic in the moduli space of abstract tropical curves.

\begin{definition}
    We say that an abstract tropical curve $\Gamma$ is \emph{realizable as a plane tropical curve} (or \emph{can be embedded as a plane tropical curve}) if there exists a smooth plane tropical curve whose image under the forgetful map is $\Gamma$.
\end{definition}

The theory of semistable reduction offers a way to tropicalize an abstract curve $C$ defined over a field with non-Archimedean valuation by taking the dual graph of the abstract curve of the special fiber (with a metric taking into account ''how fast'' nodes form, i.e.\ what the valuation of a Puiseux series $p$ in a local equation $xy=p$ of the node is). This produces an abstract tropical curve $\Gamma$ \cite{Tyo09, BPR11a}. This process can also be described by taking the minimal skeleton of the Berkovich analytic space of $C$. 

Assume the abstract curve $C$ is embedded into a toric surface, and assume that the embedded tropicalization is smooth. Then the image under the forgetful map of the thus resulting plane tropical curve is equal to the abstract tropicalization \cite[Section 5]{BPR11a}.

\subsection{Harmonic morphisms/tropical covers}

The analogues of ramified covers of Riemann surfaces in the tropical world are based on harmonic morphisms:
\begin{definition}
    A continuous map $\varphi:\Gamma\to\Gamma'$ is a \emph{morphisms of metric graphs} if there exist models $\Gamma=(G,l)$, $\Gamma'=(G',l')$ such that $\varphi(V(G))\subset V(G')$, $\varphi^{-1}(E(G'))\subset E(G)$ and for any $e\in E(G)$
    \[
    \mu_{\varphi}(e)=\begin{cases}
    \frac{l'(\varphi(e))}{l(e)} \in\mathbb Z_{>0}& \text{if } \varphi(e)\in E(G');\\
    0&\text{if } \varphi(e)\in V(G').\\
    \end{cases}
    \]
    Furthermore, if for any $v\in V(G)$ there exists $e\in E_v(G)$ such that $\mu_{\varphi}(e)>0$ and it contracts no loops we will say that the morphism is \emph{non-degenerate}. If  $\mu_{\varphi}(e)>0$ for any $e\in E(G)$, then the morphism is \emph{finite}. 
    
    The morphism of metric graphs is \emph{harmonic} if for any $v\in V(G)$ the integer 
    \[m_{\varphi}(v):=\sum_{\substack{e\in E_v(G);\\ \varphi(e)=e'}}\mu_{\varphi}(e)
    \]
    is independent of $e'\in E_{\varphi(v)}(G').$
    This number is called the \emph{multiplicity} of $\varphi$ at $v$. 
    When the morphism is harmonic, then its \emph{degree} is $d(\varphi):=\sum_{e;\varphi(e)=e'}\mu_{\varphi}(e)$ and it is independent of $e'\in E(G')$.
\end{definition}

\begin{definition}
A \emph{tropical cover} $\varphi:\Gamma_1\to \Gamma_2$ of degree $d$ is a harmonic morphism of degree $d,$ which satisfies the \emph{Riemann--Hurwitz inequality} at any $v\in \{\varphi^{-1}(t)|\, t\in \Gamma_2; \operatorname{val}(t)\geq 2\}$, i.e.
\begin{equation}\label{eq:RH}
    (2m_{\varphi}(v)-2)-\sum_{e\in E_v(\Gamma_1)}(\mu_{\varphi}(e)-1)\geq 0,
\end{equation}

where $E_v(G_1)$ denotes the set of outgoing edges from $v$ in $\Gamma_1=(G_1,l_1)$.
\end{definition}

\begin{remark}\label{rem-RH}
    In \cite{CMR}, a tropical cover is defined as a harmonic morphism which satisfies the \emph{Riemann--Hurwitz equality}. However, in the next section we will consider morphisms allowing tropical modifications.
    In this case, we can always attach leaves, counted with multiplicities, whenever we have only a strict inequality and reach the equality. Then with abuse of notation, we will refer to \eqref{eq:RH} as an equality.
\end{remark}

Given a tropical cover as above, to any $v\in V(G_1)$ we can associate a number, called \emph{local Hurwitz number}, defined as in \cite[Section 3.2.4]{CMR}. 
These numbers depend on \eqref{eq:RH} and a tropical cover is called \emph{realizable} if its local Hurwitz numbers are all nonzero.

Whether the Riemann-Hurwitz equality gives rise to nonzero Hurwitz numbers is an open problem referred to as the \emph{Hurwitz existence problem}. This has been studied in \cite[Section 2.2]{Cap13} and in particular it has been observed that for $d\leq 3$ the Hurwitz existence problem is solved, i.e. any tropical cover of degree $d\leq 3$ has nonzero local Hurwitz numbers and therefore it is always realizable. Since our focus is on degree $3$ covers we will just implicitly consider realizable tropical covers while writing simply tropical covers.

Tropical covers of metric trees have in particular been extensively studied, as they represent the tropical analogue of admissible covers. 
We will call a finite tropical cover of a metric tree an \emph{admissible tropical cover}. 
We now restrict our attention to a particular class of these covers, defined as follows.

\begin{definition}\label{def-wellcontraced}
A \emph{well-contracted cover} is a non-degenerate tropical cover $\varphi:\Gamma_1\to \Gamma_2$ with $\Gamma_2$ a path.
\end{definition}

\begin{remark}
    Let us observe that a well-contracted cover can always be recovered from the following data: an admissible cover $\varphi:\Gamma_1\to \Gamma_2$ where $\Gamma_2$ is a metric tree that becomes a path, up to contraction of leaf-edges, such that the contraction of the corresponding edges in the pre-image of any leaf-edge doesn't decrease the genus of $\Gamma_2.$ 
    
    For instance, let us consider an admissible cover $\varphi:\Gamma_1\to \Gamma_2$ where $\Gamma_2$ is metric tree with three edges and exactly one vertex of valence $3$ and the contraction of the pre-images of a leaf-edge does not decrease the genus of $\Gamma_1$.
    In particular, if an edge in the pre-image is a leaf-edge, we just remove it and consider the tropically equivalent graph with that leaf-edge contracted.
    For any other edge, instead, we notice that this cannot be a loop (otherwise its contraction would then decrease the genus by 1).
    Then its endpoints cannot coincide and we just define a morphism that contracts this edge. This morphism continues to be harmonic, since harmonicity only depends on the incident edges to a non-leaf vertex on the target. By removing a leaf-edge, this doesn't affect harmonicity. For an example see Figure \ref{fg:well-contracted}.
\end{remark}

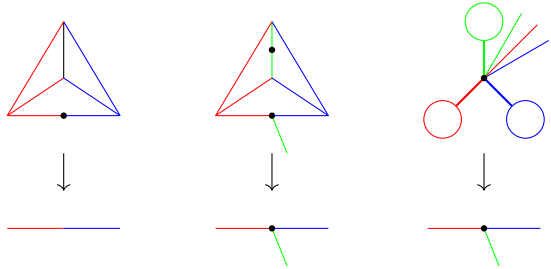
\begin{figure}[h]
        \centering
        \begin{tikzcd}
            \begin{tikzpicture}
                \draw[](0,0)--(0,0.75);
                \draw[red](-0.75,-0.5)--(0,0);
                \draw[blue](0.75,-0.5)--(0,0);
                \draw[blue](0.75,-0.5)--(0,0.75);
                \draw[red](-0.75,-0.5)--(0,0.75);
                \draw[red](-0.75,-0.5)--(0,-0.5);
                \draw[blue](0.75,-0.5)--(0,-0.5);
           
                \vertex{0,-0.5}
                \draw[->](0,-1)--(0,-1.5);
                \draw[red](-0.75,-2)--(0,-2);
                \draw[blue](0.75,-2)--(0,-2);
                
            \end{tikzpicture}&
            \begin{tikzpicture}
                \draw[green](0,0)--(0,0.75);\vertex{0,0.375}
                \draw[red](-0.75,-0.5)--(0,0);
                \draw[blue](0.75,-0.5)--(0,0);
                \draw[blue](0.75,-0.5)--(0,0.75);
                \draw[red](-0.75,-0.5)--(0,0.75);
                \draw[red](-0.75,-0.5)--(0,-0.5);
                \draw[blue](0.75,-0.5)--(0,-0.5);
                \draw[green](0,-0.5)--(0.2,-1);
                \vertex{0,-0.5}
                \draw[->](0,-1)--(0,-1.5);
                \draw[red](-0.75,-2)--(0,-2);
                \draw[blue](0.75,-2)--(0,-2);
                \draw[green](0,-2)--(0.2,-2.5);\vertex{0,-2}
            \end{tikzpicture}&
            \begin{tikzpicture}
                \draw[blue,thick](0,0)--(0.37,-0.37);
                \draw[red,thick](0,0)--(-0.37,-0.37);
                \draw[green,thick](0,0)--(0,0.5);
                \draw[green](0,0.75) circle (0.25);
                \draw[red](-0.55,-0.55) circle (0.25);
                \draw[blue](0.55,-0.55) circle (0.25);
                \draw[green](0,0)--(0.5,0.86);
                \draw[blue](0,0)--(0.86,0.5);
                \draw[red](0,0)--(0.71,0.71);
                \vertex{0,0}
                \draw[->](0,-1)--(0,-1.5);
                \draw[red](-0.75,-2)--(0,-2);
                \draw[blue](0.75,-2)--(0,-2);
                \draw[green](0,-2)--(0.2,-2.5);\vertex{0,-2}
            \end{tikzpicture}
            \end{tikzcd}\caption{On the left a well-contracted cover and in the middle the admissible cover from which we can recover it. On the right instead is depicted an admissible cover which does not define a well-contracted cover: for any edge in the target tree, the contraction of the edges in its preimage  would decrease the genus of the graph.}\label{fg:well-contracted}
\end{figure}

\subsection{Definitions of gonality for tropical curves and their relation}\label{ssc:def_gonality_and_relations}

Here we recall the various definitions of gonality for tropical curves. Similarly to the algebraic case, one can define the gonality via the existence of a divisor of given degree and rank, or via the existence of a harmonic morphism to a metric tree. However, unlike the algebraic case, such definitions do not coincide for tropical curves.
One of the reasons behind this discrepancy is that the Baker--Norine rank of a divisor on a tropical curve behaves very differently from its algebraic counterpart.

A divisor $D_C$ on an algebraic curve $C$ naturally defines
a divisor $D$ on the metric graph obtained by the tropicalization of $C$. Baker and Norine introduced the concept of rank of a divisor on a graph, mirroring the notion of rank for divisors on algebraic curves, and proved the remarkable fact that it obeys an exact analogue of the Riemann-Roch Theorem, \cite{BN}. However, Baker’s Specialization Lemma \cite[Lemma 2.8]{B08} shows that the rank of $D$ can only increase with respect to that of $D_C$. 

In order to define gonality via the existence of a divisor, let us recall first some notations and definitions from divisor theory for abstract tropical curves.

\begin{definition}
Let $\Gamma$ be a metric graph. A \emph{divisor} of $\Gamma$ is a formal sum
$D:=\sum_{x\in \Gamma}D(x)x,$
where $D(x)\in\mathbb{Z}$ and $D(x)\neq0$ for finitely many $x\in\Gamma.$ 

The \emph{degree} of $D$ is $\operatorname{deg}(D)=\sum_{x\in \Gamma}D(x).$ and we say that $D$ is \emph{effective}, and write $D\geq 0,$ if $D(x)\geq 0,$ for any $x\in\Gamma.$ 

A \emph{rational function} on $\Gamma$ is a continuous and piecewise linear function $f:\Gamma\rightarrow\mathbb{R},$ with integer slopes along its domains of linearity.

A divisor $D$ is \emph{principal} if $D=\operatorname{div}(f),$ for some rational function $f,$ where $\operatorname{div}(f)$ is the divisor with $\operatorname{div}(f)(x)$ equal to the sum of all slopes of $f$ along edges emanating from $x.$ 

Two divisors $D_1, D_2$ are \emph{linearly equivalent} if their difference is principal and we write $D_1\sim D_2$.

The (Baker--Norine) \emph{rank} of $D$ is set to be  $\operatorname{r}(D)=-1$ if $D$ is not equivalent to an effective divisor; otherwise $$\operatorname{r}(D)=\operatorname{max}\{k\in\mathbb{Z}_{\geq 0}:\forall E\geq 0\text{ with }\operatorname{deg}(E)=k, \exists\,E'\geq 0 \text { such that } D-E\sim E'\}.$$

\end{definition}

\begin{definition}
    A metric graph $\Gamma$ is \emph{divisorially $d$-gonal} if there exists a divisor $D$ of degree $d$ and (Baker--Norine) rank at least $1.$
\end{definition}

The definition of gonality via the existence of a morphism is instead as follows.
\begin{definition}
A metric graph $\Gamma$ is \emph{$d$-gonal} if there exists a non-degenerate harmonic morphism $\varphi:\Gamma'\to T$ of degree $d,$ where $\Gamma'$ is a tropical modification of $\Gamma$ and $T$ a metric tree. 
\end{definition}

\begin{remark}
    Let us observe that in the literature, $d$-gonality can also be defined via a finite morphism. Notice however, that a non-degenerate morphism can always be replaced with a finite one, up to tropical modification, as proved in \cite[Proposition 2.11]{MZ1}.
\end{remark}

The definition of $d$-gonality can also be related to the existence of an admissible cover, as it is harmonic by definition.
In particular a tropical curve that admits a tropical cover (after tropical modifications) of degree $d$ of a metric tree, is $d$-gonal. 
Moreover, given a non-degenerate harmonic morphism to a tree of degree $d$, one can always consider the pullback of a point. This will yield a divisor of degree $d$ and rank at least $1,$ \cite[Lemma 2.25]{MZ1}. 
Then in general the following holds for any $d\geq 2$.
\begin{align*}
\Gamma\text{ admits a degree $d$ well-contracted cover}&\Rightarrow\Gamma\text{ admits a degree $d$ admissible cover}\\
\Rightarrow \Gamma\,\text{$d$-gonal }&\Rightarrow \text{$\Gamma$ divisorially $d$-gonal.}
\end{align*}

When $d=3$, let us recall the known relations between the above definitions, proved in \cite{MZ1}, \cite{MZ2}.

\begin{theorem}
    Let $\Gamma$ be a $3$-edge connected metric graph. Then, up to tropical modifications,  
    $$\Gamma\text{ admits a degree $3$ admissible cover}\Leftrightarrow \text{$\Gamma$ trigonal }\Leftrightarrow \text{$\Gamma$ divisorially trigonal.}$$
\end{theorem}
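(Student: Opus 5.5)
The two easy implications follow from results recalled above, so the plan concentrates on the converse. First, a degree $3$ admissible cover of a metric tree (after tropical modification) is by definition a finite harmonic morphism of degree $3$ to a tree, hence non-degenerate, so $\Gamma$ is trigonal. Second, if $\varphi:\Gamma'\to T$ is a non-degenerate harmonic morphism of degree $3$ from a modification $\Gamma'$ of $\Gamma$, then the pullback $\varphi^*(t)$ of a point has degree $3$ and rank at least $1$ by \cite[Lemma 2.25]{MZ1}. Retracting the attached trees of $\Gamma'$ onto $\Gamma$ preserves degree and rank, so $\Gamma$ is divisorially trigonal.

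The substantial direction is divisorially trigonal $\Rightarrow$ admissible cover, and here I would mimic Chan's argument for $d=2$ \cite{Chan}. Fix $D$ of degree $3$ with $r(D)\geq 1$. For each $x\in\Gamma$, let $D_x$ be the $x$-reduced divisor equivalent to $D$; rank $\geq 1$ forces $D_x(x)\geq 1$. I would then define the relation $x\sim y$ if $y\in\operatorname{supp}(D_x)$, so that the classes are supports of effective divisors in $|D|$ through $x$. The aim is to show that:
\begin{enumerate}
\item this relation is symmetric and transitive, i.e.\ $D_x=D_y$ whenever $y\in\operatorname{supp} D_x$;
\item the quotient $T=\Gamma/\!\sim$ is a metric tree;
\item the induced map $\varphi:\Gamma\to T$ is harmonic of degree $3$, with multiplicity at $x$ equal to $D_x(x)$.
\end{enumerate}
Here $3$-edge-connectivity enters. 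If $D_x$ were to jump discontinuously, or if a fiber were to split a cycle, one would obtain an edge cut of size at most $2$ across which a degree-$3$ divisor can be transported by chip-firing. Dhar's burning algorithm applied to $D_x$ then yields a cut of at most $2$ edges, contradicting $3$-edge-connectivity. The same argument should show that $x\mapsto D_x$ moves continuously and linearly along edges, with integer slopes given by the chip-firing moves, which gives the metric on $T$ and the edge stretching factors $\mu_\varphi$.

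Harmonicity and degree $3$ then come from the fact that every fiber is the support of a degree-$3$ divisor in $|D|$, counted with multiplicity. Non-degeneracy holds because no fiber contains a whole edge or a loop: a contracted loop would again produce a small cut. Next I would replace $\varphi$ by a finite morphism using \cite[Proposition 2.11]{MZ1}, at the cost of a tropical modification. Finally I would check the Riemann--Hurwitz inequality \eqref{eq:RH} at each vertex and attach leaves where it is strict, as in Remark \ref{rem-RH}. This gives a degree $3$ tropical cover of a tree, and since $d\leq 3$ it is automatically realizable by \cite[Section 2.2]{Cap13}, i.e.\ an admissible cover.

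I expect the main obstacle to be step (ii), together with the well-definedness in (i): proving that the quotient $T$ contains no cycle and that the reduced divisors vary coherently. This is exactly where graphs with $2$-edge cuts fail, as in the known counterexamples with separating vertices. I would first attempt it by contradiction: a cycle in $T$ lifts to a closed path in $\Gamma$ along which $D_x$ returns to itself after a nontrivial family of chip-firings, and analysing where the chips pile up should produce a cut of size at most $2$.
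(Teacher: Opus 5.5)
The paper does not prove this statement; it only recalls it from \cite{MZ1, MZ2}. Your two easy implications are fine and match the chain of implications displayed just before the theorem.

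The converse, however, already fails at step (i), and for a reason that $3$-edge-connectivity cannot rule out. In Chan's setting the residual divisor $D-x$ is a single point, which is rigid on a $2$-edge-connected graph. For $d=3$ the residual has degree $2$, and two chips on a common edge can always slide symmetrically, by firing a segment of that edge or the complement of an open segment. The boundary of such a set consists of two half-edges of one edge. That is not an edge cut, so Dhar's algorithm yields no contradiction.

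Concretely, take the type (000) cover in Figure \ref{fig:g3_tropical_covers}. Here $\Gamma=K_4$ with vertices $L,R,T,C$ and a point $M$ on $LR$; the segments $LT,LC,LM$ have equal length, as do $RT,RC,RM$; and $D=3L\sim T+C+M$ has rank at least $1$. Firing $\Gamma$ minus the open edge $TC$ gives $D\sim x+\bar x+M$, where $\bar x$ is the reflection of $x\in TC$ in the midpoint of $TC$. Dhar's burning algorithm then shows $D_x=x+\bar x+M$ for $x$ in the interior of $TC$, but $D_M=T+C+M$. So $M\in\operatorname{supp}D_x$ while $x\notin\operatorname{supp}D_M$. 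All these would-be fibres contain $M$, so no morphism can have them as fibres, and $D_x(x)=1$ on an edge that the cover in the figure contracts.

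What the $D_x$ record here are the retractions to $\Gamma$ of the fibres of a finite cover of a modification: glue at $M$ a segment of length $\ell(TC)/2$, map it onto a new leaf of the target, and fold $TC$ at its midpoint onto the same leaf. Retraction is precisely what makes distinct fibres share points. Similarly, whenever a cover has an edge of stretching factor $2$, a fibre $x'+2w$ with $w$ in its interior is never $x'$-reduced, since $\{w\}$ can fire. Hence $D_{x'}$ is not the fibre through $x'$.

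So the missing idea is a way to single out, among the effective divisors of $|D|$ through $x$, the ones that are fibres. In effect this means building the modification and deciding which edges are contracted, folded or ramified; this is where the real work lies. Step (ii), as you acknowledge, is only a heuristic.

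There is a further gap at the end: \eqref{eq:RH} is not automatic for harmonic morphisms. Take a point of local degree $2$ over a trivalent vertex of the tree, with one edge of stretching factor $2$ in each of the three directions. It gives $2\cdot 2-2-3<0$, and attaching leaves as in Remark \ref{rem-RH} only repairs a strict inequality. That a degree-$3$ harmonic morphism from a $3$-edge-connected graph to a tree is an admissible cover is itself a nontrivial fact recalled in the introduction. It has to be proved, not checked.
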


\begin{theorem}
    Let $\Gamma$ be a metric graph with no separating vertices or multiple edges. Then, up to tropical modifications,  
    $$\Gamma\text{ admits a degree $3$ admissible cover}\Leftrightarrow \text{$\Gamma$ trigonal }\Leftrightarrow \text{$\Gamma$ divisorially trigonal.}$$
\end{theorem}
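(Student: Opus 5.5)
The implications from left to right need no new work. An admissible cover of degree $3$ is by definition a finite harmonic morphism of degree $3$ to a metric tree, hence $\Gamma$ is trigonal. Pulling back a point of the tree along a non-degenerate harmonic morphism of degree $3$ gives an effective divisor of degree $3$ and rank at least $1$ by \cite[Lemma 2.25]{MZ1}, so $\Gamma$ is divisorially trigonal. It therefore suffices to prove that a divisorially trigonal $\Gamma$ with no separating vertices and no multiple edges admits, after tropical modification, a degree $3$ admissible cover. I would follow the strategy of the $3$-edge-connected case in \cite{MZ1} and isolate exactly where the weaker hypothesis has to be used.

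Fix an effective divisor $D$ of degree $3$ with $r(D)\geq 1$. For every $x\in\Gamma$ there is an effective $D_x\sim D$ with $x\in\operatorname{supp}(D_x)$. I would first show that $|D|$ has no base point. Suppose some point $p$ lies in the support of every divisor in $|D|$. Then $D-p$ would be a degree $2$ divisor of rank $\geq 1$ on the part of $\Gamma$ it moves on, or it would be confined to one side of $p$. A chip-firing argument with Dhar's burning algorithm, started from $p$, should show that in the second case $p$ is a separating vertex, or that two parallel edges are attached at $p$. Both are excluded by hypothesis.

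Next I would construct the tree. The divisors $D_x$ can be chosen to vary continuously, by moving them along the level sets of rational functions $f$ with $D+\operatorname{div}(f)\geq 0$. This sweeps out a one-dimensional subcomplex $T\subset|D|$, and I expect to prove that $T$ is a metric tree, using that the degree is $3$ and that there are no base points. I then define $\varphi\colon\Gamma\to T$ by sending $x$ to a divisor of $T$ containing $x$. The expansion factor $\mu_\varphi(e)$ on an edge $e$ is the number of chips that travel simultaneously along $e$ under the corresponding rational function. Harmonicity with degree $3$ then follows because every divisor in $T$ has exactly three chips, counted with multiplicity, distributed over the preimage of each point of $T$.

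Where $\varphi$ contracts edges or violates the Riemann--Hurwitz equality, I would attach trees, which are tropical modifications, as in Remark \ref{rem-RH}. The contracted edges are then replaced by finite ones using \cite[Proposition 2.11]{MZ1}. Since every degree $3$ tropical cover is realizable \cite[Section 2.2]{Cap13}, the result is a degree $3$ admissible cover.

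The main obstacle is the step showing that $x\mapsto D_x$ is well defined and that $T$ is a tree. The danger is that two different divisors through the same point $x$ occur in $|D|$ in a way that prevents a consistent choice. In the $3$-edge-connected case this is ruled out by cut-size counting: a cut with at most two edges cannot carry three chips. Here I would try to show that any such ambiguity produces a $2$-edge cut whose two edges share an endpoint, which would be a separating vertex, or join the same pair of vertices, which would be a multiple edge. The proof would be a case analysis on how the $2$-edge cuts of $\Gamma$ interact with the supports of the divisors in $|D|$.
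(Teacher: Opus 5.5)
The paper does not prove this statement; it only quotes it from \cite{MZ1, MZ2}. Your two forward implications are fine. The converse is the whole content of the theorem, and there the proposal has a genuine gap: the one-dimensional family $T\subset|D|$ and the map $\varphi$ are never actually defined, and the mechanism you describe cannot define them. Moving $D$ along rational functions $f$ with $D+\operatorname{div}(f)\ge 0$ reaches every element of $|D|$ (truncate $f$ at intermediate levels). So what it sweeps out is all of $|D|$, which need not be one-dimensional.

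Your plan to show that non-uniqueness of $D_x$ forces a $2$-edge cut whose edges share an endpoint or are parallel is also false. Here is a counterexample configuration.
\begin{itemize}
\item Take three rails over a segment, with all three rails meeting at a single point over each end.
\item On each side of the middle, join them by a contracted rung between rails $1,2$ and another between rails $1,3$.
\item Over a middle interval, let rails $2$ and $3$ merge into one edge $e$ of dilation $2$, while rail $1$ is an edge $e'$ there.
\end{itemize}
This graph has no separating vertices and no parallel edges. It carries a non-degenerate degree-$3$ harmonic morphism to the segment, and $\{e,e'\}$ is a $2$-edge cut with four distinct endpoints. For a fibre $p+2q$ with $p\in e'$, $q\in e$, firing one side of $\Gamma\setminus\{p,q\}$ by a small amount gives $p+2q\sim p'+q+q'$. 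This is a non-fibre divisor of $|D|$ through $q'$. Such cuts are allowed by the hypotheses, so they must be handled, not excluded.

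The same example shows that harmonicity does not follow from ``every divisor has three chips''; that only gives degree $3$ over generic points. At a vertex $v$ you need that, whenever $\varphi(v)$ moves in a direction of $T$, all chips at $v$ leave along edges whose dilations add up to the multiplicity of $v$ in $\varphi(v)$. The move $p+2q\to p'+q+q'$ leaves a chip behind.

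Everything therefore hinges on a selection rule, which you do not supply. You would need:
\begin{itemize}
\item a canonical choice of $D_x$, for instance via $x$-reduced divisors, which at $x=q'$ select the fibre with two chips at $q'$ rather than $p'+q+q'$;
\item a local analysis at vertices and at $2$-edge cuts, where a dilation-$2$ edge or a modification carrying the third sheet has to appear;
\item a proof that the selected divisors form a tree and that $\varphi$ is harmonic.
\end{itemize}
Note that degree $3$ and the hypotheses enter your argument only through the unproved cut analysis. Since divisorial $d$-gonality does not imply $d$-gonality in general, that is exactly where the work lies.

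A smaller omission: in the base-point step you drop the alternative $r(D-p)\ge 1$, which makes $\Gamma$ hyperelliptic. It is excluded by the hypotheses, because by the involution description of \cite{Chan} a hyperelliptic graph of genus at least $2$ without separating vertices has parallel edges. But this has to be argued.
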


From \cite[Theorem 2]{DV}, the tree gonality of a metric graph, i.e. the minimum
degree of a tropical morphism from any tropical modification of the metric graph to a
metric tree, is at most $\lceil g/2\rceil+1$ and such a bound is sharp. Hence all genus $3,4$ tropical curves admit a degree $3$ admissible cover, therefore they are all trigonal and divisorially trigonal. The relation with well-contracted covers will instead be inspected in Theorem \ref{th:realizability_genus_3_4}. We will see that for tropical curve of maximal combinatorial type, having a degree $3$ well-contracted cover is strongly related to being smoothly embedded in $\mathbb R^2,$ as a dual unimodular triangulation of a polygon in Figure \ref{fig-polygon}.

\section{The projection of a tropical curve in a Hirzebruch surface}\label{sec-proj}

Here, we prove the observation from the introduction relating tropical plane curves in Hirzebruch surfaces to trigonal abstract tropical curves.

\begin{proposition}\label{lm:realizability_cover}
Assume $\Gamma$ can be embedded into $\mathbb{R}^2$, such that (a modification of) $\Gamma$ is dual to a unimodular triangulation of the polygon in Figure \ref{fig-polygon}, i.e.\ it is a smooth tropical plane curve in a Hirzebruch surface $\mathbb{F}_n$ of degree $(3,b)$ for some $b$. Then the projection restricted to $\Gamma$ is a realizable
well-contracted tropical cover of degree $3$.
\end{proposition}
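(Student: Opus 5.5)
The plan is to study the projection $\pi:\mathbb{R}^2\to\mathbb{R}$, $(x,y)\mapsto x$. I use the convention that the first coordinate of the polygon in Figure \ref{fig-polygon} has width $3$, so the projection is dual to the lattice width direction. Restricted to the smooth plane tropical curve $\Gamma\subset\mathbb{R}^2$ dual to a unimodular triangulation $T$ of that polygon, I will show three things in turn: $\pi|_\Gamma$ is a harmonic morphism of degree $3$ to $\mathbb{R}$, it satisfies Riemann--Hurwitz, and it is non-degenerate. Realizability then comes for free from the remark that all degree $3$ tropical covers are realizable \cite{Cap13}.

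\textbf{Step 1: harmonicity and the degree.} I take as model of $\Gamma$ its vertices together with all preimages of images of vertices, and as model of $\mathbb{R}$ the images of the vertices. An edge $e$ of $\Gamma$ has weighted direction vector $w_e(u_1,u_2)$. Its stretching factor is $\mu(e)=w_e|u_1|$, and $e$ is contracted exactly when it is vertical, i.e.\ dual to a horizontal edge of $T$. By duality, $w_e|u_1|$ equals the height difference (second-coordinate change) of the dual edge of $T$ after rotating by $90^\circ$; with the convention above this is the change in the first coordinate of the polygon. The balancing condition at a vertex $v$ then says that the sums of $\mu$ over edges pointing left and over edges pointing right coincide, which is exactly harmonicity. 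For the degree, I take a generic vertical line $x=c$ with $c\gg0$. Its intersections with $\Gamma$, counted with $\mu$, equal the lattice length of the projection of the polygon onto the first coordinate, which is $3$. Equivalently, the sum of $\mu$ over the right-pointing ends is the width of the polygon, namely $3$.

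\textbf{Step 2: Riemann--Hurwitz.} Every vertex of $\Gamma$ is trivalent of genus $0$, and unimodularity forces all edge weights to be $1$. I then go through the cases at a vertex $v$. If $m(v)=1$, there is one edge each way and a contracted third edge, so Riemann--Hurwitz holds trivially. If $m(v)=2$, the edges are either one of slope $2$ and two of slope $1$, or one of slope $1$, one of slope $2$ and a contracted edge. In the first case RH gives $2-1=1\geq0$; in the second it gives $2-1\geq 0$. For $m(v)=3$, the unimodular triangle would have width $3$ in the first coordinate, which is impossible. In the inequality case I attach leaves as in Remark \ref{rem-RH}. Points of $\Gamma$ which are not vertices but lie over vertices of $\mathbb{R}$ are $2$-valent with equal stretching on both sides, so RH holds there too.

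\textbf{Step 3: non-degeneracy (the main obstacle).} I must show that no loop is contracted, and that every vertex has an edge of positive stretching. The contracted edges are vertical segments, and a cycle cannot consist only of vertical segments, since these lie on a single vertical line and a cycle in a plane tropical curve bounds a region. More precisely, a cycle is dual to an interior lattice point, around which the cycle has edges of every direction. A vertex whose edges are all vertical is impossible, because balancing of trivalent edges with vectors $(0,\pm1)$ fails. The delicate part is making sure that collapsing vertical chains does not collapse genus and that the target really is a path, here all of $\mathbb{R}$. Finally I would note that if only a modification of $\Gamma$ is embedded, the extra trees still map harmonically and can be pruned as in the Remark after Definition \ref{def-wellcontraced}.
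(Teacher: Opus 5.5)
Your route is the paper's own: harmonicity from the balancing condition, Riemann--Hurwitz checked at the trivalent vertices, degree $3$ read off from the ends, non-degeneracy because no vertex has only contracted edges and no cycle of an embedded curve lies on a line, and realizability from the fact that degree-$3$ covers are realizable \cite{Cap13}. The flaw is in Step~2, whose case analysis is wrong in two places.

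First, your second configuration for $m(v)=2$ (stretching factors $1,2,0$) is not balanced. Second, and more seriously, the claim that $m(v)=3$ cannot occur is false. A unimodular triangle has lattice width $1$, but its extent in a fixed coordinate direction can be $3$. For example, $\conv\{(0,0),(1,0),(3,1)\}$ has determinant $1$ and spans the full width of the $\mathbb{F}_0$ rectangle $[0,3]\times[0,b]$. It appears in regular unimodular triangulations: take it as one triangle of the fan from $(3,1)$ over the bottom row, and complete the strip $[0,3]\times[0,1]$ by the fan from $(0,0)$ over the row $y=1$. Its dual vertex has stretching factors $1,2,3$ and $m(v)=3$. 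So, as written, your verification omits vertices that actually occur.

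The conclusion still holds, and the repair is the paper's uniform computation, which needs neither unimodularity nor a bound on $m(v)$. At a trivalent balanced vertex there are two cases:
\begin{itemize}
\item $\mu_1+\mu_2=\mu_3=m$, and then $2m-2-\sum_i(\mu_i-1)=1$;
\item one edge is contracted and $\mu_1=\mu_2=m$, and the left-hand side is again $1$, counting the contracted edge with $\mu-1=-1$.
\end{itemize}

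You should also fix the coordinates. The polygon in Figure~\ref{fig-polygon} has width $3$ in its first coordinate, so the projection must be $(x,y)\mapsto y$; the paper's three vertical ends of weight $1$ lie over a very negative point. With that projection, the stretching factor of an edge is the first-coordinate change of its dual edge. With $(x,y)\mapsto x$ as you wrote it, $w_e|u_1|$ is the second-coordinate change of the dual edge, and the degree would be $\frac{g+3n+2}{2}$ instead of $3$. Your remark about rotating by $90^\circ$ is meant to handle this, but it contradicts the preceding sentence.

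The degree count, the non-degeneracy argument and the treatment of modifications are otherwise fine.
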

\begin{proof}
    The projection induces a map to  $\mathbb{R}$, where the source graph $\Gamma'$ is a modification of $\Gamma$ by definition. 
    Harmonicity follows from the balancing condition. 
    
    By the smoothness hypothesis, every vertex of $\Gamma$ is $3$-valent and maps to a $2$-valent vertex of the subdivided $\mathbb{R}$. By the balancing condition, either two edges map in the same direction and their weights add up to the weight of the third, which maps in the opposite direction, or one edge is contracted and the other two map to opposite directions, with the same weight. 
    
    The multiplicity equals the sum of the two weights of the edges mapping to the same direction resp.\ the weight of the two edges mapping in opposite directions in the second case. In any case, the equation on the left of the Riemann-Hurwitz inequality yields $1$.
    
    For a $2$-valent vertex, which we obtain from subdividing to have the structure of a graph morphism, the equation on the left of the Riemann-Hurwitz inequality yields $0$. In any case, the Riemann-Hurwitz condition is satisfied. 

We thus obtain a tropical cover. Since it is not possible that all edges adjacent to a vertex are contracted, the cover is non-degenerate.
    
    By smoothness, the embedded tropical curve in $\FF_n$ has three vertical ends of weight $1$. Taking preimages of a sufficiently negative point, we obtain three points in these ends, so altogether, the tropical cover $\Gamma'\rightarrow \mathbb{R}$ is of degree $3$. 

Because of the smooth embedding into $\mathbb{R}^2$, the projection cannot contract loops.

Thus, we obtain a well-contracted cover of degree $3$. Notice that by the discussion below Remark \ref{rem-RH}, since the cover is of degree $3$, it is automatically realizable.
   
\end{proof}

One can also prove Proposition \ref{lm:realizability_cover} by means of algebraic geometry: the smooth plane tropical curve is the tropicalization of an algebraic curve embedded into $\mathbb{F}_n$. The projection there induces a degree $3$ morphism to $\mathbb{P}^1$ whose tropicalization can be contracted to yield our realizable harmonic morphism, see Figure \ref{fig:sketchtropicalization}.

\section{Abstract tropical curves of genus $3$ and $4$ of maximal combinatorial type and their Maroni invariant}\label{sec-maroni}

Let us now consider abstract tropical curve of maximal combinatorial type of genus $3$ and $4.$
In \cite{BJMS15}, the authors have studied which abstract tropical curves arise from tropical plane curves. In other words, they have determined edge length conditions that an abstract tropical curve has to satisfy in order to be embeddable in the tropical plane as a curve of degree $d$, i.e.\ with ends of directions $(-1,0)$, $(0,-1)$ and $(1,1)$ each $d$ times.
Here we want to repeat the same study, replacing degree $d$ with being dual to the polytope of Hirzebruch surfaces, whose degree varies as the algebraic Maroni invariant.

\subsection{Genus 3}\label{ssc: genus3}
As an algebraic genus $3$ trigonal curve has Maroni invariant $1,$  we will consider here $\mathbb F_1.$
Indeed, from Figure \ref{fig-polygon}, the integers $g,n$ must have the same parity, otherwise $\frac{g+3n+2}{2}$ and $\frac{g-3n+2}{2}$ wouldn't be integers. Moreover, a genus $g$ abstract tropical curve cannot be realizable in $\mathbb F_n$ with $n>\frac{g+2}{3}.$ Then a genus $3$ curve can only be realizable in $\mathbb F_1,$ as in the algebraic case.

From Figure \ref{fig-polygon} one can see that the polygon defining $\mathbb F_1$ is strictly contained in that of a plane curve of degree $4$ and of genus $3,$ and differs just by a small triangle, as in Figure \ref{fg:polygons_g3}.

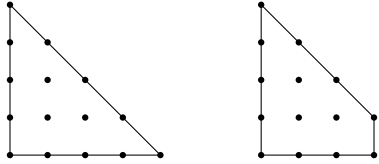
\begin{figure}[h]
    \centering
    \begin{tikzcd}
        \begin{tikzpicture}
            \draw(0,0)--(0,2);
            \draw(0,0)--(2,0);
            \draw(2,0)--(0,2);
            \foreach \j in {0,1,2,0.5,1.5}{
            \vertex{0,\j}
        }
        \foreach \j in {0,0.5,1,1.5}{
            \vertex{0.5,\j}
        }
        \foreach \j in {0,0.5,1}{
            \vertex{1,\j}
        }
        \vertex{1.5,0}\vertex{1.5,0.5}
        \vertex{2,0}
        \end{tikzpicture}&
        \begin{tikzpicture}
            \draw(0,0)--(0,2);
            \draw(0,0)--(1.5,0);
            \draw(1.5,0.5)--(0,2);
            \draw(1.5,0)--(1.5,0.5);
            \foreach \j in {0,1,2,0.5,1.5}{
            \vertex{0,\j}
        }
        \foreach \j in {0,0.5,1,1.5}{
            \vertex{0.5,\j}
        }
        \foreach \j in {0,0.5,1}{
            \vertex{1,\j}
        }
        \vertex{1.5,0}\vertex{1.5,0.5}
        \end{tikzpicture}
    \end{tikzcd}
    \caption{The polygons defining a genus $3$ curve embedded with degree $4$ in the plane (on the left) and  in the first Hirzebruch surface (on the right).}
    \label{fg:polygons_g3}
\end{figure}

Accordingly, for the genus $3$ case, we can deduce most of the edge length conditions that an abstract tropical curve has to satisfy in order to be embedded in the first Hirzebruch surface directly from \cite[Section 5]{BJMS15}.

Let us use the same notation used in \cite[Section 5]{BJMS15} for the maximal combinatorial types, depicted in Figure \ref{fg:genus3_maxtype}. Clearly, if a tropical curve is not realizable in the plane it cannot be realizable in a Hirzebruch surface, therefore we will consider just the maximal types of tropical curves realizable as plane curves.

\begin{figure}[h]
\centering
\begin{tikzcd}
    \begin{tikzpicture}
        \coordinate (1) at (0,0);
        \coordinate (2) at (1,0);
        \coordinate (3) at (0.5,1);
        \coordinate (4) at (0.5,0.4);
        \draw (1)--(2);
        \draw (1)--(3);
        \draw (1)--(4);
        \draw (3)--(4);
        \draw (3)--(2);
        \draw (4)--(2);
        \foreach \i in {1,2,3,4}{
            \vertex{\i}
        }
        \draw (0.5,-0.5) node[] {(000)};
    \end{tikzpicture}&
        \begin{tikzpicture}
        \coordinate (1) at (0,0);
        \coordinate (3) at (1,0);
        \coordinate (4) at (0,1);
        \coordinate (5) at (1,1);
        \draw (1)--(3);
        \draw (4)--(5);
        \draw(1)[] to [out=120, in=240] (4);
        \draw(1)[] to [out=60, in=300] (4);
        \draw(3)[] to [out=120, in=240] (5);
        \draw(3)[] to [out=60, in=300] (5);
        \foreach \i in {1,3,4,5}{
            \vertex{\i}
        }
        \draw (0.5,-0.5) node[] {(020)};
    \end{tikzpicture}&
    \begin{tikzpicture}
        \coordinate (1) at (0,0);
        \coordinate (2) at (0.5,0.5);
       
        \coordinate (4) at (0,1);
        \coordinate (6) at (1,0.5);
        \draw (1)--(2);
        \draw (2)--(6);

        \draw (4)--(2);
       
        \draw(1)[] to [out=120, in=240] (4);
        \draw(1)[] to [out=60, in=300] (4);
        \draw (1.3,0.5) circle (0.3);
        \foreach \i in {1,2,4,6}{
            \vertex{\i}
        }
        \draw (0.75,-0.5) node[] {(111)};
    \end{tikzpicture}&
    \begin{tikzpicture}
        \coordinate (2) at (0,0.5);
        \coordinate (4) at (0.5,0.5);
        \coordinate (5) at (1,0.5);
        \coordinate (6) at (1.5,0.5);
        \draw (2)--(4);
        \draw (5)--(6);
        \draw (-0.25,0.5) circle (0.25);
        \draw (1.75,0.5) circle (0.25);
        \draw(4)[] to [out=30, in=150] (5);
        \draw(4)[] to [out=330, in=210] (5);
        \foreach \i in {2,4,5,6}{
            \vertex{\i}
        }
        \draw (0.75,-0.5) node[] {(212)};
    \end{tikzpicture}
\end{tikzcd}\caption{Maximal combinatorial types of genus $3$ curves realizable as plane curves.}\label{fg:genus3_maxtype}
\end{figure}
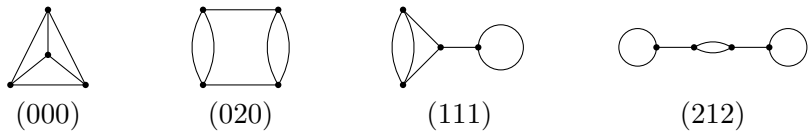

From the proof of \cite[Theorem 5.1] {BJMS15}, one can check that the triangulation realizing the graphs (020), (111), (212) as tropical plane curves induces a triangulation in the polygon associated to $\mathbb F_1$ by changing only the unbounded edges, therefore the edge length conditions will be precisely the same.

Instead, the triangulation in \cite[Figure 5] {BJMS15} defining the edge length conditions in \cite[Theorem 5.1] {BJMS15} for a graph of combinatorial type (000), induces a slightly different triangulation, realizing a graph of type (000) in $\mathbb F_1.$

\begin{figure}[h]
    \centering
    \begin{tikzcd}
        \begin{tikzpicture}
            \draw(0,0)--(0,2);
            \draw(0,0)--(1.5,0);
            \draw(1.5,0.5)--(0,2);
            \draw(1.5,0)--(1.5,0.5);
            \draw(0,2)--(0.5,1);
            \draw(0,1)--(1,1);
            \draw(0,0.5)--(1.5,0.5);
            \draw(0.5,0)--(0.5,1.5);
            \draw(1,0)--(1,1);
            \draw(0,1.5)--(1.5,0);
            \draw(0,1)--(1,0);
            \draw(0,0)--(0.5,0.5);
            
            \foreach \j in {0,1,2,0.5,1.5}{
            \vertex{0,\j}
        }
        \foreach \j in {0,0.5,1,1.5}{
            \vertex{0.5,\j}
        }
        \foreach \j in {0,0.5,1}{
            \vertex{1,\j}
        }
        \vertex{1.5,0}\vertex{1.5,0.5}
        \end{tikzpicture}&
        \begin{tikzpicture}[scale=0.3]
            \draw[blue](0,0+3)--(2,2+3);
            \draw(1,4.5) node{y};
            \draw(0.5,2) node{z};
            \draw(3,1.5) node{w};
            \draw(2,2+3)--(2,3+3);
            \draw(1,3+3)--(2,3+3);
            \draw(1,3+3)--(-1,2+3);
            \draw(-2,1+3)--(-1,2+3);
            \draw(-2,1+3)--(-2,0+3);
            \draw[green](2,2+3)--(3,2+3);
        \draw[green](3,0+3)--(3,2+3);
            \draw[green](3,3)--(1,-2+3);
        \draw[green](0,1)--(1,-2+3);
            \draw[red](0,-2+3)--(0,3);
            \draw[](0,-2+3)--(-1,0);
            \draw(-2,0)--(-1,-3+3);
            \draw(-2,0)--(-3,-2+3);
            \draw(-3,-1+3)--(-3,-2+3);
            \draw(-3,-1+3)--(-2,0+3);
            \draw[](-2,0+3)--(0,0+3);
        \end{tikzpicture}&
        \begin{tikzpicture}
        \coordinate (1) at (0,0.25);
        \coordinate (2) at (1.5,0.25);
        \coordinate (3) at (0.75,1.75);
        \coordinate (4) at (0.75,0.75);
        \draw (1)--(2);
        \draw (1)--(3);
        \draw (1)--(4);
        \draw (3)--(4);
        \draw (3)--(2);
        \draw (4)--(2);
        \foreach \i in {1,2,3,4}{
            \vertex{\i}
        }
        \draw (0.75,0) node[] {v};
        \draw (0.35,0.55) node[] {x};
        \draw (1.15,0.55) node[] {z};
        \draw (0.9,1) node[] {y};
        \draw (0.1,0.8) node[] {u};
        \draw (1.4,0.8) node[] {w};
    \end{tikzpicture}
    \end{tikzcd}
    \caption{The triangulation realizing a graph of type (000) in $\mathbb F_1$, induced by the one realizing the same graph on a plane, with associated cone of maximal dimension.}
    \label{fg:triangulation_g3}
\end{figure}
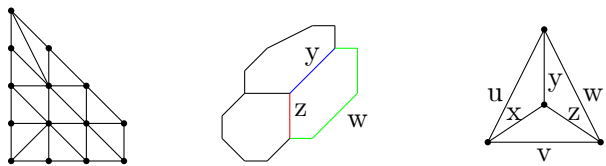

In particular, Figure \ref{fg:triangulation_g3} shows that a graph of type (000) realizable in $\mathbb F_1$ requires the edge length to satisfy $y+z\leq w,$ where $y,z,w$ denote its edge lengths as in Figure \ref{fg:genus3_maxtype}.

Then the analogue of \cite[Theorem 5.1] {BJMS15} is the following.

\begin{theorem}
    An abstract tropical curve of genus $3$ and maximal combinatorial type is realizable in $\mathbb F_1$ if and only if its combinatorial type is one of the ones in Figure \ref{fg:genus3_maxtype}.
    A tropical curve with combinatorial type (000) is realizable in $\mathbb F_1$ if and only if its edge lengths, as labeled in Figure \ref{fg:triangulation_g3}, satisfy, up to symmetry: 
    \[\operatorname{max}\{x,y\}\leq u,\quad \operatorname{max}\{x,z\}\leq v,\quad y+z\leq w.\]
    An abstract tropical curve of combinatorial type (020), (111), (212) is realizable in $\mathbb F_1$ if and only if is realizable as a plane curve of degree $4$, and in particular its edge lengths satisfy the same relations as in \cite[Theorem 5.1] {BJMS15}.
    
\end{theorem}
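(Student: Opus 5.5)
The argument compares the Hirzebruch polygon $P_1$ of Figure \ref{fg:polygons_g3} (right) with the degree-$4$ triangle $T_4$ (left). The basic observation is that $P_1=T_4\setminus\{$the triangle with vertices $(3,0),(4,0),(3,1)\}$. Both polygons have the same three interior lattice points $(1,1),(1,2),(2,1)$. They differ only along the boundary: the lattice point $(4,0)$ is removed and the boundary segment from $(3,1)$ to $(3,0)$ is added.

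\emph{Necessity.} Suppose $\Gamma$ is the skeleton of a smooth tropical curve dual to a unimodular triangulation $\mathcal T$ of $P_1$.
\begin{itemize}
\item Adding the unimodular triangle $(3,0),(4,0),(3,1)$ to $\mathcal T$ gives a unimodular triangulation of $T_4$. Its skeleton is again $\Gamma$, with the same metric, since only unbounded edges change.
\item Regularity is preserved. Extend the height function convexly by giving $(4,0)$ a very large height; the new triangle then appears as a face of the lifted polytope.
\item Hence every $\Gamma$ realizable in $\mathbb F_1$ is realizable as a plane quartic. By \cite[Theorem 5.1]{BJMS15} its type is one of (000), (020), (111), (212), and its edge lengths satisfy the inequalities listed there.
\end{itemize}

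\emph{Types (020), (111), (212).} Conversely, take the triangulations of $T_4$ used in the proof of \cite[Theorem 5.1]{BJMS15} to realize these types. One checks that each, after an affine unimodular change of coordinates fixing the quartic triangle, contains the corner triangle $(3,0),(4,0),(3,1)$. Deleting that triangle gives a regular unimodular triangulation of $P_1$ with the same skeleton and the same attainable edge lengths. Together with the necessity step, this gives exactly the same edge length conditions as in \cite[Theorem 5.1]{BJMS15}.

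\emph{Type (000): sufficiency.} Use the triangulation of Figure \ref{fg:triangulation_g3}. Write the lengths of the bounded skeleton edges as differences of tropical coefficients, i.e.\ as lattice distances determined by the height function. Then show that the set of attainable $(u,v,w,x,y,z)$ is precisely the cone
\[\max\{x,y\}\le u,\quad \max\{x,z\}\le v,\quad y+z\le w.\]
The first two inequalities come, as in \cite{BJMS15}, from the cycles through $u$ and $v$ being bounded by the adjacent edges.

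\emph{Type (000): necessity.} This is the main obstacle. For each possible embedding of the $K_4$ skeleton into $P_1$ we must derive the inequalities. For a regular unimodular triangulation of $P_1$ realizing $K_4$:
\begin{itemize}
\item The three interior points must be pairwise joined.
\item The $K_4$ vertex dual to the interior triangle $(1,1),(1,2),(2,1)$ is adjacent to the three other vertices.
\item Each of those vertices is dual to a triangle using one edge of the interior triangle and one boundary lattice point.
\end{itemize}
In $T_4$ the edge $(1,2)$–$(2,1)$ could see either $(3,1)$ or $(4,0)$-type points. In $P_1$ the cut corner leaves fewer admissible boundary points: the candidates on the slanted side are essentially $(3,1)$ and $(2,2)$, up to the symmetry swapping them. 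Enumerating these choices up to the symmetry of $P_1$ should reduce everything to the configuration of Figure \ref{fg:triangulation_g3}.

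In that configuration the edge $w$ is a concatenation of skeleton segments whose lengths are forced to be at least $y+z$. The reason is that the lattice width of $P_1$ in the direction normal to the slanted side is smaller than in $T_4$, so the segment $w$ must ``wrap around'' past the edges $y$ and $z$. I expect this inequality $y+z\le w$, and the case enumeration establishing that no other triangulation of $P_1$ realizes (000), to be where the real work lies. My first attempt is to compute the length of $w$ explicitly from the tropical coefficients at $(2,2),(3,1),(3,0)$ and compare it with $y+z$.
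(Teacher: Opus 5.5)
Your outline matches the paper's. The containment $P_1\subset T_4$ gives the list of types and necessity, and you even add the regularity argument the paper omits. The corner-triangle observation handles (020), (111), (212). The triangulation of Figure~\ref{fg:triangulation_g3} gives sufficiency for (000).

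The gap is the step you flag yourself: necessity of $y+z\le w$ for (000). The enumeration you propose for it rests on wrong premises. Put $A=(1,1)$, $B=(2,1)$, $C=(1,2)$.
\begin{itemize}
\item The apex of the triangle on $BC$ may a priori be any of $(3,1),(2,2),(1,3),(0,4)$, not essentially two points.
\item The only nontrivial lattice symmetry of $P_1$ is $(a,b)\mapsto(a,4-a-b)$. It swaps $A$ and $C$ and exchanges the bottom side with the slanted side; it does not exchange $(3,1)$ and $(2,2)$.
\item The cases cannot collapse to Figure~\ref{fg:triangulation_g3}. Write $\ell_q$ for the lattice length of the edge dual to $Bq$. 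That triangulation has $w=y+z+\ell_{(2,0)}+\ell_{(2,2)}$ with both extra terms positive. So it realizes exactly the strict inequalities, not the closed cone you announce. By contrast, the triangulation containing $AB(3,0)$, $B(3,0)(3,1)$, $BC(3,1)$ realizes $w=y+z$.
\item Your lattice-width heuristic points the wrong way. The slanted side of $P_1$ lies on $x+y=4$, just like the hypotenuse of $T_4$. What drops from $4$ to $3$ is the width in direction $(1,0)$.
\end{itemize}

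That last observation is the missing idea, and it handles all triangulations at once. Take any unimodular triangulation of $P_1$ whose skeleton has type (000). The segments $AB$, $BC$, $CA$ and the triangle $ABC$ occur, and $z$, $y$, $x$ are dual to $AB$, $BC$, $CA$. Also $w=\sum_q\ell_q$, with $q$ running over the boundary neighbours of $B$. The cell of $B$ is a closed polygon with primitive outer normals $q-B$, so
\[
z\,(A-B)+y\,(C-B)+\sum_q \ell_q\,(q-B)=0 .
\]
Its first coordinate reads $y+z=\sum_q \ell_q\,(q_1-2)\le \sum_q\ell_q=w$, because $q_1\le 3$ on $P_1$.

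The same balancing at $A$ gives $\max\{x,z\}\le v$, using both coordinates and $q_1,q_2\ge 0$. At $C$ it gives $\max\{x,y\}\le u$, using the first coordinate and the functional $q_1+q_2\le 4$. So necessity for (000) needs neither an enumeration nor \cite{BJMS15}. The paper itself only reads $y+z\le w$ off the single triangulation of Figure~\ref{fg:triangulation_g3}, so this is also the point its argument leaves implicit.
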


\subsection{Genus 4}\label{ssc:genus4}
In \cite{BJMS15}, the inequalities on the edge lengths imposed by the condition to be embeddable in the plane (with different degrees) are obtained computationally: by taking the secondary cone parametrizing all plane tropical curve dual to a given subdivision and computing the image of this cone inside the moduli space $\mathcal{M}_g$. The computational challenge in the case of tropical curves of genus $4$ is a lot bigger compared to genus $3$. For this reason, we do not go through all inequalities here. 
Nonetheless, we can provide necessary conditions on the edge lengths of an abstract tropical curve of genus $4$, of fixed maximal combinatorial type, that is realizable in $\mathbb F_0$ and $\mathbb F_2,$ respectively. 

From \cite[Table 3]{BJMS15}, notice first that there is no combinatorial type that is realizable in $\mathbb F_2$ and not in $\mathbb F_0$. In the following, we will consider combinatorial types of genus $4$ tropical curves of maximal combinatorial type realizable in both $\mathbb F_0$ and $\mathbb F_2$, these are represented in Figure \ref{fg:genus4_maxtype}.

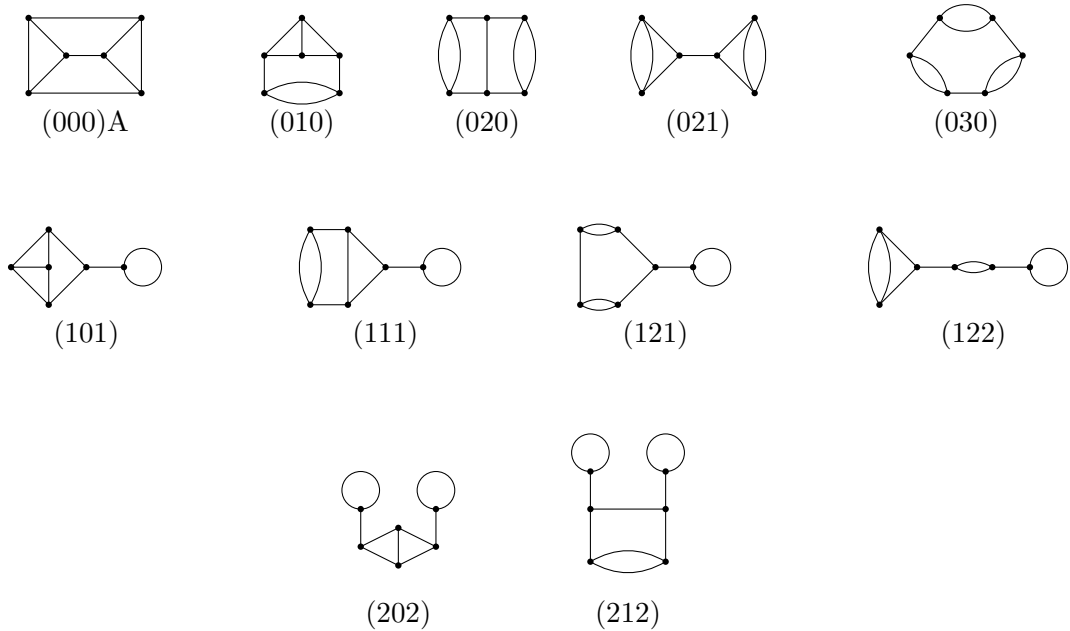
\begin{figure}[h]
\centering
\begin{tikzcd}
    \begin{tikzpicture}
        \coordinate (1) at (0,0);
        \coordinate (2) at (0,1);
        \coordinate (3) at (0.5,0.5);
        \coordinate (4) at (1,0.5);
        \coordinate (5) at (1.5,0);
        \coordinate (6) at (1.5,1);
        \draw (1)--(2);
        \draw (1)--(3);
        \draw (1)--(5);
        \draw (3)--(4);
        \draw (3)--(2);
        \draw (4)--(5);
        \draw (4)--(6);
        \draw (5)--(6);
        \draw (6)--(2);
        \foreach \i in {1,2,3,4,5,6}{
            \vertex{\i}
        }
        \draw (0.75,-0.5) node[] {(000)A};
    \end{tikzpicture}&
    \begin{tikzpicture}
        \coordinate (1) at (0,0);
        \coordinate (2) at (1,0);
        \coordinate (3) at (0,0.5);
        \coordinate (4) at (1,0.5);
        \coordinate (5) at (0.5,0.5);
        \coordinate (6) at (0.5,1);
        \draw (1)--(3);
        \draw (2)--(4);
        \draw (3)--(5);
        \draw (4)--(5);
        \draw (6)--(5);
        \draw (4)--(6);
        \draw (3)--(6);
        \draw(1)[] to [out=30, in=150] (2);
        \draw(1)[] to [out=330, in=210] (2);
        \foreach \i in {1,2,3,4,5,6}{
            \vertex{\i}
        }
        \draw (0.5,-0.5) node[] {(010)};
    \end{tikzpicture}\qquad\quad
        \begin{tikzpicture}
        \coordinate (1) at (0,0);
        \coordinate (2) at (0.5,0);
        \coordinate (3) at (1,0);
        \coordinate (4) at (0,1);
        \coordinate (5) at (1,1);
        \coordinate (6) at (0.5,1);
        \draw (1)--(2);
        \draw (2)--(6);
        \draw (2)--(3);
        \draw (4)--(6);
        \draw (6)--(5);
        \draw(1)[] to [out=120, in=240] (4);
        \draw(1)[] to [out=60, in=300] (4);
        \draw(3)[] to [out=120, in=240] (5);
        \draw(3)[] to [out=60, in=300] (5);
        \foreach \i in {1,2,3,4,5,6}{
            \vertex{\i}
        }
        \draw (0.5,-0.5) node[] {(020)};
    \end{tikzpicture}\qquad\quad
    \begin{tikzpicture}
        \coordinate (1) at (0,0);
        \coordinate (2) at (0.5,0.5);
        \coordinate (3) at (1.5,0);
        \coordinate (4) at (0,1);
        \coordinate (5) at (1.5,1);
        \coordinate (6) at (1,0.5);
        \draw (1)--(2);
        \draw (2)--(6);
        \draw (6)--(3);
        \draw (4)--(2);
        \draw (6)--(5);
        \draw(1)[] to [out=120, in=240] (4);
        \draw(1)[] to [out=60, in=300] (4);
        \draw(3)[] to [out=120, in=240] (5);
        \draw(3)[] to [out=60, in=300] (5);
        \foreach \i in {1,2,3,4,5,6}{
            \vertex{\i}
        }
        \draw (0.75,-0.5) node[] {(021)};
    \end{tikzpicture}&
    \begin{tikzpicture}
        \coordinate (1) at (-0.5,0.5);
        \coordinate (2) at (0,0);
        \coordinate (3) at (0.5,0);
        \coordinate (4) at (1,0.5);
        \coordinate (5) at (0.6,1);
        \coordinate (6) at (-0.1,1);
        \draw (2)--(3);
        \draw (1)--(6);
        \draw (4)--(5);
        \draw(1)[] to [out=350, in=100] (2);
        \draw(1)[] to [out=280, in=170] (2);
        \draw(3)[] to [out=10, in=260] (4);
        \draw(3)[] to [out=80, in=190] (4);
        \draw(6)[] to [out=60, in=120] (5);
        \draw(6)[] to [out=300, in=240] (5);
        \foreach \i in {1,2,3,4,5,6}{
            \vertex{\i}
        }
        \draw (0.25,-0.5) node[] {(030)};
    \end{tikzpicture}\\ 
    \begin{tikzpicture}
        \coordinate (1) at (-0.5,0.5);
        \coordinate (2) at (0,0);
        \coordinate (3) at (0,0.5);
        \coordinate (4) at (0,1);
        \coordinate (5) at (0.5,0.5);
        \coordinate (6) at (1,0.5);
        \draw (1)--(2);
        \draw (1)--(3);
        \draw (1)--(4);
        \draw (4)--(2);
        \draw (4)--(5);
        \draw (2)--(5);
        \draw (6)--(5);
        \draw (1.25,0.5) circle (0.25);
        \foreach \i in {1,2,3,4,5,6}{
            \vertex{\i}
        }
        \draw (0.5,-0.5) node[] {(101)};
    \end{tikzpicture}&
    \begin{tikzpicture}
        \coordinate (1) at (-0.5,0);
        \coordinate (2) at (0,0);
        \coordinate (3) at (-0.5,1);
        \coordinate (4) at (0,1);
        \coordinate (5) at (0.5,0.5);
        \coordinate (6) at (1,0.5);
        \draw (1)--(2);
        \draw (3)--(4);
        \draw (4)--(2);
        \draw (4)--(5);
        \draw (2)--(5);
        \draw (6)--(5);
        \draw(1)[] to [out=120, in=240] (3);
        \draw(1)[] to [out=60, in=300] (3);
        \draw (1.25,0.5) circle (0.25);
        \foreach \i in {1,2,3,4,5,6}{
            \vertex{\i}
        }
        \draw (0.5,-0.5) node[] {(111)};
    \end{tikzpicture}\qquad\qquad
    \begin{tikzpicture}
        \coordinate (1) at (-0.5,0);
        \coordinate (2) at (0,0);
        \coordinate (3) at (-0.5,1);
        \coordinate (4) at (0,1);
        \coordinate (5) at (0.5,0.5);
        \coordinate (6) at (1,0.5);
        \draw (4)--(5);
        \draw (2)--(5);
        \draw (6)--(5);
        \draw(1)--(3);
        \draw (1.25,0.5) circle (0.25);
        \draw(3)[] to [out=30, in=150] (4);
        \draw(3)[] to [out=330, in=210] (4);
        \draw(1)[] to [out=30, in=150] (2);
        \draw(1)[] to [out=330, in=210] (2);
        \foreach \i in {1,2,3,4,5,6}{
            \vertex{\i}
        }
        \draw (0.5,-0.5) node[] {(121)};
    \end{tikzpicture}&
    \begin{tikzpicture}
        \coordinate (1) at (-0.5,0);
        \coordinate (2) at (0,0.5);
        \coordinate (3) at (-0.5,1);
        \coordinate (4) at (0.5,0.5);
        \coordinate (5) at (1,0.5);
        \coordinate (6) at (1.5,0.5);
        \draw (3)--(2);
        \draw (1)--(2);
        \draw (2)--(4);
        \draw (5)--(6);
        \draw(1)[] to [out=120, in=240] (3);
        \draw(1)[] to [out=60, in=300] (3);
        \draw (1.75,0.5) circle (0.25);
        \draw(4)[] to [out=30, in=150] (5);
        \draw(4)[] to [out=330, in=210] (5);
        \foreach \i in {1,2,3,4,5,6}{
            \vertex{\i}
        }
        \draw (0.75,-0.5) node[] {(122)};
    \end{tikzpicture}
 \\
    &\begin{tikzpicture}
        \coordinate (1) at (-0.5,0.5);
        \coordinate (2) at (0,0.25);
        \coordinate (3) at (0.5,0.5);
        \coordinate (4) at (0,0.75);
        \coordinate (5) at (-0.5,1);
        \coordinate (6) at (0.5,1);
        \draw (4)--(2);
        \draw (1)--(2);
        \draw (1)--(4);
        \draw (3)--(4);
        \draw (3)--(2);
        \draw (1)--(5);
        \draw (3)--(6);
        \draw (-0.5,1.25) circle (0.25);
        \draw (0.5,1.25) circle (0.25);
        \foreach \i in {1,2,3,4,5,6}{
            \vertex{\i}
        }
        \draw (0,-0.5) node[] {(202)};
    \end{tikzpicture}\qquad\qquad
    \begin{tikzpicture}
        \coordinate (1) at (0,0.3);
        \coordinate (2) at (1,0.3);
        \coordinate (3) at (0,1);
        \coordinate (4) at (1,1);
        \coordinate (5) at (0,1.5);
        \coordinate (6) at (1,1.5);
        \draw (4)--(2);
        \draw (1)--(3);
        \draw (3)--(4);
        \draw (3)--(5);
        \draw (4)--(6);
        \draw (0,1.75) circle (0.25);
        \draw (1,1.75) circle (0.25);
        \foreach \i in {1,2,3,4,5,6}{
            \vertex{\i}
        }
        \draw (0.5,-0.5) node[] {(212)};
        \draw(1)[] to [out=30, in=150] (2);
        \draw(1)[] to [out=330, in=210] (2);
    \end{tikzpicture}&
\end{tikzcd}\caption{Maximal combinatorial types of genus $4$ curves realizable in $\mathbb F_0$ and $\mathbb F_2$.}\label{fg:genus4_maxtype}
\end{figure}

For each of the combinatorial type we can study the necessary conditions that the edge lengths of corresponding tropical curve have to satisfy in order to be embedded in $\mathbb F_0$ and $\mathbb F_2,$ respectively.
Let us first observe that, when studying an embedded tropical curve, we have the following correspondences between the combinatorial graph and the triangulation realizing the curve:
\begin{align*}\text{cycle}&\leftrightarrow \text{interior point}\\
\text{two adjacent cycles}&\leftrightarrow \text{1-simplex joining two interior points}\\
\text{cycles connected via a bridge}&\leftrightarrow \text{1-simplex separating two interior points.}\end{align*}

Such relations will partially determine the corresponding triangulation and this will allow us to determine necessary edge lengths conditions.
We show this for the combinatorial type (000)A. For all the other combinatorial type in Figure \ref{fg:genus4_maxtype} we refer to Appendix \ref{sc:appendix}.

Let us consider first the embedding in $\mathbb F_0$. The incidence correspondences between the cycles in the combinatorial graph force the triangulation to be as in Figure \ref{fg:(000)F_0}.
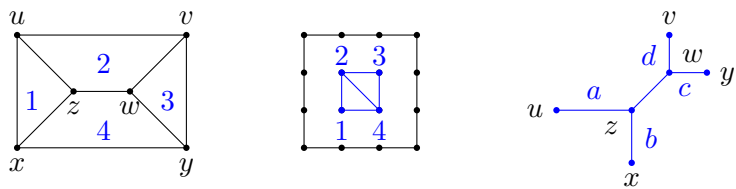
\begin{figure}[h]
\begin{tikzcd}
    \begin{tikzpicture}
        \coordinate (1) at (0,0);
        \coordinate (2) at (0,1.5);
        \coordinate (3) at (0.75,0.75);
        \coordinate (4) at (1.5,0.75);
        \coordinate (5) at (2.25,0);
        \coordinate (6) at (2.25,1.5);
        \draw (1)--(2);
        \draw (1)--(3);
        \draw (1)--(5);
        \draw (3)--(4);
        \draw (3)--(2);
        \draw (4)--(5);
        \draw (4)--(6);
        \draw (5)--(6);
        \draw (6)--(2);
        \foreach \i in {1,2,3,4,5,6}{
            \vertex{\i}
        }
        \draw[blue] (0.2,0.5) node[] {1};
        \draw[blue] (2,0.5) node[] {3};
        \draw[blue] (1.15,1) node[] {2};
        \draw[blue] (1.15,0.1) node[] {4};
        \draw (0,0) node[anchor=north] {$x$};
        \draw (2.25,0) node[anchor=north] {$y$};
        \draw(0,1.5) node[anchor=south] {$u$};
        \draw (2.25,1.5) node[anchor=south] {$v$};
        \draw(0.75,0.3) node[anchor=south] {$z$};
        \draw (1.5,0.3) node[anchor=south] {$w$};
    \end{tikzpicture}&
    \begin{tikzpicture}
        \foreach \i in {0,0.5,1,1.5}{
            \foreach \j in {0,0.5,1,1.5}{
            \vertex{\i,\j}
            }
    }
    \vertexblue{0.5,0.5}\vertexblue{1,0.5}
    \vertexblue{1,1}
    \vertexblue{0.5,1}
    \draw(0,0)--(1.5,0);
    \draw(0,0)--(0,1.5);
    \draw(1.5,1.5)--(0,1.5);
    \draw(1.5,1.5)--(1.5,0);
    \draw[blue] (0.5,0.1) node[] {1};
    \draw[blue] (1,0.1) node[] {4};
    \draw[blue] (0.5,1.1) node[] {2};
    \draw[blue] (1,1.1) node[] {3};
    \draw[blue] (0.5,0.5)--(1,0.5);
    \draw[blue] (0.5,1)--(1,1);
    \draw[blue] (0.5,1)--(1,0.5);
    \draw[blue] (0.5,1)--(0.5,0.5);
    \draw[blue] (1,1)--(1,0.5);
    \end{tikzpicture}&
    \begin{tikzpicture}
    \draw[blue](0.5,0.5)--(1,1);
    \draw[blue](0.5,0.5)--(0.5,-0.2);
    \draw[blue](0.5,0.5)--(-0.5,0.5);
    \draw[blue](1,1)--(1.5,1);
    \draw[blue](1,1)--(1,1.5);
    \vertexblue{-0.5,0.5}\vertexblue{0.5,-0.2}
    \vertexblue{0.5,0.5}
    \vertexblue{1,1}
    \vertexblue{1.5,1}
    \vertexblue{1,1.5}
    \draw[] (0.5,0.5) node[anchor=north east] {$z$};
    \draw[] (1,1) node[anchor=south west] {$w$};
    \draw[] (-0.5,0.5) node[anchor= east] {$u$};
    \draw[] (0.5,-0.2) node[anchor= north] {$x$};
    \draw[] (1.5,1) node[anchor= west] {$y$};
    \draw[] (1,1.5) node[anchor= south] {$v$};
    \draw[blue] (0,0.5) node[anchor= south] {$a$};
    \draw[blue] (0.5,0.1) node[anchor= west] {$b$};
    \draw[blue] (1.2,1) node[anchor= north] {$c$};
    \draw[blue] (1,1.2) node[anchor= east] {$d$};
    \end{tikzpicture}
    \end{tikzcd}\caption{Combinatorial type, partial triangulation of $\mathbb F_0$ and partial embedded tropical curve.}\label{fg:(000)F_0}
    \end{figure}

The lengths of the edges $a,b,c,d$ are non-trivial which forces the edge $zw$ plus either $a$ or $d,$ resp. $b$ or $c$, to have length strictly smaller than that of $uv$, resp. $xy.$

Instead, if we consider the embedding in $\mathbb F_2,$ the triangulation has to be as the one represented in Figure \ref{fg:(000)F_2}.

\begin{figure}[h]
\begin{tikzcd}
    \begin{tikzpicture}
        \foreach \j in {0,0.5,1,1.5,2,2.5,3}{
            \vertex{0,\j}
        }
        \foreach \j in {0,0.5,1,1.5,2}{
            \vertex{0.5,\j}
        }
        \foreach \j in {0,0.5,1}{
            \vertex{1,\j}
        }
        \vertex{1.5,0}
        \draw (0,0)--(1.5,0);
        \draw (0,3)--(1.5,0);
        \draw (0,3)--(0,0);
    \vertexblue{0.5,0.5}\vertexblue{1,0.5}
    \vertexblue{0.5,1.5}
    \vertexblue{0.5,1}
    \draw[blue] (0.5,0.1) node[] {1};
    \draw[blue] (1,0.1) node[] {4};
    \draw[blue] (0.5,1.1) node[anchor=east] {2};
    \draw[blue] (0.5,1.6) node[anchor=east] {3};
    \draw[blue] (0.5,0.5)--(1,0.5);
    \draw[blue] (0.5,1)--(0.5,1.5);
    \draw[blue] (0.5,1)--(1,0.5);
    \draw[blue] (0.5,1)--(0.5,0.5);
    \draw[blue] (0.5,1.5)--(1,0.5);
    \end{tikzpicture}&
    \begin{tikzpicture}
    \draw[blue](0.5,0.5)--(1.5,1.5);
    \draw[blue](0.5,0.5)--(0.5,-0.2);
    \draw[blue](0.5,0.5)--(-0.5,0.5);
    \draw[blue](1.5,1.5)--(0.5,1.5);
    \draw[blue](1.5,1.5)--(2.5,2);
    \vertexblue{-0.5,0.5}\vertexblue{0.5,-0.2}
    \vertexblue{0.5,0.5}
    \vertexblue{1.5,1.5}
    \vertexblue{2.5,2}
    \vertexblue{0.5,1.5}
    \draw[] (0.5,0.5) node[anchor=north east] {$z$};
    \draw[] (1.5,1.5) node[anchor=south] {$w$};
    \draw[] (-0.5,0.5) node[anchor= east] {$u$};
    \draw[] (0.5,-0.2) node[anchor= north] {$x$};
    \draw[] (2.5,2) node[anchor= west] {$y$};
    \draw[] (0.5,1.5) node[anchor= east] {$v$};
    \draw[blue] (0.5,0.1) node[anchor= west] {$b$};
    \draw[blue] (2,1.8) node[anchor= north] {$c$};
    \end{tikzpicture}
    \end{tikzcd}\caption{Partial triangulation of $\mathbb F_2$ and partial embedded tropical curve.}\label{fg:(000)F_2}
    \end{figure}
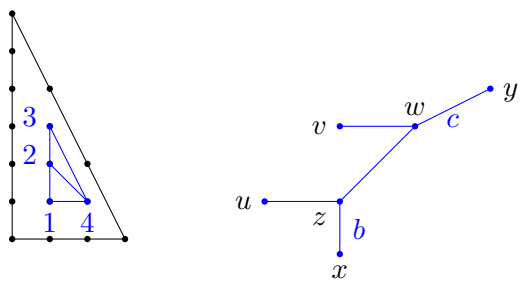
Again, the edges $b,c$ have non-trivial length, therefore $wz+wy<xy$ and $wz+xz<xy$. 
Moreover, the points $u,z$ share the same $y$-coordinates, and similarly the points $v,w$ also have the same $y$-coordinate. 
Then any triangulation completing the partial one represented in Figure \ref{fg:(000)F_2}, will yield edges connecting $u$ to $v$ with direction vector $(\alpha, 1),$ for some $\alpha \in\mathbb Z_{\geq 0}$, therefore $wz=uv$. 

This proves the following.
\begin{lemma}
    Let $\Gamma$ be a tropical curve of genus $4$ of combinatorial type (000)A.
    \begin{itemize}
        \item If $\Gamma$ is realizable in $\mathbb F_0$, then its edge lengths satisfy, up to symmetry:
        \begin{equation}
            \begin{cases}
                wz+vw<uv,\\
                wz+uz<uv,\\
                wz+wy<xy,\\
                wz+xz<xy.
            \end{cases}
        \end{equation}
        \item If $\Gamma$ is realizable in $\mathbb F_2$, then its edge lengths satisfy, up to symmetry:
        \begin{equation}
            \begin{cases}
                wz=uv,\\
                wz+wy<xy,\\
                wz+xz<xy.
            \end{cases}
        \end{equation}
    \end{itemize}
\end{lemma}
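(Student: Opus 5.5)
The plan is to read the edge-length relations off the smooth plane tropical curve that realizes $\Gamma$. We use that the forgetful map preserves lengths, measured as Euclidean length divided by the norm of the primitive direction vector. We also use that the cycle structure of (000)A pins down part of the dual unimodular triangulation.

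\textbf{Step 1: the interior points.} Label the four cycles of (000)A by $1,2,3,4$ as in Figure~\ref{fg:(000)F_0}. By the correspondences listed before the statement, each cycle is dual to an interior lattice point. Two cycles sharing an edge are dual to a unimodular segment joining the two interior points.
\begin{itemize}
\item In $\mathbb F_0$ the interior points form the $2\times 2$ square $\{1,2\}^2$ inside $[0,3]^2$.
\item In $\mathbb F_2$ they are $(1,1),(2,1),(1,2),(1,3)$ inside the triangle with vertices $(0,0)$, $(3,0)$, $(0,6)$.
\end{itemize}
The adjacency graph of the cycles in (000)A is a 4-cycle with one diagonal. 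Up to lattice symmetries of the polygon, this forces the blue segments shown in Figures~\ref{fg:(000)F_0} and~\ref{fg:(000)F_2}. In particular the diagonal $zw$ is dual to the unique diagonal segment.

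\textbf{Step 2: the inequalities.} The trivalent vertices $z,w$ have, besides $zw$, edges leaving towards the vertices $u,x$ (resp.\ $v,y$) of the outer cycles. In the dual triangulation, the triangles adjacent to the blue diagonal have third vertices on the boundary. The edges dual to the segments from these boundary points to the interior points are the pieces $a,b,c,d$ of Figure~\ref{fg:(000)F_0}, and they have positive length.

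Now compare the long edge $uv$ of $\Gamma$, which runs along the outside of cycles $1$ and $2$, with the path through $z$ and $w$. The cycle edges dual to the segments joining $(1,1)$–$(1,2)$ and $(1,2)$–$(2,2)$ are straight axis-parallel segments. So the length of $uv$ equals the sum of the projections of $wz$ and $vw$ (resp.\ $uz$) onto that axis, plus the nonzero segment $a$ or $d$. This gives $wz+vw<uv$, $wz+uz<uv$, and symmetrically the two inequalities for $xy$.

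For $\mathbb F_2$, the same argument with the vertical segments dual to $b$ and $c$ gives the two inequalities on $xy$.

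\textbf{Step 3: the equality in $\mathbb F_2$.} Here the segments $(1,1)$–$(1,2)$ and $(1,2)$–$(1,3)$ are vertical. Hence the dual edges through $u,z$ and through $v,w$ are horizontal, so $u$ and $z$ share a $y$-coordinate, and so do $v$ and $w$. The edge $uv$ is dual to segments from the left boundary column $x=0$ to the interior point $(1,2)$, i.e.\ with direction $(1,k)$. Its dual edges therefore have primitive direction $(\alpha,1)$ up to sign, exactly as $zw$ does, since $zw$ is dual to the segment $(1,2)$–$(2,1)$. Since both have $y$-component $1$ and the same vertical extent, the lattice lengths coincide, giving $wz=uv$.

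\textbf{Main obstacle.} The delicate step is Step 1 together with the accompanying length bookkeeping. We must justify that every completion of the triangulation has the claimed local shape. In particular we must check that the edge $uv$ is a single chain whose segments all have $y$-component $1$; possible bends inside $uv$ would need a case check over the boundary points adjacent to $(1,2)$ and $(1,3)$.
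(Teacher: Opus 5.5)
Your overall route is the paper's: the cycle adjacencies force the partial triangulations of Figures~\ref{fg:(000)F_0} and~\ref{fg:(000)F_2}, and you read the relations off by comparing coordinates of $u,v,x,y,z,w$. Your Step~3 is exactly the paper's argument for $wz=uv$ in $\mathbb F_2$. The ``main obstacle'' you flag there is not one: each piece of $uv$ is dual to a segment from $(1,2)$ to some $(0,k)$, so it has $y$-component $\pm1$ with a fixed sign, and bends do not matter.

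The gap is Step~2. Its geometric description is off. The triangles on the two sides of the diagonal $(1,2)$--$(2,1)$ have third vertices $(1,1),(2,2)$ (resp.\ $(1,1),(1,3)$), which are interior. The pieces $a,b,c,d$ in Figure~\ref{fg:(000)F_0} are the edges $zu,zx,wy,wv$. More importantly, your key claim that $uv$ equals a projection plus a nonzero piece is never justified.

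Done correctly, the comparison in $\mathbb F_0$ reads as follows. The edge $uv$ is a chain of edges dual to segments from $(1,2)$ to points $(j,3)$ or $(0,k)$, with directions $(-1,j-1)$ or $(2-k,-1)$. It must move $uz+zw$ to the left and $zw+wv$ down, and no piece moves more than one unit left, or one unit down, per unit of lattice length. This gives only $uv\ge\max(uz+zw,\,zw+wv)$, and the slack can vanish. Complete the blue segments by the segments from $(0,3)$ to $(1,1),(1,2),(2,2)$, the segments from $(3,0)$ to $(1,1),(2,1),(2,2)$, and fans from $(1,1)$ and $(2,2)$ to the remaining boundary points; this is a regular unimodular triangulation. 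Then $uv$ is a single edge of direction $(1,1)$, so $uv=uz+zw=zw+wv$, and likewise $xy=xz+zw=zw+wy$. So the strict inequalities for $\mathbb F_0$ cannot be obtained this way. The paper's one-line appeal to the positivity of $a,b,c,d$ has the same weakness; what the argument actually supports is the non-strict version.

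In $\mathbb F_2$ the inequalities are genuinely strict, but not by ``the same argument with the vertical segments dual to $b$ and $c$''. There $wy$ is dual to $(2,1)$--$(1,3)$ and has direction $(2,1)$. Compare $y$-coordinates instead. The vertex $y$ lies $xz+zw+wy$ above $x$. Every piece of the chain $xy$ is dual to a segment from $(2,1)$ to a point of the bottom edge or of the hypotenuse, so its direction is $(1,j-2)$, $(-1,0)$, $(-3,-1)$ or $(-5,-2)$, with $y$-component at most $1$. Hence $xy\ge xz+zw+wy$, which gives both strict inequalities because $xz,wy>0$.
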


As observed in the beginning of Section \ref{ssc: genus3}, the integer $n$ for which a tropical curve of genus $3,4$ and of maximal combinatorial type is realizable in $\mathbb F_n$, satisfies the same conditions \eqref{def:Maroni} that the Maroni invariant of an algebraic curve of the same genus has to satisfy.
Moreover, the above result (and the ones in Appendix \ref{sc:appendix}) also show that, for any of the combinatorial types that we have considered, the edge length conditions for the realizability in $\mathbb F_0$ are not compatible with the ones for the realizability in $\mathbb F_2.$ In other words, an abstract tropical curve of genus $4$ (and $3$) cannot be embedded simultaneously in Hirzebruch surfaces of different degrees.

\begin{definition}\label{def:Maroni}
    Let $\Gamma$ be a tropical curve of genus $3,4$ of maximal combinatorial type that is realizable in $\mathbb F_n$. Then the integer $n$ is called the \emph{tropical Maroni invariant} of $\Gamma$.
\end{definition}

Jensen and Lehmann in \cite{JL} have defined \emph{scrollar invariants}, or more precisely \emph{composite scrollar invariants}, for tropical curves. In the algebraic case, the difference between the second and the first composite scrollar invariants for a trigonal curve determines the Maroni invariant. Therefore one could attempt to define the Maroni invariant for tropical curves via composite scrollar invariants in the same way, as the difference between the second (which is always equal to the algebraic one for gonality $3$) and the first one. More precisely, from \cite{JL}, given a metric graph $\Gamma$ and a divisor $D$ of rank $1$ on $\Gamma,$ the difference between the second and first composite scrollar invariants is 
$\sigma_2(\Gamma,D)-\sigma_1(\Gamma,D)=m+1,$ where $m$ is the smaller integer such that $K_{\Gamma}-mD$ is not effective. Here $K_{\Gamma}$ denotes the canonical divisor on $\Gamma.$ 
It is also a consequence that of \cite[Theorem 1.1]{JL} that if $\Gamma$ is the skeleton of a trigonal algebraic curve $X,$ then the above number defines also an upper bound for the Maroni invariant of $X.$

However, we do not expect such a number to be related to the tropical Maroni invariant that we have defined above. Indeed, the integer $n$ clearly depends on the divisor $D$ on the tropical curve, which, unlike the algebraic case, is not unique: a tropical curve, realizable in some Hirzebruch surface, might admit two non-linearly independent divisors of degree $3$ and rank $1,$ which might define different composite scrollar invariants.

\section{Obstructions for smooth embeddings into $\mathbb{R}^2$}\label{sec-obstructions}

In Proposition \ref{lm:realizability_cover} we have shown that if a tropical curve $\Gamma$ is realizable in $\mathbb F_n,$ then the natural projection from $\mathbb F_n$ to the tropical line induces a degree $3$ well-contracted cover from $\Gamma$, up to tropical modifications.
We would like now to understand the opposite direction of this statemenent, when restricting to genus $g$ curves of maximal combinatorial type, when $g=3,4$.

For instance, if we consider genus $3$ tropical curves, we know from the previous section that those with combinatorial type in Figure \ref{fg:genus3_maxtype} are realizable, up to some edge length conditions. Indeed, each of these curves admits, up to tropical modifications, a well-contracted degree $3$ cover, as in Figure \ref{fig:g3_tropical_covers}.

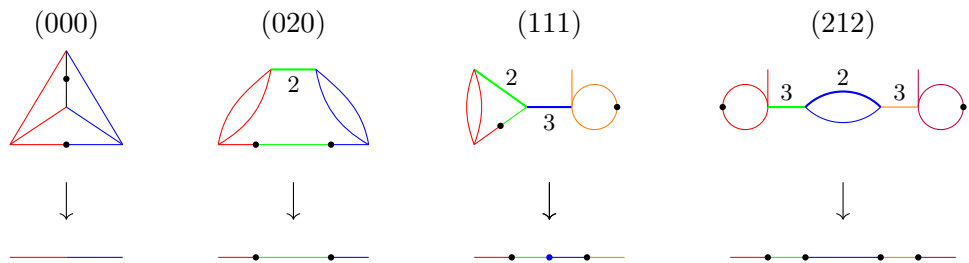
\begin{figure}[h]
        \centering
        \begin{tikzcd}
            \begin{tikzpicture}
                \draw[](0,0)--(0,0.75);\vertex{0,0.375}
                \draw[red](-0.75,-0.5)--(0,0);
                \draw[blue](0.75,-0.5)--(0,0);
                \draw[blue](0.75,-0.5)--(0,0.75);
                \draw[red](-0.75,-0.5)--(0,0.75);
                \draw[red](-0.75,-0.5)--(0,-0.5);
                \draw[blue](0.75,-0.5)--(0,-0.5);
                \vertex{0,-0.5}
                \draw(0,1) node{(000)};
                \draw[->](0,-1)--(0,-1.5);
                \draw[red](-0.75,-2)--(0,-2);
                \draw[blue](0.75,-2)--(0,-2);
            \end{tikzpicture}&
            \begin{tikzpicture}
                \draw[red](-1,-0.5)--(-0.5,-0.5);
                \draw[green](0.5,-0.5)--(-0.5,-0.5);
                \draw[blue](1,-0.5)--(0.5,-0.5);\vertex{-0.5,-0.5}\vertex{0.5,-0.5}
                \draw[thick, green](-0.3,0.5)--(0.3,0.5);
                \draw(0,0.2) node{\footnotesize{2}};
                \draw(0,1) node{(020)};
                \draw[->](0,-1)--(0,-1.5);
                \draw[red](-1,-0.5) to [out=80, in=220](-0.3,0.5);
                \draw[red](-1,-0.5) to [out=30, in=260](-0.3,0.5);
                \draw[blue](1,-0.5) to [out=100, in=320](0.3,0.5);
                \draw[blue](1,-0.5) to [out=150, in=280](0.3,0.5);
                \draw[red](-1,-2)--(-0.5,-2);
                \draw[green](0.5,-2)--(-0.5,-2);
                \draw[blue](1,-2)--(0.5,-2);\vertex{-0.5,-2}\vertex{0.5,-2}
            \end{tikzpicture}&
            \begin{tikzpicture}
                \draw[thick, green](-1,0.5)--(-0.3,0);
                \draw(-0.5,0.3) node{\footnotesize{2}};
                \draw[green](-0.65,-0.25)--(-0.3,0);
                \draw[red](-1,-0.5)--(-0.65,-0.25);
                \vertex{-0.65,-0.25}
                \draw[thick, blue](-0.3,0)--(0.3,0);
                \draw(0,-0.3) node{\footnotesize{3}};
                \draw[orange](0.6,0) circle (0.3);\vertex{0.9,0}
                \draw[orange] (0.3,0)--(0.3,0.5) ;
                \draw[->](0,-1)--(0,-1.5);
                \draw[red](-1,-0.5) to [out=70, in=290](-1,0.5);
                \draw[red](-1,-0.5) to [out=110, in=250](-1,0.5);
                \draw(0,1) node{(111)};
                \draw[->](0,-1)--(0,-1.5);
                \draw[red](-1,-2)--(-0.5,-2);
                \draw[green](0,-2)--(-0.5,-2);
                \draw[blue](0.5,-2)--(0,-2);
                \draw[orange](1,-2)--(0.5,-2);\vertex{-0.5,-2}\vertexblue{0,-2}\vertex{0.5,-2}
                
            \end{tikzpicture}&\begin{tikzpicture}
                \draw[red](-1.3,0) circle (0.3);\draw[red](-1,0)--(-1,0.5);\vertex{-1.6,0}
                \draw[thick, green](-1,0)--(-0.5,0);
                \draw(-0.75,0.1) node{\footnotesize{3}};
                \draw[purple](1.3,0) circle (0.3);\draw[purple](1,0)--(1,0.5);\vertex{1.6,0}
                \draw[orange](1,0)--(0.5,0);
                \draw(0.75,0.1) node{\footnotesize{3}};
                \draw[thick, blue] (-0.5,0) to [out=45, in=135] (0.5,0);
                \draw(0,0.3) node{\footnotesize{2}};
                \draw[blue] (-0.5,0) to [out=315, in=225] (0.5,0);
                \draw(0,1) node{(212)};
                \draw[->](0,-1)--(0,-1.5);
                \draw[red](-1.5,-2)--(-1,-2);
                \draw[green](-0.5,-2)--(-1,-2);
                \draw[blue](-0.5,-2)--(0.5,-2);
                \draw[orange](0.5,-2)--(1,-2);
                \draw[purple](1.5,-2)--(1,-2);
                \vertex{-1,-2}\vertex{-0.5,-2}\vertex{0.5,-2}\vertex{1,-2}
            \end{tikzpicture}
        \end{tikzcd}
        \caption{Examples of degree $3$ well-contracted covers.}
        \label{fig:g3_tropical_covers}
    \end{figure}

Let us instead consider the only maximal combinatorial type of a curve of genus $3,$ not realizable in $\mathbb R^2,$ that is the graph defined by three disjoint loops, each with an edge and the three edges meeting at a common vertex.
One can check that no tropical curve, whose canonical model has such a combinatorial type, admits a degree $3$ well-contracted cover. This will be made more precise later.

Also genus $4$ tropical curves of maximal combinatorial type whose underlying graph is planar follow the same behavior: the ones which are not realizable in $\mathbb F_n,$ do not admit a degree 3 well-contracted cover, up to tropical modifications.

\begin{theorem}\label{th:realizability_genus_3_4}
   Let $\Gamma$ be a tropical curve of genus $g=3,4$ of planar maximal combinatorial type. Then (a tropical modification of) $\Gamma$ admits a degree $3$ well-contracted cover if and only if $\Gamma$ is realizable in $\FF_n$, up to contractions not changing the combinatorial type.
\end{theorem}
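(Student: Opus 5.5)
The two implications are of different nature, and I will prove them separately.

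The direction ``realizable $\Rightarrow$ cover'' follows from Proposition \ref{lm:realizability_cover}. If $\Gamma$ (or a curve of the same combinatorial type obtained by contracting edges) is smoothly embedded in $\FF_n$, then the vertical projection restricted to the embedded curve is a degree $3$ well-contracted cover of a tropical modification. Here ``up to contractions not changing the combinatorial type'' means that an embedding may shorten some bridges or leaf-like parts of the canonical model without altering the type. One must check that the cover then transfers back to $\Gamma$: rescale the contracted edges, and observe that a metric change of a non-contracted edge is absorbed by changing the length of its image edge in the path. This works because harmonicity and the Riemann--Hurwitz condition are purely combinatorial.

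The direction ``cover $\Rightarrow$ realizable'' is a case analysis over the finite list of planar maximal combinatorial types of genus $3$ and $4$. For genus $3$, the planar types are exactly (000), (020), (111), (212) of Figure \ref{fg:genus3_maxtype}, together with the non-planar-embeddable type consisting of three loops attached by bridges to a common trivalent vertex. For genus $4$, I would take the planar types from \cite[Table 3]{BJMS15} and split them into two groups.
\begin{itemize}
\item The types of Figure \ref{fg:genus4_maxtype}, which are realizable in $\FF_0$ or $\FF_2$ on a full-dimensional set of lengths.
\item The remaining planar types that are not realizable in any $\FF_n$.
\end{itemize}
For types in the first group, I would construct the embedding explicitly from the cover. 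The path target is subdivided by the images of vertices, and the fibre structure over each segment is a partition of $3$: $(1,1,1)$, $(2,1)$ or $(3)$. This data determines a column-wise layout of the unimodular triangulation of the polygon in Figure \ref{fig-polygon}. The interior lattice points lie in the columns $x=1,2$, and each cycle of $\Gamma$ must sit over a segment where two sheets are distinguished. The adjacency relations (cycle $\leftrightarrow$ interior point, etc.) then fix the triangulation, as in Figures \ref{fg:(000)F_0} and \ref{fg:(000)F_2}. The remaining edge lengths are matched by contracting edges that project to points, which is where the ``up to contractions'' clause is used.

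For the types not realizable in any $\FF_n$ (the second group, together with the three-loops type in genus $3$), I would show that no degree $3$ well-contracted cover exists. The plan is to use the combinatorics of the fibres.
\begin{itemize}
\item Each loop attached by a bridge forces the bridge either to be contracted or to map with weight $3$. A bridge mapping with weight $3$ is a totally ramified point, and after it the path must continue on only one side. Hence at most two such ``end'' configurations can occur, one at each end of the path.
\item More generally, a separating edge whose removal leaves a positive-genus component on both sides must map with weight $3$ or be contracted.
\item Contraction is forbidden from destroying cycles, and non-degeneracy forbids contracting all edges at a vertex.
\end{itemize}
In the three-loops example, three positive-genus components hang off one vertex; this forces three ``ends'' of the path, which is impossible. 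For genus $4$, I would go through the list of bad types and apply the same counting of separating edges and of fibres of weight $3$.

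The main obstacle is this last step: making the non-existence argument uniform over the genus $4$ list, and correctly handling tropical modifications, since attached trees could in principle create extra room. I would first prove a lemma that any well-contracted cover of a modification restricts to one of the canonical model with the same target path, and only then do the case check.
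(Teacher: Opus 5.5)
\textbf{The gap.} Your non-existence half, which is the real content of ``cover $\Rightarrow$ realizable'', rests on two fibre claims that are false.

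First, a bridge to a pendant loop need not be contracted or of weight $3$. Take genus-$3$ type (111): parallel edges between $1$ and $4$, edges $1$--$2$ and $4$--$2$, bridge $2$--$6$, loop at $6$. Place $\varphi(4)<\varphi(1)<\varphi(2)<\varphi(6)$. Map the parallel edges, $1$--$2$ and $4$--$2$ with weight $1$, and the bridge with weight $2$. Fold the loop at its midpoint so that both halves go right with weight $1$, with a weight-$2$ leaf at the fold. Add a weight-$1$ leaf at $1$ going right, and leaves of total weight $3$ at $4$ going left. For suitable lengths this is a harmonic morphism: every fibre has degree $3$, nothing is contracted, and the target is a path. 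Since $2$--$6$ separates two positive-genus pieces, your second bullet fails too; two loops joined by a bridge even admit a cover with the bridge of weight $1$.

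So your count of ``ends'' does not establish non-existence, not even for the three-loops graph. Your proposed restriction lemma is also false: removing the balancing leaves above breaks harmonicity at vertex $1$.

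\textbf{What does work.} The argument behind Lemma \ref{lm:Forbidden1} is a degree count that works directly on the modified curve, since attached trees are just genus-$0$ branches. For a branch $\Gamma_i$ at a vertex $x$, harmonicity at the vertices of $\Gamma_i$ other than $x$ makes the fibre degree of $\varphi|_{\Gamma_i}$ constant on each side of $\varphi(x)$. An uncontracted cycle forces this degree to be $\ge 2$ over the interior of its image. Hence three positive-genus branches at one vertex, i.e.\ a sprawling node, give degree $\ge 4$ on some side.

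You also never determine which types must be excluded. They are (303) in genus $3$ and (213), (303), (314), (405) in genus $4$. All but genus-$4$ (303) contain a sprawling node. Genus-$4$ (303), a triangle with three pendant loops, needs one more observation. The triangle itself has degree $\ge 2$ over the interior of its image, so no vertex of the triangle can map there. Two of its vertices then share an endpoint, and both their pendant branches need degree $\ge 2$ beyond it.

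\textbf{Your first group.} This part is unnecessary, and its stated premise is false. The clause ``up to contractions not changing the combinatorial type'' makes the conclusion depend only on the type. The realizable metrics of a type form a cone (rescale the plane curve), so once that cone is nonempty, any metric can be shrunk into it. This is why the paper only needs to show that the never-realizable types admit no well-contracted cover.

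Moreover, several types in Figure \ref{fg:genus4_maxtype} are not realizable on a full-dimensional set of lengths. By the appendix lemmas, (101), (111), (121), (122), (202) and (212) are realizable in $\FF_0$ and $\FF_2$ only on loci cut out by linear equalities. So a triangulation built column by column from an arbitrary cover cannot reproduce the given metric.

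Finally, your transfer argument in the first paragraph is wrong. Edges lying over a common segment of the path must satisfy $\mu_\varphi(e)\,l(e)=l'(\varphi(e))$ simultaneously. So a single edge length cannot be changed just by rescaling its image.
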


\begin{proof}
    One direction of the statement is precisely Proposition \ref{lm:realizability_cover}. 
    In order to complete the proof is then sufficient to show that the tropical curves with maximal combinatorial types which are never realizable in $\FF_n$ cannot admit a well-contracted cover of degree 3, up to tropical modifications.
    From the result in Section \ref{sec-maroni}, the tropical curves which are not realizable in $\FF_n$ have combinatorial type denoted by (303) for genus $3$ and (213), (303), (314), (405) for genus $4$. 
    The combinatorial types (303) (for genus $3$), (213),(314) and (405) contain a forbidden pattern, called a \emph{sprawling node}, and we will see later in Lemma \ref{lm:Forbidden1} that containing such a pattern represents an obstruction to admitting well-contracted degree $3$ covers. Therefore we only need to check that a genus $4$ curve with combinatorial type (303) does not admit a well-contracted degree $3$ cover. This follows because the only degree $3$ tropical cover with such a graph (or a tropical modification of it) as a source is the one represented in Figure \ref{fig:(303)_tropical_cover}, which is not well-contracted. 
    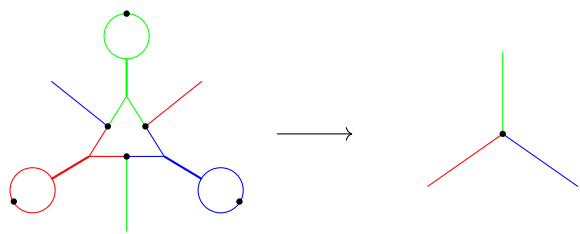
\begin{figure}[h]
            \centering
            \begin{tikzpicture}
                \draw[red](-0.5,0)--(0,0);
                \draw[blue](0,0)--(0.5,0);
                \draw[green](0,0)--(0,-1);
                \vertex{0,0}
                \draw[red](-0.5,0)--(-0.25,0.4);
                \draw[green](-0.25,0.4)--(0,0.8);
                \draw[blue](-0.25,0.4)--(-1,1);
                \draw[red](0.25,0.4)--(1,1);
                \vertex{-0.25,0.4}
                \draw[green](0.25,0.4)--(0,0.8);
                \draw[blue](0.25,0.4)--(0.5,0);\vertex{0.25,0.4}
                \draw[blue, thick](0.5,0)--(1,-0.3);
                \draw[red,thick](-0.5,0)--(-1,-0.3);
                \draw[green, thick](0,1.3)--(0,0.8);
                \draw[green](0,1.6) circle (0.3);
                \draw[red](-1.25,-0.45) circle (0.3);
                \draw[blue](1.25,-0.45) circle (0.3);
                \vertex{0,1.9}\vertex{-1.5,-0.6}\vertex{1.5,-0.6}
                \draw[->](2,0.3)--(3,0.3);
                \draw[green](5,0.3)--(5,1.4);
                \draw[blue](5,0.3)--(6,-0.4);
                \draw[red](5,0.3)--(4,-0.4);\vertex{5,0.3}
            \end{tikzpicture}
            \caption{A degree $3$ tropical cover, which is not well-contracted.}
            \label{fig:(303)_tropical_cover}
        \end{figure}
\end{proof}

Let us observe that in the above theorem we require the underlying graph of the tropical curves to be planar. This is because of the unique non-planar graph of genus $4$, whose combinatorial type is (000)B, as denoted in \cite{BJMS15}. A tropical curve of combinatorial type (000)B does admit a well-contracted cover of degree $3$, however it is not realizable in $\mathbb F_n$. Nonetheless, we will see later in Subsection \ref{ssc:unfolding(000)B} that we can embed it in $\mathbb F_0$ as a non-smooth tropical curve.

Here we continue to focus on tropical curves whose underlying graph is planar. The proof of Theorem \ref{th:realizability_genus_3_4} relies on the fact that some non-realizable maximal combinatorial types of genus $g=3,4$ contain a \emph{sprawling node} as in \cite[Figure 1]{JT21}.

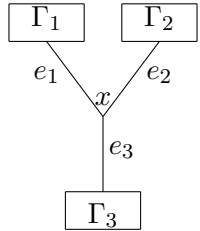
\begin{figure}[h]
\begin{tikzcd}
    \begin{tikzpicture}
        \coordinate (1) at (-0.75,1);
        \coordinate (2) at (0.75,1);
        \coordinate (3) at (0,-1);
        \coordinate (4) at (0,0);
        \draw (1)--(4);
        \draw (2)--(4);
        \draw (3)--(4);
        \draw(-1.25,1)--(-0.25,1);
        \draw(-1.25,1.5)--(-0.25,1.5);
        \draw(-0.25,1)--(-0.25,1.5);
        \draw(-1.25,1.5)--(-1.25,1);

        \draw(1.25,1)--(0.25,1);
        \draw(1.25,1.5)--(0.25,1.5);
        \draw(0.25,1)--(0.25,1.5);
        \draw(1.25,1.5)--(1.25,1);

        \draw (-0.5,-1)--(0.5,-1);
        \draw (-0.5,-1.5)--(0.5,-1.5);
        \draw (-0.5,-1.5)--(-0.5,-1);
        \draw (0.5,-1.5)--(0.5,-1);

        \draw[] (1) node[anchor=south] {$\Gamma_1$};
        \draw[] (2) node[anchor=south] {$\Gamma_2$};
        \draw[] (3) node[anchor=north] {$\Gamma_3$};
        \draw[] (4) node[anchor=south] {$x$};
        \draw[] (-0.75,0.5) node {$e_1$};
        \draw[] (0.75,0.5) node {$e_2$};
        \draw[] (0.25,-0.5) node {$e_3$};
    \end{tikzpicture}
    \end{tikzcd}\caption{A graph with a sprawling node.}\label{fg:Forbidden}
    \end{figure}

We will now complete the proof of Theorem \ref{th:realizability_genus_3_4} by showing that graphs containing a sprawling node cannot admit a degree $3$ well-contracted cover. The following result also holds in higher genus.

\begin{lemma}\label{lm:Forbidden1}
    Let $\Gamma$ be a tropical curve of genus $g\geq 3$ with a forbidden pattern which is a sprawling node. Then $\Gamma$ doesn't admit a degree $3$ well-contracted cover, up to tropical modifications. 
\end{lemma}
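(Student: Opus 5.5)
Suppose, for contradiction, that a tropical modification $\Gamma'$ of $\Gamma$ admits a degree $3$ well-contracted cover $\varphi:\Gamma'\to P$, where $P$ is a path (identified with a subdivided segment or $\mathbb{R}$). Write $\Gamma_1,\Gamma_2,\Gamma_3$ for the three subgraphs hanging off the sprawling node $x$ via the bridges $e_1,e_2,e_3$. Each $\Gamma_i$ has positive genus, because in the sprawling node pattern each $e_i$ separates a part of positive genus. The trees attached in the modification do not change this, since genus and the bridge structure are preserved.

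The first step is to show that $\varphi(\Gamma_i)$ is a nondegenerate segment $I_i\subset P$. By non-degeneracy no cycle is contracted, so a cycle $\gamma\subset\Gamma_i$ maps onto a segment of positive length. Over any interior point $t$ of $\varphi(\gamma)$, at least two points of $\gamma$ map to $t$, because a cycle mapped to a path covers each interior point of its image at least twice. Counted with multiplicity, $\varphi^{-1}(t)\cap\Gamma_i$ therefore has weight at least $2$.

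The second step is a counting argument on fibres. Let $I=\varphi(\gamma_1)\cap\varphi(\gamma_2)$ for cycles $\gamma_i\subset\Gamma_i$. A point $t$ in the interior of $I$ would have fibre weight at least $4>3$, so the interiors of the images $\varphi(\gamma_1),\varphi(\gamma_2),\varphi(\gamma_3)$ must be pairwise disjoint. Since $P$ is a path, one of them, say $\varphi(\gamma_2)$, lies strictly between the other two. Then $\Gamma_1\cup e_1\cup\{x\}\cup e_3\cup\Gamma_3$ is connected and avoids $\gamma_2$ except through $x$. Its image is connected and contains points on both sides of $\varphi(\gamma_2)$, so it covers the interior of $\varphi(\gamma_2)$. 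This gives at least one more preimage over such a point, coming from outside $\Gamma_2$ or via $e_2$ (one has to be careful about where $x$ maps). The fibre weight is then at least $2+1=3$. Refining this, I would use harmonicity to force degree exactly $3$ there. The bridge $e_2$ and the part of $\Gamma_2$ connecting to it must then also map somewhere, since $\Gamma_2$ is attached at a point mapping into or next to $\varphi(\gamma_2)$. Using local degree (harmonicity) at the endpoint of $e_2$ in $\Gamma_2$, and the fact that $e_2$ is not contracted (non-degeneracy, or if contracted then its endpoint maps inside), I expect to produce a fourth preimage over some point, contradicting degree $3$.

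The main obstacle is making this last step rigorous. The complication is that the image of $x$ could lie inside $\varphi(\gamma_2)$, and that the trees added by the modification allow extra flexibility. I would handle it by a case split on the position of $\varphi(x)$ relative to the three intervals $\varphi(\gamma_i)$.

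\emph{Case 1: $\varphi(x)$ lies outside the interior of $\varphi(\gamma_2)$.} The path through $e_2$ from $x$ into $\gamma_2$ gives an additional sheet over a neighbouring part of the interval.

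\emph{Case 2: $\varphi(x)$ lies inside the interior of $\varphi(\gamma_2)$.} Then both $e_1$ and $e_3$ together with $\gamma_2$ give four sheets over a small neighbourhood.

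In each case I would apply harmonicity at vertices of $\Gamma_2$ to rule out cancellations, using that multiplicities are positive integers and that the local degree over every point of $P$ is exactly $3$.
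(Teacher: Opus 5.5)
\textbf{There is a genuine gap.} Your first two steps are correct, taking non-degeneracy to mean that no cycle is contracted. Write $I_i=\varphi(\gamma_i)=[a_i,b_i]$, ordered from left to right. A non-contracted cycle gives fibre weight $\ge 2$ over the interior of $I_i$, so the $I_i$ have pairwise disjoint interiors. Every fibre over the interior of the middle interval $I_2$ then consists of exactly two points of $\gamma_2$ and one point of $\Gamma_1\cup e_1\cup e_3\cup\Gamma_3$.

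This configuration is not yet contradictory, and you stop exactly where the contradiction must be produced ("I expect to produce a fourth preimage"). The case split you sketch also fails as stated, because the contradiction is not local near $\varphi(x)$:
\begin{itemize}
\item \emph{Case 2.} Suppose $e_1$ leaves $x$ to the left and $e_3$ to the right, both with multiplicity $1$. Suppose also that $e_2$ is contracted onto a point $y_2\in\gamma_2$ with $\varphi(y_2)=\varphi(x)$. Then $x$ and $y_2$ are balanced, and each one-sided neighbourhood of $\varphi(x)$ carries exactly $2+1=3$ sheets. There is no fourth sheet near $\varphi(x)$.
\item \emph{Case 1.} Take, for instance, $b_1\le\varphi(x)\le a_2$. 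The path from $x$ through $e_2$ may enter $\gamma_2$ at a point over $a_2$. Its sheet then lies over $(\varphi(x),a_2)$, outside $I_2$, where only two sheets (from $e_2$ and from $e_3$) are forced.
\end{itemize}
In both cases the fourth sheet appears only far away, over $I_1$ or $I_3$.

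\textbf{The missing ingredient is a global degree count on the three arms.} Let $H_i$ be the closure of the connected component of $\Gamma'\setminus\{x\}$ containing $\Gamma_i$. It contains $e_i$ and every modification tree attached along $e_i\cup\Gamma_i$. Since $e_i$ is a bridge, $H_i$ meets the rest of $\Gamma'$ only at $x$.
\begin{itemize}
\item Harmonicity holds at every vertex of $H_i$ other than $x$. Hence the function $t\mapsto\sum_{p\in H_i\cap\varphi^{-1}(t)}\mu(p)$ is constant on each connected component of $P\setminus\{\varphi(x)\}$.
\item By your cycle argument, this function is $\ge 2$ on any component meeting the interior of $I_i$.
\item Because $P$ is a path, $P\setminus\{\varphi(x)\}$ has at most two components.
\end{itemize}
So two of the three arms have degree $\ge 2$ on the same component, giving fibre weight $\ge 4>3$.

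This is essentially the paper's proof. The paper phrases it as a dichotomy at $x$. Either some $e_i$ is contracted, so $\varphi|_{\Gamma_i}$ has a global degree $\ge 2$ and at most $1$ is left for the other arms. Or two of the $e_i$ leave $x$ in the same direction. Once you have the constancy of the arm degrees, your interval-disjointness and middle-interval analysis are no longer needed.
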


\begin{proof}
    Let $e_1,e_2,e_3$ denote the three edges defining the sprawling  node in $\Gamma,$ as in Figure \ref{fg:Forbidden}, and $x=\overline e_1\cap \overline e_2\cap \overline e_3.$
    Assume by contradiction that $\Gamma$ admits a non-degenerate degree $3$ tropical cover to a path $T$. Then either one of the edges $e_i$ is contracted or at least two of them have the same image through $\varphi.$ 
    
    If $e_i$ is contracted for some $i\in\{1,2,3\},$ then, since $\Gamma_i$ has positive genus, the restriction of $\varphi$ to $\Gamma_i$ has to have degree at least $2$. This would then force $\varphi$ to restrict over some $\Gamma_j$ with $j\neq i$ to either a contraction or a degree $1$ morphism, both of which are not possible.
    
    If instead $\varphi(e_i)=e\in E(T)$ for $i=1,2$ (and eventually $i=3$), then the subcurves $\Gamma_i$ also admit a well-contracted cover and $\varphi$ restricted to (at least) one of $\Gamma_i$ would be a degree 1 morphism to a sub-path in $T$, giving again a contradiction.
    
\end{proof}

Let us also consider the other two forbidden patterns in \cite[Figure 1]{JT21}, which appear for graphs of genus $g\geq5$. The above result indeed generalizes for such graphs: metric graphs with such forbidden patterns do not admit degree $3$ well-contracted covers. 
\begin{lemma}
    Let $\Gamma$ be a tropical curve with a forbidden pattern which is a \emph{crowded graph}. Then $\Gamma$ doesn't admit a degree 3 well-contracted cover, up to tropical modifications.
\end{lemma}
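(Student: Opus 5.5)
I would argue as in the proof of Lemma \ref{lm:Forbidden1}, replacing the single separating vertex $x$ by the separating configuration of a crowded graph. Recall from \cite{JT21} that a crowded graph contains two vertices $u,v$ (possibly joined by an edge) such that removing them leaves at least three connected components $\Gamma_1,\Gamma_2,\Gamma_3,\dots$, each of positive genus. Each $\Gamma_i$ is attached to the rest of $\Gamma$ only through edges ending at $u$ or $v$. I would first fix this description precisely, since the case analysis depends on it. Then I would assume, for a contradiction, that a tropical modification $\Gamma'$ of $\Gamma$ admits a non-degenerate degree $3$ tropical cover $\varphi\colon\Gamma'\to T$ with $T$ a path. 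The trees added by the modification carry no genus, so I would ignore them in the degree count.

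The key step is to define a local degree for each $\Gamma_i$. Choose a point $t$ in the interior of the image $\varphi(\Gamma_i)$, away from $\varphi(u)$ and $\varphi(v)$, and let $d_i$ be the number of preimages of $t$ in $\Gamma_i$, counted with the multiplicities $\mu_\varphi$. Harmonicity implies that $d_i$ does not depend on the choice of $t$ in a common region. Non-degeneracy forbids contracting loops, and $\Gamma_i$ has positive genus, so $\Gamma_i$ cannot be mapped by degree $1$ onto a sub-path of $T$: a harmonic degree-$1$ map to a tree is an isomorphism away from contracted trees. It also cannot be contracted entirely to a point, since then a cycle would be contracted. Hence $d_i\ge 2$ wherever $\varphi(\Gamma_i)$ is nondegenerate. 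The same fact is used in the proof of Lemma \ref{lm:Forbidden1}.

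Next I would bound how these local degrees can overlap. Since $T$ is a path, I would compare the images $\varphi(\Gamma_i)$ relative to $\varphi(u)$ and $\varphi(v)$. If two of the components have images sharing an open segment of $T$, the fibre over that segment has degree at least $2+2>3$, a contradiction. So the images of at least three components are pairwise interior-disjoint sub-paths of $T$. Each $\Gamma_i$ connects only to $u$ and $v$, so each image $\varphi(\Gamma_i)$ is a sub-path containing $\varphi(u)$ or $\varphi(v)$; more precisely it lies between these images or on one side of one of them. A path has at most three such interior-disjoint regions: left of $\min(\varphi(u),\varphi(v))$, between the two points, and right of the maximum. With three components, one lies in each region.

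This is where I expect the main obstacle, because the counting alone does not finish the argument. I would look at the component $\Gamma_j$ whose image lies between $\varphi(u)$ and $\varphi(v)$. The outer components must be connected to the rest of $\Gamma$ through $u$ or $v$. Their edges then run through the middle region, or the fibre over a point just past $\varphi(u)$ picks up contributions from both an outer component and its connecting edges. Either way the degree exceeds $3$ there, unless some connecting edge is contracted. If an edge is contracted, then $\varphi(u)=\varphi(v)$ and the middle region collapses, which returns us to the sprawling-node situation of Lemma \ref{lm:Forbidden1}. I would run this final case distinction, $\varphi(u)=\varphi(v)$ versus $\varphi(u)\neq\varphi(v)$, carefully, and use harmonicity at $u$ and $v$ to compute the multiplicities of the attaching edges.
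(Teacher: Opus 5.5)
Your fibre-counting strategy differs from the paper's, which passes to the degree-$3$, rank-$1$ divisor $D=v_1+y_1+y_2$ and uses Dhar's burning algorithm to reach $D\sim 2v_1+v_2$. As written, though, your argument has a genuine gap.

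\textbf{Where it fails.}
\begin{itemize}
\item The step ``hence $d_i\ge 2$ wherever $\varphi(\Gamma_i)$ is nondegenerate'' is false. Positive genus only forces fibre degree $\ge 2$ over the interior of the image of a non-contracted cycle, since the cycle folds over an interval. Other parts of $\Gamma_i$, such as bridges inside $\Gamma_i$, can map with degree $1$.
\item The fibre degree of $\Gamma_i$ alone jumps at the images of the vertices where $e_i,e_i'$ attach, not at $\varphi(v_1),\varphi(v_2)$. So your ``regions'' are not the right ones.
\item Consequently the ``$2+2>3$'' overlap argument and the conclusion that the three images are pairwise interior-disjoint are unjustified. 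They also point the wrong way. In a crowded graph each $\Gamma_i$ is attached to \emph{both} $v_1$ and $v_2$, not to ``$u$ or $v$''. So all three pieces $\Gamma_i\cup e_i\cup e_i'$ have images containing the segment between $\varphi(v_1)$ and $\varphi(v_2)$, and they necessarily overlap.
\item The final paragraph, where the contradiction must come from, is only a plan, and its claims are wrong. Contracting $e_i$ identifies $\varphi(v_1)$ with the image of a vertex of $\Gamma_i$, not with $\varphi(v_2)$. And $\varphi(v_1)=\varphi(v_2)$ does not put you in the hypotheses of the sprawling-node lemma.
\end{itemize}

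\textbf{How to repair it.} The idea is salvageable and then gives a shorter argument than the paper's.
\begin{enumerate}
\item Let $\Gamma_i^+$ be $\Gamma_i\cup e_i\cup e_i'$ together with all trees of the modification glued to it away from $v_1,v_2$. Set $p=\varphi(v_1)$, $q=\varphi(v_2)$, and let $\delta_i(t)$ be the fibre degree of $\Gamma_i^+$ over a generic $t\in T$.
\item Every vertex of $\Gamma_i^+$ other than $v_1,v_2$ has all its edges in $\Gamma_i^+$. So harmonicity makes $\delta_i$ constant on each of the at most three components of $T\setminus\{p,q\}$. Also $\sum_i\delta_i\le 3$.
\item A non-contracted cycle of $\Gamma_i$ gives $\delta_i\ge 2$ on a whole such component.
\item If $p=q$, there are at most two components. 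Pigeonhole gives two indices with $\delta\ge 2$ on the same component, hence a fibre of degree $\ge 4$.
\item If $p\ne q$, each $\Gamma_i^+$ is connected and contains $v_1,v_2$, so $\delta_i\ge 1$ between $p$ and $q$. Hence $\delta_i=1$ there for all three $i$. Each $\delta_i\ge 2$ must then occur on one of the at most two outer components, and pigeonhole again gives a fibre of degree $\ge 4$.
\end{enumerate}
Compared with the paper, this repaired route avoids linear equivalence and burning entirely. It also uses the morphism directly rather than only through the induced divisor.
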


\begin{proof}
    Assume by contradiction that there exists a degree 3 well-contracted cover, hence a non-degenerate harmonic morphism of the same degree to a path, up to tropical modification. As recalled in Section \ref{ssc:def_gonality_and_relations}, such a cover determines a degree $3$ divisor $D$ of rank $1.$   Then we may assume $D=v_1+y_1+y_2$ for some $y_i\in\Gamma$ and $v_1$ as in Figure \ref{fg:Forbidden1}.
    
\begin{figure}[h]
\begin{tikzcd}
    \begin{tikzpicture}
        \coordinate (1) at (0,0);
        \coordinate (2) at (-1,1);
        \coordinate (3) at (0,1);
        \coordinate (4) at (1,1);
        \coordinate (22) at (-1,2);
        \coordinate (33) at (0,2);
        \coordinate (44) at (1,2);
        \coordinate (11) at (0,3);
        \draw (1)--(2);
        \draw (1)--(3);
        \draw (1)--(4);
        \draw (11)--(22);
        \draw (11)--(33);
        \draw (11)--(44);
        \draw(-1.25,1)--(-0.75,1);
        \draw(-0.25,1)--(0.25,1);
        \draw(1.25,1)--(0.75,1);
        \draw(-1.25,2)--(-0.75,2);
        \draw(-0.25,2)--(0.25,2);
        \draw(1.25,2)--(0.75,2);
        \draw(-1.25,2)--(-1.25,1);\draw(-0.75,1)--(-0.75,2);
        \draw(-0.25,2)--(-0.25,1);\draw(0.25,1)--(0.25,2);
        \draw(1.25,2)--(1.25,1);\draw(0.75,1)--(0.75,2);
        \draw[] (1) node[anchor=north] {$v_1$};
        \draw[] (11) node[anchor=south] {$v_2$};
        \draw[] (2) node[anchor=south] {$\Gamma_1$};
        \draw[] (3) node[anchor=south] {$\Gamma_2$};
        \draw[] (4) node[anchor=south] {$\Gamma_3$};
        \draw[] (-0.3,0.5) node {$e_1$};
        \draw[] (0.25,0.5) node {$e_2$};
        \draw[] (0.85,0.5) node {$e_3$};
        \draw[] (-0.3,2.4) node {$e_1'$};
        \draw[] (0.25,2.4) node {$e_2'$};
        \draw[] (0.85,2.4) node {$e_3'$};
    \end{tikzpicture}
    \end{tikzcd}\caption{A crowded graph.}\label{fg:Forbidden1}
    \end{figure}
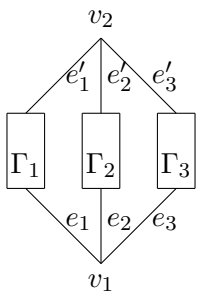

    In the following we refer to vertices and edges of the canonical model of $\Gamma.$ Without loss of generality we also assume no two points in the support of $D$ are in the interior of the same edge. 
    Notice also that $\Gamma$ is bridgeless.
    Then, using Dhar's burning algorithm, the points $y_i$ must identify one or two edges which form with some edge incident to $v_1$, say $e_1$, a $k$-edge cut, with $k=2,3.$ 
    We have the following possibilities.
    \begin{enumerate}
    \item Suppose $k=3$, then we have the following possibilities.
    \begin{enumerate}
        \item The points $y_1,y_2$ are contained in a $2$-edge cut of $\Gamma_1$. Starting a fire for instance in $\Gamma_2$ burns $v_1$ and the fire spreads in $\Gamma_1,$ stopping at $y_1,y_2.$ However the fire spreads also through $v_2$, reaching $y_1,y_2$ also from the other direction and thus burning the entire graph.

        \item If $y_2\in e_2\cup e_2'$, $y_3\in e_3\cup e_3'$, consider the rational function with slope $-1$ from the three points, along the $3$-edge cut, until one of the points reaches a vertex in some $\Gamma_i$ (up to linear equivalence this can always happen, otherwise we are in case (2) which will be considered later). Start a fire at any other point of $\Gamma_i.$  this burns first the vertex in $\Gamma_i$ and then the other two by connectivity.  
        \item Finally, if for some $j\in\{2,3\},$ $y_j$ lies in a bridge of $\Gamma_j$, then the previous argument applies, by replacing the edges among $e_2,e_2',e_3,e_3'$ forming the $3$-edge cut, with the bridge in $\Gamma_j$ and $\Gamma_j$ with one of the connected component in $\Gamma_j$ with the bridge removed.
    \end{enumerate}
    \item If $k=2$, then $y_2\in e$; $e\in\{e_1',b\}$, where $b$ is a bridge in $\Gamma_1.$ Starting Dhar's burning algorithm from any point in $\Gamma\setminus\{e_1,e\}\cup\Gamma_1$ would then burn the graph unless $y_3=v_1$ or it belongs to a edge $e'\subset\Gamma_1$ which forms with either $e_1$ or $e$ a $2$-edge cut. However in this case we could consider the rational function with slope $-1$ from any pair of point identifying a $2$-edge cut, toward the opposite direction of $v_1,v_2,$ until the first one reaches a vertex. Starting again Dhar's burning form any other vertex in $\Gamma_1$ then would burn the graph. 
        Notice that the graph doesn't burn only if $\Gamma_1$ has no other vertices, meaning that $\Gamma_1$ is made of parallel edges, and the same argument can be repeated for $\Gamma_2,\Gamma_3$. In particular the divisor must be of the form $2v_1+v_2$. Then the morphism $\varphi$, which induces such a divisor (via pull-back of any point in the metric tree), is such that any two incident edges to the same vertex $v_i$ cannot be identified. In particular the target tree cannot be a path.
    \end{enumerate}
\end{proof}

\begin{lemma}
    Let $\Gamma$ be a tropical curve with a forbidden pattern which is a \emph{TIE-fighter graph}. Then $\Gamma$ doesn't admit a degree 3 well-contracted cover, up to tropical modifications.
\end{lemma}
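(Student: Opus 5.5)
The plan is to argue by contradiction, in the same way as Lemma \ref{lm:Forbidden1}, by a degree count. Suppose that (a tropical modification of) $\Gamma$ admits a non-degenerate harmonic morphism $\varphi$ of degree $3$ to a path $T$. Tropical modification only glues on trees, so it changes neither the genus of any subgraph nor the separation pattern below, and we may work on the modified graph.

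I recall the TIE-fighter pattern of \cite[Figure 1]{JT21} as follows; this recollection should be checked against the figure. It consists of a central edge $e$ with endpoints $v_1,v_2$. At $v_1$ two further edges lead to subgraphs $\Gamma_1,\Gamma_2$ of positive genus. At $v_2$ two further edges lead to subgraphs $\Gamma_3,\Gamma_4$ of positive genus. The argument needs two local facts.

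\textbf{Fact (a).} If a connected subgraph $\Gamma'$ of positive genus hangs off a cut set, then $\varphi$ restricted to $\Gamma'$ has local degree at least $2$ over every point of $\varphi(\Gamma')$ away from the image of the cut. The reason is that a degree-$1$ harmonic map to a path is injective on edges up to contraction, and non-degeneracy forbids contracting cycles.

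\textbf{Fact (b).} Since $T$ is a path, the edges at a vertex $v$ split into the edges mapping to the left, those mapping to the right, and the contracted ones. By harmonicity, the multiplicity $m_\varphi(v)\le 3$ is the sum of $\mu_\varphi$ over each of the two non-contracted groups.

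The main step applies these facts first at $v_1$ and then at $v_2$. At $v_1$ the three branches $\Gamma_1$, $\Gamma_2$ and $e\cup\Gamma_3\cup\Gamma_4$ all have positive genus, so the sprawling-node case analysis should go through verbatim.
\begin{itemize}
\item If a branch is contracted, it absorbs degree at least $2$ over its image point. This leaves degree at most $1$ for one of the remaining positive-genus branches, contradicting Fact (a).
\item If two branches leave $v_1$ in the same direction, their degrees over the common image edge sum to at most $3$. Hence one of them has degree $1$ there, and restricting $\varphi$ to that branch gives a degree-$1$ morphism to a sub-path, contradicting Fact (a).
\end{itemize}
If the pattern really is of this bridge shape, then it simply contains a sprawling node at $v_1$, and the statement follows directly from Lemma \ref{lm:Forbidden1}. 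I would note this explicitly.

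The main obstacle is the case where the TIE-fighter's wings are joined to $v_i$ by two edges rather than by a single bridge. This is the genuinely new feature that keeps the pattern from being a sprawling node. Here Fact (a) alone is not enough, because a wing can enter $v_1$ along two edges mapping in opposite directions, each with weight $1$. For this case I would pass to the divisor side, as in the proof for crowded graphs. The pullback of a point gives a degree-$3$ divisor $D$ of rank $1$, and we may move its support so that it contains $v_1$. I would then run Dhar's burning algorithm from a point in a wing at $v_2$, and show that the two remaining chips cannot block the fire from both wings at $v_1$ and from the central edge simultaneously, since that would require at least four cut edges. This would show that $D$ is forced to be supported on $\{v_1,v_2\}$. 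Finally, exactly as at the end of the crowded-graph proof, a morphism inducing such a divisor cannot identify two edges at $v_i$, and so its target cannot be a path.
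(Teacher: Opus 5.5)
There is a genuine gap: you misremember the pattern, and the part of your proof aimed at the real pattern is not carried out.

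\textbf{The pattern is not the one you analyse.} In this paper (Figure~\ref{fg:Forbidden2}) the TIE-fighter has no central edge. The wings $\Gamma_1,\Gamma_4$ hang off bridges $e_1,e_4$ at two trivalent vertices, say $x$ and $x'$. These are joined by two parallel strands $e_2\Gamma_2e_2'$ and $e_3\Gamma_3e_3'$, with $\Gamma_2,\Gamma_3$ of positive genus. Since $e_2,e_3$ are not bridges, $\Gamma\setminus\{x\}$ has only two components, so $x$ is not a sprawling node. Your two bullets need three separate positive-genus branches at the vertex, so they do not apply. The reduction to Lemma~\ref{lm:Forbidden1} only covers your H-shaped graph, which is not the new pattern.

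\textbf{The divisor paragraph fails on the actual graph.} Everything therefore rests on your last paragraph, and its concrete claims are false here.
\begin{itemize}
\item The pair $\{e_2,e_3\}$ is a $2$-edge cut separating $\{x\}\cup e_1\cup\Gamma_1$ from the rest. So two chips on $e_2$ and $e_3$ \emph{do} stop a fire started in $\Gamma_4$; the paper's proof has to treat exactly this case. Hence $D$ is not forced onto two vertices.
\item The endgame borrowed from the crowded-graph proof also fails. Harmonicity at $x$ allows $e_1$ to leave with weight $2$ in one direction and $e_2,e_3$ to leave with weight $1$ each in the other. So two edges at $x$ can be identified, and no contradiction is visible locally.
\item Fact (a) is overstated. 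Positive genus only forces degree $\ge 2$ on \emph{some} component of $T$ minus the image of the attaching set, not over every point of the image.
\end{itemize}

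\textbf{What is missing is a global argument across the cockpit.} The paper first shows, in the style of the sprawling-node lemma, that $e_1$ and $e_4$ map to distinct edges. Consequently no edge of the cockpit maps to a leaf-edge of the path. It then runs Dhar's algorithm on the pulled-back divisor to find $w\in\overline{e}_3$ with $D\sim 3w$. Since $e_3$ is not a bridge, $\varphi(w)$ would have to be a leaf, a contradiction.

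Alternatively, your degree counting can be completed without divisors, as follows.
\begin{itemize}
\item Let $P$ be a subgraph meeting the rest of $\Gamma$ in a finite set $S$. Then $t\mapsto\sum_{y\in P\cap\varphi^{-1}(t)}m_\varphi(y)$ is constant on the components of $T\setminus\varphi(S)$. It is $\ge 2$ on one of them if $P$ contains a non-contracted cycle.
\item Apply this to $\Gamma_1\cup e_1$ and its complement at $x$, and to $\Gamma_4\cup e_4$ and its complement at $x'$. Since the degrees add to $3$, each wing has degree $\ge 2$ on an end-piece of $T$ where everything else has degree $\le 1$. These two end-pieces are disjoint.
\item Hence the strands $P_i=e_i\cup\Gamma_i\cup e_i'$ have total degree $\le 1$ outside the interval between $\varphi(x)$ and $\varphi(x')$.
\item Each $P_i$ has positive genus and is attached only at $x$ and $x'$. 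So both must have degree $\ge 2$ on that middle interval, which in particular must be nonempty. This gives degree $\ge 4>3$, a contradiction.
\end{itemize}
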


\begin{proof}
Let us assume again by contradiction that there exists a degree $3$ well-contracted cover $\varphi:\Gamma\to \Gamma',$ with $\Gamma$ a TIE-fighter graph, as in Figure \ref{fg:Forbidden2}.
Using an argument similar to the one in the proof of Lemma \ref{lm:Forbidden1}, one can check that the two bridges $e_1,e_4$ are sent to distinct edges in $\Gamma'$.
This in particular implies that any edge in the connected component of $\Gamma\setminus (e_1\cup e_4)$ containing $\Gamma_2$ and $\Gamma_3$ cannot be sent to a leaf-edge in $\Gamma'.$
\begin{figure}[h]
\begin{tikzcd}
    \begin{tikzpicture}
        \draw (0,-0.5)--(0,0.5);
        \draw (0,-0.5)--(-0.5,-0.5);
        \draw (0,0.5)--(-0.5,0.5);
        \draw (-0.5,-0.5)--(-0.5,0.5);
        \draw (0,0)--(1,0);
        \draw (1,0)--(1.5,0.5);
        \draw (1,0)--(1.5,-0.5);
        \draw (1.5,-0.5)--(1.5,-0.75);\draw (1.5,-0.5)--(1.5,-0.25);
        \draw (1.5,-0.75)--(2.5,-0.75);\draw (2.5,-0.25)--(1.5,-0.25);
        \draw (2.5,-0.5)--(2.5,-0.75);\draw (2.5,-0.5)--(2.5,-0.25);
        \draw (1.5,0.5)--(1.5,0.75);\draw (1.5,0.5)--(1.5,0.25);
        \draw (1.5,0.75)--(2.5,0.75);\draw (2.5,0.25)--(1.5,0.25);
        \draw (2.5,0.5)--(2.5,0.75);\draw (2.5,0.5)--(2.5,0.25);
        \draw (3,0)--(2.5,0.5);
        \draw (3,0)--(2.5,-0.5);
        \draw(3,0)--(4,0);
        \draw (4,-0.5)--(4,0.5);
        \draw (4,-0.5)--(4.5,-0.5);
        \draw (4,0.5)--(4.5,0.5);
        \draw (4.5,-0.5)--(4.5,0.5);
        \draw[] (-0.25,0) node {$\Gamma_1$};
        \draw[] (4.25,0) node {$\Gamma_4$};
        \draw[] (0.5,0.1) node {$e_1$};
        \draw[] (3.5,0.1) node {$e_4$};
        \draw[] (1.2,0.4) node {$e_2$};
        \draw[] (1.2,-0.5) node {$e_3$};
        \draw[] (2.8,0.4) node{$e_2'$};
        \draw[] (2.8,-0.5) node {$e_3'$};
        \draw[] (2,0.4) node{$\Gamma_2$};
        \draw[] (2,-0.6) node {$\Gamma_3$};
    \end{tikzpicture}
    \end{tikzcd}\caption{A TIE-fighter graph and the graph obtained by contracting all bridges.}\label{fg:Forbidden2}
    \end{figure}
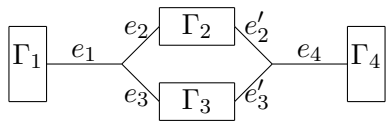

The morphism $\varphi$ determines a divisor $D$ of degree $3$ and rank at least $1,$ therefore we may write $D=v+y_2+y_3$ with $v=\Gamma_1\cap\overline{e}_1$.
If $y_2,y_3\notin\Gamma_1$, then they need to determine a $2$-edge cut of the graph. We may further assume that none of them is on a bridge, and that at least one is a vertex. Starting a fire from $\Gamma_1\setminus\{v\}$ burns the whole graph unless $y_2,y_3$ are contained in the closure of $e_2,e_3,$ respectively. Up to linear equivalence, then we may assume that at least one of them is then in $\Gamma_1$ and we refer to the following.

If $y_2,y_3$ in $\Gamma_1$, then consider for instance the case where $D\sim 3v$ (if the multiplicity is only $2$ then the argument follows even more easily). In particular we can consider the linear equivalent divisor moving $2v$ along $e_1$ and $e_2$ and $v$ on $e_1$ and $e_3$, until at least two points (with multiplicity) reach a vertex in $\Gamma_2$ or $\Gamma_3$. If they all reach a vertex, then starting the fire for instance at $\Gamma_4$ would burn the entire graph, unless the point $v$ moving on $e_3$ reaches the vertex $\Gamma_3\cap\overline{e}_3$ only after $2v$, moved first along $e_2$, does. 
This in particular means that there is some point $w\in\overline{e}_3$ such that $D\sim 3w.$ Since $e_3$ is not a bridge, then $\varphi(w)$ is a leaf, which contradicts $\Gamma'$ being a path.
\end{proof}

\section{Unfolding contracted features of a planar embedding of a trigonal $\Gamma$}\label{sec-unfolding}

So far we have focused on smooth plane tropical curves and their images under the forgetful map which are abstract tropical curves of maximal combinatorial type.

Assume $C$ is an abstract curve defined over $K$, and $C$ comes embedded into a toric surface. Assume its embedded tropicalization is not smooth. There is still a map from the abstract tropicalization $\Gamma$ to the embedded tropicalization (which in the smooth case can be obtained by sending $\Gamma$ identically onto the part of the plane tropical curve obtained by iteratively shrinking ends and leaf edges), but this map can now contract subgraphs of $\Gamma$, so that the embedded tropicalization does not necessarily reflect important features of the abstract tropicalization anymore.
For that reason, extending the forgetful map to non-smooth tropical plane curves does not make sense.

However, we can use a linear re-embedding of torus in such a way that the tropicalization of the newly embedded curve does not contract parts of $\Gamma$ anymore. This can be called an \emph{unfolding} of the skeleton.

The theoretical background for this is laid in \cite{BPR11a}, where a tropicalization that reflects all important features is called \emph{faithful}.

The concept of using linear re-embeddings to unfold was studied in various settings, e.g.\ in \cite{IMS09, BLdM11, Mi07, CM14}. Notice that this concept works for a \emph{tropicalized curve}, that is we always have preimages under tropicalization that are compatible with our tropical curves.

In this paper, we want to illustrate the power of re-embeddings in two settings:
\begin{enumerate}
    \item We show that the non-planar graph (000)B of genus $4$ can be the abstract tropicalization of a re-embedded plane tropical curve. 
    \item We show, exemplarily for the graph (000)A, that any abstract tropical curve $\Gamma$ of this type together with a trigonal morphism to $\mathbb{R}$, can be embedded as a re-embedded tropical plane curve in such a way that the projection to $\mathbb{R}$ yields the trigonal morphism.
\end{enumerate}

\subsection{Unfolding an abstract tropical curve of type (000)B}\label{ssc:unfolding(000)B}

\begin{figure}
    \centering

\tikzset{every picture/.style={line width=0.75pt}} 

\begin{tikzpicture}[x=0.75pt,y=0.75pt,yscale=-1,xscale=1]

\draw    (260,60) -- (250,70) ;
\draw    (200,110) -- (210,100) ;
\draw    (240,90) -- (250,80) ;
\draw    (220,90) -- (220,70) ;
\draw [color={rgb, 255:red, 208; green, 2; blue, 27 }  ,draw opacity=1 ]   (220,90) -- (210,90) ;
\draw    (220,70) -- (210,60) ;
\draw    (210,60) -- (200,60) ;
\draw    (200,130) -- (200,110) ;
\draw [color={rgb, 255:red, 65; green, 117; blue, 5 }  ,draw opacity=1 ]   (240,90) -- (220,90) ;
\draw    (200,80) -- (200,60) ;
\draw [color={rgb, 255:red, 74; green, 144; blue, 226 }  ,draw opacity=1 ]   (220,100) -- (220,90) ;
\draw    (240,110) -- (250,120) ;
\draw    (250,70) -- (220,70) ;
\draw    (200,60) -- (190,50) ;
\draw    (210,60) -- (210,40) ;
\draw    (250,80) -- (250,70) ;
\draw    (190,50) -- (180,50) ;
\draw    (190,40) -- (190,50) ;
\draw    (200,80) -- (180,80) ;
\draw    (210,90) -- (200,80) ;
\draw [color={rgb, 255:red, 208; green, 2; blue, 27 }  ,draw opacity=1 ]   (220,100) -- (210,100) ;
\draw [color={rgb, 255:red, 208; green, 2; blue, 27 }  ,draw opacity=1 ]   (210,90) -- (210,100) ;
\draw [color={rgb, 255:red, 74; green, 144; blue, 226 }  ,draw opacity=1 ]   (230,110) -- (220,100) ;
\draw [color={rgb, 255:red, 74; green, 144; blue, 226 }  ,draw opacity=1 ]   (240,110) -- (230,110) ;
\draw [color={rgb, 255:red, 74; green, 144; blue, 226 }  ,draw opacity=1 ]   (240,90) -- (240,110) ;
\draw    (270,60) -- (260,60) ;
\draw    (260,40) -- (260,60) ;
\draw    (270,80) -- (250,80) ;
\draw    (200,110) -- (180,110) ;
\draw    (230,130) -- (230,110) ;
\draw    (250,130) -- (250,120) ;
\draw    (270,120) -- (250,120) ;
\draw    (310,80) -- (340,50) ;
\draw    (340,80) -- (340,50) ;
\draw    (370,80) -- (340,50) ;
\draw [color={rgb, 255:red, 208; green, 2; blue, 27 }  ,draw opacity=1 ]   (340,110) -- (310,80) ;
\draw [color={rgb, 255:red, 74; green, 144; blue, 226 }  ,draw opacity=1 ]   (340,110) -- (340,80) ;
\draw [color={rgb, 255:red, 189; green, 16; blue, 224 }  ,draw opacity=1 ]   (380,100) -- (340,80) ;
\draw [color={rgb, 255:red, 74; green, 144; blue, 226 }  ,draw opacity=1 ]   (340,110) -- (370,80) ;
\draw [color={rgb, 255:red, 208; green, 2; blue, 27 }  ,draw opacity=1 ]   (380,100) -- (310,80) ;
\draw [color={rgb, 255:red, 65; green, 117; blue, 5 }  ,draw opacity=1 ]   (380,100) -- (370,80) ;
\draw [color={rgb, 255:red, 208; green, 2; blue, 27 }  ,draw opacity=1 ]   (310,130) -- (300,120) ;
\draw [color={rgb, 255:red, 189; green, 16; blue, 224 }  ,draw opacity=1 ]   (310,130) -- (310,120) ;
\draw [color={rgb, 255:red, 65; green, 117; blue, 5 }  ,draw opacity=1 ]   (320,140) -- (310,130) ;
\draw [color={rgb, 255:red, 65; green, 117; blue, 5 }  ,draw opacity=1 ]   (330,120) -- (320,140) ;
\draw    (320,150) -- (320,140) ;

\end{tikzpicture}

    \caption{Unfolding a graph type (000)B.}
    \label{fig-unfolding000B}
\end{figure}
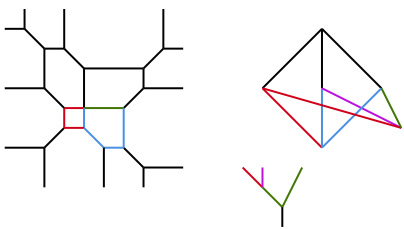

Consider the non-smooth tropical plane curve in $\mathbb{F}_0$ depicted in Figure \ref{fig-unfolding000B}. By Theorem 3.2 \cite{CM14}, we can re-embed the curve in such a way that we can unfold edges of arbitrary length separating the edges that form a crossing dual to a parallelogram. The picture shows on the bottom the new piece of the tropical plane curve that appears in the modified plane we obtain by re-embedding: it is a tropical plane consisting of three half-planes which are glued along a line. In one cell, we see the left part of the tropical plane curve (everything to the left of the crossing), in a second cell, we see the right part, and in the new, third, cell, we see the new piece. That means, the red and the green edge passing through the crossing are lowered into the new cell, and the pink edge appears as a new edge. On the right, the picture shows the abstract tropical curve we obtain when iteratively shrinking ends and leaf edges. It is remarkable that we can obtain a non-planar graph as abstract tropical curve underlying a re-embedding of a plane tropical curve. Similar examples can be found e.g.\ in \cite{Kra13}.

\subsection{Unfolding a trigonal morphism of an abstract tropical curve of type (000)A}
Assume $\Gamma$ is an abstract tropical curve of type (000)A. Let $f:\Gamma\rightarrow \mathbb{R}$ be a trigonal morphism for $\Gamma$. 
Start from a non-smooth tropical plane curve in $\FF_n$  such that iteratively shrinking ends and leaf edges yields $\tilde{\Gamma}$, where $\tilde{\Gamma}$ can be obtained from $\Gamma$ by contracting edges.
We demonstrate how we can use linear re-embeddings to unfold $\Gamma$ from the non-smooth tropical plane curve, in such a way that the  contraction $\tilde{\Gamma}\rightarrow \Gamma$ composed with the projection from $\FF_n$ yield the trigonal morphism of $\Gamma$.

For type (000)A, there exist $6$ combinatorial types of trigonal morphisms (see Figure \ref{fig:sixmorphisms}).

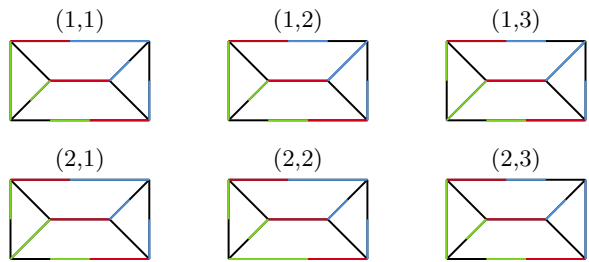
\begin{figure}[h]
    \centering
\tikzset{every picture/.style={line width=0.75pt}} 

\begin{tikzpicture}[x=0.75pt,y=0.75pt,yscale=-1,xscale=1]

\draw (165,50) node[][font=\footnotesize]{(1,1)};
\draw (275,50) node[][font=\footnotesize]{(1,2)};
\draw (385,50) node[][font=\footnotesize]{(1,3)};
\draw (165,120) node[][font=\footnotesize]{(2,1)};
\draw (275,120) node[][font=\footnotesize]{(2,2)};
\draw (385,120) node[][font=\footnotesize]{(2,3)};
\draw    (130,60) -- (200,60) ;
\draw    (130,60) -- (150,80) ;
\draw    (180,80) -- (200,100) ;
\draw    (180,80) -- (200,60) ;
\draw    (130,100) -- (150,80) ;
\draw    (130,100) -- (200,100) ;
\draw    (150,80) -- (180,80) ;
\draw    (200,60) -- (200,100) ;
\draw    (130,60) -- (130,100) ;
\draw    (240,60) -- (310,60) ;
\draw    (240,60) -- (260,80) ;
\draw    (290,80) -- (310,100) ;
\draw    (290,80) -- (310,60) ;
\draw    (240,100) -- (260,80) ;
\draw    (240,100) -- (310,100) ;
\draw    (260,80) -- (290,80) ;
\draw    (310,60) -- (310,100) ;
\draw    (240,60) -- (240,100) ;
\draw    (350,60) -- (420,60) ;
\draw    (350,60) -- (370,80) ;
\draw    (400,80) -- (420,100) ;
\draw    (400,80) -- (420,60) ;
\draw    (350,100) -- (370,80) ;
\draw    (350,100) -- (420,100) ;
\draw    (370,80) -- (400,80) ;
\draw    (420,60) -- (420,100) ;
\draw    (350,60) -- (350,100) ;
\draw    (130,120+10) -- (200,120+10) ;
\draw    (130,120+10) -- (150,140+10) ;
\draw    (180,140+10) -- (200,160+10) ;
\draw    (180,140+10) -- (200,120+10) ;
\draw    (130,160+10) -- (150,140+10) ;
\draw    (130,160+10) -- (200,160+10) ;
\draw    (150,140+10) -- (180,140+10) ;
\draw    (200,120+10) -- (200,160+10) ;
\draw    (130,120+10) -- (130,160+10) ;
\draw    (240,120+10) -- (310,120+10) ;
\draw    (240,120+10) -- (260,140+10) ;
\draw    (290,140+10) -- (310,160+10) ;
\draw    (290,140+10) -- (310,120+10) ;
\draw    (240,160+10) -- (260,140+10) ;
\draw    (240,160+10) -- (310,160+10) ;
\draw    (260,140+10) -- (290,140+10) ;
\draw    (310,120+10) -- (310,160+10) ;
\draw    (240,120+10) -- (240,160+10) ;
\draw    (350,120+10) -- (420,120+10) ;
\draw    (350,120+10) -- (370,140+10) ;
\draw    (400,140+10) -- (420,160+10) ;
\draw    (400,140+10) -- (420,120+10) ;
\draw    (350,160+10) -- (370,140+10) ;
\draw    (350,160+10) -- (420,160+10) ;
\draw    (370,140+10) -- (400,140+10) ;
\draw    (420,120+10) -- (420,160+10) ;
\draw    (350,120+10) -- (350,160+10) ;
\draw [color={rgb, 255:red, 208; green, 2; blue, 27 }  ,draw opacity=1 ]   (150,80) -- (180,80) ;
\draw [color={rgb, 255:red, 208; green, 2; blue, 27 }  ,draw opacity=1 ]   (170,100) -- (200,100) ;
\draw [color={rgb, 255:red, 208; green, 2; blue, 27 }  ,draw opacity=1 ]   (130,60) -- (160,60) ;
\draw [color={rgb, 255:red, 208; green, 2; blue, 27 }  ,draw opacity=1 ]   (260,80) -- (290,80) ;
\draw [color={rgb, 255:red, 208; green, 2; blue, 27 }  ,draw opacity=1 ]   (240,60) -- (270,60) ;
\draw [color={rgb, 255:red, 208; green, 2; blue, 27 }  ,draw opacity=1 ]   (280,100) -- (310,100) ;
\draw [color={rgb, 255:red, 208; green, 2; blue, 27 }  ,draw opacity=1 ]   (350,60) -- (380,60) ;
\draw [color={rgb, 255:red, 208; green, 2; blue, 27 }  ,draw opacity=1 ]   (370,80) -- (400,80) ;
\draw [color={rgb, 255:red, 208; green, 2; blue, 27 }  ,draw opacity=1 ]   (390,100) -- (420,100) ;
\draw [color={rgb, 255:red, 208; green, 2; blue, 27 }  ,draw opacity=1 ]   (130,120+10) -- (160,120+10) ;
\draw [color={rgb, 255:red, 208; green, 2; blue, 27 }  ,draw opacity=1 ]   (150,140+10) -- (180,140+10) ;
\draw [color={rgb, 255:red, 208; green, 2; blue, 27 }  ,draw opacity=1 ]   (170,160+10) -- (200,160+10) ;
\draw [color={rgb, 255:red, 208; green, 2; blue, 27 }  ,draw opacity=1 ]   (240,120+10) -- (270,120+10) ;
\draw [color={rgb, 255:red, 208; green, 2; blue, 27 }  ,draw opacity=1 ]   (260,140+10) -- (290,140+10) ;
\draw [color={rgb, 255:red, 208; green, 2; blue, 27 }  ,draw opacity=1 ]   (280,160+10) -- (310,160+10) ;
\draw [color={rgb, 255:red, 208; green, 2; blue, 27 }  ,draw opacity=1 ]   (350,120+10) -- (380,120+10) ;
\draw [color={rgb, 255:red, 208; green, 2; blue, 27 }  ,draw opacity=1 ]   (370,140+10) -- (400,140+10) ;
\draw [color={rgb, 255:red, 208; green, 2; blue, 27 }  ,draw opacity=1 ]   (390,160+10) -- (420,160+10) ;
\draw [color={rgb, 255:red, 74; green, 144; blue, 226 }  ,draw opacity=1 ]   (160,60) -- (200,60) ;
\draw [color={rgb, 255:red, 74; green, 144; blue, 226 }  ,draw opacity=1 ]   (180,80) -- (190,70) ;
\draw [color={rgb, 255:red, 74; green, 144; blue, 226 }  ,draw opacity=1 ]   (200,100) -- (200,80) ;
\draw [color={rgb, 255:red, 74; green, 144; blue, 226 }  ,draw opacity=1 ]   (200,160+10) -- (200,140+10) ;
\draw [color={rgb, 255:red, 74; green, 144; blue, 226 }  ,draw opacity=1 ]   (310,100) -- (310,80) ;
\draw [color={rgb, 255:red, 74; green, 144; blue, 226 }  ,draw opacity=1 ]   (310,160+10) -- (310,140+10) ;
\draw [color={rgb, 255:red, 74; green, 144; blue, 226 }  ,draw opacity=1 ]   (420,100) -- (420,80) ;
\draw [color={rgb, 255:red, 74; green, 144; blue, 226 }  ,draw opacity=1 ]   (420,160+10) -- (420,120+10) ;
\draw [color={rgb, 255:red, 74; green, 144; blue, 226 }  ,draw opacity=1 ]   (180,140+10) -- (190,130+10) ;
\draw [color={rgb, 255:red, 74; green, 144; blue, 226 }  ,draw opacity=1 ]   (290,80) -- (310,60) ;
\draw [color={rgb, 255:red, 74; green, 144; blue, 226 }  ,draw opacity=1 ]   (290,140+10) -- (300,130+10) ;
\draw [color={rgb, 255:red, 74; green, 144; blue, 226 }  ,draw opacity=1 ]   (400,80) -- (420,60) ;
\draw [color={rgb, 255:red, 74; green, 144; blue, 226 }  ,draw opacity=1 ]   (400,140+10) -- (410,130+10) ;
\draw [color={rgb, 255:red, 74; green, 144; blue, 226 }  ,draw opacity=1 ]   (160,120+10) -- (200,120+10) ;
\draw [color={rgb, 255:red, 74; green, 144; blue, 226 }  ,draw opacity=1 ]   (270,60) -- (290,60) ;
\draw [color={rgb, 255:red, 74; green, 144; blue, 226 }  ,draw opacity=1 ]   (270,120+10) -- (310,120+10) ;
\draw [color={rgb, 255:red, 74; green, 144; blue, 226 }  ,draw opacity=1 ]   (380,60) -- (400,60) ;
\draw [color={rgb, 255:red, 74; green, 144; blue, 226 }  ,draw opacity=1 ]   (380,120+10) -- (400,120+10) ;
\draw [color={rgb, 255:red, 126; green, 211; blue, 33 }  ,draw opacity=1 ]   (130,60) -- (130,100) ;
\draw [color={rgb, 255:red, 126; green, 211; blue, 33 }  ,draw opacity=1 ]   (240,60) -- (240,100) ;
\draw [color={rgb, 255:red, 126; green, 211; blue, 33 }  ,draw opacity=1 ]   (130,120+10) -- (130,140+10) ;
\draw [color={rgb, 255:red, 126; green, 211; blue, 33 }  ,draw opacity=1 ]   (240,120+10) -- (240,140+10) ;
\draw [color={rgb, 255:red, 126; green, 211; blue, 33 }  ,draw opacity=1 ]   (350,60) -- (350,80) ;
\draw [color={rgb, 255:red, 126; green, 211; blue, 33 }  ,draw opacity=1 ]   (350,120+10) -- (350,160+10) ;
\draw [color={rgb, 255:red, 126; green, 211; blue, 33 }  ,draw opacity=1 ]   (150,80) -- (140,90) ;
\draw [color={rgb, 255:red, 126; green, 211; blue, 33 }  ,draw opacity=1 ]   (260,80) -- (250,90) ;
\draw [color={rgb, 255:red, 126; green, 211; blue, 33 }  ,draw opacity=1 ]   (260,140+10) -- (250,150+10) ;
\draw [color={rgb, 255:red, 126; green, 211; blue, 33 }  ,draw opacity=1 ]   (370,140+10) -- (360,150+10) ;
\draw [color={rgb, 255:red, 126; green, 211; blue, 33 }  ,draw opacity=1 ]   (370,80) -- (350,100) ;
\draw [color={rgb, 255:red, 126; green, 211; blue, 33 }  ,draw opacity=1 ]   (150,140+10) -- (130,160+10) ;
\draw [color={rgb, 255:red, 126; green, 211; blue, 33 }  ,draw opacity=1 ]   (170,100) -- (150,100) ;
\draw [color={rgb, 255:red, 126; green, 211; blue, 33 }  ,draw opacity=1 ]   (170,160+10) -- (150,160+10) ;
\draw [color={rgb, 255:red, 126; green, 211; blue, 33 }  ,draw opacity=1 ]   (280,100) -- (260,100) ;
\draw [color={rgb, 255:red, 126; green, 211; blue, 33 }  ,draw opacity=1 ]   (390,100) -- (370,100) ;
\draw [color={rgb, 255:red, 126; green, 211; blue, 33 }  ,draw opacity=1 ]   (390,160+10) -- (370,160+10) ;
\draw [color={rgb, 255:red, 126; green, 211; blue, 33 }  ,draw opacity=1 ]   (280,160+10) -- (240,160+10) ;

\end{tikzpicture}
\caption{Six combinatorial types of trigonal morphisms for a graph of type (000)A. For the three colors blue, red and green, edges of the same color are supposed to have the same length (and map to the same image edge via the morphism). Black edges are contracted via the morphism and can have any length.}
    \label{fig:sixmorphisms}
\end{figure}

We consider four combinatorial types of non-smooth tropical curves in $\FF_0$, see Figure \ref{fig:4nonsmooth}.

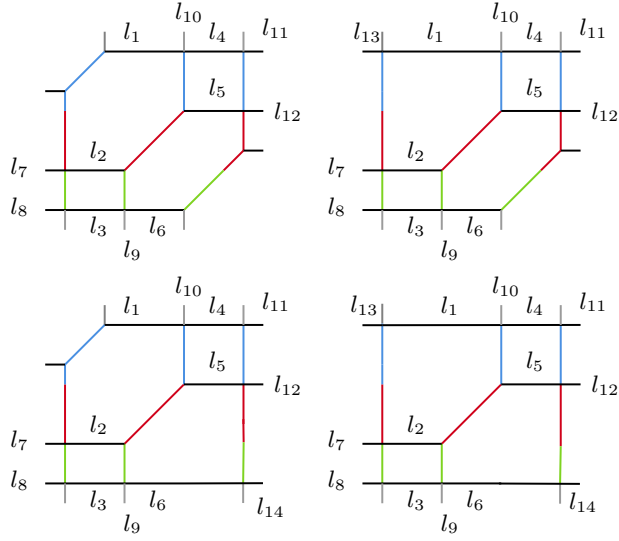
\begin{figure}[h]
    \centering

\tikzset{every picture/.style={line width=0.75pt}} 

\begin{tikzpicture}[x=0.75pt,y=0.75pt,yscale=-1,xscale=1]

\draw [color={rgb, 255:red, 128; green, 128; blue, 128 }  ,draw opacity=1 ]   (150,190) -- (150,180) ;
\draw [color={rgb, 255:red, 208; green, 2; blue, 27 }  ,draw opacity=1 ]   (160,250) -- (190,220) ;
\draw [color={rgb, 255:red, 74; green, 144; blue, 226 }  ,draw opacity=1 ]   (190,220) -- (190,190) ;
\draw [color={rgb, 255:red, 126; green, 211; blue, 33 }  ,draw opacity=1 ]   (130,250) -- (130,270) ;
\draw [color={rgb, 255:red, 208; green, 2; blue, 27 }  ,draw opacity=1 ]   (130,250) -- (130,220) ;
\draw [color={rgb, 255:red, 208; green, 2; blue, 27 }  ,draw opacity=1 ]   (210,250) -- (220,240) ;
\draw [color={rgb, 255:red, 208; green, 2; blue, 27 }  ,draw opacity=1 ]   (220,240) -- (220,220) ;
\draw [color={rgb, 255:red, 126; green, 211; blue, 33 }  ,draw opacity=1 ]   (160,250) -- (160,270) ;
\draw [color={rgb, 255:red, 126; green, 211; blue, 33 }  ,draw opacity=1 ]   (210,250) -- (190,270) ;
\draw [color={rgb, 255:red, 74; green, 144; blue, 226 }  ,draw opacity=1 ]   (220,220) -- (220,190) ;
\draw [color={rgb, 255:red, 74; green, 144; blue, 226 }  ,draw opacity=1 ]   (130,220) -- (130,210) ;
\draw [color={rgb, 255:red, 74; green, 144; blue, 226 }  ,draw opacity=1 ]   (130,210) -- (150,190) ;
\draw    (230,190) -- (150,190) ;
\draw    (190,270) -- (120,270) ;
\draw [color={rgb, 255:red, 155; green, 155; blue, 155 }  ,draw opacity=1 ]   (190,190) -- (190,180) ;
\draw [color={rgb, 255:red, 155; green, 155; blue, 155 }  ,draw opacity=1 ]   (220,190) -- (220,180) ;
\draw    (160,250) -- (120,250) ;
\draw    (230,220) -- (190,220) ;
\draw    (130,210) -- (120,210) ;
\draw [color={rgb, 255:red, 155; green, 155; blue, 155 }  ,draw opacity=1 ]   (130,280) -- (130,270) ;
\draw [color={rgb, 255:red, 155; green, 155; blue, 155 }  ,draw opacity=1 ]   (160,280) -- (160,270) ;
\draw [color={rgb, 255:red, 155; green, 155; blue, 155 }  ,draw opacity=1 ][fill={rgb, 255:red, 155; green, 155; blue, 155 }  ,fill opacity=1 ]   (190,280) -- (190,270) ;
\draw    (230,240) -- (220,240) ;
\draw [color={rgb, 255:red, 128; green, 128; blue, 128 }  ,draw opacity=1 ]   (290,190) -- (290,180) ;
\draw [color={rgb, 255:red, 208; green, 2; blue, 27 }  ,draw opacity=1 ]   (320,250) -- (350,220) ;
\draw [color={rgb, 255:red, 74; green, 144; blue, 226 }  ,draw opacity=1 ]   (350,220) -- (350,190) ;
\draw [color={rgb, 255:red, 126; green, 211; blue, 33 }  ,draw opacity=1 ]   (290,250) -- (290,270) ;
\draw [color={rgb, 255:red, 208; green, 2; blue, 27 }  ,draw opacity=1 ]   (290,250) -- (290,220) ;
\draw [color={rgb, 255:red, 208; green, 2; blue, 27 }  ,draw opacity=1 ]   (370,250) -- (380,240) ;
\draw [color={rgb, 255:red, 208; green, 2; blue, 27 }  ,draw opacity=1 ]   (380,240) -- (380,220) ;
\draw [color={rgb, 255:red, 126; green, 211; blue, 33 }  ,draw opacity=1 ]   (320,250) -- (320,270) ;
\draw [color={rgb, 255:red, 126; green, 211; blue, 33 }  ,draw opacity=1 ]   (370,250) -- (350,270) ;
\draw [color={rgb, 255:red, 74; green, 144; blue, 226 }  ,draw opacity=1 ]   (380,220) -- (380,190) ;
\draw [color={rgb, 255:red, 74; green, 144; blue, 226 }  ,draw opacity=1 ]   (290,220) -- (290,210) ;
\draw [color={rgb, 255:red, 74; green, 144; blue, 226 }  ,draw opacity=1 ]   (290,210) -- (290,190) ;
\draw    (390,190) -- (290,190) ;
\draw    (350,270) -- (280,270) ;
\draw [color={rgb, 255:red, 155; green, 155; blue, 155 }  ,draw opacity=1 ]   (350,190) -- (350,180) ;
\draw [color={rgb, 255:red, 155; green, 155; blue, 155 }  ,draw opacity=1 ]   (380,190) -- (380,180) ;
\draw    (320,250) -- (280,250) ;
\draw    (390,220) -- (350,220) ;
\draw    (290,190) -- (280,190) ;
\draw [color={rgb, 255:red, 155; green, 155; blue, 155 }  ,draw opacity=1 ]   (290,280) -- (290,270) ;
\draw [color={rgb, 255:red, 155; green, 155; blue, 155 }  ,draw opacity=1 ]   (320,280) -- (320,270) ;
\draw [color={rgb, 255:red, 155; green, 155; blue, 155 }  ,draw opacity=1 ][fill={rgb, 255:red, 155; green, 155; blue, 155 }  ,fill opacity=1 ]   (350,280) -- (350,270) ;
\draw    (390,240) -- (380,240) ;
\draw [color={rgb, 255:red, 128; green, 128; blue, 128 }  ,draw opacity=1 ]   (150,328) -- (150,318) ;
\draw [color={rgb, 255:red, 208; green, 2; blue, 27 }  ,draw opacity=1 ]   (160,388) -- (190,358) ;
\draw [color={rgb, 255:red, 74; green, 144; blue, 226 }  ,draw opacity=1 ]   (190,358) -- (190,328) ;
\draw [color={rgb, 255:red, 126; green, 211; blue, 33 }  ,draw opacity=1 ]   (130,388) -- (130,408) ;
\draw [color={rgb, 255:red, 208; green, 2; blue, 27 }  ,draw opacity=1 ]   (130,388) -- (130,358) ;
\draw [color={rgb, 255:red, 208; green, 2; blue, 27 }  ,draw opacity=1 ]   (220.08,387.22) -- (219.92,375.55) ;
\draw [color={rgb, 255:red, 208; green, 2; blue, 27 }  ,draw opacity=1 ]   (220,378) -- (220,358) ;
\draw [color={rgb, 255:red, 126; green, 211; blue, 33 }  ,draw opacity=1 ]   (160,388) -- (160,408) ;
\draw [color={rgb, 255:red, 126; green, 211; blue, 33 }  ,draw opacity=1 ]   (219.83,407.88) -- (220.08,387.22) ;
\draw [color={rgb, 255:red, 74; green, 144; blue, 226 }  ,draw opacity=1 ]   (220,358) -- (220,328) ;
\draw [color={rgb, 255:red, 74; green, 144; blue, 226 }  ,draw opacity=1 ]   (130,358) -- (130,348) ;
\draw [color={rgb, 255:red, 74; green, 144; blue, 226 }  ,draw opacity=1 ]   (130,348) -- (150,328) ;
\draw    (230,328) -- (150,328) ;
\draw    (190,408) -- (120,408) ;
\draw [color={rgb, 255:red, 155; green, 155; blue, 155 }  ,draw opacity=1 ]   (190,328) -- (190,318) ;
\draw [color={rgb, 255:red, 155; green, 155; blue, 155 }  ,draw opacity=1 ]   (220,328) -- (220,318) ;
\draw    (160,388) -- (120,388) ;
\draw    (230,358) -- (190,358) ;
\draw    (130,348) -- (120,348) ;
\draw [color={rgb, 255:red, 155; green, 155; blue, 155 }  ,draw opacity=1 ]   (130,418) -- (130,408) ;
\draw [color={rgb, 255:red, 155; green, 155; blue, 155 }  ,draw opacity=1 ]   (160,418) -- (160,408) ;
\draw [color={rgb, 255:red, 155; green, 155; blue, 155 }  ,draw opacity=1 ][fill={rgb, 255:red, 155; green, 155; blue, 155 }  ,fill opacity=1 ]   (219.83,417.88) -- (219.83,407.88) ;
\draw    (229.83,407.88) -- (190,408) ;
\draw [color={rgb, 255:red, 128; green, 128; blue, 128 }  ,draw opacity=1 ]   (290,327.3) -- (290,317.3) ;
\draw [color={rgb, 255:red, 208; green, 2; blue, 27 }  ,draw opacity=1 ]   (320,388) -- (350,358) ;
\draw [color={rgb, 255:red, 74; green, 144; blue, 226 }  ,draw opacity=1 ]   (350,358) -- (350,328) ;
\draw [color={rgb, 255:red, 126; green, 211; blue, 33 }  ,draw opacity=1 ]   (290,388) -- (290,408) ;
\draw [color={rgb, 255:red, 208; green, 2; blue, 27 }  ,draw opacity=1 ]   (290,388) -- (290,358) ;
\draw [color={rgb, 255:red, 208; green, 2; blue, 27 }  ,draw opacity=1 ]   (380,389.22) -- (380,378) ;
\draw [color={rgb, 255:red, 208; green, 2; blue, 27 }  ,draw opacity=1 ]   (380,378) -- (380,358) ;
\draw [color={rgb, 255:red, 126; green, 211; blue, 33 }  ,draw opacity=1 ]   (320,388) -- (320,408) ;
\draw [color={rgb, 255:red, 126; green, 211; blue, 33 }  ,draw opacity=1 ]   (379.67,408.22) -- (380,389.22) ;
\draw [color={rgb, 255:red, 74; green, 144; blue, 226 }  ,draw opacity=1 ]   (380,358) -- (380,328) ;
\draw [color={rgb, 255:red, 74; green, 144; blue, 226 }  ,draw opacity=1 ]   (290,358) -- (290,348) ;
\draw [color={rgb, 255:red, 74; green, 144; blue, 226 }  ,draw opacity=1 ]   (290,348) -- (290,327.3) ;
\draw    (390,328) -- (290,328.13) ;
\draw    (350,408) -- (280,408) ;
\draw [color={rgb, 255:red, 155; green, 155; blue, 155 }  ,draw opacity=1 ]   (350,328) -- (350,318) ;
\draw [color={rgb, 255:red, 155; green, 155; blue, 155 }  ,draw opacity=1 ]   (380,328) -- (380,318) ;
\draw    (320,388) -- (280,388) ;
\draw    (390,358) -- (350,358) ;
\draw    (290,328.13) -- (280,328.13) ;
\draw [color={rgb, 255:red, 155; green, 155; blue, 155 }  ,draw opacity=1 ]   (290,418) -- (290,408) ;
\draw [color={rgb, 255:red, 155; green, 155; blue, 155 }  ,draw opacity=1 ]   (320,418) -- (320,408) ;
\draw [color={rgb, 255:red, 155; green, 155; blue, 155 }  ,draw opacity=1 ][fill={rgb, 255:red, 155; green, 155; blue, 155 }  ,fill opacity=1 ]   (379.67,418.22) -- (379.67,408.22) ;
\draw    (389.67,408.22) -- (348.83,408.05) ;

\draw (158,174) node [anchor=north west][inner sep=0.75pt]  [font=\footnotesize] [align=left] {$\displaystyle l_{1}$};
\draw (201,174) node [anchor=north west][inner sep=0.75pt]  [font=\footnotesize] [align=left] {$\displaystyle l_{4}$};
\draw (141,234) node [anchor=north west][inner sep=0.75pt]  [font=\footnotesize] [align=left] {$\displaystyle l_{2}$};
\draw (141,272) node [anchor=north west][inner sep=0.75pt]  [font=\footnotesize] [align=left] {$\displaystyle l_{3}$};
\draw (201,202) node [anchor=north west][inner sep=0.75pt]  [font=\footnotesize] [align=left] {$\displaystyle l_{5}$};
\draw (171,272) node [anchor=north west][inner sep=0.75pt]  [font=\footnotesize] [align=left] {$\displaystyle l_{6}$};
\draw (101,242) node [anchor=north west][inner sep=0.75pt]  [font=\footnotesize] [align=left] {$\displaystyle l_{7}$};
\draw (101,262) node [anchor=north west][inner sep=0.75pt]  [font=\footnotesize] [align=left] {$\displaystyle l_{8}$};
\draw (158,284) node [anchor=north west][inner sep=0.75pt]  [font=\footnotesize] [align=left] {$\displaystyle l_{9}$};
\draw (184,164) node [anchor=north west][inner sep=0.75pt]  [font=\footnotesize] [align=left] {$\displaystyle l_{10}$};
\draw (228,172) node [anchor=north west][inner sep=0.75pt]  [font=\footnotesize] [align=left] {$\displaystyle l_{11}$};
\draw (234,214) node [anchor=north west][inner sep=0.75pt]  [font=\footnotesize] [align=left] {$\displaystyle l_{12}$};
\draw (311,174) node [anchor=north west][inner sep=0.75pt]  [font=\footnotesize] [align=left] {$\displaystyle l_{1}$};
\draw (361,174) node [anchor=north west][inner sep=0.75pt]  [font=\footnotesize] [align=left] {$\displaystyle l_{4}$};
\draw (301,234) node [anchor=north west][inner sep=0.75pt]  [font=\footnotesize] [align=left] {$\displaystyle l_{2}$};
\draw (301,272) node [anchor=north west][inner sep=0.75pt]  [font=\footnotesize] [align=left] {$\displaystyle l_{3}$};
\draw (361,202) node [anchor=north west][inner sep=0.75pt]  [font=\footnotesize] [align=left] {$\displaystyle l_{5}$};
\draw (331,272) node [anchor=north west][inner sep=0.75pt]  [font=\footnotesize] [align=left] {$\displaystyle l_{6}$};
\draw (261,242) node [anchor=north west][inner sep=0.75pt]  [font=\footnotesize] [align=left] {$\displaystyle l_{7}$};
\draw (261,262) node [anchor=north west][inner sep=0.75pt]  [font=\footnotesize] [align=left] {$\displaystyle l_{8}$};
\draw (318,284) node [anchor=north west][inner sep=0.75pt]  [font=\footnotesize] [align=left] {$\displaystyle l_{9}$};
\draw (344,164) node [anchor=north west][inner sep=0.75pt]  [font=\footnotesize] [align=left] {$\displaystyle l_{10}$};
\draw (388,172) node [anchor=north west][inner sep=0.75pt]  [font=\footnotesize] [align=left] {$\displaystyle l_{11}$};
\draw (394,214) node [anchor=north west][inner sep=0.75pt]  [font=\footnotesize] [align=left] {$\displaystyle l_{12}$};
\draw (158,312) node [anchor=north west][inner sep=0.75pt]  [font=\footnotesize] [align=left] {$\displaystyle l_{1}$};
\draw (201,312) node [anchor=north west][inner sep=0.75pt]  [font=\footnotesize] [align=left] {$\displaystyle l_{4}$};
\draw (141,372) node [anchor=north west][inner sep=0.75pt]  [font=\footnotesize] [align=left] {$\displaystyle l_{2}$};
\draw (141,410) node [anchor=north west][inner sep=0.75pt]  [font=\footnotesize] [align=left] {$\displaystyle l_{3}$};
\draw (201,340) node [anchor=north west][inner sep=0.75pt]  [font=\footnotesize] [align=left] {$\displaystyle l_{5}$};
\draw (171,410) node [anchor=north west][inner sep=0.75pt]  [font=\footnotesize] [align=left] {$\displaystyle l_{6}$};
\draw (101,380) node [anchor=north west][inner sep=0.75pt]  [font=\footnotesize] [align=left] {$\displaystyle l_{7}$};
\draw (101,400) node [anchor=north west][inner sep=0.75pt]  [font=\footnotesize] [align=left] {$\displaystyle l_{8}$};
\draw (158,422) node [anchor=north west][inner sep=0.75pt]  [font=\footnotesize] [align=left] {$\displaystyle l_{9}$};
\draw (184,302) node [anchor=north west][inner sep=0.75pt]  [font=\footnotesize] [align=left] {$\displaystyle l_{10}$};
\draw (228,310) node [anchor=north west][inner sep=0.75pt]  [font=\footnotesize] [align=left] {$\displaystyle l_{11}$};
\draw (234,352) node [anchor=north west][inner sep=0.75pt]  [font=\footnotesize] [align=left] {$\displaystyle l_{12}$};
\draw (318,312) node [anchor=north west][inner sep=0.75pt]  [font=\footnotesize] [align=left] {$\displaystyle l_{1}$};
\draw (361,312) node [anchor=north west][inner sep=0.75pt]  [font=\footnotesize] [align=left] {$\displaystyle l_{4}$};
\draw (301,372) node [anchor=north west][inner sep=0.75pt]  [font=\footnotesize] [align=left] {$\displaystyle l_{2}$};
\draw (301,410) node [anchor=north west][inner sep=0.75pt]  [font=\footnotesize] [align=left] {$\displaystyle l_{3}$};
\draw (361,340) node [anchor=north west][inner sep=0.75pt]  [font=\footnotesize] [align=left] {$\displaystyle l_{5}$};
\draw (331,410) node [anchor=north west][inner sep=0.75pt]  [font=\footnotesize] [align=left] {$\displaystyle l_{6}$};
\draw (261,380) node [anchor=north west][inner sep=0.75pt]  [font=\footnotesize] [align=left] {$\displaystyle l_{7}$};
\draw (261,400) node [anchor=north west][inner sep=0.75pt]  [font=\footnotesize] [align=left] {$\displaystyle l_{8}$};
\draw (318,422) node [anchor=north west][inner sep=0.75pt]  [font=\footnotesize] [align=left] {$\displaystyle l_{9}$};
\draw (344,302) node [anchor=north west][inner sep=0.75pt]  [font=\footnotesize] [align=left] {$\displaystyle l_{10}$};
\draw (388,310) node [anchor=north west][inner sep=0.75pt]  [font=\footnotesize] [align=left] {$\displaystyle l_{11}$};
\draw (394,352) node [anchor=north west][inner sep=0.75pt]  [font=\footnotesize] [align=left] {$\displaystyle l_{12}$};
\draw (274,174) node [anchor=north west][inner sep=0.75pt]  [font=\footnotesize] [align=left] {$\displaystyle l_{13}$};
\draw (225.17,413.8) node [anchor=north west][inner sep=0.75pt]  [font=\footnotesize] [align=left] {$\displaystyle l_{14}$};
\draw (272.5,311.67) node [anchor=north west][inner sep=0.75pt]  [font=\footnotesize] [align=left] {$\displaystyle l_{13}$};
\draw (382.83,411.13) node [anchor=north west][inner sep=0.75pt]  [font=\footnotesize] [align=left] {$\displaystyle l_{14}$};
\end{tikzpicture}

    \caption{Four combinatorial types of non-smooth embeddings into $\FF_0$. The blue edges have length $L_1$, the red edges have length $L_2$ and the green edges have length $L_3$. The black edges are contracted, their lengths are denoted by $l_1,\ldots, l_6$. The lengths $l_7\ldots,l_{14}$ denote the lengths of the edges we unfold from the $4$-valent vertices.}
    \label{fig:4nonsmooth}
\end{figure}

By Theorem 3.2 \cite{CM14}, we can re-embed the curve in such a way that we can unfold edges of arbitrary length separating the edges (resp.\ the vertex and edge) that form a crossing dual to a parallelogram (resp.\ dual to a quadrangle). The effect of the projection morphism is depicted in Figure \ref{fig:projmorphism}, where we incorporate all $4$ combinatorial types in one picture, which is possible since we can just set the corresponding length of unfolded edges to $0$.

\begin{figure}[h]
    \centering
\tikzset{every picture/.style={line width=0.75pt}} 

\begin{tikzpicture}[x=0.75pt,y=0.75pt,yscale=-1,xscale=1]

\draw    (210,80) -- (350,80) ;
\draw    (210,80) -- (250,120) ;
\draw    (310,120) -- (350,160) ;
\draw    (310,120) -- (350,80) ;
\draw    (210,160) -- (250,120) ;
\draw    (210,160) -- (350,160) ;
\draw    (250,120) -- (310,120) ;
\draw    (350,80) -- (350,160) ;
\draw    (210,80) -- (210,160) ;
\draw [color={rgb, 255:red, 208; green, 2; blue, 27 }  ,draw opacity=1 ]   (250,120) -- (310,120) ;
\draw [color={rgb, 255:red, 208; green, 2; blue, 27 }  ,draw opacity=1 ]   (290,160) -- (350,160) ;
\draw [color={rgb, 255:red, 208; green, 2; blue, 27 }  ,draw opacity=1 ]   (210,80) -- (270,80) ;
\draw [color={rgb, 255:red, 74; green, 144; blue, 226 }  ,draw opacity=1 ]   (270,80) -- (310,80) ;
\draw [color={rgb, 255:red, 74; green, 144; blue, 226 }  ,draw opacity=1 ]   (310,120) -- (330,100) ;
\draw [color={rgb, 255:red, 74; green, 144; blue, 226 }  ,draw opacity=1 ]   (350,160) -- (350,120) ;
\draw [color={rgb, 255:red, 126; green, 211; blue, 33 }  ,draw opacity=1 ]   (210,80) -- (210,130) ;
\draw [color={rgb, 255:red, 126; green, 211; blue, 33 }  ,draw opacity=1 ]   (250,120) -- (230,140) ;
\draw [color={rgb, 255:red, 126; green, 211; blue, 33 }  ,draw opacity=1 ]   (290,160) -- (250,160) ;

\draw (320,60) node [anchor=north west][inner sep=0.75pt]  [font=\footnotesize] [align=left] {$\displaystyle l_{1} +l_{13}$};
\draw (311,84) node [anchor=north west][inner sep=0.75pt]  [font=\footnotesize] [align=left] {$\displaystyle l_{10}$};
\draw (351,92) node [anchor=north west][inner sep=0.75pt]  [font=\footnotesize] [align=left] {$\displaystyle l_{4} +l_{11}$};
\draw (231,142) node [anchor=north west][inner sep=0.75pt]  [font=\footnotesize] [align=left] {$\displaystyle l_{9}$};
\draw (281,132) node [anchor=north west][inner sep=0.75pt]  [font=\footnotesize] [align=left] {$\displaystyle l_{5} +l_{12}$};
\draw (241,92) node [anchor=north west][inner sep=0.75pt]  [font=\footnotesize] [align=left] {$\displaystyle l_{2} +l_{7}$};
\draw (171,134) node [anchor=north west][inner sep=0.75pt]  [font=\footnotesize] [align=left] {$\displaystyle l_{3} +l_{8}$};
\draw (221,162) node [anchor=north west][inner sep=0.75pt]  [font=\footnotesize] [align=left] {$\displaystyle l_{6} +l_{14}$};

\end{tikzpicture}

    \caption{The effect of the projection morphism for the $4$ non-smooth embeddings into $\FF_0$.}
    \label{fig:projmorphism}
\end{figure}
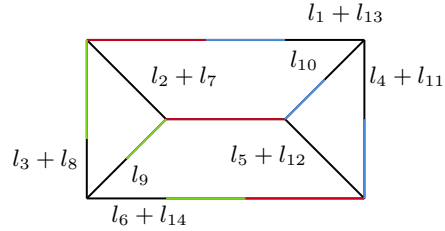

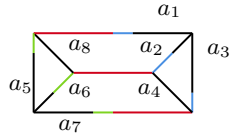
\begin{figure}[h]
    \centering

\tikzset{every picture/.style={line width=0.75pt}} 

\begin{tikzpicture}[x=0.75pt,y=0.75pt,yscale=-1,xscale=1]

\draw [color={rgb, 255:red, 208; green, 2; blue, 27 }  ,draw opacity=1 ]   (150,490) -- (190,490) ;
\draw [color={rgb, 255:red, 126; green, 211; blue, 33 }  ,draw opacity=1 ]   (150,490) -- (140,500) ;
\draw [color={rgb, 255:red, 74; green, 144; blue, 226 }  ,draw opacity=1 ]   (190,490) -- (200,480) ;
\draw    (150,490) -- (130,470) ;
\draw [color={rgb, 255:red, 208; green, 2; blue, 27 }  ,draw opacity=1 ]   (170,510) -- (210,510) ;
\draw [color={rgb, 255:red, 208; green, 2; blue, 27 }  ,draw opacity=1 ]   (130,470) -- (170,470) ;
\draw [color={rgb, 255:red, 74; green, 144; blue, 226 }  ,draw opacity=1 ]   (170,470) -- (180,470) ;
\draw [color={rgb, 255:red, 74; green, 144; blue, 226 }  ,draw opacity=1 ]   (210,510) -- (210,500) ;
\draw [color={rgb, 255:red, 126; green, 211; blue, 33 }  ,draw opacity=1 ]   (170,510) -- (160,510) ;
\draw [color={rgb, 255:red, 126; green, 211; blue, 33 }  ,draw opacity=1 ]   (130,470) -- (130,480) ;
\draw    (130,510) -- (130,480) ;
\draw    (130,510) -- (140,500) ;
\draw    (130,510) -- (160,510) ;
\draw    (190,490) -- (210,510) ;
\draw    (210,470) -- (210,500) ;
\draw    (180,470) -- (210,470) ;
\draw    (200,480) -- (210,470) ;

\draw (191,454) node [anchor=north west][inner sep=0.75pt]  [font=\footnotesize] [align=left] {$\displaystyle a_{1}$};
\draw (182,473) node [anchor=north west][inner sep=0.75pt]  [font=\footnotesize] [align=left] {$\displaystyle a_{2}$};
\draw (216,474) node [anchor=north west][inner sep=0.75pt]  [font=\footnotesize] [align=left] {$\displaystyle a_{3}$};
\draw (181,494) node [anchor=north west][inner sep=0.75pt]  [font=\footnotesize] [align=left] {$\displaystyle a_{4}$};
\draw (116,492) node [anchor=north west][inner sep=0.75pt]  [font=\footnotesize] [align=left] {$\displaystyle a_{5}$};
\draw (146,494) node [anchor=north west][inner sep=0.75pt]  [font=\footnotesize] [align=left] {$\displaystyle a_{6}$};
\draw (141,512) node [anchor=north west][inner sep=0.75pt]  [font=\footnotesize] [align=left] {$\displaystyle a_{7}$};
\draw (146,472) node [anchor=north west][inner sep=0.75pt]  [font=\footnotesize] [align=left] {$\displaystyle a_{8}$};

\end{tikzpicture}

    \caption{Labeling of the contracted edges for a graph of type (000)A.}
    \label{fig:000labels}
\end{figure}

In Figure \ref{fig:000labels}, we fix a uniform labeling of the lengths of the contracted edges. The blue edges have length $L_1$, the red edges $L_2$ and the green edges $L_3$.
If we are in type (1,1) in Figure \ref{fig:sixmorphisms}, then $a_1=a_5=0$. In type (1,2), $a_2=a_5=0$. In type (1,3), $a_2=a_6=0$. In type (2,1), $a_1=a_6=0$. In type (2,2), $a_1=a_7=0$. In type (2,3) finally, $a_3=a_5=0$.

In type (1,1), let $l_3=l_8=0$. Then also $l_2=0$. 
Let $l_4=l_5=\min\{a_3,a_4\}$.

If $a_7<l_5+L_2$, pick the embedding on the top left, i.e.\ $l_{13}$ and $l_{14}$ do not exist. 
Else, pick the embedding on the bottom left, i.e.\ $l_{13}$ does not exist.

Let $l_1=0$. Pick $l_7=a_8$, $l_9=a_6$, $l_{10}=a_2$. Let $l_4=l_5=\min\{a_3,a_4\}$. If $\min\{a_3,a_4\}=a_3$, let $l_{11}=0$ and $l_{12}=a_4-a_3$. Else, let $l_{12}=0$ and $l_{11}=a_3-a_4$.
If $a_7<l_5+L_2$, let $l_6=a_7$. Else, $l_6=l_5+L_2$, choose $l_{14}=a_7-l_6$.

In type (1,2), let $l_3=l_8=l_{10}=0$. Then also $l_2=0$. Let $l_9=a_6$, $l_7=a_8$.

If $a_1<l_2+L_2$, pick one of the embeddings on the left, i.e.\ $l_{13}$ does not exist. Let $a_1=l_1$. Else, pick one of the embeddings on the right. Then $l_1=l_2+L_2$. Let $l_{13}=a_1-l_1$.

Let $l_4=l_5=\min\{a_3,a_4\}$. If $\min\{a_3,a_4\}=a_3$, let $l_{11}=0$ and $l_{12}=a_4-a_3$. Else, let $l_{12}=0$ and $l_{11}=a_3-a_4$.

If $a_7<l_5+L_2$, pick one of the embeddings on the top, i.e.\ $l_{14}$ does not exist. Let $a_7=l_6$. Else, pick one of the embeddings on the bottom. Then $l_6=l_5+L_2$. Let $l_{14}=a_7-l_6$.

In type (1,3), let $l_9=l_{10}=0$.
Let $l_2=l_3=\min\{a_5,a_8\}$.
If $\min\{a_5,a_8\}=a_5$, let $l_8=0$ and $l_7=a_8-a_5$. Else, let $l_7=0$ and $l_8=a_5-a_8$.

If $a_1<l_2+L_2$, pick one of the embeddings on the left, i.e.\ $l_{13}$ does not exist. Let $a_1=l_1$. Else, pick one of the embeddings on the right. Then $l_1=l_2+L_2$. Let $l_{13}=a_1-l_1$.

Let $l_4=l_5=\min\{a_3,a_4\}$. If $\min\{a_3,a_4\}=a_3$, let $l_{11}=0$ and $l_{12}=a_4-a_3$. Else, let $l_{12}=0$ and $l_{11}=a_3-a_4$.

If $a_7<l_5+L_2$, pick one of the embeddings on the top, i.e.\ $l_{14}$ does not exist. Let $a_7=l_6$. Else, pick one of the embeddings on the bottom. Then $l_6=l_5+L_2$. Let $l_{14}=a_7-l_6$.

In type (2,1), let $l_4=l_5=\min\{a_3,a_4\}$. If $\min\{a_3,a_4\}=a_3$, let $l_{11}=0$ and $l_{12}=a_4-a_3$. Else, let $l_{12}=0$ and $l_{11}=a_3-a_4$.

Let $l_2=l_3=\min\{a_5,a_8\}$.
If $\min\{a_5,a_8\}=a_5$, let $l_8=0$ and $l_7=a_8-a_5$. Else, let $l_7=0$ and $l_8=a_5-a_8$.

If $a_7<l_5+L_2$, pick the embedding on the top left, i.e.\ $l_{13}$ and $l_{14}$ do not exist. Let $a_7=l_6$.
Else, pick the embedding on the bottom left, i.e.\ $l_{13}$ does not exist. Then $l_6=l_5+L_2$. Let $l_{14}=a_7-l_6$.

Let $l_1=l_9=0$. Let $l_{10}=a_2$.

In type (2,2), let $l_1=l_6=0$. Pick the upper left embedding, i.e.\ $l_{13}, l_{14}$ do not exist.

Let $l_2=l_3=\min\{a_5,a_8\}$.
If $\min\{a_5,a_8\}=a_5$, let $l_8=0$ and $l_7=a_8-a_5$. Else, let $l_7=0$ and $l_8=a_5-a_8$.

Let $l_4=l_5=\min\{a_3,a_4\}$. If $\min\{a_3,a_4\}=a_3$, let $l_{11}=0$ and $l_{12}=a_4-a_3$. Else, let $l_{12}=0$ and $l_{11}=a_3-a_4$.

Let $l_{10}=a_1$, $l_9=a_6$.

In type (2,3), let $l_2=l_3=l_8=l_4=l_5=l_{11}=0$.

If $a_1<l_2+L_2$, pick one of the embeddings on the left, i.e.\ $l_{13}$ does not exist. Let $a_1=l_1$. Else, pick one of the embeddings on the right. Then $l_1=l_2+L_2$. Let $l_{13}=a_1-l_1$.

If $a_7<l_5+L_2$, pick one of the embeddings on the top, i.e.\ $l_{14}$ does not exist. Let $a_7=l_6$. Else, pick one of the embeddings on the bottom. Then $l_6=l_5+L_2$. Let $l_{14}=a_7-l_6$.

Let $l_7=a_8$, $l_9=a_6$, $l_{10}=a_2$ and $l_{12}=a_4$.

In any case, we manage to unfold $\Gamma$ in such a way that the trigonal morphism is respected.
\appendix
\section{Embeddings of genus 4 curves of maximal combinatorial type in Hirzebruch surfaces}
Here we complete the discussion started in Section \ref{ssc:genus4} for all the remaining combinatorial types in Figure \ref{fg:genus4_maxtype}.
\subsubsection*{Type (010)}\label{sc:appendix}
Let us label the cycles and the vertices as in Figure \ref{fg:(010)F_0}.
The cycle $3$ is adjacent to all the others, and the cycle $4$ does not meet $1$ and $2$. Then the triangulation describing the embedding in $\mathbb F_0$ has to contain the blue edges as in the second figure in Figure \ref{fg:(010)F_0}, and since we want the tropical plane curve to be smooth, then it will contain also the black edges in the third figure.
\begin{figure}[h]
\begin{tikzcd}
    \begin{tikzpicture}[scale=0.75]
        \coordinate (1) at (0,0);
        \coordinate (2) at (2,0);
        \coordinate (3) at (0,1);
        \coordinate (4) at (2,1);
        \coordinate (5) at (1,1);
        \coordinate (6) at (1,2);
        \draw (1)--(3);
        \draw (2)--(4);
        \draw (3)--(5);
        \draw (4)--(5);
        \draw (6)--(5);
        \draw (4)--(6);
        \draw (3)--(6);
        \draw(1)[] to [out=30, in=150] (2);
        \draw(1)[] to [out=330, in=210] (2);
        \foreach \i in {1,2,3,4,5,6}{
            \vertex{\i}
        }

        \draw[blue] (0.5,1.1) node[] {1};
        \draw[blue] (1.5,0.5) node[] {3};
        \draw[blue] (1.5,1.1) node[] {2};
        \draw[blue] (1,-0.1) node[] {4};
        \draw (0,0) node[anchor=north] {$x$};
        \draw (2,0) node[anchor=north] {$y$};
        \draw(0,1) node[anchor=south] {$u$};
        \draw (2,1) node[anchor=south] {$v$};
        \draw(1,0.5) node[anchor=south] {$z$};
        \draw (1,2) node[anchor=south] {$w$};
    \end{tikzpicture}&
    \begin{tikzpicture}
        \foreach \i in {0,0.5,1,1.5}{
            \foreach \j in {0,0.5,1,1.5}{
            \vertex{\i,\j}
            }
    }
    \vertexblue{0.5,0.5}\vertexblue{1,0.5}
    \vertexblue{1,1}
    \vertexblue{0.5,1}
    \draw(0,0)--(1.5,0);
    \draw(0,0)--(0,1.5);
    \draw(1.5,1.5)--(0,1.5);
    \draw(1.5,1.5)--(1.5,0);
    \draw[blue] (0.5,0.1) node[] {1};
    \draw[blue] (1,0.1) node[] {2};
    \draw[blue] (0.5,1.1) node[] {3};
    \draw[blue] (1,1.1) node[] {4};
    \draw[blue] (0.5,0.5)--(1,0.5);
    \draw[blue] (0.5,1)--(1,0.5);
    \draw[blue] (0.5,1)--(1,1);
    \draw[blue] (0.5,1)--(0.5,0.5);
    \draw[blue] (0.5,1)--(1.5,0.5);
    \end{tikzpicture}
    &
    \begin{tikzpicture}
        \foreach \i in {0,0.5,1,1.5}{
            \foreach \j in {0,0.5,1,1.5}{
            \vertex{\i,\j}
            }
    }
    \vertexblue{0.5,0.5}\vertexblue{1,0.5}
    \vertexblue{1,1}
    \vertexblue{0.5,1}
    \draw(0,0)--(1.5,0);
    \draw(0,0)--(0,1.5);
    \draw(1.5,1.5)--(0,1.5);
    \draw(1.5,1.5)--(1.5,0);
    \draw[blue] (0.5,0.1) node[] {1};
    \draw[blue] (1,0.1) node[] {2};
    \draw[blue] (0.5,1.1) node[] {3};
    \draw[blue] (1,1.1) node[] {4};
    \draw[] (1.5,0.5)--(1,0.5);
    \draw(1,1)--(1.5,0.5);
    \draw[blue] (0.5,0.5)--(1,0.5);
    \draw[blue] (0.5,1)--(1,0.5);
    \draw[blue] (0.5,1)--(1,1);
    \draw[blue] (0.5,1)--(0.5,0.5);
    \draw[blue] (0.5,1)--(1.5,0.5);
    \end{tikzpicture}&
    \begin{tikzpicture}[scale=0.75]
    \draw[blue](0.5,0.5)--(1,1);
    \draw[blue](1,1)--(1.5,2);
    \draw[blue](1.5,2)--(1.5,2.5);
    \draw[](1.5,2)--(2,2.5);
    \draw[blue](0.5,0.5)--(0.5,-0.2);
    \draw[blue](0.5,0.5)--(-0.5,0.5);
    \draw[](1,1)--(1,0);
    \vertexblue{-0.5,0.5}\vertexblue{0.5,-0.2}
    \vertexblue{0.5,0.5}
    \vertexblue{1,1}
    \vertexblue{1.5,2}
    \vertexblue{1.5,2.5}
    \draw[] (0.5,0.5) node[anchor=north east] {$z$};
    \draw[] (1,1) node[anchor=south east] {$v$};
    \draw[] (-0.5,0.5) node[anchor= east] {$u$};
    \draw[] (0.5,-0.2) node[anchor= north] {$w$};
    \draw[] (1.5,2) node[anchor= north west] {$y$};
    \draw[] (1.5,2.5) node[anchor= south] {$x$};
    \end{tikzpicture}
    \end{tikzcd}\caption{Combinatorial type, partial triangulations of $\mathbb F_0$ and partial embedded tropical curve.}\label{fg:(010)F_0}
    \end{figure}
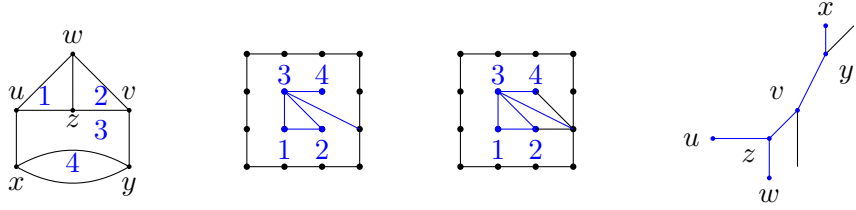

Comparing $x$-coordinates, resp. $y$-coordinates, of $ ux$ and of the edges $uz,zv,vy,xy$ gives inequality $uz+vz+vy<ux,$ resp. $vz+2vy+xy<ux$.

Similarly, possible triangulations of $\mathbb F_2,$ have to be as in Figure \ref{fg:(010)F_2a}. 
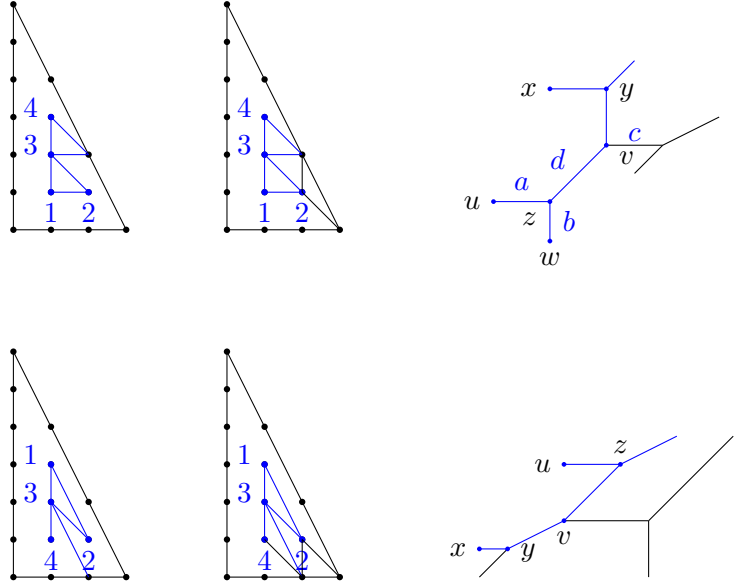
\begin{figure}[h]
\begin{tikzcd}
    \begin{tikzpicture}
        \foreach \j in {0,0.5,1,1.5,2,2.5,3}{
            \vertex{0,\j}
        }
        \foreach \j in {0,0.5,1,1.5,2}{
            \vertex{0.5,\j}
        }
        \foreach \j in {0,0.5,1}{
            \vertex{1,\j}
        }
        \vertex{1.5,0}
        \draw (0,0)--(1.5,0);
        \draw (0,3)--(1.5,0);
        \draw (0,3)--(0,0);
    \vertexblue{0.5,0.5}\vertexblue{1,0.5}
    \vertexblue{0.5,1.5}
    \vertexblue{0.5,1}
    \draw[blue] (0.5,0.1) node[] {1};
    \draw[blue] (1,0.1) node[] {2};
    \draw[blue] (0.5,1.1) node[anchor=east] {3};
    \draw[blue] (0.5,1.6) node[anchor=east] {4};
    \draw[blue] (0.5,0.5)--(1,0.5);
    \draw[blue] (1,1)--(0.5,1.5);
    \draw[blue] (0.5,1)--(0.5,1.5);
    \draw[blue] (0.5,1)--(1,0.5);
    \draw[blue] (0.5,1)--(0.5,0.5);
    \draw[blue] (0.5,1)--(1,1);
    \end{tikzpicture}&
    \begin{tikzpicture}
        \foreach \j in {0,0.5,1,1.5,2,2.5,3}{
            \vertex{0,\j}
        }
        \foreach \j in {0,0.5,1,1.5,2}{
            \vertex{0.5,\j}
        }
        \foreach \j in {0,0.5,1}{
            \vertex{1,\j}
        }
        \vertex{1.5,0}
        \draw (0,0)--(1.5,0);
        \draw (0,3)--(1.5,0);
        \draw (0,3)--(0,0);
    \vertexblue{0.5,0.5}\vertexblue{1,0.5}
    \vertexblue{0.5,1.5}
    \vertexblue{0.5,1}
    \draw[blue] (0.5,0.1) node[] {1};
    \draw[blue] (1,0.1) node[] {2};
    \draw[blue] (0.5,1.1) node[anchor=east] {3};
    \draw[blue] (0.5,1.6) node[anchor=east] {4};
    \draw[blue] (0.5,0.5)--(1,0.5);
    \draw[blue] (1,1)--(0.5,1.5);
    \draw[blue] (0.5,1)--(0.5,1.5);
    \draw[blue] (0.5,1)--(1,0.5);
    \draw[blue] (0.5,1)--(0.5,0.5);
    \draw[blue] (0.5,1)--(1,1);
    \draw(1,0.5)--(1,1);
    \draw(1,0.5)--(1.5,0); 
    \end{tikzpicture}&
    \begin{tikzpicture}[scale=0.75]
    \draw[blue](0.5,0.5)--(1.5,1.5);
    \draw[blue](0.5,0.5)--(0.5,-0.2);
    \draw[blue](0.5,0.5)--(-0.5,0.5);
    \draw[blue](1.5,1.5)--(1.5,2.5);
    \draw[blue](1.5,2.5)--(2,3);
    \draw[blue](0.5,2.5)--(1.5,2.5);
    \draw[](1.5,1.5)--(2.5,1.5);
    \draw[](2.5,1.5)--(3.5,2);
    \draw[](2.5,1.5)--(2,1);
    \vertexblue{-0.5,0.5}\vertexblue{0.5,-0.2}
    \vertexblue{0.5,0.5}
    \vertexblue{1.5,1.5}
    \vertexblue{0.5,2.5}
    \vertexblue{1.5,2.5}
    \draw[] (0.5,0.5) node[anchor=north east] {$z$};
    \draw[] (1.5,2.5) node[anchor=west] {$y$};
    \draw[] (-0.5,0.5) node[anchor= east] {$u$};
    \draw[] (0.5,-0.2) node[anchor= north] {$w$};
    \draw[] (1.5,1.3) node[anchor= west] {$v$};
    \draw[] (0.5,2.5) node[anchor= east] {$x$};
    \draw[blue] (0,0.5) node[anchor= south] {$a$};
    \draw[blue] (0.5,0.1) node[anchor= west] {$b$};
    \draw[blue] (2,2) node[anchor= north] {$c$};
    \draw[blue] (1,1.2) node[anchor= east] {$d$};
    \end{tikzpicture}\\
    \begin{tikzpicture}
        \foreach \j in {0,0.5,1,1.5,2,2.5,3}{
            \vertex{0,\j}
        }
        \foreach \j in {0,0.5,1,1.5,2}{
            \vertex{0.5,\j}
        }
        \foreach \j in {0,0.5,1}{
            \vertex{1,\j}
        }
        \vertex{1.5,0}
        \draw (0,0)--(1.5,0);
        \draw (0,3)--(1.5,0);
        \draw (0,3)--(0,0);
    \vertexblue{0.5,0.5}\vertexblue{1,0.5}
    \vertexblue{0.5,1.5}
    \vertexblue{0.5,1}
    \draw[blue] (0.5,0.1) node[] {4};
    \draw[blue] (1,0.1) node[] {2};
    \draw[blue] (0.5,1.1) node[anchor=east] {3};
    \draw[blue] (0.5,1.6) node[anchor=east] {1};
    \draw[blue] (0.5,1)--(0.5,1.5);
    \draw[blue] (0.5,1)--(1,0.5);
    \draw[blue] (0.5,1)--(1,0);
    \draw[blue] (0.5,1)--(0.5,0.5);
    \draw[blue] (0.5,1.5)--(1,0.5);
    \end{tikzpicture}&\begin{tikzpicture}
        \foreach \j in {0,0.5,1,1.5,2,2.5,3}{
            \vertex{0,\j}
        }
        \foreach \j in {0,0.5,1,1.5,2}{
            \vertex{0.5,\j}
        }
        \foreach \j in {0,0.5,1}{
            \vertex{1,\j}
        }
        \vertex{1.5,0}
        \draw (0,0)--(1.5,0);
        \draw (0,3)--(1.5,0);
        \draw (0,3)--(0,0);
    \vertexblue{0.5,0.5}\vertexblue{1,0.5}
    \vertexblue{0.5,1.5}
    \vertexblue{0.5,1}
    \draw[blue] (0.5,0.1) node[] {4};
    \draw[blue] (1,0.1) node[] {2};
    \draw[blue] (0.5,1.1) node[anchor=east] {3};
    \draw[blue] (0.5,1.6) node[anchor=east] {1};
    \draw[blue] (0.5,1)--(0.5,1.5);
    \draw[blue] (0.5,1)--(1,0.5);
    \draw[blue] (0.5,1)--(1,0);
    \draw[blue] (0.5,1)--(0.5,0.5);
    \draw[blue] (0.5,1.5)--(1,0.5);
    \draw(1,0.5)--(1.5,0); 
    \draw(1,0)--(1,0.5); 
    \draw(0.5,0.5)--(1,0); 
    \end{tikzpicture}&
   \begin{tikzpicture}[scale=0.75]
    \draw[](2.5,2.5)--(1,1);
    \draw[](1,0)--(1,1);
    \draw[] (1,1)--(-0.5,1);
    \draw[blue] (0.5,2)--(1.5,2.5);
    \draw[blue] (0.5,2)--(-0.5,2);
    \draw[blue] (0.5,2)--(-0.5,1);
    \draw[blue] (-1.5,0.5)--(-0.5,1);
    \draw[blue] (-1.5,0.5)--(-2,0.5);
    \draw[] (-1.5,0.5)--(-2,0);
    \vertexblue{-2,0.5}\vertexblue{-1.5,0.5}
    \vertexblue{-0.5,2}
    \vertexblue{-0.5,1}
    \vertexblue{0.5,2}
    \draw[] (-2,0.5) node[anchor= east] {$x$};
    \draw[] (-1.5,0.5) node[anchor=west] {$y$};
    \draw[] (-0.5,2) node[anchor= east] {$u$};
    \draw[] (-0.5,1) node[anchor= north] {$v$};
    \draw[] (0.5,2) node[anchor= south] {$z$};
    \end{tikzpicture}
    \end{tikzcd}\caption{Partial triangulations of $\mathbb F_2$ and partial embedded tropical curves.}\label{fg:(010)F_2a}
\end{figure}
    
In both cases, any completion of such triangulations would would give segments joining $u$ and $x$ of the form $\alpha, 1),$ for some $\alpha \in\mathbb Z$.
Therefore the length of the edge $ux$ will be equal to the difference between the $y$-coordinates of the points $x$ and $u.$ 
This is also true for the edges joining the points $v,y,z$ since they are also defined by segments of the form $(\alpha', 1)$, for some $\alpha'\in\ZZ$. Moreover $y,z$ have same $y$-coordinate as $x,u,$ respectively, thus $vz+vy=ux.$
This proves the following.
\begin{lemma}
    Let $\Gamma$ be a tropical curve of genus $4$ of combinatorial type (010).
    \begin{itemize}
        \item If $\Gamma$ is realizable in $\mathbb F_0,$ then its edge lengths satisfy, up to symmetry:
        \begin{equation}
        \begin{cases}
           uz+vz+vy<ux;\\
           vz+2vy+xy<ux.\\
           \end{cases}
        \end{equation}
        \item If $\Gamma$ is realizable in $\mathbb F_2,$ then its edge lengths satisfy, up to symmetry:
        \begin{equation}
               vz+vy=ux.
        \end{equation}
    \end{itemize}
\end{lemma}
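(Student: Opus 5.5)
The plan is to follow the same template used for type (000)A: first pin down the part of the unimodular triangulation forced by the cycle structure of (010), then read off edge lengths from the dual plane curve by comparing coordinates of vertices.

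The combinatorial input is the correspondence recalled in Section \ref{ssc:genus4}: cycles correspond to interior lattice points, adjacent cycles to $1$-simplices joining interior points, and bridge-connected cycles to separating simplices. Label the cycles $1,\dots,4$ as in Figure \ref{fg:(010)F_0}. Cycle $3$ meets all others, and cycle $4$ meets neither $1$ nor $2$. Hence the interior point dual to cycle $3$ must be joined by edges to the other three interior points, while the point dual to $4$ is joined to neither of the points for $1$ and $2$.

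\textbf{Case $\mathbb F_0$.} The interior points of the $3\times 3$ square form a unit square. Up to the symmetries of the polygon, this forces the blue configuration in Figure \ref{fg:(010)F_0}: cycle $3$ sits at a corner adjacent to everything via a diagonal. Unimodularity then forces the additional black edges, in particular the edge from $(1,1)$ to $(3,2)$ and its neighbours near the boundary. Dualising gives the partial curve drawn in that figure. The cycle edge $ux$ is dual to a segment whose direction is fixed, and $ux$ is opposite to the chain $u\to z\to v\to y\to x$, possibly closed up by edges of nonzero length.

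I then compare the $x$-coordinates along both paths from $u$ to $x$. Along the chain, the edges $uz$, $zv$, $vy$ contribute horizontal displacements in the same direction, while the other side contributes at least the horizontal component of $ux$ plus a strictly positive extra edge. This yields $uz+vz+vy<ux$. Doing the same with $y$-coordinates, weighting by the direction vectors (the edge $vy$ has direction with second coordinate $2$, which produces the coefficient $2$), yields $vz+2vy+xy<ux$. Strictness comes from the forced bounded edges of positive length, exactly as the edges $a,b,c,d$ did for (000)A.

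\textbf{Case $\mathbb F_2$.} The interior points $(1,1),(2,1),(1,2),(1,3)$ admit, up to symmetry, only the two partial triangulations in Figure \ref{fg:(010)F_2a}, according to which interior point plays cycle $3$. I will check that in each case the dual edges to $ux$, and to the edges along $v,y,z$, are dual to primitive segments of the form $(\alpha,1)$. Consequently each of those tropical edges has lattice length equal to its vertical displacement (up to the sign convention, its $y$-extent). Finally I observe from the forced triangles that $y$ and $z$ lie at the same height as $x$ and $u$ respectively. Summing vertical displacements then gives $vz+vy=ux$.

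\textbf{Main obstacle.} The hard part is justifying that the listed partial triangulations are the only ones up to symmetry, and that every unimodular completion keeps the relevant dual segments of the form $(\alpha,1)$. I would handle this by a short enumeration of placements of the four labels on the interior points, discarding those incompatible with the adjacency and non-adjacency conditions. I would then note that the height coincidences are forced by the horizontal edges already present in the partial triangulation, independently of how it is completed.
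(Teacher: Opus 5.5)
You follow the paper's own route: force a partial triangulation from the cycle adjacencies, then compare coordinates along the two arcs of cycle $3$ from $u$ to $x$. In the paper's configuration, with cycle $3$ at $(1,2)$, your $(\alpha,1)$ argument in $\mathbb F_2$ is correct. However, both places where you claim more than the coordinate comparison gives are genuine gaps. The paper's proof asserts these same two steps without argument, so following it more closely will not close them.

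\emph{Strictness in $\mathbb F_0$.} The edge $ux$ is not one segment of fixed direction. It is the broken line dual to all triangulation edges joining $(1,2)$ to the boundary points between $(2,2)$ and $(1,1)$, and which of these occur depends on the completion. Traversed from $u$ to $x$, its segments have directions among $(-2,1),(-1,1),(0,1),(1,1),(1,0),(1,-1),(1,-2)$, all with both coordinates $\le 1$. That is what the comparison yields: $uz+vz+vy\le ux$ and $vz+2vy+xy\le ux$. Strictness would need a segment with first (resp.\ second) coordinate $<1$. Unlike the edges $a,b,c,d$ in type (000)A, nothing forces one.

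A concrete completion shows this. Take the cones of $(1,2)$ over $(1,1),(2,1),(3,1),(2,2),(3,3),(2,3),(1,3),(0,3),(1,1)$, of $(1,1)$ over $(0,3),(0,2),(0,1),(0,0),(1,0),(2,1)$, of $(2,1)$ over $(1,0),(2,0),(3,0),(3,1)$, and of $(2,2)$ over $(3,1),(3,2),(3,3)$. This is a regular unimodular triangulation of type (010): the closure equations of the four cycle polygons have positive solutions. Every segment of $ux$ has first coordinate $1$, so closing cycle $3$ gives $uz+vz+vy=ux$ identically. No graph automorphism rescues strictness: here $ux>vy$, and closing cycles $1$ and $2$ gives $uw>uz$ and $vw>vz$. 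Similarly, joining $(1,2)$ only to $(0,3),(0,2),(0,1),(0,0)$ between $(2,2)$ and $(1,1)$ forces $vz+2vy+xy=ux$. So only the non-strict inequalities follow.

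\emph{The enumeration in $\mathbb F_2$.} The paper's two partial triangulations both put cycle $3$ at $(1,2)$. They are exchanged by the lattice reflection $(x,y)\mapsto(x,6-2x-y)$ of the triangle, so they do not correspond to different positions of cycle $3$. An honest enumeration finds a second position: cycle $3$ at $(2,1)$, which is also joinable to all three other interior points. Then cycle $4$ sits at $(1,1)$ and cycles $1,2$ at $(1,2),(1,3)$. The edge $(2,1)$--$(0,2)$ and the triangles $(2,1),(1,2),(0,2)$ and $(2,1),(0,2),(1,1)$ are forced.

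This case occurs. Complete with the cones of $(2,1)$ over $(1,1),(1,0),(2,0),(3,0),(2,2),(1,4),(1,3)$, of $(1,3)$ over $(1,4),(0,6),(0,5),(0,4),(0,3),(1,2)$, of $(1,2)$ over $(0,3),(0,2)$, and of $(1,1)$ over $(0,2),(0,1),(0,0),(1,0)$. The closure equations have positive solutions, for example with $ux=vz=1$, $uz=2$, $vy=19$, and the skeleton is of type (010).

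In this configuration $ux$ is the single segment dual to $(2,1)$--$(0,2)$, with direction $(1,2)$, so the $(\alpha,1)$ argument breaks down. The arc from $y$ to $v$ uses directions with first coordinate $\le 1$, so closing cycle $3$ gives $2vz+uz+ux\le vy$ instead. This is incompatible with $vz+vy=ux$ and with its images $uz+ux=vy$ and $vw+vy=ux$. Closing cycle $1$ in this completion gives $uw=uz$, which also rules out $uw+ux=vy$. So the step ``every completion keeps the relevant segments of the form $(\alpha,1)$'' fails, and the $\mathbb F_2$ relation as stated cannot be proved: it needs an additional case.
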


\subsubsection*{Type (020)}

Let us now consider the combinatorial type (020). The triangulation describing the embedding in $\mathbb F_0$ is represented in Figure \ref{fg:(020)F_0}.
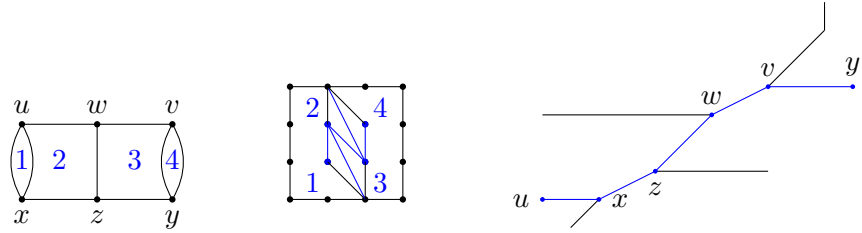
\begin{figure}[h]
\begin{tikzcd}
      \begin{tikzpicture}
        \coordinate (1) at (0,0);
        \coordinate (2) at (1,0);
        \coordinate (3) at (2,0);
        \coordinate (4) at (0,1);
        \coordinate (5) at (2,1);
        \coordinate (6) at (1,1);
        \draw (1)--(2);
        \draw (2)--(6);
        \draw (2)--(3);
        \draw (4)--(6);
        \draw (6)--(5);
        \draw(1)[] to [out=120, in=240] (4);
        \draw(1)[] to [out=60, in=300] (4);
        \draw(3)[] to [out=120, in=240] (5);
        \draw(3)[] to [out=60, in=300] (5);
        \foreach \i in {1,2,3,4,5,6}{
            \vertex{\i}
        }
        \draw (0,0) node[anchor=north] {$x$};
        \draw (2,0) node[anchor=north] {$y$};
        \draw(0,1) node[anchor=south] {$u$};
        \draw (2,1) node[anchor=south] {$v$};
        \draw(1,0) node[anchor=north] {$z$};
        \draw (1,1) node[anchor=south] {$w$};
        \draw[blue](0,0.4) node {1};
        \draw[blue] (2,0.4) node {4};
        \draw[blue](0.5,0.4) node {2};
        \draw[blue](1.5,0.4) node {3};
    \end{tikzpicture}&
    \begin{tikzpicture}
        \foreach \i in {0,0.5,1,1.5}{
            \foreach \j in {0,0.5,1,1.5}{
            \vertex{\i,\j}
            }
    }
    \vertexblue{0.5,0.5}\vertexblue{1,0.5}
    \vertexblue{1,1}
    \vertexblue{0.5,1}
    \draw(0,0)--(1.5,0);
    \draw(0,0)--(0,1.5);
    \draw(1.5,1.5)--(0,1.5);
    \draw(1.5,1.5)--(1.5,0);
    \draw[blue] (0.3,0.1) node[] {1};
    \draw[blue] (1.2,0.1) node[] {3};
    \draw[blue] (0.3,1.1) node[] {2};
    \draw[blue] (1.2,1.1) node[] {4};
    \draw[] (0.5,0.5)--(1,0);
    \draw[blue] (0.5,1)--(1,0);
    \draw[blue] (0.5,1)--(1,0.5);
    \draw[blue] (0.5,1.5)--(1,0.5);
    \draw[] (0.5,1.5)--(1,1);
    \draw[blue] (0.5,0.5)--(0.5,1);
    \draw[] (0.5,1.5)--(0.5,1);
    \draw[blue] (1,0.5)--(1,1);
    \draw[] (1,0.5)--(1,0);
    \end{tikzpicture}
    &
    \begin{tikzpicture}[scale=0.75]
    \draw[](0.5,0-0.5)--(1,0.5-0.5);
    \draw[blue](1,0.5-0.5)--(2,1-0.5);
    \draw[blue](2,1-0.5)--(3,2-0.5);
    \draw[blue](3,2-0.5)--(4,2.5-0.5);
    \draw[](0,2-0.5)--(3,2-0.5);
    \draw[](4,2.5-0.5)--(5,3.5-0.5);
    \vertexblue{3,2-0.5}
    \draw[blue](4,2.5-0.5)--(5.5,2.5-0.5);
    \draw[](2,1-0.5)--(4,1-0.5);
    \draw[blue](0,0.5-0.5)--(1,0.5-0.5);
    \draw[](5,3.5-0.5)--(5,4-0.5);
    \vertexblue{0,0.5-0.5}
    \vertexblue{2,1-0.5}
    \vertexblue{1,0.5-0.5}
    \vertexblue{5.5,2.5-0.5}
    \vertexblue{4,2.5-0.5}
    \draw[] (1,0.5-0.5) node[anchor=west] {$x$};
    \draw[] (4,2.5-0.5) node[anchor=south ] {$v$};
    \draw[] (0,0.5-0.5) node[anchor= east] {$u$};
    \draw[] (5.5,2.5-0.5) node[anchor= south] {$y$};
    \draw[] (2,1-0.5) node[anchor= north] {$z$};
    \draw[] (3,2-0.5) node[anchor= south] {$w$};
    \end{tikzpicture}
    \end{tikzcd}\caption{Combinatorial type, partial triangulations of $\mathbb F_0$ and partial embedded tropical curve. }\label{fg:(020)F_0}
    \end{figure}

Comparing the $x$-coordinates, resp. $y$-coordinates, of the points gives $2xz+zw<uw$ and $2vw+zw<yz,$ resp. $xz+zw<uw$ and $vw+zw<yz,$

Instead, for the triangulation of $\mathbb F_2,$ we have two possibilities, depicted in Figures \ref{fg:(020)F_2a} and \ref{fg:(020)F_2b}.

\begin{figure}[h]
\begin{tikzcd}
    \begin{tikzpicture}
        \foreach \j in {0,0.5,1,1.5,2,2.5,3}{
            \vertex{0,\j}
        }
        \foreach \j in {0,0.5,1,1.5,2}{
            \vertex{0.5,\j}
        }
        \foreach \j in {0,0.5,1}{
            \vertex{1,\j}
        }
        \vertex{1.5,0}
        \draw (0,0)--(1.5,0);
        \draw (0,3)--(1.5,0);
        \draw (0,3)--(0,0);
        \coordinate (1) at (1,0.5);
        \coordinate (2) at (0.5,0.5);
        \coordinate (3) at (0.5,1);
        \coordinate (4) at (0.5,1.5);
        \foreach \j in {1,2,3,4}{
            \vertexblue{\j}
        }
        \draw[blue] (1) node[anchor=north] {1};
        \draw[blue] (2) node[anchor=north] {2};
        \draw[blue] (3) node[anchor=east] {3};
        \draw[blue] (4) node[anchor=east] {4};
        \draw[blue] (1)--(2);
        \draw[blue] (2)--(4);
        \draw[blue] (2)--(1,1);
    \end{tikzpicture}&
    \begin{tikzpicture}
        \foreach \j in {0,0.5,1,1.5,2,2.5,3}{
            \vertex{0,\j}
        }
        \foreach \j in {0,0.5,1,1.5,2}{
            \vertex{0.5,\j}
        }
        \foreach \j in {0,0.5,1}{
            \vertex{1,\j}
        }
        \vertex{1.5,0}
        \draw (0,0)--(1.5,0);
        \draw (0,3)--(1.5,0);
        \draw (0,3)--(0,0);
        \coordinate (1) at (1,0.5);
        \coordinate (2) at (0.5,0.5);
        \coordinate (3) at (0.5,1);
        \coordinate (4) at (0.5,1.5);
        \foreach \j in {1,2,3,4}{
            \vertexblue{\j}
        }
        \draw[blue] (1) node[anchor=north] {1};
        \draw[blue] (2) node[anchor=north] {2};
        \draw[blue] (3) node[anchor=east] {3};
        \draw[blue] (4) node[anchor=east] {4};
        \draw[blue] (1)--(2);
        \draw[blue] (2)--(4);
        \draw[blue] (2)--(1,1);
        \draw (1)--(1,1);\draw (3)--(1,1); \draw (4)--(1,1);
    \draw(1,0.5)--(1.5,0); 
    \end{tikzpicture}&
    \begin{tikzpicture}[scale=0.75]
    \draw[blue](2.5,0.5)--(1.5,1.5);
    \draw[](2.5,0.5)--(3.5,0.5);
    \draw[](2.8,-0.2)--(3.5,0.5);
    \draw[](3.5,0.5)--(4.5,1);
    \draw[blue](2.5,0.5)--(2.5,0);
    \draw[](1.5,1.5)--(1.5,2.5);
    \draw[](1.5,2.5)--(2,3);
    \draw[blue](0.5,2.5)--(1.5,2.5);
    \draw[blue](1.5,1.5)--(1,1.5);
    \vertexblue{2.5,0.5}\vertexblue{1,1.5}
    \vertexblue{2.5,0}
    \vertexblue{1.5,1.5}
    \vertexblue{0.5,2.5}
    \vertexblue{1.5,2.5}
    \draw[] (1,1.5) node[anchor= east] {$w$};
    \draw[] (1.5,2.5) node[anchor=west] {$y$};
    \draw[] (2.5,0.5) node[anchor= south] {$x$};
    \draw[] (1.5,1.5) node[anchor= west] {$z$};
    \draw[] (2.5,0) node[anchor= north] {$u$};
    \draw[] (0.5,2.5) node[anchor= east] {$v$};
    \end{tikzpicture}
    \end{tikzcd}\caption{Partial triangulation of $\mathbb F_2$ and partial embedded tropical curve.}\label{fg:(020)F_2a}
\end{figure}
    
\begin{figure}[h]
\begin{tikzcd}
    \begin{tikzpicture}
        \foreach \j in {0,0.5,1,1.5,2,2.5,3}{
            \vertex{0,\j}
        }
        \foreach \j in {0,0.5,1,1.5,2}{
            \vertex{0.5,\j}
        }
        \foreach \j in {0,0.5,1}{
            \vertex{1,\j}
        }
        \vertex{1.5,0}
        \draw (0,0)--(1.5,0);
        \draw (0,3)--(1.5,0);
        \draw (0,3)--(0,0);
        \coordinate (2) at (1,0.5);
        \coordinate (1) at (0.5,0.5);
        \coordinate (3) at (0.5,1);
        \coordinate (4) at (0.5,1.5);
        \foreach \j in {1,2,3,4}{
            \vertexblue{\j}
        }
        \draw[blue] (1) node[anchor=north] {1};
        \draw[blue] (2) node[anchor=north] {2};
        \draw[blue] (3) node[anchor=east] {3};
        \draw[blue] (4) node[anchor=east] {4};
        \draw[blue] (2)--(3);
        \draw[blue] (1)--(2);
        \draw[blue] (3)--(4);
        \draw[blue] (2)--(0,1);
    \end{tikzpicture}&
    \begin{tikzpicture}
        \foreach \j in {0,0.5,1,1.5,2,2.5,3}{
            \vertex{0,\j}
        }
        \foreach \j in {0,0.5,1,1.5,2}{
            \vertex{0.5,\j}
        }
        \foreach \j in {0,0.5,1}{
            \vertex{1,\j}
        }
        \vertex{1.5,0}
         \draw (0,0)--(1.5,0);
        \draw (0,3)--(1.5,0);
        \draw (0,3)--(0,0);
        \coordinate (2) at (1,0.5);
        \coordinate (1) at (0.5,0.5);
        \coordinate (3) at (0.5,1);
        \coordinate (4) at (0.5,1.5);
        \foreach \j in {1,2,3,4}{
            \vertexblue{\j}
        }
        \draw[blue] (1) node[anchor=north] {1};
        \draw[blue] (2) node[anchor=north] {2};
        \draw[blue] (3) node[anchor=east] {3};
        \draw[blue] (4) node[anchor=east] {4};
        \draw[blue] (2)--(3);
        \draw[blue] (1)--(2);
        \draw[blue] (3)--(4);
        \draw[blue] (2)--(0,1);
        \draw (0,1)--(3);\draw (0,1)--(1); \draw (4)--(1,1);\draw (2)--(1,1);
    \draw(1,0.5)--(1.5,0); 
    \end{tikzpicture}&
    \begin{tikzpicture}[scale=0.75]
    \draw[](1.5,2.5+1)--(2,3+1);
    \draw[blue](0,2.5+1)--(1.5,2.5+1);
    \draw[](0,2.5+1)--(0,0.5+1);
    \draw[blue](-0.5,-0.5+1)--(0,0.5+1);
    \draw[blue](0,0.5+1)--(0.5,1+1);
    \draw[blue](-0.5,-0.5+1)--(-0.5,-1+1);
    \draw[](-1,-1+1)--(-0.5,-0.5+1);
    \draw[](0,-0.5+1)--(1.5,1+1);
    \draw[](0.5,1+1)--(1.5,1+1);
    \draw[](0,-0.5+1)--(0,-0.7+1);
    \draw[](2.5,1.5+1)--(1.5,1+1);
    \vertexblue{0,0.5+1}\vertexblue{0.5,1+1}
    \vertexblue{-0.5,-0.5+1}
    \vertexblue{1.5,1+1}
    \vertexblue{0,2.5+1}
    \vertexblue{1.5,2.5+1}
    \draw[] (0.5,1+1) node[anchor= south] {w};
    \draw[] (1.5,2.5+1) node[anchor=west] {v};
    \draw[] (-0.5,-0.5+1) node[anchor= south east] {x};
    \draw[] (0,0.5+1) node[anchor= east] {z};
    \draw[] (1.5,1+1) node[anchor= south] {u};
    \draw[] (0,2.5+1) node[anchor= east] {y};
    \end{tikzpicture}
    \end{tikzcd}\caption{Partial triangulation of $\mathbb F_2$ and partial embedded tropical curve.}\label{fg:(020)F_2b}
    \end{figure}
The first possibility in Figure \ref{fg:(020)F_2a} shows that the triangulation can be completed by joining the points $v,w$ with edges with corresponding segments $(\alpha, 1),$ thus $vw=yz.$

The triangulation depicted in Figure \ref{fg:(020)F_2b}, instead, can be completed by joining the points $z,y$ with edges defined by segment of type $(0,1)$ or $(2, 1).$
Both have same $y$-coordinate, therefore $yz=vw+wz.$

This proves the following lemma.
\begin{lemma}
    Let $\Gamma$ be a tropical curve of genus $4$ of combinatorial type (020).
    \begin{itemize}
        \item If $\Gamma$ is realizable in $\mathbb F_0,$ then its edge lengths satisfy, up to symmetry:
        \begin{equation}
        \begin{cases}
            2xz+wz<uw\\2vw+wz<yz.        \end{cases}
        \end{equation}
        \item If $\Gamma$ is realizable in $\mathbb F_2,$ then its edge lengths satisfy, up to symmetry:
        \begin{equation}
        \begin{cases}
            xz+wz<uw\\vw=yz;
        \end{cases}\text{or}\quad
        \begin{cases}
            2xz+wz<uw\\
            yz=vw+wz.
        \end{cases}
        \end{equation}
    \end{itemize}
\end{lemma}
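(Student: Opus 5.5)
This lemma is proved in the same way as the (000)A and (010) cases: first pin down a partial unimodular triangulation from the cycle structure, then read off length relations by comparing coordinates in the dual plane curve. For (020), cycles $2$ and $3$ are adjacent to each other along the edge $zw$. Each of them is attached to one outer cycle, $1$ resp.\ $4$, along a double edge, namely $u$--$x$ and $v$--$y$. Cycles $1$ and $4$ are not adjacent to each other.

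By the dictionary between cycles and interior points, between adjacency and interior $1$-simplices, and between bridges and separating $1$-simplices, the four interior lattice points must carry an edge path $1$--$2$--$3$--$4$ with no edge $1$--$4$.

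\emph{Embedding in $\mathbb F_0$.} The interior points form the $2\times 2$ square $\{1,2\}^2$ in the $3\times 3$ square. The path condition allows only the configuration of Figure~\ref{fg:(020)F_0}, up to the symmetries of the square. Smoothness then forces the black edges, in particular those from the boundary points $(2,0)$ and $(1,3)$.

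In the dual curve, the double edges between $u$ and $x$, and between $v$ and $y$, are each realized by two parallel edge paths. One path passes through the triangle chain around cycle $1$, the other around cycle $2$ (resp.\ $3$ and $4$). The key step is to compute the horizontal displacement from $u$ to $w$. Along one route it is $2xz+wz$ in lattice length, because the edge $xz$ has direction with horizontal component $2$. Along the other route it is at most $uw$. The nontrivial end edges leaving the chain force a strict inequality, which gives $2xz+wz<uw$. The symmetric computation on the other side gives $2vw+wz<yz$.

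\emph{Embedding in $\mathbb F_2$.} The interior points are $(1,1),(2,1),(1,2),(1,3)$. Enumerating the ways to place a path $1$--$2$--$3$--$4$ with $1$ and $4$ non-adjacent gives, up to symmetry, exactly the two partial triangulations of Figures~\ref{fg:(020)F_2a} and~\ref{fg:(020)F_2b}.

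In both cases I would use the observation from the (000)A and (010) cases. Every completion joins the relevant vertices by dual edges whose lattice direction has vertical component $1$, that is, directions $(\alpha,1)$. Their lattice lengths therefore equal differences of $y$-coordinates, which converts the relations into equalities.

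\emph{First case.} The vertices $v$ and $w$ are joined by $(\alpha,1)$-edges, and so are $y$ and $z$. The end vertices share the same $y$-coordinates, so $vw=yz$. An $x$-coordinate comparison on the opposite side, as in the $\mathbb F_0$ case but now with horizontal multiplicity $1$, gives $xz+wz<uw$.

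\emph{Second case.} The vertices $z$ and $y$ are joined by segments of direction $(0,1)$ or $(2,1)$. Since $y,z$ have the same $y$-coordinates as $v$ and $w$ respectively, this gives $yz=vw+wz$. The strict inequality $2xz+wz<uw$ follows as in the $\mathbb F_0$ case.

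The main obstacle is the exhaustive enumeration of partial triangulations of the $\mathbb F_2$ trapezoid compatible with the adjacency pattern. I must make sure there are no further configurations, for example one with $2$ at $(1,2)$, and that every completion really uses only $(\alpha,1)$-type edges between the relevant vertices. I would do this by hand, case by case on the position of the interior point $(2,1)$ in the path.
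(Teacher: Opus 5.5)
Your strategy is the paper's. You fix the partial triangulations from the cycle adjacencies, then compare coordinates, and use the $(\alpha,1)$-edges to get $vw=yz$ and $yz=vw+wz$. The step that fails is the strict inequality $xz+wz<uw$ in the first $\mathbb F_2$ configuration (Figure~\ref{fg:(020)F_2a}).

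Your $\mathbb F_0$ computation already hides the reason. The horizontal displacement from $u$ to $w$ through $x,z$ is $\ell+2xz+wz$, not $2xz+wz$. Here $\ell>0$ is the length of the $u$--$x$ edge separating cycles $1$ and $2$, which is dual to the vertical segment between interior points $1$ and $2$. This term, and not any end edge, is what makes $2xz+wz<uw$ strict. In Figure~\ref{fg:(020)F_2a} that edge is dual to the horizontal segment $(1,1)$--$(2,1)$. It is therefore vertical and contributes nothing horizontally, so your $x$-comparison only gives $xz+wz\le uw$. Precisely, closing up the cycle around $(1,1)$ gives $uw-(xz+wz)=\sum_{j\ge 1} j\,\ell_{(0,j)}$, where $\ell_{(0,j)}$ is the length of the edge dual to $(1,1)$--$(0,j)$ (zero if that segment is absent).

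Equality really occurs. Take the neighbours of $(1,1)$ to be exactly $(2,1),(2,2),(1,2),(0,0),(1,0),(2,0)$, via the triangles $(0,0)(1,1)(1,2)$, $(0,0)(1,0)(1,1)$, $(1,0)(2,0)(1,1)$ and $(1,1)(2,0)(2,1)$. Complete this to a regular unimodular triangulation. One exists, for example using $(2,0)(3,0)(2,1)$, $(2,1)(2,2)(3,0)$, $(1,3)(2,2)(1,4)$, and joining the left edge to $(1,2),(1,3),(1,4)$. This gives a smooth curve of type (020) with $uw=xz+wz$ and $vw=yz$, which satisfies neither system of the statement under any relabelling.

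So no argument can give the strict inequality in this configuration. The paper's text also derives only $vw=yz$ for Figure~\ref{fg:(020)F_2a} and states the inequality without proof. What the coordinate comparison actually establishes is $xz+wz\le uw$.

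For the second configuration your claim is correct. There, though, the factor $2$ comes from the vertical component of the $x$--$z$ edge, which has direction $(1,2)$, so it is a $y$-comparison; the vertical $u$--$x$ edge supplies strictness.

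Finally, the adjacency condition you need is that the cycle-adjacency graph is exactly the path $1$--$2$--$3$--$4$, so $1$--$3$ and $2$--$4$ must also be excluded. Forbidding only $1$--$4$ does not force your enumeration: in $\mathbb F_0$, the path along three sides of the square with diagonal $2$--$4$ survives.
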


\subsubsection*{Type (021)}

Let us now consider the combinatorial type (021). The triangulation describing the embedding in $\mathbb F_0$ is represented in Figure \ref{fg:(021)F_0}.
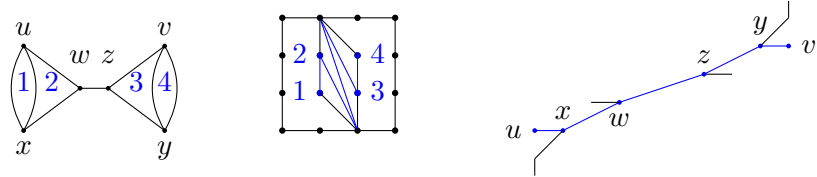
\begin{figure}[h]
\begin{tikzcd}
      \begin{tikzpicture}[scale=0.75]
        \coordinate (1) at (0,0);
        \coordinate (2) at (1,0.75);
        \coordinate (3) at (2.5,0);
        \coordinate (4) at (0,1.5);
        \coordinate (5) at (2.5,1.5);
        \coordinate (6) at (1.5,0.75);
        \draw (1)--(2);
        \draw (2)--(6);
        \draw (6)--(3);
        \draw (4)--(2);
        \draw (6)--(5);
        \draw(1)[] to [out=120, in=240] (4);
        \draw(1)[] to [out=60, in=300] (4);
        \draw(3)[] to [out=120, in=240] (5);
        \draw(3)[] to [out=60, in=300] (5);
        \foreach \i in {1,2,3,4,5,6}{
            \vertex{\i}
        }

        \draw (0,0) node[anchor=north] {$x$};
        \draw (2.5,0) node[anchor=north] {$y$};
        \draw(0,1.5) node[anchor=south] {$u$};
        \draw (2.5,1.5) node[anchor=south] {$v$};
        \draw(1.5,1) node[anchor=south] {$z$};
        \draw (1,1) node[anchor=south] {$w$};
        \draw[blue](0,0.7) node {1};
        \draw[blue] (2.5,0.7) node {4};
        \draw[blue](0.5,0.7) node {2};
        \draw[blue](2,0.7) node {3};
    \end{tikzpicture}&
    \begin{tikzpicture}
        \foreach \i in {0,0.5,1,1.5}{
            \foreach \j in {0,0.5,1,1.5}{
            \vertex{\i,\j}
            }
    }
    \coordinate (1) at (0.5,0.5);
    \coordinate (2) at (0.5,1);
    \coordinate (3) at (1,0.5);
    \coordinate (4) at (1,1);
        \foreach \j in {1,2,3,4}{
            \vertexblue{\j}
        }
    \draw[blue] (1) node[anchor=east] {1};\draw[blue] (2) node[anchor=east] {2};
    \draw[blue] (3) node[anchor=west] {3};\draw[blue] (4) node[anchor=west] {4};
    \draw(0,0)--(1.5,0);
    \draw(0,0)--(0,1.5);
    \draw(1.5,1.5)--(0,1.5);
    \draw(1.5,1.5)--(1.5,0);
    \draw[blue] (1)--(2);
    \draw[blue] (3)--(4);
    \draw[blue] (0.5,1.5)--(1,0);
    \draw[blue] (2)--(1,0);
    \draw[blue] (0.5,1.5)--(3);
    \draw[] (1)--(1,0);\draw[] (3)--(1,0);
    \draw[] (2)--(0.5,1.5);\draw[] (4)--(0.5,1.5);
    \end{tikzpicture}
    &
    \begin{tikzpicture}[scale=0.75]
    \draw[blue](0.5,0)--(1,0);
    \draw[blue](1,0)--(2,0.5);
    \draw[blue](2,0.5)--(3.5,1);
    \draw[blue](3.5,1)--(4.5,1.5);
    \vertexblue{2,0.5}
    \draw[](2,0.5)--(1.5,0.5);\draw (3.5,1)--(4,1);
    \vertexblue{3.5,1}\vertexblue{4.5,1.5}\vertexblue{5,1.5}
    \vertexblue{1,0}
    \vertexblue{0.5,0}
    \draw[blue](4.5,1.5)--(5,1.5);
    \draw[](4.5,1.5)--(5,2);
    \draw[](5,2.3)--(5,2);
    \draw[](1,0)--(0.5,-0.5);
    \draw[](0.5,-0.8)--(0.5,-0.5);
    \draw[] (1,0) node[anchor=south] {$x$};
    \draw[] (5,1.5) node[anchor=west ] {$v$};
    \draw[] (0.5,0) node[anchor= east] {$u$};
    \draw[] (4.5,1.5) node[anchor= south] {$y$};
    \draw[] (2,0.5) node[anchor= north] {$w$};
    \draw[] (3.5,1) node[anchor= south] {$z$};
    \end{tikzpicture}
    \end{tikzcd}\caption{Combinatorial type, partial triangulations of $\mathbb F_0$ and partial embedded tropical curve.}\label{fg:(021)F_0}
    \end{figure}

Comparing $x$-coordinates, resp. $y$-coordinates, of the points yields relations $ux+2wx<uw$ and $vy+2yz<vz$, resp.  
$wx<uw$ and $yz<vz.$
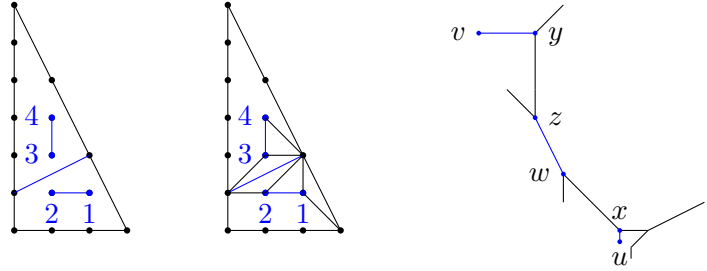
\begin{figure}[h]
\begin{tikzcd}
    \begin{tikzpicture}
        \foreach \j in {0,0.5,1,1.5,2,2.5,3}{
            \vertex{0,\j}
        }
        \foreach \j in {0,0.5,1,1.5,2}{
            \vertex{0.5,\j}
        }
        \foreach \j in {0,0.5,1}{
            \vertex{1,\j}
        }
        \vertex{1.5,0}
        \draw (0,0)--(1.5,0);
        \draw (0,3)--(1.5,0);
        \draw (0,3)--(0,0);
        \coordinate (1) at (1,0.5);
        \coordinate (2) at (0.5,0.5);
        \coordinate (3) at (0.5,1);
        \coordinate (4) at (0.5,1.5);
        \foreach \j in {1,2,3,4}{
            \vertexblue{\j}
        }
        \draw[blue] (1) node[anchor=north] {1};
        \draw[blue] (2) node[anchor=north] {2};
        \draw[blue] (3) node[anchor=east] {3};
        \draw[blue] (4) node[anchor=east] {4};
        \draw[blue] (1)--(2);
        \draw[blue] (3)--(4);
        \draw[blue] (0,0.5)--(1,1);
    \end{tikzpicture}&
    \begin{tikzpicture}
        \foreach \j in {0,0.5,1,1.5,2,2.5,3}{
            \vertex{0,\j}
        }
        \foreach \j in {0,0.5,1,1.5,2}{
            \vertex{0.5,\j}
        }
        \foreach \j in {0,0.5,1}{
            \vertex{1,\j}
        }
        \vertex{1.5,0}
        \draw (0,0)--(1.5,0);
        \draw (0,3)--(1.5,0);
        \draw (0,3)--(0,0);
        \coordinate (1) at (1,0.5);
        \coordinate (2) at (0.5,0.5);
        \coordinate (3) at (0.5,1);
        \coordinate (4) at (0.5,1.5);
        \foreach \j in {1,2,3,4}{
            \vertexblue{\j}
        }
        \draw[blue] (1) node[anchor=north] {1};
        \draw[blue] (2) node[anchor=north] {2};
        \draw[blue] (3) node[anchor=east] {3};
        \draw[blue] (4) node[anchor=east] {4};
         \draw[blue] (1)--(2);
        \draw[blue] (3)--(4);
        \draw[blue] (0,0.5)--(1,1);
        \draw[] (0,0.5)--(3);
        \draw[] (1,1)--(2);
        \draw[] (0,0.5)--(2);
        \draw (1)--(1,1);\draw (3)--(1,1); \draw (4)--(1,1);
    \draw(1,0.5)--(1.5,0); 
    \end{tikzpicture}&
    \begin{tikzpicture}[scale=0.75]
    \draw[](1.5,1.5+1)--(1.5,2.5+1);
    \draw[](1.5,2.5+1)--(2,3+1);
    \draw[blue](0.5,2.5+1)--(1.5,2.5+1);
    \draw[](1.5,1+1)--(1,1.5+1);
    \draw[](1.5,1.5+1)--(1.5,1+1);
    \draw[blue](1.5,1+1)--(2,0+1);
    \draw[](2,0+1)--(2,-0.5+1);
    \draw[](2,0+1)--(3,-1+1);
    \draw[blue](3,-1.2+1)--(3,-1+1);
    \draw[](3.5,-1+1)--(3,-1+1);
    \draw[](3.5,-1+1)--(4.5,-0.5+1);
    \draw[](3.5,-1+1)--(3.2,-1.3+1);
    \draw[](3.2,-1.5+1)--(3.2,-1.3+1);
    \vertexblue{3,-1+1}\vertexblue{2,0+1}
    \vertexblue{3,-1.2+1}
    \vertexblue{1.5,1+1}
    \vertexblue{0.5,2.5+1}
    \vertexblue{1.5,2.5+1}
    \draw[] (2,0+1) node[anchor= east] {$w$};
    \draw[] (1.5,2.5+1) node[anchor=west] {$y$};
    \draw[] (3,-1+1) node[anchor= south] {$x$};
    \draw[] (1.5,1+1) node[anchor= west] {$z$};
    \draw[] (3,-1.2+1) node[anchor= north] {$u$};
    \draw[] (0.5,2.5+1) node[anchor= east] {$v$};
    \end{tikzpicture}
    \end{tikzcd}\caption{Partial triangulation of $\mathbb F_2$ and partial embedded tropical curve.}\label{fg:(021)F_2}
\end{figure}

The triangulation in Figure \ref{fg:(021)F_2} can be completed by joining $z$ with $v$ only via segments of the form $(\alpha,1).$ Then $vz=yz.$
Instead, let us notice that the total length of the edges joining $u,w$ depends on how the triangulation is completed. In Figure \ref{fg:(021)codim} we exhibit a triangulation for which the edge lengths do not satisfy any additional equation.

\begin{figure}[h]
\begin{tikzcd}
    \begin{tikzpicture}
        \foreach \j in {0,0.5,1,1.5,2,2.5,3}{
            \vertex{0,\j}
        }
        \foreach \j in {0,0.5,1,1.5,2}{
            \vertex{0.5,\j}
        }
        \foreach \j in {0,0.5,1}{
            \vertex{1,\j}
        }
        \vertex{1.5,0}
        \draw (0,0)--(1.5,0);
        \draw (0,3)--(1.5,0);
        \draw (0,3)--(0,0);
        \coordinate (1) at (1,0.5);
        \coordinate (2) at (0.5,0.5);
        \coordinate (3) at (0.5,1);
        \coordinate (4) at (0.5,1.5);
        \foreach \j in {1,2,3,4}{
            \vertexblue{\j}
        }
        \draw[blue] (1) node[anchor=north] {1};
        \draw[blue] (2) node[anchor=north] {2};
        \draw[blue] (3) node[anchor=east] {3};
        \draw[blue] (4) node[anchor=east] {4};
         \draw[blue] (1)--(2);
        \draw[blue] (3)--(4);
        \draw[blue] (0,0.5)--(1,1);
        \draw[] (0,0.5)--(3);
        \draw[] (1,1)--(2);
        \draw[] (0,0.5)--(2);
        \draw (1)--(1,1);\draw (3)--(1,1); \draw (4)--(1,1);
    \draw(1,0.5)--(1.5,0); 
    \draw[red](0,0.5)--(0.5,0);
    \draw[red](0.5,0.5)--(0.5,0);
    \draw[red](0.5,0.5)--(1,0);
    \draw[red](1,0)--(1);
    \end{tikzpicture}&
    \begin{tikzpicture}[scale=0.75]
    \draw[](1.5,1.5+1.5)--(1.5,2.5+1.5);
    \draw[](1.5,2.5+1.5)--(2,3+1.5);
    \draw[blue](0.5,2.5+1.5)--(1.5,2.5+1.5);
    \draw[](1.5,1+1.5)--(1,1.5+1.5);
    \draw[](1.5,1.5+1.5)--(1.5,1+1.5);
    \draw[blue](1.5,1+1.5)--(2,0+1.5);
    \draw[](2,0+1.5)--(2,-1.7+1.5);
    \draw[](2,0+1.5)--(3,-1+1.5);
    \draw[blue](3,-1.2+1.5)--(3,-1+1.5);
    \draw[](3.5,-1+1.5)--(3,-1+1.5);
    \draw[](3.5,-1+1.5)--(4.5,-0.5+1.5);
    \draw[](3.5,-1+1.5)--(3.3,-1.2+1.5);
    \draw[](3.3,-1.5+1.5)--(3.3,-1.2+1.5);
    \draw[red](3,-1.2+1.5)--(3.3,-1.2+1.5);
    \draw[red](3,-1.2+1.5)--(2.5,-1.7+1.5);
    \draw(2.5,-2.2+1.5)--(2.5,-1.7+1.5);
    \draw[red](2.5,-1.7+1.5)--(2,-1.7+1.5);
    \draw[red](1.7,-2+1.5)--(2,-1.7+1.5);
    \draw(1.7,-2+1.5)--(1.5,-2+1.5);
    \draw(1.7,-2+1.5)--(1.7,-2.2+1.5);
    \vertexblue{3,-1+1.5}\vertexblue{2,0+1.5}
    \vertexblue{3,-1.2+1.5}
    \vertexblue{1.5,1+1.5}
    \vertexblue{0.5,2.5+1.5}
    \vertexblue{1.5,2.5+1.5}
    \draw[] (2,0+1.5) node[anchor= east] {$w$};
    \draw[] (1.5,2.5+1.5) node[anchor=west] {$y$};
    \draw[] (3,-1+1.5) node[anchor= south] {$x$};
    \draw[] (1.5,1+1.5) node[anchor= west] {$z$};
    \draw[] (3,-1.2+1.5) node[anchor= north] {$u$};
    \draw[] (0.5,2.5+1.5) node[anchor= east] {$v$};
    \end{tikzpicture}
    \end{tikzcd}\caption{A possible triangulation of $\mathbb F_2$ and the corresponding embedded curve.}\label{fg:(021)codim}
\end{figure}
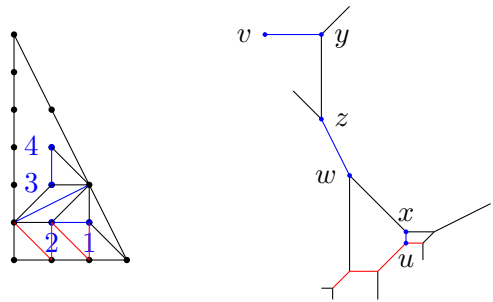

This proves the following Lemma.
\begin{lemma}
    Let $\Gamma$ be a tropical curve of genus $4$ of combinatorial type (021).
    \begin{itemize}
        \item If $\Gamma$ is realizable in $\mathbb F_0,$ then its edge lengths satisfy, up to symmetry:
        \begin{equation}
        \begin{cases}
            ux+2wx<uw;\\
            vy+2yz<vz.
        \end{cases}
        \end{equation}
        \item If $\Gamma$ is realizable in $\mathbb F_2,$ then its edge lengths satisfy, up to symmetry:
        \begin{equation}
        \begin{cases}
            yz=vz;
        \end{cases}
        \end{equation}
    \end{itemize}
\end{lemma}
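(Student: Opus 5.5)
The argument follows the pattern used for types (000)A, (010) and (020). The combinatorics of the graph (021) forces part of the dual unimodular triangulation, and edge lengths are then read off by comparing coordinates of vertices in the partial embedded curve.

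\emph{$\mathbb F_0$.} Label the cycles $1,\dots,4$ and the vertices $x,y,u,v,w,z$ as in Figure \ref{fg:(021)F_0}. Use the dictionary from Section \ref{ssc:genus4}:
\begin{itemize}
\item each cycle corresponds to an interior lattice point;
\item adjacent cycles give a $1$-simplex joining two interior points;
\item cycles connected through a bridge give a $1$-simplex separating two interior points.
\end{itemize}
Cycles $1,2$ share the double edge $ux$, and cycles $3,4$ share $vy$. Cycles $2,3$ are joined by the bridge $wz$, and the bridges at $w$ and $z$ separate the pairs. This forces, up to the symmetries of the square $[0,3]^2$, the blue edges of the middle picture. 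Unimodularity then forces the black edges through the boundary points $(1,0)$ and $(1,3)$.

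Dualizing gives the partial curve on the right of Figure \ref{fg:(021)F_0}, where each edge has a known primitive direction. Write each path, for instance from $u$ to $w$ along the two arcs of the double edge, as a sum of lengths times directions. Comparing the $x$-coordinates of $u$ and $w$ along the two routes gives $ux+2wx<uw$. The factor $2$ comes from the slope of the segment dual to the simplex from $(1,0)$ to $(1,2)$. The strictness comes from the unbounded or leaf edges that must attach with positive length. The same computation at $v,y,z$ gives $vy+2yz<vz$. The $y$-coordinate comparisons give only the weaker $wx<uw$ and $yz<vz$, so they are dropped.

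\emph{$\mathbb F_2$.} In the trapezoid with vertices $(0,0),(3,0),(0,5),(1,2)$ (here $n=2$), the four interior points are $(1,1),(2,1),(1,2),(1,3)$. Again using the adjacency and bridge data, show that the only admissible placement up to symmetry is the blue configuration of Figure \ref{fg:(021)F_2}. In that configuration the cycle pair $\{3,4\}$ sits at $(1,2),(1,3)$ and the pair $\{1,2\}$ at $(2,1),(1,1)$.

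Completing to a unimodular triangulation, every simplex dual to an edge on the path from $z$ to $v$ must be of the form joining points in consecutive rows. Hence those edges have direction vectors $(\alpha,1)$, and their lattice lengths equal differences of $y$-coordinates. Since $y$ and $v$ lie on a common horizontal edge, the two arcs between $z$ and the pair $y,v$ cover the same height difference. This gives $vz=yz$. For the other side ($u,w,x$), exhibit the explicit completion of Figure \ref{fg:(021)codim}: the red simplices make the $uw$ length depend on a free parameter, so no further equation holds in general.

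\emph{Main obstacle.} The hardest step is the exhaustiveness claim in $\mathbb F_2$: that the adjacency constraints leave only the configuration of Figure \ref{fg:(021)F_2}, up to symmetry. The trapezoid is less symmetric than the square, so the placements of the bridge $wz$ must be checked by hand; I would enumerate them against the four interior points. For each alternative configuration, I would show that either some cycle would be adjacent to the wrong neighbour, or the triangulation would force a sprawling pattern or a non-unimodular cell.
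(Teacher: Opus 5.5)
Your strategy is the paper's: force a partial triangulation from the cycle and bridge incidences, compare coordinates in the dual curve, and use the completion of Figure \ref{fg:(021)codim} for the $u,w,x$ side in $\mathbb F_2$. However, several concrete steps are wrong as written.

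\emph{The $\mathbb F_2$ part.} You use the wrong polygon. For $g=4$, $n=2$ one has $\frac{g-3n+2}{2}=0$ and $\frac{g+3n+2}{2}=6$, so the polygon is the triangle $\conv\{(0,0),(3,0),(0,6)\}$. Its hypotenuse contains $(1,4)$ and $(2,2)$. Your vertex $(1,2)$ lies inside the hull of your other three vertices, and that hull misses both points, although it has the same four interior points. These boundary points are essential. In Figure \ref{fg:(021)F_2} the bridge $wz$ is dual to the segment joining $(0,1)$ and $(2,2)$, $z$ is dual to the triangle $(0,1),(1,2),(2,2)$, and $y$ is dual to $(1,2),(2,2),(1,3)$. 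So the case enumeration you call the main step would be run in a polygon where the target configuration does not exist.

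Your derivation of $vz=yz$ is also wrong as stated. A segment joining consecutive rows has the form $(\alpha,1)$, so its dual edge has direction $(1,-\alpha)$ and its lattice length is an $x$-difference. The correct mechanism has three parts:
\begin{itemize}
\item $zy$ is the single vertical edge dual to the horizontal segment joining $(1,2)$ and $(2,2)$.
\item The arc from $z$ to $v$ is dual to segments joining $(1,2)$ to points $(0,j)$, $j\ge 1$, on the left edge. These are segments $(-1,j-2)$ between consecutive \emph{columns}, with dual directions $(j-2,1)$.
\item The inner edge $yv$, dual to the segment joining $(1,2)$ and $(1,3)$, is horizontal.
\end{itemize}
Hence both arcs have length equal to the height of the line through $y,v$ above $z$.

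\emph{The $\mathbb F_0$ part.} You again mix the picture's scale with lattice coordinates. The forced edges meet the boundary at $(2,0)$ and $(1,3)$, not $(1,0)$. The factor $2$ comes from the segment joining $(2,0)$ and $(1,2)$; it is dual to $xw$, which therefore has direction $(2,1)$.

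More seriously, your reason for strictness fails. The arc from $w$ to $u$ consists of the edge dual to the segment joining $(1,2)$ and $(1,3)$ (direction $(-1,0)$, length $a$), followed by edges dual to the segments joining $(1,2)$ and $(0,j)$ with $j\le 3$ (direction $(2-j,-1)$, length $\ell_j$). Comparing both coordinates of $u-w$ gives $\sum_j\ell_j=wx$ and
\[
uw=ux+2wx+\sum_j(3-j)\,\ell_j .
\]
Ends attached along the arc produce no slack; only neighbours $(0,j)$ of $(1,2)$ with $j\le 2$ do.

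Suppose $(0,3)$ is the only left neighbour of $(1,2)$. This is allowed: the triangle $(1,1),(1,2),(0,3)$ is unimodular and compatible with the forced edges. Then every dual curve has $uw=ux+2wx$. Such curves exist, e.g.\ with $u=(0,0)$, $x=(3,0)$, $w=(5,1)$ and the arc turning at $(1,1)$.

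So the comparison yields only $ux+2wx\le uw$, and by the central symmetry of the configuration only $vy+2yz\le vz$. The paper asserts $<$ without further argument. No argument based on positive leaf lengths can supply strictness here.
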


\subsubsection*{Type (030)}

Let us now consider the combinatorial type (030). The triangulation describing the embedding in $\mathbb F_0$ is represented in Figure \ref{fg:(030)F_0}.
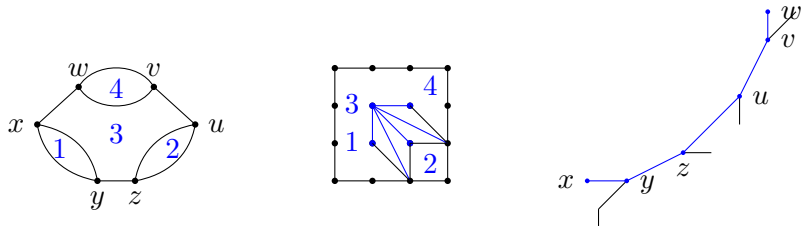
\begin{figure}[h]
\begin{tikzcd}
      \begin{tikzpicture}
        \coordinate (1) at (-0.3,0.75);
        \coordinate (2) at (0.5,0);
        \coordinate (3) at (1,0);
        \coordinate (4) at (1.8,0.75);
        \coordinate (5) at (1.25,1.25);
        \coordinate (6) at (0.25,1.25);
        \draw (2)--(3);
        \draw (1)--(6);
        \draw (4)--(5);
        \draw(1)[] to [out=350, in=100] (2);
        \draw(1)[] to [out=280, in=170] (2);
        \draw(3)[] to [out=10, in=260] (4);
        \draw(3)[] to [out=80, in=190] (4);
        \draw(6)[] to [out=60, in=120] (5);
        \draw(6)[] to [out=300, in=240] (5);
        \foreach \i in {1,2,3,4,5,6}{
            \vertex{\i}
        }
        \draw(1) node[anchor=east] {$x$};
        \draw(2) node[anchor=north] {$y$};
        \draw(3) node[anchor=north] {$z$};
        \draw(4) node[anchor=west] {$u$};
        \draw(5) node[anchor=south] {$v$};
        \draw(6) node[anchor=south] {$w$};
        \draw[blue](0,0.3) node {1};
        \draw[blue] (0.75,1.1) node {4};
        \draw[blue](1.5,0.3) node {2};
        \draw[blue](0.75,0.5) node {3};
    \end{tikzpicture}&
    \begin{tikzpicture}
        \foreach \i in {0,0.5,1,1.5}{
            \foreach \j in {0,0.5,1,1.5}{
            \vertex{\i,\j}
            }
    }
    \coordinate (1) at (0.5,0.5);
    \coordinate (3) at (0.5,1);
    \coordinate (2) at (1,0.5);
    \coordinate (4) at (1,1);
        \foreach \j in {1,2,3,4}{
            \vertexblue{\j}
        }
    \draw[blue] (1) node[anchor=east] {1};\draw[blue] (2) node[anchor=north west] {2};
    \draw[blue] (3) node[anchor=east] {3};\draw[blue] (4) node[anchor=south west] {4};
    \draw(0,0)--(1.5,0);
    \draw(0,0)--(0,1.5);
    \draw(1.5,1.5)--(0,1.5);
    \draw(1.5,1.5)--(1.5,0);
    \draw[blue] (3)--(2);
    \draw[blue] (3)--(4);
    \draw[blue] (3)--(1);
    \draw[blue] (3)--(1.5,0.5);
    \draw[blue] (3)--(1,0);
    \draw[] (4)--(1.5,0.5);
    \draw[] (2)--(1.5,0.5);
    \draw[] (2)--(1,0);
    \draw[] (1)--(1,0);
    \end{tikzpicture}
    &
    \begin{tikzpicture}[scale=0.75]
    \draw (-0.5,-0.5)--(-0.5,-0.8);
    \draw (-0.5,-0.5)--(0,0);
    \draw[blue] (0,0)--(1,0.5);\draw[blue] (0,0)--(-0.7,0);
    \draw[blue] (1,0.5)--(2,1.5);\draw[] (1,0.5)--(1.5,0.5);
    \draw[] (2,1.5)--(2,1);
    \draw[blue] (2,1.5)--(2.5,2.5);
    \draw[] (2.5,2.5)--(3,3);
    \draw[blue] (2.5,2.5)--(2.5,3);
    \vertexblue{1,0.5}\vertexblue{2,1.5}\vertexblue{2.5,2.5}
    \vertexblue{2.5,3}
    \vertexblue{0,0}
    \vertexblue{-0.7,0}
    \draw[] (1,0.5) node[anchor=north] {$z$};
    \draw[] (2,1.5) node[anchor=west] {$u$};
    \draw[] (2.5,2.5) node[anchor=west] {$v$};
    \draw[] (2.5,3) node[anchor=west] {$w$};
    \draw[] (0,0) node[anchor=west] {$y$};
    \draw[] (-0.7,0) node[anchor=east] {$x$};
    \end{tikzpicture}
    \end{tikzcd}\caption{Combinatorial type, partial triangulations of $\mathbb F_0$ and partial embedded tropical curve.}\label{fg:(030)F_0}
    \end{figure}

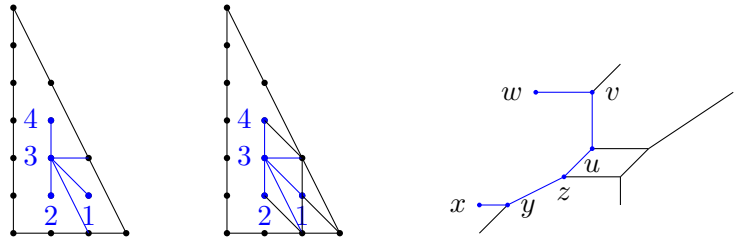
\begin{figure}[h]
\begin{tikzcd}
    \begin{tikzpicture}
        \foreach \j in {0,0.5,1,1.5,2,2.5,3}{
            \vertex{0,\j}
        }
        \foreach \j in {0,0.5,1,1.5,2}{
            \vertex{0.5,\j}
        }
        \foreach \j in {0,0.5,1}{
            \vertex{1,\j}
        }
        \vertex{1.5,0}
        \draw (0,0)--(1.5,0);
        \draw (0,3)--(1.5,0);
        \draw (0,3)--(0,0);
        \coordinate (1) at (1,0.5);
        \coordinate (2) at (0.5,0.5);
        \coordinate (3) at (0.5,1);
        \coordinate (4) at (0.5,1.5);
        \foreach \j in {1,2,3,4}{
            \vertexblue{\j}
        }
        \draw[blue] (1) node[anchor=north] {1};
        \draw[blue] (2) node[anchor=north] {2};
        \draw[blue] (3) node[anchor=east] {3};
        \draw[blue] (4) node[anchor=east] {4};
        \draw[blue] (3)--(2);
        \draw[blue] (3)--(1);
        \draw[blue] (3)--(4);
        \draw[blue] (3)--(1,0);
        \draw[blue] (3)--(1,1);
    \end{tikzpicture}&
    \begin{tikzpicture}
        \foreach \j in {0,0.5,1,1.5,2,2.5,3}{
            \vertex{0,\j}
        }
        \foreach \j in {0,0.5,1,1.5,2}{
            \vertex{0.5,\j}
        }
        \foreach \j in {0,0.5,1}{
            \vertex{1,\j}
        }
        \vertex{1.5,0}
        \draw (0,0)--(1.5,0);
        \draw (0,3)--(1.5,0);
        \draw (0,3)--(0,0);
        \coordinate (1) at (1,0.5);
        \coordinate (2) at (0.5,0.5);
        \coordinate (3) at (0.5,1);
        \coordinate (4) at (0.5,1.5);
        \foreach \j in {1,2,3,4}{
            \vertexblue{\j}
        }
        \draw[blue] (1) node[anchor=north] {1};
        \draw[blue] (2) node[anchor=north] {2};
        \draw[blue] (3) node[anchor=east] {3};
        \draw[blue] (4) node[anchor=east] {4};
        \draw[blue] (3)--(2);
        \draw[blue] (3)--(1);
        \draw[blue] (3)--(4);
        \draw[blue] (3)--(1,0);
        \draw[blue] (3)--(1,1);
        \draw[] (2)--(1,0);
        \draw[] (1)--(1,0);
        \draw (4)--(1,1);
    \draw(1,0.5)--(1.5,0); 
    \draw[] (1,1)--(1,0.5);
    \end{tikzpicture}&
    \begin{tikzpicture}[scale=0.75]
    \draw[blue](1.5,1.5)--(1.5,2.5);
    \draw[](1.5,2.5)--(2,3);
    \draw[blue](0.5,2.5)--(1.5,2.5);
    \draw[blue](1.5,1.5)--(1,1);
    \draw[blue](0,0.5)--(1,1);
    \draw[blue](0,0.5)--(-0.5,0.5);
    \draw[](0,0.5)--(-0.5,0);
    \draw[](1,1)--(2,1);
    \draw[](2,0.5)--(2,1);
    \draw[](2.5,1.5)--(2,1);\draw[](2.5,1.5)--(4,2.5);
    \draw[](2.5,1.5)--(1.5,1.5);
    \vertexblue{1,1}\vertexblue{0,0.5}
    \vertexblue{-0.5,0.5}
    \vertexblue{1.5,1.5}
    \vertexblue{0.5,2.5}
    \vertexblue{1.5,2.5}
    \draw[] (0,0.5) node[anchor= west] {$y$};
    \draw[] (1.5,2.5) node[anchor=west] {$v$};
    \draw[] (-0.5,0.5) node[anchor= east] {$x$};
    \draw[] (1,1) node[anchor= north] {$z$};
    \draw[] (1.5,1.5) node[anchor= north] {$u$};
    \draw[] (0.5,2.5) node[anchor= east] {$w$};
    \end{tikzpicture}
    \end{tikzcd}\caption{Partial triangulation of $\mathbb F_2$ and partial embedded tropical curve.}\label{fg:(030)F_2}
\end{figure}

Again, comparing $x$-coordinates and $y$-coordinates of the points yields the following.
\begin{lemma}
    Let $\Gamma$ be a tropical curve of genus $4$ of combinatorial type (030).
    \begin{itemize}
        \item If $\Gamma$ is realizable in $\mathbb F_0,$ then its edge lengths satisfy:
        \begin{equation}
        \begin{cases}
        2yz+uz+uv<xw;\\
        yz+uz+2uv<xw.\end{cases}
        \end{equation}
        \item If $\Gamma$ is realizable in $\mathbb F_0,$ then its edge lengths satisfy:
        \begin{equation}
            yz+uz+uv=xw.
        \end{equation}
    \end{itemize}
\end{lemma}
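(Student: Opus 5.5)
The plan follows the same pattern as for types (000)A, (010), (020) and (021). First the cycle–interior-point correspondence forces a partial triangulation. Then the edge lengths are read off by comparing coordinates of the vertices of the dual tropical curve.

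\emph{Forced triangulation.} In type (030), with cycles labelled as in Figure \ref{fg:(030)F_0}, cycle $3$ is adjacent to all other cycles, while cycles $1,2,4$ are pairwise connected only through cycle $3$ or through the edge $yz$. So the interior point dual to cycle $3$ must be joined by segments to the other three interior points. Smoothness then forces the black segments shown in the second pictures of Figures \ref{fg:(030)F_0} and \ref{fg:(030)F_2}, for the polygons of $\mathbb F_0$ and $\mathbb F_2$ respectively. I would first check that, up to the symmetries of each polygon, these are the only possible placements of the four interior points with the prescribed adjacencies. This determines the slopes of the edges dual to the segments emanating from interior point $3$: the path $y\to z\to u\to v$ together with the edges $xy$ and $vw$.

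\emph{Case $\mathbb F_0$.} The bounded edges $xy$, $yz$, $zu$, $uv$, $vw$ form a chain in the plane whose directions are given by the primitive normals of the forced segments. Since every direction is determined, the differences of $x$-coordinates and of $y$-coordinates between $x$ and $w$ are fixed linear combinations of $yz$, $uz$, $uv$. For example, the edge $yz$ has direction $(2,1)$ and so contributes $2yz$ to the horizontal displacement and $yz$ to the vertical one; similarly for $uz$ and $uv$. On the other hand, the cycle $4$ edges joining $x$ and $w$ run around the outside. Their total length $xw$ must exceed both displacements, because nontrivial unbounded or leaf edges have to emanate between them, exactly as the edges $a,b,c,d$ did for (000)A. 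This yields the two strict inequalities $2yz+uz+uv<xw$ and $yz+uz+2uv<xw$.

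\emph{Case $\mathbb F_2$.} I would use the rigidity of the polygon, as in the (000)A and (010) arguments. Any completion of the partial triangulation joins the edge of the polygon of $\mathbb F_2$ to interior point $4$ only through segments with primitive vector of the form $(\alpha,1)$. Hence every edge on the outer arc of cycle $4$ from $x$ to $w$ is dual to such a segment, and its lattice length equals its vertical extent. Summing gives $xw = y(w)-y(x)$. The chain $y\to z\to u\to v$ is also dual to segments of type $(\alpha',1)$, while $xy$ and $vw$ are horizontal. So $y(w)-y(x)=yz+uz+uv$, giving the equality.

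\emph{Main obstacle.} I expect the main difficulty to be the uniqueness claim for the partial triangulation in $\mathbb F_2$: I must exclude alternative placements of the cycle-$3$ interior point along the column of interior points. I would try to settle it by enumerating the four-interior-point configurations of the $\mathbb F_2$ polygon, as in Figures \ref{fg:(020)F_2a} and \ref{fg:(020)F_2b}. I would also verify that the statement's second item is meant to refer to $\mathbb F_2$, apparently a typo for the repeated $\mathbb F_0$.
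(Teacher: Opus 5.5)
The overall route is the paper's: incidences force a partial triangulation, and then you compare coordinates around the central cycle. However, the step you flag as the main obstacle, uniqueness of the forced configuration in $\mathbb F_2$, is false and cannot be repaired.

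In $\conv\{(0,0),(3,0),(0,6)\}$ the interior points are $(1,1),(1,2),(1,3),(2,1)$. Other centres in the column $x=1$ are indeed excluded, since $(1,1)$ and $(1,3)$ cannot be joined. But the off-column point $(2,1)$ can also serve as centre. It is joined to $(1,1),(1,2),(1,3)$, and the then forced segments $(2,1)$--$(0,2)$ and $(2,1)$--$(0,4)$ keep the column points pairwise non-adjacent.

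Here is an explicit instance. Give $(2,1)$ the neighbours $(1,1),(1,0),(2,0),(3,0),(2,2),(1,3),(0,4),(1,2),(0,2)$. Complete with the fans from $(1,1)$ to $(0,2),(0,1),(0,0),(1,0)$, from $(1,2)$ to $(0,4),(0,3),(0,2)$, and from $(1,3)$ to $(2,2),(1,4),(0,6),(0,5),(0,4)$. This unimodular triangulation is induced, via the lower convex hull, by the heights
$h(2,1)=h(1,1)=h(1,3)=0$, $h(1,2)=-1$, $h(0,2)=h(0,4)=-\tfrac32$, $h(0,3)=-\tfrac74$, $h(0,1)=\tfrac{3}{10}$, $h(0,0)=\tfrac94$, $h(1,0)=h(2,2)=2$, $h(2,0)=3$, $h(3,0)=5$, $h(0,5)=1$, $h(1,4)=\tfrac{11}{4}$, $h(0,6)=\tfrac{18}{5}$.

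The dual smooth curve has canonical model of type (030), with bigons of edge lengths $\{\tfrac12,\tfrac34\}$, $\{\tfrac12,\tfrac{21}{10}\}$, $\{1,1\}$ and single edges $10,\tfrac12,\tfrac12$. The relation $xw=yz+uz+uv$ says that one single edge equals the other two plus an edge of a bigon, so it holds for no labelling here. Hence the second item is false as stated. The paper's proof has the same omission: Figure \ref{fg:(030)F_2} only treats the centre $(1,2)$.

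In the centre-$(2,1)$ case the star of $(1,2)$ is forced to be $(2,1),(0,4),(0,3),(0,2)$. Closing that cycle shows the corresponding bigon always has two equal edges. So what can actually be proved is a dichotomy, with this as the second alternative.

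For the centre $(1,2)$ your $\mathbb F_2$ computation is correct after one relabelling. The edge $xw$ is the single edge of the central cycle $3$, not an arc of cycle $4$ (which is the bigon $vw$). So its pieces are dual to segments from $(1,2)$ to points $(0,j)$, and they have directions $(2-j,-1)$. Hence $xw$ equals the vertical extent $yz+uz+uv$.

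The same relabelling is needed in $\mathbb F_0$. There the inequalities do not come from ends emanating from the arc. Traversed from $w$ to $x$, the pieces of $xw$ have directions among $(-1,2),(-1,1),(-1,0),(-1,-1),(0,-1),(1,-1),(2,-1)$, all with both coordinates at least $-1$. The displacement from $w$ to $x$ is $-(xy+2yz+uz+uv,\;yz+uz+2uv+vw)$; you omitted $xy$ and $vw$. This gives $xw\ge xy+2yz+uz+uv$ and $xw\ge yz+uz+2uv+vw$. Strictness of the stated inequalities comes from $xy,vw>0$.
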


\subsubsection*{Type (101)}

Let us now consider the combinatorial type (101). The triangulation describing the embedding in $\mathbb F_0$ is represented in Figure \ref{fg:(101)F_0}.
\begin{figure}[h]
\begin{tikzcd}
      \begin{tikzpicture}
        \coordinate (1) at (-0.75,0.75);
        \coordinate (2) at (0,0);
        \coordinate (3) at (0,0.75);
        \coordinate (4) at (0,1.5);
        \coordinate (5) at (0.75,0.75);
        \coordinate (6) at (1.5,0.75);
        \draw (1)--(2);
        \draw (1)--(3);
        \draw (1)--(4);
        \draw (4)--(2);
        \draw (4)--(5);
        \draw (2)--(5);
        \draw (6)--(5);
        \draw (2,0.75) circle (0.5);
        \foreach \i in {1,2,3,4,5,6}{
            \vertex{\i}
        }
        \draw(1) node[anchor=east] {$x$};
        \draw(2) node[anchor=north] {$y$};
        \draw(3) node[anchor=west] {$z$};
        \draw(4) node[anchor=south] {$u$};
        \draw(5) node[anchor=south] {$v$};
        \draw(6)+(0.15,0) node[anchor=south east] {$w$};
        \draw[blue](-0.2,0.3) node {1};
        \draw[blue] (2,0.6) node {4};
        \draw[blue](-0.2,0.9) node {2};
        \draw[blue](0.5,0.6) node {3};
    \end{tikzpicture}&
    \begin{tikzpicture}
        \foreach \i in {0,0.5,1,1.5}{
            \foreach \j in {0,0.5,1,1.5}{
            \vertex{\i,\j}
            }
    }
    \coordinate (1) at (0.5,0.5);
    \coordinate (3) at (0.5,1);
    \coordinate (2) at (1,0.5);
    \coordinate (4) at (1,1);
        \foreach \j in {1,2,3,4}{
            \vertexblue{\j}
        }
    \draw[blue] (1) node[anchor=east] {1};\draw[blue] (2) node[anchor=north west] {2};
    \draw[blue] (3) node[anchor=east] {3};\draw[blue] (4) node[anchor=south west] {4};
    \draw(0,0)--(1.5,0);
    \draw(0,0)--(0,1.5);
    \draw(1.5,1.5)--(0,1.5);
    \draw(1.5,1.5)--(1.5,0);
    \draw[blue](0,1.5)--(1.5,0.5);
    \draw[blue] (3)--(2);
    \draw[blue] (3)--(1);
    \draw[blue] (2)--(1);
    \draw[] (4)--(1.5,0.5);\draw(4)--(0,1.5);
    \draw(3)--(0,1.5);\draw(3)--(1.5,0.5);
    \draw[] (2)--(1.5,0.5);
    \end{tikzpicture}
    &
\begin{tikzpicture}[scale=0.75]
    \draw[blue] (0,0)--(-1,0);\draw[blue] (0,0)--(0,-1);
    \draw[blue] (0,0)--(0.5,0.5);
    \draw[] (0.5,0.5)--(1,1.5);
    \draw[] (1,1.5)--(0.5,1);
    \draw[] (0.5,0)--(0.5,0.5);
    \draw[blue] (1,1.5)--(2,3);
    \draw[] (2,3)--(2.5,4);
    \draw[] (2,3)--(2.5,3.5);
    
    \vertexblue{1,1.5}\vertexblue{2,3}\vertexblue{0,0}\vertexblue{0.5,0.5}
    \vertexblue{-1,0}
    \draw[] (1,1.5) node[anchor=east] {$v$};
    \draw[] (2,3) node[anchor=west] {$w$};
    \draw[] (0,0) node[anchor=west] {$z$};
    \draw[] (0.5,0.5) node[anchor=west] {$u$};
    \draw[] (-1,0) node[anchor=east] {$y$};
    \end{tikzpicture}
    \end{tikzcd}\caption{Combinatorial type, partial triangulations of $\mathbb F_0$ and partial embedded tropical curve.}\label{fg:(101)F_0}
    \end{figure}
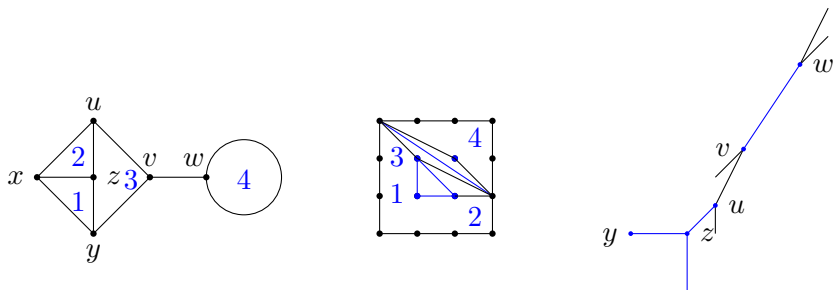

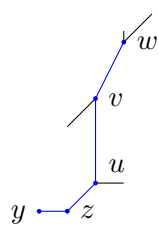
\begin{figure}[h]
\begin{tikzcd}
    \begin{tikzpicture}
        \foreach \j in {0,0.5,1,1.5,2,2.5,3}{
            \vertex{0,\j}
        }
        \foreach \j in {0,0.5,1,1.5,2}{
            \vertex{0.5,\j}
        }
        \foreach \j in {0,0.5,1}{
            \vertex{1,\j}
        }
        \vertex{1.5,0}
        \draw (0,0)--(1.5,0);
        \draw (0,3)--(1.5,0);
        \draw (0,3)--(0,0);
        \coordinate (1) at (1,0.5);
        \coordinate (2) at (0.5,0.5);
        \coordinate (3) at (0.5,1);
        \coordinate (4) at (0.5,1.5);
        \foreach \j in {1,2,3,4}{
            \vertexblue{\j}
        }
        \draw[blue] (1) node[anchor=north] {1};
        \draw[blue] (2) node[anchor=north] {2};
        \draw[blue] (3) node[anchor=east] {3};
        \draw[blue] (4) node[anchor=east] {4};
        \draw[blue] (3)--(2);
        \draw[blue] (3)--(1);
        \draw[blue] (0,1.5)--(1,1);
    \end{tikzpicture}&
    \begin{tikzpicture}
        \foreach \j in {0,0.5,1,1.5,2,2.5,3}{
            \vertex{0,\j}
        }
        \foreach \j in {0,0.5,1,1.5,2}{
            \vertex{0.5,\j}
        }
        \foreach \j in {0,0.5,1}{
            \vertex{1,\j}
        }
        \vertex{1.5,0}
        \draw (0,0)--(1.5,0);
        \draw (0,3)--(1.5,0);
        \draw (0,3)--(0,0);
        \coordinate (1) at (1,0.5);
        \coordinate (2) at (0.5,0.5);
        \coordinate (3) at (0.5,1);
        \coordinate (4) at (0.5,1.5);
        \foreach \j in {1,2,3,4}{
            \vertexblue{\j}
        }
        \draw[blue] (1) node[anchor=north] {1};
        \draw[blue] (2) node[anchor=north] {2};
        \draw[blue] (3) node[anchor=east] {3};
        \draw[blue] (4) node[anchor=south east] {4};
        \draw[blue] (3)--(2);
        \draw[blue] (3)--(1);
        \draw[blue] (0,1.5)--(1,1);
        \draw[blue] (3)--(1,1);
        \draw (4)--(1,1);\draw(4)--(0,1.5);\draw(3)--(0,1.5);
    \draw(1,0.5)--(1.5,0); 
    \draw[] (1,1)--(1,0.5);
    \end{tikzpicture}&
    \begin{tikzpicture}[scale=0.75]
    \draw[](1.5,2.5+1)--(2,3+1);
    \draw[blue](1,1.5+1)--(1.5,2.5+1);
    \draw[](1.5,2.7+1)--(1.5,2.5+1);
    \draw[](0.5,1+1)--(1,1.5+1);
    \draw[blue](1,0+1)--(1,1.5+1);
    \draw[blue](0.5,-0.5+1)--(1,0+1);
    \draw[blue](0.5,-0.5+1)--(0,-0.5+1);
    \draw[](1,0+1)--(1.5,0+1);
    \vertexblue{1,0+1}\vertexblue{1,1.5+1}
    \vertexblue{0,-0.5+1}
    \vertexblue{0.5,-0.5+1}
    \vertexblue{1.5,2.5+1}
    \draw[] (0.5,-0.5+1) node[anchor= west] {$z$};
    \draw[] (1.5,2.5+1) node[anchor=west] {$w$};
    \draw[] (0,-0.5+1) node[anchor= east] {$y$};
    \draw[] (1,0+1) node[anchor= south west] {$u$};
    \draw[] (1,1.5+1) node[anchor= west] {$v$};
    \end{tikzpicture}
    \end{tikzcd}\caption{Partial triangulation of $\mathbb F_2$ and partial embedded tropical curve.}\label{fg:(101)F_2}
\end{figure}

Both partial triangulation in Figures \ref{fg:(101)F_0}, \ref{fg:(101)F_2} can be completed by joining $y$ with $v$ either with one edge of segment $(1,1)$, two edges of segments $(1,1)$ and $(0,1)$ or three edges, with segments $(-1,1),$ $(0,1)$ and $(1,1)$, all with $y$-coordinate equal to $1.$
Comparing with the $y$-coordinates of the points $u,z$ gives respectively the equalities $vy=uz+2uv$ in $\FF_0$ and $vy=uv+uz$ in $\FF_2$. 

This proves the following Lemma.
\begin{lemma}
    Let $\Gamma$ be a tropical curve of genus $4$ of combinatorial type (101).
    \begin{itemize}
        \item If $\Gamma$ is realizable in $\mathbb F_0,$ then its edge lengths satisfy:
        \begin{equation}
        \begin{cases}
           2uv+uz=vy.
        \end{cases}
        \end{equation}
        \item If $\Gamma$ is realizable in $\mathbb F_2,$ then its edge lengths satisfy:
        \begin{equation}
        \begin{cases}
            uv+uz=vy.
        \end{cases}
        \end{equation}
    \end{itemize}
\end{lemma}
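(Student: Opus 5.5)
Both parts of the statement come from one mechanism, already used for (000)A and (010) above. The cycle–lattice-point correspondences fix the partial triangulations in Figures \ref{fg:(101)F_0} and \ref{fg:(101)F_2}. In each of them, the edges of the tropical curve that form the path from $y$ to $v$ are dual to segments leaving a fixed interior lattice point (point $3$) towards the boundary. Because of this, their direction vectors have second coordinate $\pm1$ in a controlled way. Comparing $y$-coordinates of vertices along two different paths then turns the edge lengths into an equality.

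\textbf{Step 1 (forced edges).} First I check that the blue edges in the middle pictures are forced by the combinatorics of (101). Cycle $4$ is the loop at $w$, so its interior point is joined to the others only through the bridge $vw$; this bridge is dual to a $1$-simplex separating point $4$ from $1,2,3$. Cycles $1,2,3$ are pairwise adjacent, so their lattice points are pairwise joined by $1$-simplices. In $\mathbb F_0$ the only configuration up to the symmetries of the square is the one drawn. In $\mathbb F_2$ the interior points $(1,1),(1,2),(1,3),(2,1)$ force point $3=(1,2)$ to be the common neighbour, giving the drawn picture. Smoothness (unimodularity) then adds the black edges.

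\textbf{Step 2 (enumerate completions).} The part of the triangulation not yet fixed lies between point $3$ and the boundary segment on the left side. I list the unimodular completions. The dual edges of the tropical curve between $y$ and $v$ are dual to segments from point $3$ to boundary points, which are at height one relative to the fixed cells. There are three cases: one edge with primitive direction $(1,1)$, two edges with directions $(1,1),(0,1)$, or three edges with directions $(-1,1),(0,1),(1,1)$. In every case each tropical edge on the path $y\to v$ rises exactly its lattice length in the $y$-coordinate. So the total length of the path from $y$ to $v$ equals the difference of $y$-coordinates between $v$ and $y$.

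\textbf{Step 3 (compare with the other path).} Next I compute the same height difference along the path $y\to z\to u\to v$ in the partial embedded curves. In $\mathbb F_2$ the edge $yz$ is horizontal (direction $(1,0)$, contributing nothing), while $zu$ and $uv$ each rise by their lengths; this gives $vy=uz+uv$. In $\mathbb F_0$ the dual edges come from the square's slopes instead. There the edge from $u$ to $v$ has direction with second coordinate $2$ relative to its lattice length (dual to the segment from $(0,3)$ to $(3,1)$, i.e.\ $(1,2)$ up to the weight), which yields the factor $2$ and hence $vy=uz+2uv$. The main obstacle is Step 2: I must verify that the enumeration of completions is exhaustive and that no completion introduces an edge of direction with second coordinate $0$ or $\ge2$ along the $y$–$v$ path. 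I would check this by listing the lattice points in the triangle cut out by point $3$ and the left boundary, and observing that every unimodular triangle there has an edge at height exactly one.
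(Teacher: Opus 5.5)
Your route is the paper's: fix the partial triangulation from the cycle adjacencies, then compare $Y$-coordinates along the two arcs from $y$ to $v$ of the cycle dual to point $3$.

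\textbf{The $\mathbb F_0$ case.} This goes through. The configuration is forced up to the symmetries of the square and the graph automorphisms $u\leftrightarrow y$, $x\leftrightarrow z$, so the identity holds only up to symmetry. Two details need fixing. In Step 3, the edge $uv$ is dual to the segment from $(1,2)$ to $(3,1)$. The segment $(0,3)$--$(3,1)$ is dual to the bridge $vw$ and has dual direction $(2,3)$. In Step 2, your list of completions is not exhaustive: $(1,2)$ can also be joined to $(0,0)$, adding an edge of direction $(-2,1)$. The invariant you actually need, and which holds, is that every neighbour of $(1,2)$ in the unfixed sector lies on the column $\{i=0\}$. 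Hence every dual edge on the arc $v\to y$ has vertical component equal to its length.

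\textbf{The gap: Step 1 for $\mathbb F_2$.} The paper's argument tacitly makes the same assumption. Point $3$ is not forced to be $(1,2)$.
\begin{itemize}
\item The loop's lattice point can be $(1,1)$, cut off by the primitive segment $(0,2)$--$(3,0)$. The only unimodular triangle on the other side is $(0,2),(3,0),(2,1)$. So the cycle through $v$ is the cell of $(2,1)$, and cycles $1,2$ sit at $(1,2),(1,3)$.
\item Take the neighbours of $(2,1)$ to be $(3,0),(2,2),(1,3),(1,2),(0,2)$. Then $v,t,y,z,u$ are dual to the triangles formed by $(2,1)$ with $(0,2),(3,0)$; $(3,0),(2,2)$; $(2,2),(1,3)$; $(1,3),(1,2)$; $(1,2),(0,2)$ respectively. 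The vertex $t$ carries the end dual to $(3,0)$--$(2,2)$, so the abstract edge $vy$ is $vt\cup ty$.
\item Closing up the cell of $(2,1)$ gives $\ell(vt)=yz+uz+2\,uv$ and $\ell(ty)=uv-yz$. Hence $vy=uz+3\,uv$, not $uz+uv$.
\end{itemize}

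\textbf{This configuration is realized.} Take $\max_{i,j}(c_{ij}+iX+jY)$ with
\begin{itemize}
\item $(c_{00},c_{10},c_{20},c_{30})=(-32,-21,-11,-2)$,
\item $(c_{01},c_{11},c_{21})=(-16,-4,0)$,
\item $(c_{02},c_{12},c_{22})=(-1,0,-1)$,
\item $(c_{03},c_{13})=(-\frac12,0)$,
\item $(c_{04},c_{14})=(-\frac32,-2)$,
\item $c_{05}=-\frac92$ and $c_{06}=-\frac{17}2$.
\end{itemize}
This is a smooth curve dual to a unimodular triangulation of the triangle $(0,0),(3,0),(0,6)$. Its abstract graph is of type (101), and the cycle through $v$ has vertices $v=(-3,-5)$, $t=(3,1)$, $y=(2,1)$, $z=(0,0)$, $u=(-1,-1)$. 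The edge lengths are $uz=yz=1$, $uv=2$, $vy=7$, $xz=\frac12$, $xu=1$, $xy=\frac{11}{2}$. These violate $uv+uz=vy$ under every automorphism of the graph.

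So no refinement of your Steps 2--3 can prove the $\mathbb F_2$ claim as stated. The conclusion must also allow $vy=uz+3\,uv$, up to symmetry.
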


\subsubsection*{Type (111)}

Let us now consider the combinatorial type (111). The triangulation describing the embedding in $\mathbb F_0$ is represented in Figure \ref{fg:(111)F_0}.
\begin{figure}[h]
\begin{tikzcd}
      \begin{tikzpicture}
        \coordinate (1) at (-0.75,0);
        \coordinate (2) at (0,0);
        \coordinate (3) at (-0.75,1.5);
        \coordinate (4) at (0,1.5);
        \coordinate (5) at (0.75,0.75);
        \coordinate (6) at (1.5,0.75);
        \draw (1)--(2);
        \draw (3)--(4);
        \draw (4)--(2);
        \draw (4)--(5);
        \draw (2)--(5);
        \draw (6)--(5);
        \draw(1)[] to [out=120, in=240] (3);
        \draw(1)[] to [out=60, in=300] (3);
        \draw (2,0.75) circle (0.5);
        \foreach \i in {1,2,3,4,5,6}{
            \vertex{\i}
        }
        \draw(1) node[anchor=north] {$x$};
        \draw(2) node[anchor=north] {$y$};
        \draw(3) node[anchor=south] {$z$};
        \draw(4) node[anchor=south] {$u$};
        \draw(5) node[anchor=south] {$v$};
        \draw(6)+(0.15,0) node[anchor=south east] {$w$};
        \draw[blue](-0.75,0.6) node {1};
        \draw[blue] (2,0.6) node {4};
        \draw[blue](-0.2,0.6) node {2};
        \draw[blue](0.4,0.6) node {3};
    \end{tikzpicture}&
    \begin{tikzpicture}
        \foreach \i in {0,0.5,1,1.5}{
            \foreach \j in {0,0.5,1,1.5}{
            \vertex{\i,\j}
            }
    }
    \coordinate (1) at (0.5,0.5);
    \coordinate (3) at (0.5,1);
    \coordinate (2) at (1,0.5);
    \coordinate (4) at (1,1);
        \foreach \j in {1,2,3,4}{
            \vertexblue{\j}
        }
    \draw[blue] (1) node[anchor=east] {1};\draw[blue] (2) node[anchor=north west] {2};
    \draw[blue] (3) node[anchor=east] {3};\draw[blue] (4) node[anchor=south west] {4};
    \draw(0,0)--(1.5,0);
    \draw(0,0)--(0,1.5);
    \draw(1.5,1.5)--(0,1.5);
    \draw(1.5,1.5)--(1.5,0);
    \draw[blue](0,1.5)--(1.5,0.5);
    \draw[blue] (3)--(2);
    \draw[blue] (0,1)--(2);
    \draw[blue] (2)--(1);
    \draw[] (4)--(1.5,0.5);\draw(4)--(0,1.5);\draw(0,1)--(3);\draw(0,1)--(1);
    \draw(3)--(0,1.5);\draw(3)--(1.5,0.5);
    \draw[] (2)--(1.5,0.5);
    \end{tikzpicture}
    &
    \begin{tikzpicture}[scale=0.75]
    \draw[blue] (0,0)--(0.5,0.5);
    \draw[] (0.5,0.5)--(1,1.5);
    \draw[] (1,1.5)--(0,0.5);
    \draw[] (0.5,0)--(0.5,0.5);
    \draw[blue] (1,1.5)--(2,3);
    \draw[] (2,3)--(2.5,4);
    \draw[] (2,3)--(2.5,3.5);
    \draw[blue](0,0)--(-0.5,-1);
    \draw[](0,0)--(0,0.5);
    \draw[](-1,0.5)--(0,0.5);
    \draw[blue](-0.5,-1.5)--(-0.5,-1);
    \draw[](-1,-1.5)--(-0.5,-1);
    \vertexblue{1,1.5}\vertexblue{2,3}\vertexblue{0.5,0.5}
    \vertexblue{0,0}
    \draw[] (1,1.5) node[anchor=east] {$v$};
    \draw[] (2,3) node[anchor=west] {$w$};
    \draw[] (0.5,0.5) node[anchor=west] {$u$};
    \draw[] (0,0) node[anchor=east] {$y$};
    \end{tikzpicture}
    \end{tikzcd}\caption{Combinatorial type, partial triangulations of $\mathbb F_0$ and partial embedded tropical curve.}\label{fg:(111)F_0}
    \end{figure}

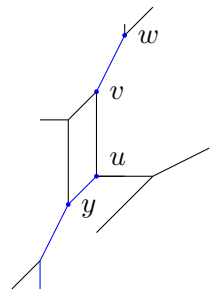
\begin{figure}[h]
\begin{tikzcd}
    \begin{tikzpicture}
        \foreach \j in {0,0.5,1,1.5,2,2.5,3}{
            \vertex{0,\j}
        }
        \foreach \j in {0,0.5,1,1.5,2}{
            \vertex{0.5,\j}
        }
        \foreach \j in {0,0.5,1}{
            \vertex{1,\j}
        }
        \vertex{1.5,0}
        \draw (0,0)--(1.5,0);
        \draw (0,3)--(1.5,0);
        \draw (0,3)--(0,0);
        \coordinate (2) at (1,0.5);
        \coordinate (1) at (0.5,0.5);
        \coordinate (3) at (0.5,1);
        \coordinate (4) at (0.5,1.5);
        \foreach \j in {1,2,3,4}{
            \vertexblue{\j}
        }
        \draw[blue] (1) node[anchor=north] {1};
        \draw[blue] (2) node[anchor=north] {2};
        \draw[blue] (3) node[anchor=east] {3};
        \draw[blue] (4) node[anchor=east] {4};
        \draw[blue] (3)--(2);
        \draw[blue] (1)--(2);
        \draw[blue] (0,1.5)--(1,1);
        \draw[blue] (0,1)--(1,0.5);
    \end{tikzpicture}&
    \begin{tikzpicture}
        \foreach \j in {0,0.5,1,1.5,2,2.5,3}{
            \vertex{0,\j}
        }
        \foreach \j in {0,0.5,1,1.5,2}{
            \vertex{0.5,\j}
        }
        \foreach \j in {0,0.5,1}{
            \vertex{1,\j}
        }
        \vertex{1.5,0}
        \draw (0,0)--(1.5,0);
        \draw (0,3)--(1.5,0);
        \draw (0,3)--(0,0);
        \coordinate (2) at (1,0.5);
        \coordinate (1) at (0.5,0.5);
        \coordinate (3) at (0.5,1);
        \coordinate (4) at (0.5,1.5);
        \foreach \j in {1,2,3,4}{
            \vertexblue{\j}
        }
        \draw[blue] (1) node[anchor=north] {1};
        \draw[blue] (2) node[anchor=north] {2};
        \draw[blue] (3) node[anchor=east] {3};
        \draw[blue] (4) node[anchor=south east] {4};
        \draw[blue] (3)--(2);
        \draw[blue] (1)--(2);
        \draw[blue] (0,1.5)--(1,1);
        \draw[blue] (0,1)--(1,0.5);\draw(0,1)--(1,1);\draw(0,1)--(1);
        \draw (4)--(1,1);\draw(4)--(0,1.5);\draw(3)--(0,1.5);
    \draw(1,0.5)--(1.5,0); 
    \draw[] (1,1)--(1,0.5);

    \end{tikzpicture}&
    \begin{tikzpicture}[scale=0.75]
    \draw[](1.5,2.5+1)--(2,3+1);
    \draw[blue](1,1.5+1)--(1.5,2.5+1); 
    \draw[](1.5,2.7+1)--(1.5,2.5+1);
    \draw[](0.5,1+1)--(1,1.5+1);
    \draw[](1,0+1)--(1,1.5+1);
    \draw[](1,0+1)--(1.5,0+1);
    \draw[](3,0.5+1)--(2,0+1);
    \draw[](1,-1+1)--(2,0+1);
    \draw[blue](0.5,-0.5+1)--(1,0+1);
    \draw[blue](0.5,-0.5+1)--(0,-1.5+1);
    \draw[](0.5,-0.5+1)--(0.5,1+1);
    \draw[](0,1+1)--(0.5,1+1);
    \draw[blue](0,-2+1)--(0,-1.5+1);
    \draw[](-0.5,-2+1)--(0,-1.5+1);
    \draw[](1,0+1)--(2,0+1);
    \vertexblue{1,0+1}
    \vertexblue{1,1.5+1}
    \vertexblue{0.5,-0.5+1}
    \vertexblue{1.5,2.5+1}
    \draw[] (0.5,-0.5+1) node[anchor= west] {$y$};
    \draw[] (1.5,2.5+1) node[anchor=west] {$w$};
    \draw[] (1,0+1) node[anchor= south west] {$u$};
    \draw[] (1,1.5+1) node[anchor= west] {$v$};
    \end{tikzpicture}
    \end{tikzcd}\caption{Partial triangulation of $\mathbb F_2$ and partial embedded tropical curve.}\label{fg:(111)F_2}
\end{figure}
 
Comparing again the $y$-coordinates of the points proves the following.
\begin{lemma}
    Let $\Gamma$ be a tropical curve of genus $4$ of combinatorial type (111).
    \begin{itemize}
        \item If $\Gamma$ is realizable in $\mathbb F_0,$ then its edge lengths satisfy:
        \begin{equation}
        \begin{cases}
           uy+2uv=vy.
        \end{cases}
        \end{equation}
        \item If $\Gamma$ is realizable in $\mathbb F_2,$ then its edge lengths satisfy:
        \begin{equation}
        \begin{cases}
            uy+uv=vy.
            
        \end{cases}
        \end{equation}
    \end{itemize}
\end{lemma}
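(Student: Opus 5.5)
I will follow the same pattern as the preceding lemmas: the combinatorics of type (111) forces part of any unimodular triangulation, and comparing coordinates of vertices then gives the stated equalities. Label the cycles of (111) as in Figure \ref{fg:(111)F_0}. Cycle $1$ is the double-edge cycle between $x$ and $z$. Cycle $2$ is adjacent to cycles $1$ and $3$. Cycle $4$ is the loop at $w$, attached by the bridge $vw$. Via the dictionary (cycle $\leftrightarrow$ interior point, adjacent cycles $\leftrightarrow$ edge between interior points, bridge $\leftrightarrow$ edge separating interior points), the interior points must be assigned so that the following hold. The point of cycle $2$ is joined to the points of $1$ and $3$. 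Points $1$ and $3$ are not joined, since the cycles do not share an edge. The point of cycle $4$ is cut off from the rest by a segment dual to the bridge $vw$, and there is a second segment dual to the edge $xy$ separating point $1$.

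\textbf{Step 1 (forcing the triangulation).} For $\FF_0$, the interior points form the $2\times 2$ square $\{1,2\}^2$. I will check that, up to the symmetries of the square, the only admissible assignment is the one drawn in Figure \ref{fg:(111)F_0}. There, point $4=(2,2)$ is separated by the segment from $(0,3)$ to $(3,1)$, and point $1=(1,1)$ is separated by the segment from $(0,2)$ to $(2,1)$. For $\FF_2$, the interior points are $(1,1),(2,1),(1,2),(1,3)$. The top point $(1,3)$ must carry the loop, since only it can be cut off by a segment such as the one from $(0,3)$ to $(2,2)$ while keeping $2,3$ adjacent. This forces the picture in Figure \ref{fg:(111)F_2}. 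Smoothness then forces the black completing edges shown in those figures.

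\textbf{Step 2 (reading off directions).} The edges $uy$, $uv$, $vy$ are dual to edges of the forced triangulation. I will check which triangles contain the path from $y$ through $u$ to $v$, and which contain the cycle-$3$ edge $vy$. The key observation, as for (101), is the following. In $\FF_0$, the edge $vy$ runs along direction vectors of the form $(\alpha,1)$, because all dual segments around point $3$ that separate $y$ from $v$ are non-horizontal with vertical component $1$. Its lattice length therefore equals the difference of the $y$-coordinates of $v$ and $y$. Along the other path, $uy$ has vertical component $1$ per unit lattice length, while $uv$ has vertical component $2$ per unit. In the embedding, $uv$ is dual to a segment of the form $(2,\cdot)$ or $(\cdot,1)$ with slope giving height $2$, as for $(101)$ in $\FF_0$. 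Equating the two height differences gives $uy+2uv=vy$.

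In $\FF_2$, the corresponding segment dual to $uv$ has vertical component $1$ instead. Repeating the same comparison of heights gives $uy+uv=vy$.

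\textbf{Main obstacle.} The delicate step is Step 1 together with the claim in Step 2 that every completion uses only edges of the asserted directions along $vy$. Several completions exist, some with two or three edges between $y$ and $v$, with segments $(-1,1),(0,1),(1,1)$. To handle this, I will argue directly that every segment crossing the horizontal strip between $y$ and $v$ has second coordinate difference exactly $1$, since all relevant lattice points lie on two adjacent horizontal lines of the polygon. This makes the height equality independent of the completion, exactly as in the (101) case.
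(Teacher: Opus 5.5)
Your route is the paper's: force the partial triangulation from the cycle adjacencies, then compare heights along the two paths of cycle $3$ from $y$ to $v$.

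\textbf{The $\mathbb F_0$ case.} Here the route goes through. The loop point is a corner of $\{1,2\}^2$. The only cutting segments are $(0,3)$--$(3,1)$ and its mirror image. The bridge then lands at $(1,2)$, and this forces $2=(2,1)$, $1=(1,1)$ and the whole star of $(1,2)$. You do have the coordinates swapped in Step 2. The segment $(0,2)$--$(1,2)$ is horizontal, not ``non-horizontal''. What matters is that the relevant lattice points lie on the two adjacent \emph{vertical} lines $x=0,1$. The dual tropical edges therefore have directions $(0,1),(1,1)$, and lattice length equals height difference.

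\textbf{The gap: Step 1 for $\mathbb F_2$.} This step cannot be repaired, because the claimed equality is false in general. The loop need not sit over $(1,3)$.

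Cut $(2,1)$ off by the segment $(1,0)$--$(2,2)$, and place cycles $3,2,1$ at $(1,1),(1,2),(1,3)$, so $2,3$ are still adjacent. Concretely, take the unimodular triangulation consisting of:
\begin{itemize}
\item the four triangles around $(2,1)$;
\item $\{(1,0),(2,2),(1,1)\}$, $\{(1,1),(2,2),(1,2)\}$, $\{(1,2),(2,2),(1,3)\}$, $\{(1,3),(2,2),(1,4)\}$;
\item the fan from $(1,3)$ over $(1,4),(0,6),(0,5),(0,4),(0,3)$;
\item $\{(1,3),(0,3),(1,2)\}$, $\{(1,2),(0,3),(0,2)\}$, $\{(1,2),(0,2),(1,1)\}$;
\item the fan from $(1,1)$ over $(0,2),(0,1),(0,0),(1,0)$.
\end{itemize}
It is induced by $\max(a_{ij}+iX+jY)$ with
\begin{gather*}
a_{11}=a_{12}=a_{22}=0,\ a_{10}=-1,\ a_{21}=-2,\ a_{20}=-5,\ a_{30}=-10,\ a_{00}=-\tfrac94,\ a_{01}=-\tfrac32,\\
a_{02}=-1,\ a_{03}=-\tfrac32,\ a_{13}=-1,\ a_{04}=-3,\ a_{14}=-\tfrac72,\ a_{05}=-5,\ a_{06}=-\tfrac{29}{4};
\end{gather*}
the flip inequality holds strictly on all $21$ interior edges.

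The canonical model is of type (111). Only the two triangles on $(1,2)$--$(1,3)$ give trivalent vertices on cycle $1$, so cycle $1$ is the double edge. The loop over $(2,1)$ hangs off cycle $3$ through the bridge dual to $(1,0)$--$(2,2)$.

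Computing vertices: the two vertices of cycle $3$ shared with cycle $2$ are $(0,0)$ and $(-1,0)$, and $v=(1,-1)$. The triangle $yuv$ has the following edge lengths:
\begin{itemize}
\item $1$ for the shared edge;
\item $1$ for the edge $(0,0)$--$(1,-1)$;
\item $\tfrac12+\tfrac14+\tfrac14+\tfrac94=\tfrac{13}{4}$ for the third edge, via $(-\tfrac32,-\tfrac12)$, $(-\tfrac32,-\tfrac34)$, $(-\tfrac54,-1)$.
\end{itemize}
So $uy+uv=vy$ would read $1+1=\tfrac{13}4$ or $1+\tfrac{13}4=1$, whichever way $u,y$ are labelled. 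In this configuration the height comparison around $(1,2)$ gives $xy=zu$ instead.

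Even with the loop over $(1,3)$, the cut need not be $(0,3)$--$(2,2)$. The segment $(0,4)$--$(3,0)$ also works and is realizable. The bridge then lands at $(2,1)$, and the triangle relation becomes $vy=uy+3uv$.

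So the second bullet, as stated, cannot be proved by any completion argument. The paper's proof consists of the same forced-picture assertion (its figure for (111) in $\mathbb F_2$), so it shares this gap. The correct $\mathbb F_2$ conclusion is a disjunction over these configurations, as in the lemmas for (020) and (122).
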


\subsubsection*{Type (121)}

Let us now consider the combinatorial type (121). The triangulation describing the embedding in $\mathbb F_0$ is represented in Figure \ref{fg:(121)F_0}.
\begin{figure}[h]
\begin{tikzcd}
      \begin{tikzpicture}
        \coordinate (1) at (-0.75,0);
        \coordinate (2) at (0,0);
        \coordinate (3) at (-0.75,1.5);
        \coordinate (4) at (0,1.5);
        \coordinate (5) at (0.75,0.75);
        \coordinate (6) at (1.5,0.75);
        \draw (4)--(5);
        \draw (2)--(5);
        \draw (6)--(5);
        \draw(1)--(3);
        \draw (2,0.75) circle (0.5);
        \draw(3)[] to [out=30, in=150] (4);
        \draw(3)[] to [out=330, in=210] (4);
        \draw(1)[] to [out=30, in=150] (2);
        \draw(1)[] to [out=330, in=210] (2);
        \foreach \i in {1,2,3,4,5,6}{
            \vertex{\i}
        }
        \draw(1) node[anchor=north] {$x$};
        \draw(2) node[anchor=north] {$y$};
        \draw(3) node[anchor=south] {$z$};
        \draw(4) node[anchor=south] {$u$};
        \draw(5) node[anchor=south] {$v$};
        \draw(6)+(0.15,0) node[anchor=south east] {$w$};
        \draw[blue](-0.4,1.4) node {1};
        \draw[blue] (2,0.6) node {4};
        \draw[blue](-0.4,-0.1) node {2};
        \draw[blue](0,0.6) node {3};
    \end{tikzpicture}&
    \begin{tikzpicture}
        \foreach \i in {0,0.5,1,1.5}{
            \foreach \j in {0,0.5,1,1.5}{
            \vertex{\i,\j}
            }
    }
    \coordinate (1) at (0.5,0.5);
    \coordinate (3) at (0.5,1);
    \coordinate (2) at (1,0.5);
    \coordinate (4) at (1,1);
        \foreach \j in {1,2,3,4}{
            \vertexblue{\j}
        }
    \draw[blue] (1) node[anchor=east] {1};\draw[blue] (2) node[anchor=north west] {2};
    \draw[blue] (3) node[anchor=east] {3};\draw[blue] (4) node[anchor=south west] {4};
    \draw(0,0)--(1.5,0);
    \draw(0,0)--(0,1.5);
    \draw(1.5,1.5)--(0,1.5);
    \draw(1.5,1.5)--(1.5,0);
    \draw[blue](0,1.5)--(1.5,0.5);
    \draw[blue] (3)--(2);
    \draw[blue] (3)--(1);
    \draw[] (4)--(1.5,0.5);\draw(4)--(0,1.5);\draw[](3)--(0,1.5);\draw(3)--(1.5,0.5);\draw(2)--(1.5,0);\draw(2)--(1,0);\draw(1)--(1,0);
    \draw[blue](3)--(1,0);
    \draw[] (2)--(1.5,0.5);
    \end{tikzpicture}
    &
    \begin{tikzpicture}[scale=0.75]
    \draw[blue] (0,0)--(0.5,0.5);
    \draw[] (0.5,0.5)--(1,1.5);
    \draw[] (1,1.5)--(0.5,0.5);
    \draw[] (0.5,0.5)--(0.5,0);
    \draw[] (1,0)--(0.5,0);
    \draw[] (0,-0.5)--(0.5,0);
    \draw[] (0,-0.5)--(0,-1);
    \draw[] (-0.5,-0.5)--(0,-0.5);
    \draw[blue] (-0.5,-0.5)--(0.5,0.5);
    \draw[blue] (-1,-0.75)--(-0.5,-0.5);
    \draw[blue] (-1,-0.75)--(-1.2,-0.75);
    \draw[] (-1,-0.75)--(-1.5,-1.25);
    \draw[] (1,1.5)--(-1,-0.5);
    \draw[] (0.5,0)--(0.5,0.5);
    \draw[blue] (1,1.5)--(2,3);
    \draw[] (2,3)--(2.5,4);
    \draw[] (2,3)--(2.5,3.5);
    \vertexblue{1,1.5}\vertexblue{2,3}\vertexblue{0.5,0.5}
    \vertexblue{-0.5,-0.5}\vertexblue{-1,-0.75}\vertexblue{-1.2,-0.75}
    \draw[] (1,1.5) node[anchor=east] {$v$};
    \draw[] (2,3) node[anchor=west] {$w$};
    \draw[] (-1,-0.75) node[anchor=north] {$x$};
    \draw[] (0.5,0.5) node[anchor=west] {$u$};
    \draw[] (-0.5,-0.5) node[anchor=north] {$z$};
    \draw[] (-1.2,-0.75) node[anchor=east] {$y$};
    \end{tikzpicture}
    \end{tikzcd}\caption{Combinatorial type, partial triangulations of $\mathbb F_0$ and partial embedded tropical curve.}\label{fg:(121)F_0}
    \end{figure}

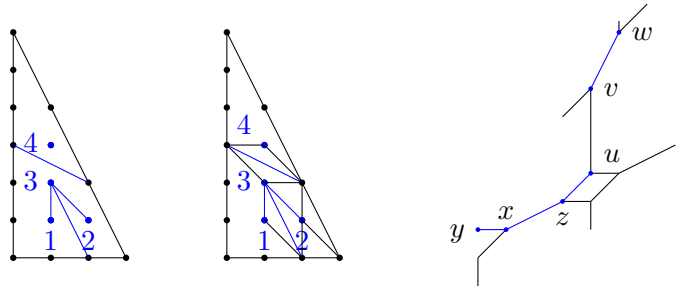
\begin{figure}[h]
\begin{tikzcd}
    \begin{tikzpicture}
        \foreach \j in {0,0.5,1,1.5,2,2.5,3}{
            \vertex{0,\j}
        }
        \foreach \j in {0,0.5,1,1.5,2}{
            \vertex{0.5,\j}
        }
        \foreach \j in {0,0.5,1}{
            \vertex{1,\j}
        }
        \vertex{1.5,0}
        \draw (0,0)--(1.5,0);
        \draw (0,3)--(1.5,0);
        \draw (0,3)--(0,0);
        \coordinate (2) at (1,0.5);
        \coordinate (1) at (0.5,0.5);
        \coordinate (3) at (0.5,1);
        \coordinate (4) at (0.5,1.5);
        \foreach \j in {1,2,3,4}{
            \vertexblue{\j}
        }
        \draw[blue] (1) node[anchor=north] {1};
        \draw[blue] (2) node[anchor=north] {2};
        \draw[blue] (3) node[anchor=east] {3};
        \draw[blue] (4) node[anchor=east] {4};
        \draw[blue] (3)--(2);
        \draw[blue] (1)--(3);
        \draw[blue] (3)--(1,0);
        \draw[blue] (0,1.5)--(1,1);
    \end{tikzpicture}&
    \begin{tikzpicture}
        \foreach \j in {0,0.5,1,1.5,2,2.5,3}{
            \vertex{0,\j}
        }
        \foreach \j in {0,0.5,1,1.5,2}{
            \vertex{0.5,\j}
        }
        \foreach \j in {0,0.5,1}{
            \vertex{1,\j}
        }
        \vertex{1.5,0}
        \draw (0,0)--(1.5,0);
        \draw (0,3)--(1.5,0);
        \draw (0,3)--(0,0);
        \coordinate (2) at (1,0.5);
        \coordinate (1) at (0.5,0.5);
        \coordinate (3) at (0.5,1);
        \coordinate (4) at (0.5,1.5);
        \foreach \j in {1,2,3,4}{
            \vertexblue{\j}
        }
        \draw[blue] (1) node[anchor=north] {1};
        \draw[blue] (2) node[anchor=north] {2};
        \draw[blue] (3) node[anchor=east] {3};
        \draw[blue] (4) node[anchor=south east] {4};
        \draw[blue] (3)--(2);
        \draw[blue] (1)--(3);
        \draw[blue] (3)--(1,0);
        \draw[blue] (0,1.5)--(1,1);\draw(3)--(1,1);\draw(1,0)--(1);
        \draw (4)--(1,1);\draw(4)--(0,1.5);\draw(3)--(0,1.5);\draw(2)--(1,0);
    \draw(1,0.5)--(1.5,0); 
    \draw[] (1,1)--(1,0.5);
    \end{tikzpicture}&
    \begin{tikzpicture}[scale=0.75]
    \draw[](1.5,2.5+1.5)--(2,3+1.5);
    \draw[blue](1,1.5+1.5)--(1.5,2.5+1.5); 
    \draw[](1.5,2.7+1.5)--(1.5,2.5+1.5);
    \draw[](0.5,1+1.5)--(1,1.5+1.5);
    \draw[](1,0+1.5)--(1,1.5+1.5);
    \draw[](1,0+1.5)--(1.5,0+1.5);
    \draw[](2.5,0.5+1.5)--(1.5,0+1.5);
    \draw[blue](0.5,-0.5+1.5)--(1,0+1.5);
    \draw[blue](-0.5,-1+1.5)--(0.5,-0.5+1.5);
    \draw[blue](-0.5,-1+1.5)--(-1,-1+1.5);
    \draw[](-0.5,-1+1.5)--(-1,-1.5+1.5);
    \draw[](-1,-2+1.5)--(-1,-1.5+1.5);
    \draw[](0.5,-0.5+1.5)--(1,-0.5+1.5);
    \draw[](1.5,0+1.5)--(1,-0.5+1.5);
    \draw[](1,-0.5+1.5)--(1,-1+1.5);
   \vertexblue{-0.5,-1+1.5}
    \vertexblue{1,0+1.5}\vertexblue{1,1.5+1.5}
    \vertexblue{-1,-1+1.5}
    \vertexblue{0.5,-0.5+1.5}
    \vertexblue{1.5,2.5+1.5}
    \draw[] (0.5,-0.5+1.5) node[anchor= north] {$z$};
    \draw[] (1.5,2.5+1.5) node[anchor=west] {$w$};
    \draw[] (-1,-1+1.5) node[anchor= east] {$y$};
    \draw[] (1,0+1.5) node[anchor= south west] {$u$};
    \draw[] (1,1.5+1.5) node[anchor= west] {$v$};
    \draw[] (-0.5,-1+1.5) node[anchor= south] {$x$};
    \end{tikzpicture}
    \end{tikzcd}\caption{Partial triangulation of $\mathbb F_2$ and partial embedded tropical curve.}\label{fg:(121)F_2}
\end{figure}

Comparing again the $y$-coordinates of the points proves the following.
\begin{lemma}
    Let $\Gamma$ be a tropical curve of genus $4$ of combinatorial type (121).
    \begin{itemize}
        \item If $\Gamma$ is realizable in $\mathbb F_0,$ then its edge lengths satisfy:
        \begin{equation}
        \begin{cases}
           xz+zu+2uv=vy.\\ 
        \end{cases}
        \end{equation}
        \item If $\Gamma$ is realizable in $\mathbb F_2$, then its edge lengths satisfy:
        \begin{equation}
        \begin{cases}
            xz+zu+uv=vy.
        \end{cases}
        \end{equation}
    \end{itemize}
\end{lemma}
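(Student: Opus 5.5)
Following the pattern used for (000)A and the previous appendix types, I will first fix the triangulation forced by the combinatorics of (121), and then read off a length relation from the dual curve. In (121) we have four cycles. Cycle $1$ (the double edge at $z,u$) and cycle $2$ (the double edge at $x,y$) are both adjacent to cycle $3$, which consists of $xz$, the double edge parts and $u v y$. Cycle $4$ is the loop at $w$, attached to $v$ via the bridge $vw$. The correspondences listed in Section \ref{ssc:genus4} then apply. Adjacent cycles $1$--$3$ and $2$--$3$ force $1$-simplices joining the corresponding interior points. The bridge $vw$ forces a $1$-simplex separating interior point $4$ from point $3$. Since $1$ and $2$ are not adjacent, there is no segment between them.

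In $\mathbb F_0$ (interior points $(1,1),(2,1),(1,2),(2,2)$), I will label the points so that $3=(1,2)$ is joined to both $1=(1,1)$ and $2=(2,1)$. The separating segment is forced to be the one from $(0,3)$ to $(3,1)$, as in Figure \ref{fg:(121)F_0}. Smoothness then forces the black completing edges around $3$ and $4$. In $\mathbb F_2$ the same reasoning gives Figure \ref{fg:(121)F_2}, with $3=(1,2)$ adjacent to $1=(1,1)$, $2=(2,1)$ and $4=(1,3)$, and the separating segment from $(0,3)$ to $(2,2)$. The key observation is the same one used for (101) and (111). Every edge of the tropical curve lying on the path from $y$ through the region dual to $3$ up to $v$ is dual to a segment of the form $(\alpha,1)$ or $(1,0)$ whose $y$-coordinate behaviour is controlled. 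Concretely, the edges of the path $y\to x\to z\to u\to v$ (taking the non-double-edge route) are dual to the segments of the triangulation around point $3$. The edge $vy$ is dual to segments whose second coordinate difference is exactly $1$, so its lattice length equals the difference of the vertical coordinates of $v$ and $y$.

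Next I compare vertical coordinates. Going from $y$ to $v$ along $x,z,u$, each edge has a primitive direction with vertical component $1$, except the edge $uv$. The edge $uv$ is dual to a segment of vertical extent $2$ in $\mathbb F_0$, namely $(0,1.5)$--$(1.5,0.5)$ rescaled, i.e.\ $(0,3)$--$(3,1)$, which has direction $(3,-2)$ and normal $(2,3)$. In $\mathbb F_2$ its vertical extent is $1$. Summing the vertical displacements gives $xz+zu+2uv$, respectively $xz+zu+uv$. These must equal the displacement along the edges of $vy$, which is exactly $vy$.

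The main obstacle is the bookkeeping in the vertical-coordinate comparison. I must verify, from the specific forced segments, which edges have vertical component $1$ and which have $2$, and that the completion of the triangulation along $vy$ can only use segments with second coordinate $1$. I will check this directly on Figures \ref{fg:(121)F_0} and \ref{fg:(121)F_2}, using that the vertex set on the relevant horizontal strips is fixed by the polygon. The remaining edge lengths contribute no further relations, so only the stated equalities follow.
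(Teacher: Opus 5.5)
Your overall plan (fix the forced partial triangulation, then compare vertical coordinates along the two arcs of cycle $3$ between $v$ and the double edges) is exactly the paper's. However, the step that carries the proof is wrong as written, and your formulas come out right only because two errors cancel.

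First, $(0,3)$--$(3,1)$ (resp.\ $(0,3)$--$(2,2)$) is the separating segment, which you yourself identified as dual to the bridge $vw$. In Figure~\ref{fg:(121)F_0} it is the edge of direction $(2,3)$. It cannot also be dual to $uv$: that edge lies on the boundary of the region dual to $(1,2)$, so it is dual to a segment emanating from $(1,2)$.

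Second, an edge dual to a segment of direction $(a,b)$ has primitive direction $\pm(-b,a)$. Its vertical displacement is therefore $|a|$ times its length: what matters is the horizontal extent of the dual segment, not the vertical one. With your segment and the correct rule you would get a factor $3$, as your own ``normal $(2,3)$'' shows. With the correct segment and your rule you would get $1$.

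For the same reason, your criterion for $vy$ (``second coordinate difference exactly $1$'') is not the right one. The segments dual to the chain $vy$ are $(1,2)$--$(0,j)$, with second coordinate difference $j-2$. What they share is first coordinate difference $1$.

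Third, it is false that every other edge on the path has vertical component $1$. The cycle-$3$ part of one double edge is horizontal, and that is the only reason it is absent from the relation. Under your claim the sum would also contain $xy$.

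Finally, in $\mathbb F_2$ the point $3=(1,2)$ is not adjacent to $4=(1,3)$; the segment $(0,3)$--$(2,2)$ separates them.

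The correct bookkeeping goes around the boundary of the region dual to $(1,2)$. In $\mathbb F_0$, $v$ is dual to the triangle $(1,2),(0,3),(3,1)$, and the edges are as follows:
\begin{itemize}
\item $uv$ is dual to $(1,2)$--$(3,1)$ and has primitive direction $(1,2)$;
\item $zu$ is dual to $(1,2)$--$(2,1)$, direction $(1,1)$;
\item $xz$ is dual to $(1,2)$--$(2,0)$, direction $(2,1)$;
\item $xy$ is dual to the vertical segment $(1,2)$--$(1,1)$, hence horizontal;
\item $vy$ is the chain of edges dual to $(1,2)$--$(0,j)$ for $j$ from $3$ down to some $a\ge 0$, each of direction $\pm(j-2,1)$, with the same vertical orientation by convexity.
\end{itemize}
The points $(0,j)$ are the only possible neighbours of $(1,2)$ in that angular sector, since $(1,0)$ is blocked by $(1,1)$. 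Closing up the boundary gives $vy=2uv+zu+xz$.

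In $\mathbb F_2$ the only change is that $v$ is dual to $(1,2),(0,3),(2,2)$. Then $uv$ is dual to the horizontal segment $(1,2)$--$(2,2)$, so it is vertical and contributes $uv$, giving $vy=uv+zu+xz$.

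This requires labelling so that the double edge $xy$ is the one dual to $(1,2)$--$(1,1)$. With your assignment of the $zu$-cycle to $(1,1)$, the same computation yields the relation with $x\leftrightarrow z$, $y\leftrightarrow u$ exchanged.
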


\subsubsection*{Type (122)}

Let us now consider the combinatorial type (122). The triangulation describing the embedding in $\mathbb F_0$ and possible embeddings in $\FF_2$ are represented in Figures \ref{fg:(122)F_0},\ref{fg:(122)F_2a} and \ref{fg:(122)F_2b}.
\begin{figure}[h]
\begin{tikzcd}
      \begin{tikzpicture}
        \coordinate (1) at (-0.75,0);
        \coordinate (2) at (0,0.75);
        \coordinate (3) at (-0.75,1.5);
        \coordinate (4) at (0.75,0.75);
        \coordinate (5) at (1.5,0.75);
        \coordinate (6) at (2,0.75);
        \draw (3)--(2);
        \draw (1)--(2);
        \draw (2)--(4);
        \draw (5)--(6);
        \draw(1)[] to [out=120, in=240] (3);
        \draw(1)[] to [out=60, in=300] (3);
        \draw (2.5,0.75) circle (0.5);
        \draw(4)[] to [out=30, in=150] (5);
        \draw(4)[] to [out=330, in=210] (5);
        \foreach \i in {1,2,3,4,5,6}{
            \vertex{\i}
        }
        \draw(1) node[anchor=north] {$x$};
        \draw(2) node[anchor=north] {$y$};
        \draw(3) node[anchor=south] {$z$};
        \draw(4) node[anchor=south] {$u$};
        \draw(5) node[anchor=south] {$v$};
        \draw(6)+(0.15,0) node[anchor=south east] {$w$};
        \draw[blue](-0.75,0.6) node {1};
        \draw[blue] (2.5,0.6) node {4};
        \draw[blue](-0.4,0.6) node {2};
        \draw[blue](1.2,0.6) node {3};
    \end{tikzpicture}&
    \begin{tikzpicture}
        \foreach \i in {0,0.5,1,1.5}{
            \foreach \j in {0,0.5,1,1.5}{
            \vertex{\i,\j}
            }
    }
    \coordinate (1) at (0.5,0.5);
    \coordinate (3) at (0.5,1);
    \coordinate (2) at (1,0.5);
    \coordinate (4) at (1,1);
        \foreach \j in {1,2,3,4}{
            \vertexblue{\j}
        }
    \draw[blue] (1) node[anchor=east] {1};\draw[blue] (2) node[anchor=north west] {2};
    \draw[blue] (3) node[anchor=east] {3};\draw[blue] (4) node[anchor=south west] {4};
    \draw(0,0)--(1.5,0);
    \draw(0,0)--(0,1.5);
    \draw(1.5,1.5)--(0,1.5);
    \draw(1.5,1.5)--(1.5,0);
    \draw[blue](0,1.5)--(1.5,0.5);
    \draw[blue](0,1)--(1.5,0.5);
    \draw[blue] (1)--(2);
    \draw[] (4)--(1.5,0.5);\draw(4)--(0,1.5);
    \draw[](3)--(0,1.5);\draw(3)--(1.5,0.5);
    \draw (3)--(0,1);
    \draw(2)--(0,1);\draw(1)--(0,1);
    \draw[] (2)--(1.5,0.5);
    \end{tikzpicture}
    &
    \begin{tikzpicture}[scale=0.75]
    \draw[] (0.5,0.5)--(1,1.5);
    \draw[blue] (0,-1)--(0.5,0.5);
    \draw[] (0,-1)--(0,-2);
    \draw[] (0,-1)--(-0.5,-2);
    \draw[] (-1,-2.5)--(-0.5,-2);
    \draw[] (-1,-2.5)--(-1.5,-2.5);
    \draw[blue] (-0.5,-2.5)--(-0.5,-2);
    \draw[] (1,1.5)--(0.5,1);\draw[] (0,1)--(0.5,1);
    \draw[] (0.5,0.5)--(0.5,1);
   \draw[blue] (1,1.5)--(2,3);
    \draw[] (2,3)--(2.5,4);
    \draw[] (2,3)--(2.5,3.5);
    \vertexblue{1,1.5}\vertexblue{2,3}\vertexblue{0.5,0.5}
    \vertexblue{0,-1}\vertexblue{-0.5,-2}
    \draw[] (1,1.5) node[anchor=east] {$v$};
    \draw[] (2,3) node[anchor=west] {$w$};
    \draw[] (-0.5,-2) node[anchor=east] {$x$};
    \draw[] (0.5,0.5) node[anchor=west] {$u$};
    \draw[] (0,-1) node[anchor=east] {$y$};
    \end{tikzpicture}
    \end{tikzcd}\caption{Combinatorial type, partial triangulations of $\mathbb F_0$ and partial embedded tropical curve.}\label{fg:(122)F_0}
    \end{figure}
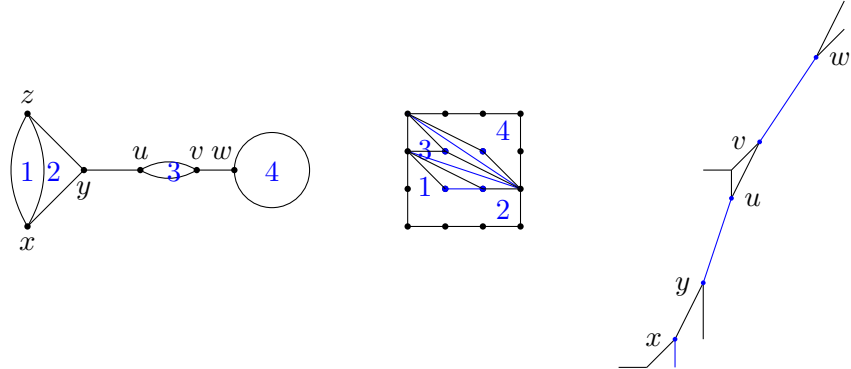

\begin{figure}[h]
\begin{tikzcd}
    \begin{tikzpicture}
        \foreach \j in {0,0.5,1,1.5,2,2.5,3}{
            \vertex{0,\j}
        }
        \foreach \j in {0,0.5,1,1.5,2}{
            \vertex{0.5,\j}
        }
        \foreach \j in {0,0.5,1}{
            \vertex{1,\j}
        }
        \vertex{1.5,0}
        \draw (0,0)--(1.5,0);
        \draw (0,3)--(1.5,0);
        \draw (0,3)--(0,0);
        \coordinate (2) at (1,0.5);
        \coordinate (1) at (0.5,0.5);
        \coordinate (3) at (0.5,1);
        \coordinate (4) at (0.5,1.5);
        \foreach \j in {1,2,3,4}{
            \vertexblue{\j}
        }
        \draw[blue] (1) node[anchor=north] {1};
        \draw[blue] (2) node[anchor=north] {2};
        \draw[blue] (3) node[anchor=east] {3};
        \draw[blue] (4) node[anchor=east] {4};
        \draw[blue](1)--(2);
        \draw[blue] (1,1)--(0,0.5);
        \draw[blue] (0,1.5)--(1,1);
    \end{tikzpicture}&
    \begin{tikzpicture}
        \foreach \j in {0,0.5,1,1.5,2,2.5,3}{
            \vertex{0,\j}
        }
        \foreach \j in {0,0.5,1,1.5,2}{
            \vertex{0.5,\j}
        }
        \foreach \j in {0,0.5,1}{
            \vertex{1,\j}
        }
        \vertex{1.5,0}
        \draw (0,0)--(1.5,0);
        \draw (0,3)--(1.5,0);
        \draw (0,3)--(0,0);
        \coordinate (2) at (1,0.5);
        \coordinate (1) at (0.5,0.5);
        \coordinate (3) at (0.5,1);
        \coordinate (4) at (0.5,1.5);
        \foreach \j in {1,2,3,4}{
            \vertexblue{\j}
        }
        \draw[blue] (1) node[anchor=north] {1};
        \draw[blue] (2) node[anchor=north] {2};
        \draw[blue] (3) node[anchor=east] {3};
        \draw[blue] (4) node[anchor=south east] {4};
        \draw[blue] (1)--(2);
        \draw[blue] (1,1)--(0,0.5);
        \draw[blue] (0,1.5)--(1,1);
        \draw(3)--(1,1);\draw(0,0.5)--(1);
        \draw(1,1)--(1);
        \draw (4)--(1,1);\draw(4)--(0,1.5);\draw(3)--(0,1.5);\draw(3)--(0,1);\draw(3)--(0,0.5);
    \draw(1,0.5)--(1.5,0); 
    \draw[] (1,1)--(1,0.5); 
    \end{tikzpicture}&
    \begin{tikzpicture}[scale=0.75]
    \draw[](1.5,2.5+1.5)--(2,3+1.5);
    \draw[blue](1,1.5+1.5)--(1.5,2.5+1.5); \draw[](1.5,2.7+1.5)--(1.5,2.5+1.5);
    \draw[](0.5,1+1.5)--(1,1.5+1.5);
    \draw[](0.5,1+1.5)--(0.5,0.5+1.5);\draw[](0.5,1+1.5)--(0,1+1.5);
    \draw[](1,0+1.5)--(0.5,0.5+1.5);\draw[](0.5,0.5+1.5)--(0,0.5+1.5);
    \draw[](1,0+1.5)--(1,1.5+1.5);

    \draw[blue](1,+1.5)--(1.5,-1+1.5);
    \vertexblue{1.5,-1+1.5}
    \draw[](1.5,-2+1.5)--(1.5,-1+1.5);
    \draw[](2,-1.5+1.5)--(1.5,-1+1.5);
    \draw[](2,-1.5+1.5)--(2.5,-1.5+1.5);
    \draw[](3.5,-1+1.5)--(2.5,-1.5+1.5);
    \draw[blue](2,-1.5+1.5)--(2,-2+1.5);
    \vertexblue{1,0+1.5}\vertexblue{1,1.5+1.5}
    \vertexblue{2,-1.5+1.5}
    \vertexblue{1.5,2.5+1.5}
    \draw[] (2,-1.5+1.5) node[anchor= south] {$x$};
    \draw[] (1.5,2.5+1.5) node[anchor=west] {$w$};
    \draw[] (1.5,-1+1.5) node[anchor= east] {$y$};
    \draw[] (1,0+1.5) node[anchor= south west] {$u$};
    \draw[] (1,1.5+1.5) node[anchor= west] {$v$};
    \end{tikzpicture}
    \end{tikzcd}\caption{Partial triangulation of $\mathbb F_2$ and partial embedded tropical curve.}\label{fg:(122)F_2a}
\end{figure}

\begin{figure}[h]
\begin{tikzcd}
    \begin{tikzpicture}
        \foreach \j in {0,0.5,1,1.5,2,2.5,3}{
            \vertex{0,\j}
        }
        \foreach \j in {0,0.5,1,1.5,2}{
            \vertex{0.5,\j}
        }
        \foreach \j in {0,0.5,1}{
            \vertex{1,\j}
        }
        \vertex{1.5,0}
        \draw (0,0)--(1.5,0);
        \draw (0,3)--(1.5,0);
        \draw (0,3)--(0,0);
        \coordinate (4) at (1,0.5);
        \coordinate (3) at (0.5,0.5);
        \coordinate (2) at (0.5,1);
        \coordinate (1) at (0.5,1.5);
        \foreach \j in {1,2,3,4}{
            \vertexblue{\j}
        }
        \draw[blue] (1) node[anchor=east] {1};
        \draw[blue] (2) node[anchor=east] {2};
        \draw[blue] (3) node[anchor=north] {3};
        \draw[blue] (4) node[anchor=north] {4};
        \draw[blue](1)--(2);
        \draw[blue] (1,1)--(0.5,0);
        \draw[blue] (0,0.5)--(1,1);
    \end{tikzpicture}
&
    \begin{tikzpicture}
        \foreach \j in {0,0.5,1,1.5,2,2.5,3}{
            \vertex{0,\j}
        }
        \foreach \j in {0,0.5,1,1.5,2}{
            \vertex{0.5,\j}
        }
        \foreach \j in {0,0.5,1}{
            \vertex{1,\j}
        }
        \vertex{1.5,0}
        \draw (0,0)--(1.5,0);
        \draw (0,3)--(1.5,0);
        \draw (0,3)--(0,0);
        \coordinate (4) at (1,0.5);
        \coordinate (3) at (0.5,0.5);
        \coordinate (2) at (0.5,1);
        \coordinate (1) at (0.5,1.5);
        \foreach \j in {1,2,3,4}{
            \vertexblue{\j}
        }
        \draw[blue] (1) node[anchor=east] {1};
        \draw[blue] (2) node[anchor=east] {2};
        \draw[blue] (3) node[anchor=north] {3};
        \draw[blue] (4) node[anchor=north] {4};
        \draw[blue](1)--(2);
        \draw[blue] (1,1)--(0.5,0);
        \draw[blue] (0,0.5)--(1,1);
        \draw(4)--(0.5,0); \draw(4)--(1.5,0); \draw(4)--(1,0);
        \draw(3)--(1,1);
        \draw(3)--(0,0.5);\draw(3)--(0.5,0);
        \draw(2)--(0,0.5);\draw(2)--(1,1);
        \draw(1)--(1,1);
        \draw[] (1,1)--(1,0.5);
    \end{tikzpicture}&
    \begin{tikzpicture}[scale=0.75]
    \draw(0.5,-0.5+1.5)--(1,-0.5+1.5);
    \draw(0,0+1.5)--(0,0.5+1.5);
    \draw[](0,0.5+1.5)--(1,-0.5+1.5);
    \draw[blue](0,0.5+1.5)--(-0.5,1.5+1.5); 
    \draw[](-0.5,1.5+1.5)--(-0.5,2.5+1.5);
    \draw[](-0.5,1.5+1.5)--(-1,2+1.5);
    \draw[blue](-0.5,2.5+1.5)--(-1,2.5+1.5);
    \draw[](-0.5,2.5+1.5)--(0,3+1.5);
    \draw[blue](1,-0.5+1.5)--(2,-1+1.5);
    \draw[](2,-1+1.5)--(3.5,-1+1.5);
    \draw[](2,-1+1.5)--(2.5,-1.5+1.5);
    \draw[](2.5,-1.5+1.5)--(3,-1.5+1.5);
    \draw[](3.5,-1+1.5)--(3,-1.5+1.5);
    \draw[](2.5,-1.5+1.5)--(2.5,-2+1.5);
    \draw[](3,-1.5+1.5)--(3,-2+1.5);
    \draw[](3.5,-1+1.5)--(4.3,-0.5+1.5);
    \vertexblue{2,-1+1.5}\vertexblue{1,-0.5+1.5}
    \vertexblue{0,0.5+1.5}
    \vertexblue{-0.5,1.5+1.5}\vertexblue{-0.5,2.5+1.5}
    \vertexblue{-1,2.5+1.5}
    \draw[] (-1,2.5+1.5) node[anchor= east] {$z$};
    \draw[] (2,-1+1.5) node[anchor=south] {$w$};
    \draw[] (-0.5,1.5+1.5) node[anchor= north west] {$y$};
    \draw[] (0,0.5+1.5) node[anchor= south west] {$u$};
    \draw[] (1,-0.5+1.5) node[anchor= south west] {$v$};
    \draw[] (-0.5,2.5+1.5) node[anchor= west] {$x$};
    \end{tikzpicture}
    \end{tikzcd}\caption{Partial triangulation of $\mathbb F_2$ and partial embedded tropical curve.}\label{fg:(122)F_2b}
\end{figure}

In Figure \ref{fg:(122)F_0}, comparing the $y$-coordinates of the points $u,v$ and the other point of valence $3$ along one of the two path connecting the two points, that the segment defining a path between the two points has to have length twice the sum of the other segments in the other path.

Instead, in Figure \ref{fg:(122)F_2a}, we see that $l_1= l_2$, where $l_1,l_2$ denote the two parallel edges with $u,v$ as endpoints. Notice that no other edge length equation arises from the partial triangulation. In fact, the bottom part of the partial triangulation can be completes similarly as in Figure \ref{fg:(021)codim}, and gives no relations between the edge lengths of the edges with endpoints $x,y,z.$

In Figure \ref{fg:(122)F_2b}, we see that $l_1\neq l_2.$ Moreover any completion of the triangulation will join $y$ with $z$ with edges of segments $(-1,1)$ and $(0,1)$ or with a single edge defined by $(-1,1)$. The $y$-coordinates of such vectors are the same, hence $yz=xy.$  
Also here notice that no other equation arises from the triangulation.
 
This proves the following Lemma.
\begin{lemma}
    Let $\Gamma$ be a tropical curve of genus $4$ of combinatorial type (122). Let $l_1,l_2$ be the lengths of the two parallel edges with endpoints $u,v$. 
    \begin{itemize}
        \item If $\Gamma$ is realizable in $\mathbb F_0,$ then $l_1=2\cdot l_2$, up to symmetry. 
        \item If $\Gamma$ is realizable in $\mathbb F_2,$ then its edge lengths satisfy, up to symmetry:
        \begin{equation}
        \begin{cases}
            l_1=l_2;
        \end{cases} \text{or} \quad \begin{cases}
            xy=yz.
        \end{cases}
        \end{equation}\end{itemize}
\end{lemma}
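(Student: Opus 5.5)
The plan follows the pattern of the previous lemmas in this appendix: extract the forced part of the dual triangulation from the adjacency structure of the cycles of type (122), then read off length relations by comparing coordinates of vertices in the partial embedded curve. Recall that the length of an edge of a plane tropical curve is its Euclidean length divided by the norm of its primitive direction vector. So for an edge with primitive direction $(\alpha,1)$, the length equals the difference of the $y$-coordinates of its endpoints. This observation is the only computational tool needed.

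\textbf{Embedding in $\mathbb F_0$.} Cycles $3$ (formed by the two parallel edges between $u,v$) and $4$ (the loop at $w$) are joined by the bridge $vw$, and cycle $3$ meets cycle $2$ only through the bridge $yu$. By the correspondence between cycles and interior points, the triangulation must separate these interior points by $1$-simplices and contain the blue edges of Figure \ref{fg:(122)F_0}. In particular, it contains the edges of the strip passing from $(0,1)$, resp.\ $(0,2)$, to $(3,1)$, whose duals bound cycle $3$. Unimodularity then forces the remaining black edges. In the dual curve, one of the two $u$--$v$ paths is a single edge dual to a segment of primitive normal giving direction $(\alpha,1)$ with $\alpha=2$ after accounting for the slope of $(0,2)$--$(3,1)$. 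The other path consists of edges with direction of $y$-component $1$ but traversed relative to a vector of slope $1/2$. Comparing the $y$-coordinates of $u$ and $v$ along both paths then gives $l_1=2l_2$ (up to relabeling the two parallel edges).

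\textbf{Embedding in $\mathbb F_2$.} Two placements of the four interior points $(1,1),(1,2),(1,3),(2,1)$ relative to cycles $1,\dots,4$ are compatible with the adjacency data. These are the ones in Figures \ref{fg:(122)F_2a} and \ref{fg:(122)F_2b}, and I would first argue that they are the only two, by checking which interior point can correspond to the loop cycle $4$ (it must be a point adjacent to only one other interior point via a separating edge).

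In the first case, both parallel edges between $u$ and $v$ are dual to segments whose normals have direction $(\alpha,1)$. Since $u$ and $v$ are common endpoints, equating $y$-differences gives $l_1=l_2$.

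In the second case, any completion joins $y$ and $z$ by edges with directions $(-1,1)$ and $(0,1)$, or by a single $(-1,1)$ edge. Since $x$ and $z$ share the relevant $y$-coordinate constraints, comparing $y$-coordinates gives $xy=yz$.

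\textbf{Main obstacle.} The delicate step is the exhaustiveness of the case analysis, namely that every unimodular triangulation realizing (122) contains one of the three displayed partial triangulations up to the symmetries of the polygons. I would handle this by enumerating the positions of the interior point dual to the loop cycle and of the two points joined by the double edge, and discarding placements violating the adjacency correspondence.
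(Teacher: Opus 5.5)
There is a genuine gap, in the $\mathbb F_0$ part. Your strategy matches the paper's: read off the forced partial triangulations from the cycle adjacencies, then compare $y$-coordinates along the two $u$--$v$ paths. Your two $\mathbb F_2$ cases also reproduce the paper's argument. But the $\mathbb F_0$ computation is exactly where the factor $2$ must come from, and as written it does not produce it.

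\textbf{What goes wrong in your $\mathbb F_0$ computation.}
\begin{enumerate}
\item The forced blue segments are $(0,2)$--$(3,1)$ and $(0,3)$--$(3,1)$. They are not $(0,1)$--$(3,1)$: that segment passes through the interior points $(1,1)$ and $(2,1)$, so it cannot be an edge of a unimodular triangulation.
\item The duals of these two segments are the bridges $yu$ and $vw$, not the boundary of cycle $3$.
\item You give the direct $u$--$v$ edge direction $(2,1)$. Then both paths would consist of edges with $y$-component $1$, and comparing $y$-coordinates would give $l_1=l_2$, which is the $\mathbb F_2$ relation. The remark about ``a vector of slope $1/2$'' does not repair this.
\end{enumerate}

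\textbf{The missing computation.} The triangle with vertices $(0,2),(0,3),(3,1)$ has $(1,2)$ as its only interior lattice point, and no other lattice points besides its vertices. So its unimodular triangulation is forced to be the cone from $(1,2)$, and cycle $3$ is a triangle in $\mathbb R^2$:
\begin{itemize}
\item The direct edge $uv$ is dual to $(1,2)$--$(3,1)$, so its primitive direction is $(1,2)$, with $y$-component $2$.
\item The other $u$--$v$ path consists of the edges dual to $(1,2)$--$(0,2)$ and $(1,2)$--$(0,3)$, with directions $(0,1)$ and $(1,1)$.
\item These two edges meet at the vertex dual to $(0,2),(0,3),(1,2)$. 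Its third edge is the end dual to the boundary segment $(0,2)$--$(0,3)$, so this vertex disappears in the canonical model.
\end{itemize}
Write $c$ for the length of the direct edge and $a,b$ for the two pieces of the other path. Comparing $x$-coordinates gives $b=c$. Comparing $y$-coordinates gives $a+b=2c$. Hence $l_1=a+b=2c=2l_2$.

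\textbf{A smaller point on the second $\mathbb F_2$ case.} Your phrase ``$x$ and $z$ share the relevant $y$-coordinate constraints'' should be made precise. The inner $x$--$z$ edge is dual to the vertical segment $(1,2)$--$(1,3)$, so it is horizontal and its endpoints have the same height. The $y$--$x$ edge is dual to $(1,2)$--$(2,2)$, so it is vertical. The $y$--$z$ path consists of edges dual to $(1,2)$--$(0,j)$, with directions $(j-2,1)$. That path can have more than the two edges you list, but all of them have $y$-component $1$, so $xy=yz$ still follows.
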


\subsubsection*{Type (202)}
Let us now consider the combinatorial type (202). The triangulation describing the embedding in $\mathbb F_0$ is represented in Figure \ref{fg:(202)F_0}.

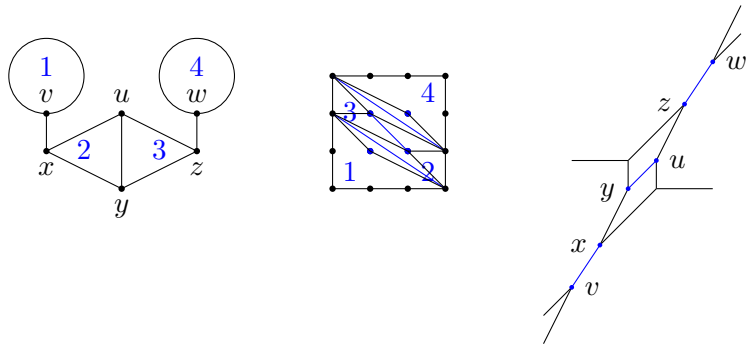
\begin{figure}[h]
\begin{tikzcd}
      \begin{tikzpicture}
        \coordinate (1) at (-0.5,0.5);
        \coordinate (2) at (0.5,0);
        \coordinate (3) at (1.5,0.5);
        \coordinate (4) at (0.5,1);
        \coordinate (5) at (-0.5,1);
        \coordinate (6) at (1.5,1);
        \draw (4)--(2);
        \draw (1)--(2);
        \draw (1)--(4);
        \draw (3)--(4);
        \draw (3)--(2);
        \draw (1)--(5);
        \draw (3)--(6);
        \draw (-0.5,1.5) circle (0.5);
        \draw (1.5,1.5) circle (0.5);
        \foreach \i in {1,2,3,4,5,6}{
            \vertex{\i}
        }
        \draw(1) node[anchor=north] {$x$};
        \draw(2) node[anchor=north] {$y$};
        \draw(3) node[anchor=north] {$z$};
        \draw(4) node[anchor=south] {$u$};
        \draw(5) node[anchor=south] {$v$};
        \draw(6) node[anchor=south] {$w$};
        \draw[blue](-0.5,1.5) node {1};
        \draw[blue] (1.5,1.5) node {4};
        \draw[blue](0,0.4) node {2};
        \draw[blue](1,0.4) node {3};
    \end{tikzpicture}&
    \begin{tikzpicture}
        \foreach \i in {0,0.5,1,1.5}{
            \foreach \j in {0,0.5,1,1.5}{
            \vertex{\i,\j}
            }
    }
    \coordinate (1) at (0.5,0.5);
    \coordinate (3) at (0.5,1);
    \coordinate (2) at (1,0.5);
    \coordinate (4) at (1,1);
        \foreach \j in {1,2,3,4}{
            \vertexblue{\j}
        }
    \draw[blue] (1) node[anchor=north east] {1};\draw[blue] (2) node[anchor=north west] {2};
    \draw[blue] (3) node[anchor=east] {3};\draw[blue] (4) node[anchor=south west] {4};
    \draw(0,0)--(1.5,0);
    \draw(0,0)--(0,1.5);
    \draw(1.5,1.5)--(0,1.5);
    \draw(1.5,1.5)--(1.5,0);
    \draw[blue](1.5,0)--(0,1);
    \draw[blue](1.5,0.5)--(0,1.5);
    \draw[blue] (3)--(2);
    \draw[] (4)--(1.5,0.5);\draw(4)--(0,1.5);
    \draw[](3)--(0,1.5);\draw(3)--(1.5,0.5);
    \draw (3)--(0,1);
    \draw(2)--(0,1);\draw(1)--(0,1);
    \draw[] (2)--(1.5,0.5);\draw (1.5,0)--(2);\draw (1.5,0)--(1);
    \end{tikzpicture}
    &
    \begin{tikzpicture}[scale=0.75]
    \draw[] (0.5,0.5)--(1,1.5);
    \draw[blue] (0.5,0.5)--(0,0);
    \draw[] (0,0)--(0,0.5);
    \draw[] (1,1.5)--(0,0.5);
    \draw[] (-1,0.5)--(0,0.5);
    \draw[] (0.5,0)--(0.5,0.5);
    \draw[] (0.5,0)--(-0.5,-1);\draw[] (0.5,0)--(1.5,0);
    \draw[] (0,0)--(-0.5,-1);
    \draw[blue] (-1,-1.75)--(-0.5,-1);\vertexblue{-0.5,-1}
    \draw[] (-1,-1.75)--(-1.5,-2.25);
    \draw[] (-1,-1.75)--(-1.5,-2.75);
   \draw[blue] (1,1.5)--(1.5,2.25);
    \draw[] (1.5,2.25)--(2,3.25);
    \draw[] (1.5,2.25)--(2,2.75);
    \vertexblue{1,1.5}\vertexblue{1.5,2.25}\vertexblue{0.5,0.5}
    \vertexblue{-1,-1.75}\vertexblue{0,0}
    \draw[] (1,1.5) node[anchor=east] {$z$};
    \draw[] (1.5,2.25) node[anchor=west] {$w$};
    \draw[] (0,0) node[anchor=east] {$y$};
    \draw[] (0.5,0.5) node[anchor=west] {$u$};
    \draw[] (-1,-1.75) node[anchor=west] {$v$};
    \draw[] (-0.5,-1) node[anchor=east] {$x$};
    \end{tikzpicture}
    \end{tikzcd}\caption{Combinatorial type, partial triangulations of $\mathbb F_0$ and partial embedded tropical curve.}\label{fg:(202)F_0}
    \end{figure}
Comparing $x$-coordinates, resp. $y$-coordinates, gives equality $yz=2uz+uy$, resp. $ux=2xy+uy$.

Instead, the partial triangulation of the embedding in $\mathbb F_2,$ is depicted in Figure \ref{fg:(202)F_2}. All possible completions yield the relation $yz=uz.$

\begin{figure}[h]
\begin{tikzcd}
    \begin{tikzpicture}
        \foreach \j in {0,0.5,1,1.5,2,2.5,3}{
            \vertex{0,\j}
        }
        \foreach \j in {0,0.5,1,1.5,2}{
            \vertex{0.5,\j}
        }
        \foreach \j in {0,0.5,1}{
            \vertex{1,\j}
        }
        \vertex{1.5,0}
        \draw (0,0)--(1.5,0);
        \draw (0,3)--(1.5,0);
        \draw (0,3)--(0,0);
        \coordinate (1) at (1,0.5);
        \coordinate (2) at (0.5,0.5);
        \coordinate (3) at (0.5,1);
        \coordinate (4) at (0.5,1.5);
        \foreach \j in {1,2,3,4}{
            \vertexblue{\j}
        }
        \draw[blue] (1) node[anchor=north] {1};
        \draw[blue] (2) node[anchor=east] {2};
        \draw[blue] (3) node[anchor=east] {3};
        \draw[blue] (4) node[anchor=south east] {4};
        \draw[blue](2)--(3);
        \draw[blue] (1,1)--(0.5,0);
        \draw[blue] (0,1.5)--(1,1);
    \end{tikzpicture}&
    \begin{tikzpicture}
        \foreach \j in {0,0.5,1,1.5,2,2.5,3}{
            \vertex{0,\j}
        }
        \foreach \j in {0,0.5,1,1.5,2}{
            \vertex{0.5,\j}
        }
        \foreach \j in {0,0.5,1}{
            \vertex{1,\j}
        }
        \vertex{1.5,0}
        \draw (0,0)--(1.5,0);
        \draw (0,3)--(1.5,0);
        \draw (0,3)--(0,0);
       \coordinate (1) at (1,0.5);
        \coordinate (2) at (0.5,0.5);
        \coordinate (3) at (0.5,1);
        \coordinate (4) at (0.5,1.5);
        \foreach \j in {1,2,3,4}{
            \vertexblue{\j}
        }
        \draw[blue] (1) node[anchor=north] {1};
        \draw[blue] (2) node[anchor=east] {2};
        \draw[blue] (3) node[anchor=east] {3};
        \draw[blue] (4) node[anchor=south east] {4};
        \draw[blue](2)--(3);
        \draw[blue] (1,1)--(0.5,0);
        \draw[blue] (0,1.5)--(1,1);
        \draw(3)--(1,1);
        \draw (4)--(1,1);\draw(4)--(0,1.5);\draw(3)--(0,1.5);\draw(2)--(1,1);\draw(2)--(0.5,0);
        \draw(1)--(0.5,0);\draw(1)--(1,0);
    \draw(1,0.5)--(1.5,0); 
    \draw[] (1,1)--(1,0.5);
    
    \end{tikzpicture}&
    \begin{tikzpicture}[scale=0.75]
    \draw[](1.5,2.5+1.5)--(2,3+1.5);
    \draw[blue](1,1.5+1.5)--(1.5,2.5+1.5); \draw[](1.5,2.7+1.5)--(1.5,2.5+1.5);
    \draw[](0.5,1+1.5)--(1,1.5+1.5);
    \draw[](1,0+1.5)--(1,1.5+1.5);
    \draw[blue](1,0+1.5)--(0,0+1.5);
    \vertexblue{0,0+1.5}
    \draw[](1,0+1.5)--(1.5,-0.5+1.5);
    \vertexblue{1.5,-0.5+1.5}
\draw[](1.5,-0.5+1.5)--(1,-0.5+1.5);
    \draw[blue](1.5,-0.5+1.5)--(2.5,-1+1.5);
    \draw(2.5,-1+1.5)--(4,-1+1.5);\draw(4,-1+1.5)--(5,-0.5+1.5);
    \draw(2.5,-1+1.5)--(3,-1.5+1.5);
    \draw(3.5,-1.5+1.5)--(3,-1.5+1.5);
    \draw(3.5,-1.5+1.5)--(4,-1+1.5);
    \draw(3.5,-1.5+1.5)--(3.5,-2+1.5);
    \draw(3,-2+1.5)--(3,-1.5+1.5);
   
    \vertexblue{1,0+1.5}\vertexblue{1,1.5+1.5}
    \vertexblue{2.5,-1+1.5}
    \vertexblue{1.5,2.5+1.5}
    \draw[] (2.5,-1+1.5) node[anchor= north] {$v$};
    \draw[] (1.5,2.5+1.5) node[anchor=west] {$w$};
    \draw[] (0,0+1.5) node[anchor= east] {$u$};
    \draw[] (1,0+1.5) node[anchor= south west] {$y$};
    \draw[] (1,1.5+1.5) node[anchor= west] {$z$};
    \draw[] (1.5,-0.5+1.5) node[anchor= north] {$x$};
    \end{tikzpicture}
    \end{tikzcd}\caption{Partial triangulation of $\mathbb F_2$ and partial embedded tropical curve.}\label{fg:(202)F_2}
\end{figure}
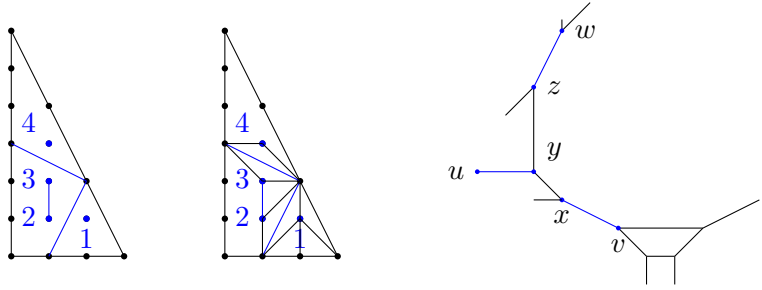

Let us also exhibit in Figure \ref{fg:(202)codim} a triangulation that shows that $xy+uy< ux<2xy+uy,$ but no other equality is satisfied. 

\begin{figure}[h]
\begin{tikzcd}
    \begin{tikzpicture}
        \foreach \j in {0,0.5,1,1.5,2,2.5,3}{
            \vertex{0,\j}
        }
        \foreach \j in {0,0.5,1,1.5,2}{
            \vertex{0.5,\j}
        }
        \foreach \j in {0,0.5,1}{
            \vertex{1,\j}
        }
        \vertex{1.5,0}
        \draw (0,0)--(1.5,0);
        \draw (0,3)--(1.5,0);
        \draw (0,3)--(0,0);
       \coordinate (1) at (1,0.5);
        \coordinate (2) at (0.5,0.5);
        \coordinate (3) at (0.5,1);
        \coordinate (4) at (0.5,1.5);
        \foreach \j in {1,2,3,4}{
            \vertexblue{\j}
        }
        \draw[blue] (1) node[anchor=north] {1};
        \draw[blue] (2) node[anchor=east] {2};
        \draw[blue] (3) node[anchor=east] {3};
        \draw[blue] (4) node[anchor=south east] {4};
        \draw[blue](2)--(3);
        \draw[blue] (1,1)--(0.5,0);
        \draw[blue] (0,1.5)--(1,1);
        \draw(3)--(1,1);
        \draw (4)--(1,1);\draw(4)--(0,1.5);\draw(3)--(0,1.5);\draw(2)--(1,1);\draw(2)--(0.5,0);
        \draw(1)--(0.5,0);\draw(1)--(1,0);
    \draw(1,0.5)--(1.5,0); 
    \draw[] (1,1)--(1,0.5);
    \draw[red](2)--(0,1);
    \draw[red](2)--(0,0.5);\draw[red](0.5,0)--(0,0.5);
    \end{tikzpicture}&
    \begin{tikzpicture}[scale=0.75]
    \draw[](1.5,2.5+1.5)--(2,3+1.5);
    \draw[blue](1,1.5+1.5)--(1.5,2.5+1.5); \draw[](1.5,2.7+1.5)--(1.5,2.5+1.5);
    \draw[](0.5,1+1.5)--(1,1.5+1.5);
    \draw[](1,0+1.5)--(1,1.5+1.5);
    \draw[blue](1,0+1.5)--(0,0+1.5);
    
    \draw[](1,0+1.5)--(1.5,-0.5+1.5);
    \draw[red](0,0+1.5)--(-0.25,-0.25+1.5);\vertexblue{0,0+1.5}
    \draw[red](-0.25,-0.5+1.5)--(-0.25,-0.25+1.5);\draw(-0.5,-0.25+1.5)--(-0.25,-0.25+1.5);
    \draw[](-0.25,-0.5+1.5)--(1.5,-0.5+1.5);
    \draw[red](-0.5,-0.75+1.5)--(-0.25,-0.5+1.5);
    \draw[](-0.5,-0.75+1.5)--(-0.5,-1+1.5);
    \draw[](-0.5,-0.75+1.5)--(-0.75,-0.75+1.5);
    \vertexblue{1.5,-0.5+1.5}
\draw[](1.5,-0.5+1.5)--(1,-0.5+1.5);
    \draw[blue](1.5,-0.5+1.5)--(2.5,-1+1.5);
    \draw(2.5,-1+1.5)--(4,-1+1.5);\draw(4,-1+1.5)--(5,-0.5+1.5);
    \draw(2.5,-1+1.5)--(3,-1.5+1.5);
    \draw(3.5,-1.5+1.5)--(3,-1.5+1.5);
    \draw(3.5,-1.5+1.5)--(4,-1+1.5);
    \draw(3.5,-1.5+1.5)--(3.5,-2+1.5);
    \draw(3,-2+1.5)--(3,-1.5+1.5);
    \vertexblue{1,0+1.5}\vertexblue{1,1.5+1.5}
    \vertexblue{2.5,-1+1.5}
    \vertexblue{1.5,2.5+1.5}
    \draw[] (2.5,-1+1.5) node[anchor= north] {$v$};
    \draw[] (1.5,2.5+1.5) node[anchor=west] {$w$};
    \draw[] (0,0+1.5) node[anchor= east] {$u$};
    \draw[] (1,0+1.5) node[anchor= south west] {$y$};
    \draw[] (1,1.5+1.5) node[anchor= west] {$z$};
    \draw[] (1.5,-0.5+1.5) node[anchor= north] {$x$};
    \end{tikzpicture}
    \end{tikzcd}\caption{A triangulation of $\mathbb F_2$ and the corresponding embedded tropical curve.}\label{fg:(202)codim}
\end{figure}
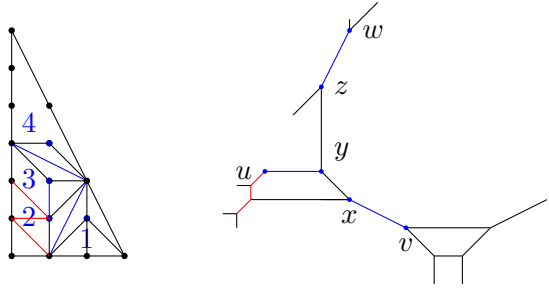

All the above prove the following Lemma.
\begin{lemma}
    Let $\Gamma$ be a tropical curve of genus $4$ of combinatorial type (202).
    \begin{itemize}
        \item If $\Gamma$ is realizable in $\mathbb F_0,$ then its edge lengths satisfy, up to symmetry:
        \begin{equation}
        \begin{cases}
            yz=2uz+uy;\\
            ux=2xy+uy.
        \end{cases}
        \end{equation}
        \item If $\Gamma$ is realizable in $\mathbb F_2$, then its edge lengths satisfy, up to symmetry:
        \begin{equation}
        \begin{cases}
            uz=yz.
        \end{cases}
        \end{equation}
    \end{itemize}
\end{lemma}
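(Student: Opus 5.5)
The plan follows the template used for the other combinatorial types in Section \ref{ssc:genus4} and in this appendix. First, the cycle-incidence dictionary (cycle $\leftrightarrow$ interior point, adjacent cycles $\leftrightarrow$ a $1$-simplex joining two interior points, cycles joined by a bridge $\leftrightarrow$ a $1$-simplex separating two interior points) fixes the relevant part of any unimodular triangulation. Then the linear algebra of the embedded curve, obtained by comparing coordinates of vertices, gives the edge-length relations.

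For type (202), the cycles $1$ and $4$ are the two loops at $v$ and $w$. Each is attached by a bridge, $xv$ resp.\ $zw$, to the central part. That central part consists of cycles $2$ and $3$, which share the edge $uy$. So the interior points $2,3$ must be joined by an edge, while $1$ and $4$ must each be cut off by a separating segment.

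\textbf{Case $\mathbb F_0$.} In the $4\times 4$ grid $[0,3]^2$, these incidences force, up to symmetry, the configuration of Figure \ref{fg:(202)F_0}:
\begin{itemize}
\item the separating segments $(3,0)$--$(0,2)$ and $(3,1)$--$(0,3)$;
\item the edge between the interior points $(0.5,1)$ and $(1,0.5)$ in the picture's scaling.
\end{itemize}
Unimodularity then forces the remaining triangles adjacent to these segments. We then read off primitive directions and take lengths as lattice lengths. The cycle $3$ is bounded by the path $y\to u\to z$ on one side and the single edge $yz$ on the other. Since $yz$ is dual to a segment of slope-type $(\pm 2,1)$, comparing $x$-coordinates of $y$ and $z$ along both paths gives $yz=2uz+uy$. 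The analogous comparison of $y$-coordinates for cycle $2$, along $x\to y\to u$ versus the edge $ux$, gives $ux=2xy+uy$.

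\textbf{Case $\mathbb F_2$.} In the trapezoid with vertices $(0,0),(3,0),(0,6)$ the interior points are $(1,1),(1,2),(1,3),(2,1)$. The same incidences force the partial triangulation of Figure \ref{fg:(202)F_2}, up to the symmetry of the polygon:
\begin{itemize}
\item the edge $(1,1)$--$(1,2)$, making cycles $2,3$ adjacent;
\item separating segments $(2,2)$--$(1,0)$ and $(0,3)$--$(2,2)$, cutting off the interior points $(2,1)$ and $(1,3)$.
\end{itemize}
The key point is then to show that every completion joins $u$ and $z$ only by edges of the curve dual to segments of the form $(\alpha,1)$, and likewise for the path $y$--$z$. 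Also $u$ and $y$ share a $y$-coordinate. Then $uz$ and $yz$ both equal the difference of the $y$-coordinates, so $uz=yz$.

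The main obstacle is this last step: enumerating the completions of the partial triangulation near the interior point $3$ and checking that every dual edge on both paths has second coordinate exactly $1$. I would handle it by listing the lattice points adjacent to $(1,2)$ not yet covered and checking the few unimodular fans around it. Figure \ref{fg:(202)codim} serves only as a consistency check that no further equality is forced.
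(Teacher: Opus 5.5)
Your plan follows the paper's template. However, the step ``the same incidences force the partial triangulation of Figure~\ref{fg:(202)F_2}, up to the symmetry of the polygon'' is false, and this is where the $\mathbb F_2$ bullet breaks down. The completion argument near $(1,2)$, which you single out as the main obstacle, is actually easy: the missing neighbours of $(1,2)$ lie among $(0,0),(0,1),(0,2)$.

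\textbf{The $\mathbb F_2$ step fails.} Instead of cutting off $(2,1)$ and $(1,3)$, you can cut off $(1,1)$ by $(0,2)$--$(3,0)$ and $(1,3)$ by $(0,3)$--$(2,2)$, and join $(2,1)$--$(1,2)$.
\emph{Triangulation.} Take the triangles $(0,2)(3,0)(2,1)$, $(3,0)(2,2)(2,1)$, $(2,2)(2,1)(1,2)$, $(2,1)(1,2)(0,2)$, $(1,2)(0,2)(0,3)$, $(1,2)(2,2)(0,3)$, together with the fans from $(1,1)$ and $(1,3)$. This is a unimodular triangulation. It is regular: it refines the regular coarse subdivision given by the domains of linearity of $\min(0,2x+3y-6)+\min(0,6-x-2y)$. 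Its dual curve has type (202), with $x$ dual to $(0,2)(3,0)(2,1)$ and $z$ dual to $(0,3)(2,2)(1,2)$.
\emph{Relations.} Here $(1,2)$ has only the neighbours $(2,1),(2,2),(0,3),(0,2)$, so its region is a parallelogram with sides of directions $(1,1)$ and $(0,1)$. Take $u$ dual to $(2,2)(2,1)(1,2)$ and $y$ dual to $(0,2)(2,1)(1,2)$. The parallelogram gives $yz=uz+uy$, and closing the region of $(2,1)$ gives $ux=3xy+uy$.
\emph{Explicit instance.} Put $u=(0,0)$, $y=(-1,-1)$, $z=(0,1)$, $x=(-2,-3)$, with $2$-valent points $(-1,0)$ on $yz$ and $(1,0)$ on $ux$. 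Then $uy=uz=xy=1$, $yz=2$ and $ux=4$. This violates both $uz=yz$ and its symmetric form $ux=xy$.
So no analysis of completions of Figure~\ref{fg:(202)F_2} can establish the second bullet as stated. The paper's own proof makes the same forcing claim, and the $\mathbb F_2$ statement needs this second alternative added.

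\textbf{The $\mathbb F_0$ derivation mislabels the edges.} In $\mathbb F_0$ the configuration is indeed forced up to the symmetries of the square, so your choice of triangulation is right. In that forced triangulation the star of $(1,2)$ is $(0,2),(0,3),(3,1),(2,1)$.
\emph{Which edge is which.} The $(2,-1)$-segment $(1,2)$--$(3,1)$ is dual to $uz$, which has direction $(1,2)$; it is not dual to $yz$. The edge $yz$ consists of the two embedded edges dual to $(1,2)$--$(0,2)$ and $(1,2)$--$(0,3)$, of directions $(0,1)$ and $(1,1)$. They meet at a point carrying an end.
\emph{Why the relation holds.} Both pieces of $yz$ have unit second coordinate, so $yz$ equals the difference of the second coordinates of $z$ and $y$, which is $uy+2uz$. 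If $yz$ really were the edge dual to the $(\pm2,1)$-segment (i.e.\ after swapping $u$ and $y$), the same comparison would give $uz=2yz+uy$ instead.
\emph{The other cycle.} Similarly, $ux$ is made of pieces of directions $(0,-1)$ and $(-1,-1)$, and $xy$ is dual to $(2,1)$--$(0,2)$. That is what produces $ux=2xy+uy$.
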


\subsubsection*{Type (212)}

Let us now consider the combinatorial type (212). The triangulation describing the embedding in $\mathbb F_0$ is represented in Figure \ref{fg:(212)F_0}.
\begin{figure}[h]
\begin{tikzcd}
      \begin{tikzpicture}
        \coordinate (1) at (0,0);
        \coordinate (2) at (1,0);
        \coordinate (3) at (0,1);
        \coordinate (4) at (1,1);
        \coordinate (5) at (0,1.5);
        \coordinate (6) at (1,1.5);
        \draw (4)--(2);
        \draw (1)--(3);
        \draw (3)--(4);
        \draw (3)--(5);
        \draw (4)--(6);
        \draw (0,1.75) circle (0.25);
        \draw (1,1.75) circle (0.25);
        \foreach \i in {1,2,3,4,5,6}{
            \vertex{\i}
        }
        \draw(1)[] to [out=30, in=150] (2);
        \draw(1)[] to [out=330, in=210] (2);
        \draw(1) node[anchor=north] {$x$};
        \draw(2) node[anchor=north] {$y$};
        \draw(3) node[anchor=east] {$z$};
        \draw(4) node[anchor=west] {$u$};
        \draw(5) node[anchor= east] {$v$};
        \draw(6) node[anchor= west] {$w$};
        \draw[blue](0,1.6) node {1};
        \draw[blue] (1,1.6) node {4};
        \draw[blue](0.5,0.4) node {2};
        \draw[blue](0.5,-0.1) node {3};
    \end{tikzpicture}&
    \begin{tikzpicture}
        \foreach \i in {0,0.5,1,1.5}{
            \foreach \j in {0,0.5,1,1.5}{
            \vertex{\i,\j}
            }
    }
    \coordinate (3) at (0.5,0.5);
    \coordinate (1) at (0.5,1);
    \coordinate (4) at (1,0.5);
    \coordinate (2) at (1,1);
        \foreach \j in {1,2,3,4}{
            \vertexblue{\j}
        }
    \draw[blue] (1) node[anchor=south east] {1};\draw[blue] (2) node[anchor=north west] {2};
    \draw[blue] (3) node[anchor=north east] {3};\draw[blue] (4) node[anchor=north west] {4};
    \draw(0,0)--(1.5,0);
    \draw(0,0)--(0,1.5);
    \draw(1.5,1.5)--(0,1.5);
    \draw(1.5,1.5)--(1.5,0);
    \draw[blue](2)--(3);
    \draw[blue](0.5,0)--(1.5,1.5);
    \draw[blue](0,0.5)--(1.5,1.5);
    \draw[blue](0.5,0)--(2);
    \draw[blue](0,0.5)--(2);
    \draw (2)--(1.5,1.5);   
    \draw(0,0.5)--(3);
    \draw(0.5,0)--(3);
    \draw(0,0.5)--(0.5,0);

    \draw(0.5,0)--(4);\draw(1.5,1.5)--(4);\draw(1.5,1)--(4);
    \draw(0,0.5)--(1);\draw(1.5,1.5)--(1);\draw(1,1.5)--(1);
    \end{tikzpicture}
    &
    \begin{tikzpicture}[scale=0.75]
    \draw(-0.5,0)--(0,0);
    \draw(0,-0.5)--(0,0);
    \draw(0,0)--(0.5,0.5);
    \draw(1,0.5)--(0.5,0.5);
    \draw(0.5,1)--(0.5,0.5);\vertexblue{0.5,1}\vertexblue{1,0.5}
    \draw[blue](0.5,1)--(1,0.5);
    \draw[blue](0.5,1)--(0,2);\vertexblue{0,2}
    \draw[blue](1,0.5)--(2,0);\vertexblue{2,0}
    \draw(2,0)--(0,2);
    \draw(0,2)--(-1,3.5);
    \draw(2,0)--(3.5,-1);\vertexblue{-1,3.5}\vertexblue{3.5,-1}
    \draw(-1,3.5)--(-1.5,4.5);\draw(-1.5,5)--(-1.5,4.5);\draw(-2,5)--(-1.5,4.5);
    \draw(3.5,-1)--(4.5,-1.5);\draw(5,-1.5)--(4.5,-1.5);\draw(5,-2)--(4.5,-1.5);
    \draw(-1,3.5)--(-1.5,4);
    \draw[] (0,2) node[anchor=north] {$z$};
    \draw[] (2,0) node[anchor=north] {$u$};
    \draw[] (1,0.5) node[anchor=north] {$y$};
    \draw[] (-1,3.5) node[anchor=west] {$v$};
    \draw[] (3.5,-1) node[anchor=west] {$w$};
    \draw[] (0.5,1) node[anchor=east] {$x$};
    \end{tikzpicture}
    \end{tikzcd}\caption{Combinatorial type, partial triangulations of $\mathbb F_0$ and partial embedded tropical curve.}\label{fg:(212)F_0}
    \end{figure}
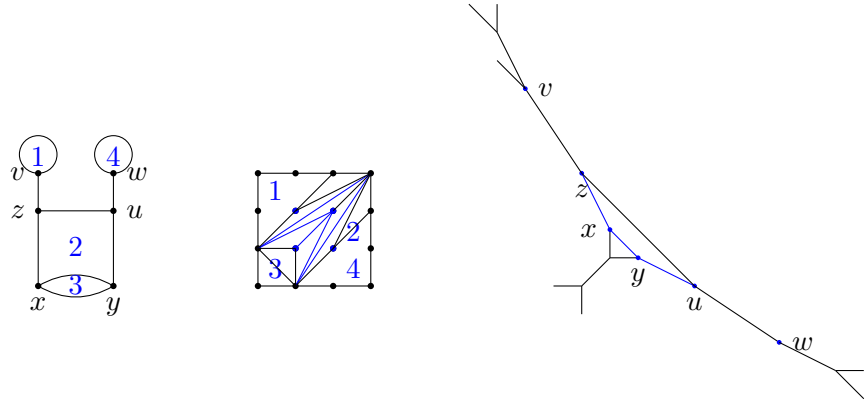

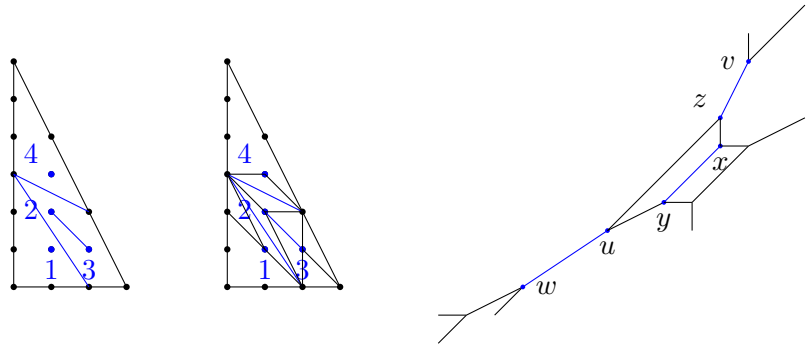
\begin{figure}[h]
\begin{tikzcd}
    \begin{tikzpicture}
        \foreach \j in {0,0.5,1,1.5,2,2.5,3}{
            \vertex{0,\j}
        }
        \foreach \j in {0,0.5,1,1.5,2}{
            \vertex{0.5,\j}
        }
        \foreach \j in {0,0.5,1}{
            \vertex{1,\j}
        }
        \vertex{1.5,0}
        \draw (0,0)--(1.5,0);
        \draw (0,3)--(1.5,0);
        \draw (0,3)--(0,0);
        \coordinate (3) at (1,0.5);
        \coordinate (1) at (0.5,0.5);
        \coordinate (2) at (0.5,1);
        \coordinate (4) at (0.5,1.5);
        \foreach \j in {1,2,3,4}{
            \vertexblue{\j}
        }
        \draw[blue] (1) node[anchor=north] {1};
        \draw[blue] (2) node[anchor=east] {2};
        \draw[blue] (3) node[anchor=north] {3};
        \draw[blue] (4) node[anchor=south east] {4};
        \draw[blue](2)--(3);
        \draw[blue] (0,1.5)--(1,0);
        \draw[blue] (0,1.5)--(1,1);
    \end{tikzpicture}&
    \begin{tikzpicture}
        \foreach \j in {0,0.5,1,1.5,2,2.5,3}{
            \vertex{0,\j}
        }
        \foreach \j in {0,0.5,1,1.5,2}{
            \vertex{0.5,\j}
        }
        \foreach \j in {0,0.5,1}{
            \vertex{1,\j}
        }
        \vertex{1.5,0}
        \draw (0,0)--(1.5,0);
        \draw (0,3)--(1.5,0);
        \draw (0,3)--(0,0);
       \coordinate (3) at (1,0.5);
        \coordinate (1) at (0.5,0.5);
        \coordinate (2) at (0.5,1);
        \coordinate (4) at (0.5,1.5);
        \foreach \j in {1,2,3,4}{
            \vertexblue{\j}
        }
        \draw[blue] (1) node[anchor=north] {1};
        \draw[blue] (2) node[anchor=east] {2};
        \draw[blue] (3) node[anchor=north] {3};
        \draw[blue] (4) node[anchor=south east] {4};
        \draw[blue](2)--(3);
        \draw[blue] (0,1.5)--(1,0);
        \draw[blue] (0,1.5)--(1,1);
        \draw(4)--(1,1);\draw(4)--(0,1.5);
        \draw(2)--(0,1.5);\draw(2)--(1,1); \draw(2)--(1,0);
        \draw(3)--(1,0);
        \draw(1)--(1,0);\draw(1)--(0,1.5);\draw(1)--(0,1);
    \draw(1,0.5)--(1.5,0); 
    \draw[] (1,1)--(1,0.5); 
    \end{tikzpicture}&
    \begin{tikzpicture}[scale=0.75]
    \draw[blue](0,0)--(1.5,1);
    \draw(-0.5,-0.5)--(0,0);
    \draw(-1,-0.5)--(0,0);
    \draw(-1,-0.5)--(-1.5,-1);\draw(-1,-0.5)--(-1.5,-0.5);\vertexblue{0,0}
    \vertexblue{1.5,1}
    \draw(1.5,1)--(2.5,1.5);\vertexblue{2.5,1.5}
    \draw(1.5,1)--(3.5,3);\vertexblue{3.5,3}
    \draw[blue](2.5,1.5)--(3.5,2.5);\vertexblue{3.5,2.5}
    \draw(3.5,2.5)--(3.5,3);
    \draw(3.5,2.5)--(4,2.5);\draw(5,3)--(4,2.5);
    \draw[](2.5,1.5)--(3,1.5);
    \draw[](3,1.5)--(4,2.5);
    \draw[](3,1.5)--(3,1);
    \draw[blue](3.5,3)--(4,4);
    \draw(4,4)--(5,5);\draw(4,4)--(4,4.5);
    \vertexblue{4,4}
    \draw (1.5,1) node[anchor= north] {$u$};
    \draw (0,0) node[anchor=west] {$w$};
    \draw[] (2.5,1.5) node[anchor= north] {$y$};
    \draw[] (3.5,3) node[anchor= south east] {$z$};
    \draw[] (4,4) node[anchor= east] {$v$};
    \draw[] (3.5,2.5) node[anchor= north] {$x$};
    \end{tikzpicture}
    \end{tikzcd}\caption{Partial triangulation of $\mathbb F_2$ and partial embedded tropical curve.}\label{fg:(212)F_2}
\end{figure}

Comparing again $x$ or $y$-coordinates of the points proves the following.
\begin{lemma}
    Let $\Gamma$ be a tropical curve of genus $4$ of combinatorial type (212).
    \begin{itemize}
        \item If $\Gamma$ is realizable in $\mathbb F_0,$ then its edge lengths satisfy, up to symmetry:
        \begin{equation}
        \begin{cases}
        xz+xy+2uy=uz;\\ 
        2xz+xy+uy=uz; 
        \end{cases}\Rightarrow \begin{cases}
        xy+3uy=uz;\\
        uy=xz. 
        \end{cases}
        \end{equation}
        \item If $\Gamma$ is realizable in $\mathbb F_2$, then its edge lengths satisfy, up to symmetry:
        \begin{equation}
        \begin{cases}
        2uy+xy=uz;\\
        uy+xy+xz=uz;\\
        \end{cases}\Rightarrow 
        \begin{cases}
        xy+2uy=uz;\\
        uy=xz. 
        \end{cases}
        \end{equation}
    \end{itemize}
\end{lemma}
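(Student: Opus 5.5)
Following the pattern used for the other genus $4$ types, I will read the constraints off the partial triangulations in Figures \ref{fg:(212)F_0} and \ref{fg:(212)F_2}. In type (212) the cycles $1$ and $4$ are attached by bridges $zv$ and $uw$. The cycles $2$ and $3$ share the double edge between $x$ and $y$ and are joined to one another through $z$ and $u$. By the correspondences listed at the start of Section \ref{ssc:genus4}, the interior lattice points dual to $1$ and $4$ must be separated by $1$-simplices from the points dual to $2$ and $3$, while $2$ and $3$ are joined by a $1$-simplex. Together with unimodularity, this forces the blue edges in each figure. I would first check that these are the only configurations up to the symmetries of the polygon, which are the reflection of the square for $\FF_0$ and the shear symmetry of the $\FF_2$ triangle. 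This step is responsible for the ``up to symmetry'' clause.

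With the configuration fixed, I work in the dual plane curve. The vertices $z,x,y,u$ lie on the boundary of the two adjacent cycles $2$ and $3$. Each edge has a primitive direction determined by its dual segment, and its lattice length is the change in $y$-coordinate (or $x$-coordinate) divided by the corresponding component of the primitive vector. So I will write down the direction vectors of $zx$, $xy$ (both parallel edges), $yu$ and $zu$, which can be read off from the blue dual segments. I then express the height difference between $z$ and $u$ in two ways: along the path $z\to x\to y\to u$, and along the edge(s) $z\to u$. These must agree.

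For $\FF_0$ this should give $xz+xy+2uy=uz$, where the factor $2$ comes from the edge $yu$ having direction with vertical component $1/2$ relative to its length normalization. Comparing $x$-coordinates through the other parallel edge between $x$ and $y$ gives $2xz+xy+uy=uz$. Subtracting the two equations gives $uy=xz$, and substituting back gives $xy+3uy=uz$. For $\FF_2$ I do the same comparison. The dual segment of $zu$ is a segment $(\alpha,1)$ from the vertex $(0,3)$, so its lattice length equals the $y$-difference. This yields $2uy+xy=uz$ and $uy+xy+xz=uz$, and hence $uy=xz$ and $xy+2uy=uz$.

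The main obstacle is making sure these coordinate identities hold for every completion of the partial triangulation. The edge $zu$, like the paths through $x,y$, may be subdivided by further vertices of the curve (leaf edges into the unbounded region). I would handle this as in types (101) and (021): every segment that completes the region crossing $zu$ has fixed second coordinate $1$, or fixed first coordinate for the $x$-comparison. So the total length equals the coordinate difference regardless of the subdivision. Verifying this for each completion is the step I expect to require the most care.
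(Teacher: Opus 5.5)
Your approach is the same as the paper's. Once the triangulation is fixed, the two relations are the $x$- and $y$-components of the closing condition of cycle $2$ (the path $z\to x\to y\to u$ versus the edge $zu$), and your equations and eliminations are correct. A few justifications need fixing, though: the factor $2$ comes from a primitive direction with a component equal to $2$, not $1/2$ (in the paper's labeling for $\FF_0$, $yu$ is dual to the segment from $(1,0)$ to $(2,2)$, has direction $(2,-1)$, and the factor appears in the $x$-comparison); ``$xy$'' must mean the edge shared by cycles $2$ and $3$, since in $\FF_0$ the other parallel edge is bent into a vertical and a horizontal piece, each of length $xy$, so it has total length $2\,xy$ and does not share a direction vector with $xy$; a dual segment $(\alpha,1)$ gives a tropical direction $(1,-\alpha)$, whose length is the $x$-difference rather than the $y$-difference (for $zu$ in $\FF_2$ the direction is $(1,1)$, so the two happen to agree); and your anticipated subdivision obstacle does not arise, because the star of the lattice point dual to cycle $2$ ($(2,2)$ in $\FF_0$, $(1,2)$ in $\FF_2$) is completely forced by unimodularity, so cycle $2$ is a quadrilateral with no further vertices on its edges.
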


\bibliographystyle{plain}
\bibliography{TrigonalAndEmbeddedTropicalCurves.bib}

\end{document}